\newenvironment{enumerate-(a)}{\begin{enumerate}[label={\upshape (\alph*)}, leftmargin=2pc]}{\end{enumerate}}
\newenvironment{enumerate-(A)}{\begin{enumerate}[label={\upshape (\Alph*)}, leftmargin=2pc]}{\end{enumerate}}
\newenvironment{enumerate-(i)}{\begin{enumerate}[label={\upshape (\roman*)}, leftmargin=2pc]}{\end{enumerate}}
\newenvironment{itemizenew}{\begin{itemize}[leftmargin=2pc]}{\end{itemize}}
\theoremstyle{plain}
\newtheorem{theorem}{Theorem}[section]
\newtheorem{proposition}[theorem]{Proposition}
\newtheorem{lemma}[theorem]{Lemma}
\newtheorem{corollary}[theorem]{Corollary}
\newtheorem{claim}{Claim}[theorem]
\theoremstyle{definition}
\newtheorem{definition}[theorem]{Definition}
\theoremstyle{remark}
\newtheorem{remark}[theorem]{Remark}
\newtheorem{remarks}[theorem]{Remarks}
\newtheorem{example}[theorem]{Example}
\newcommand{\AND}{\mathbin{\, \wedge \,}}
\newcommand{\OR}{\mathbin{\, \vee \,}}
\renewcommand{\implies}{\Rightarrow}	
\newcommand{\IMPLIES}{\mathbin{\, \Rightarrow \,}}
\renewcommand{\iff}{\mathbin{\Leftrightarrow }}
\newcommand{\IFF}{\mathbin{\, \Leftrightarrow \,}}
\newcommand{\FORALL}[1]{\forall {#1} \, }
\newcommand{\FORALLS}[2]{\forall^{#1} {#2} \, }
\newcommand{\EXISTS}[1]{\exists {#1} \, }
\newcommand{\EXISTSONE}[1]{\exists ! {#1} \, }
\newcommand{\EXISTSS}[2]{\exists^{#1} {#2} \, }
\newcommand{\R}{\mathbb{R}}	
\newcommand{\Q}{\mathbb{Q}}	
\newcommand{\Mid}{\boldsymbol\mid}			
\newcommand{\equalsdef}{\stackrel{\text{\tiny\rm def}}{=}} 	
\newcommand{\setof}[2]{\mathopen \{{#1}\Mid{#2} \mathclose\}} 
\newcommand{\setofLR}[2]{\left \{{#1} \Mid {#2} \right\}} 
\newcommand{\setLR}[1]{\left \{ {#1} \right \}} 
\newcommand{\Pow}{\mathscr{P}}		
\newcommand{\seqofLR}[2]{\left \langle #1 \Mid #2 \right \rangle} 
\newcommand{\seqLR}[1]{\left \langle #1 \right \rangle}	
\newcommand{\eq}[1]{{\boldsymbol [}{#1} {\boldsymbol ]}}	
\newcommand{\symdif}{\mathop{\triangle}}	
\newcommand{\LOC}[2]{{#1}_{\lfloor {#2}\rfloor}} 
\newcommand{\pre}[2]{\prescript{#1}{}{#2}}				
\DeclareMathOperator{\dom}{dom} 		
\DeclareMathOperator{\ran}{ran} 		
\newcommand{\conc}{{}^\smallfrown}		
\DeclareMathOperator{\lh}{lh}				
\newcommand{\onto}{\twoheadrightarrow}
\newcommand{\lelex}{<_{\mathrm{lex}}}		
\DeclareMathOperator{\Int}{Int} 		
\DeclareMathOperator{\Cl}{Cl} 		
\DeclareMathOperator{\Fr}{Fr} 		
\newcommand{\Nbhd}{{\boldsymbol N}\!} 
\DeclareMathOperator{\Ball}{B}
\newcommand{\bSigma}{\boldsymbol{\Sigma}}
\newcommand{\bPi}{\boldsymbol{\Pi}}
\newcommand{\bGamma}{\boldsymbol{\Gamma}}
\newcommand{\bDelta}{\boldsymbol{\Delta}}
\newcommand{\Fsigma}{\mathbf{F}_\sigma}	
\newcommand{\NULL}{\textrm{\scshape Null}}	
\newcommand{\Bor}{\textrm{\scshape Bor}}	
\newcommand{\MEAS}{\textrm{\scshape Meas}}	
\newcommand{\MALG}{\textrm{\scshape Malg}}	
\newcommand{\ZF}{\mathsf{ZF}} 		
\newcommand{\ZFC}{\mathsf{ZFC}}		
\newcommand{\DC}{\mathsf{DC}}			
\DeclareMathOperator{\Rake}{Rake}
\newcommand{\Rakep}{\Rake^+}
\DeclareMathOperator{\Plus}{Plus}
\DeclareMathOperator{\Sum}{Sum}
\DeclareMathOperator{\NATURAL}{Nat}
\DeclareMathOperator{\FLAT}{Flat}
\newcommand{\uu}{\boldsymbol u}
\newcommand{\hh}{\boldsymbol{h}}
\newcommand{\mm}{\boldsymbol m}
\DeclareMathOperator{\cof}{cof} 			
\newcommand{\AD}{\mathsf{AD}}		
\newcommand{\I}{\mathbf{I}}
\newcommand{\II}{\mathbf{II}}
\newcommand{\GL}{G_{\mathrm{L}}}
\newcommand{\GW}{G_{\mathrm{W}}}
\newcommand{\leqW}{\leq_{\mathrm{W}}}
\newcommand{\leql}{\leq_{\mathrm{L}}}
\newcommand{\nleqW}{\nleq_{\mathrm{W}}}
\newcommand{\leW}{<_{\mathrm{W}}}
\newcommand{\lel}{<_{\mathrm{L}}}
\newcommand{\Wequiv}{\equiv_{\mathrm{W}}}
\newcommand{\lequiv}{\equiv_{\mathrm{L}}}
\newcommand{\Wdeg}[1]{\eq{#1}_{\mathrm{W}}}
\newcommand{\Wrank}[1]{\RANK{#1}_{\mathrm{W}}}
\newcommand{\Wsum}{\mathbin{\boldsymbol{+}} }
\newcommand{\Wtree}{\boldsymbol{T}}
\DeclareMathOperator{\supt}{supt}
\newcommand{\RANK}[1]{\left \Vert #1 \right \Vert}
\newcommand{\body}[1]{\left [ {#1} \right ]} 
\newcommand{\markdef}[1]{\textbf{#1}}
\newcommand{\densitytree}{\boldsymbol{D}}
\title{The Descriptive Set Theory of the Lebesgue Density Theorem}
\author{Alessandro Andretta}
\address{Dipartimento di Matematica, Universit\`a di Torino, via Carlo Alberto 10, 10123 Torino --- Italy}
\email{alessandro.andretta@unito.it}
\author{Riccardo Camerlo}
\address{Dipartimento di Matematica, Politecnico di Torino, Corso Duca degli Abruzzi 24, 10129 Torino --- Italy}
\email{camerlo@calvino.polito.it}
\dedicatory{In memory of Greg Hjorth}
\subjclass[2010]{03E15, 28A05}
\date{\today}                                           
\begin{document}
\begin{abstract}
Given an equivalence class \( \eq{A} \) in the measure algebra of the Cantor space, let 
\( \hat \Phi ( \eq{A} ) \) be the set of points having density \( 1 \) in \( A \).
Sets of the form \( \hat \Phi ( \eq{A} ) \) are called \( \mathcal{T} \)-regular.
We establish several results about \( \mathcal{T} \)-regular sets.
Among these, we show that \( \mathcal{T} \)-regular sets can have any complexity 
within \( \bPi^{0}_{3}\) (=\( \mathbf{F}_{ \sigma\delta}\)), 
that is for any \( \bPi^{0}_{3}\) subset \( X \) of the Cantor space
there is a \( \mathcal{T} \)-regular set that has the same topological complexity of \( X \).
Nevertheless, the generic \( \mathcal{T} \)-regular set is \( \bPi^{0}_{3}\)-complete, meaning that the classes 
\( \eq{A} \) such that \( \hat{ \Phi} ( \eq{A} ) \) is \( \bPi^{0}_{3}\)-complete form a comeagre 
subset of the measure algebra.
We prove that this set is also dense in the sense of forcing, as \( \mathcal{T} \)-regular sets with empty interior 
turn out to be \( \bPi^{0}_{3}\)-complete.
Finally we show that the generic \( \eq{A} \) does not contain a \( \bDelta^{0}_{2}\) set, 
i.e., a set which is in \(  \mathbf{F}_ \sigma \cap \mathbf{G}_ \delta  \).
\end{abstract}
\maketitle

\section{Introduction}
The measure algebra of a probability Borel measure 
\( \mu \) on a standard Borel space \( X \), is the quotient
\[
\MALG ( X , \mu )= \frac{\MEAS ( X ,  \mu ) }{\NULL ( X , \mu ) }
\] 
where \(\MEAS ( X , \mu ) \) is the \( \sigma \)-algebra of the \( \mu \)-measurable 
subsets of \( X \) and 
\( \NULL ( X ,  \mu ) \) is the \( \sigma \)-ideal of the sets of \( \mu \)-measure \( 0 \).
It  can be obtained by taking the quotient of \( \Bor ( X ) \), 
the \( \sigma \)-algebra of Borel subsets of \( X \), and it is canonical, in the 
sense that \( \MALG ( X , \mu ) \) is unique, up to isomorphism, for
any continuous probability measure \( \mu \) on a standard Borel 
space~\cite[p.~116]{Kechris:1995kc}.
The function \( \left ( \eq{A} , \eq{B} \right ) \mapsto \mu \left ( A \symdif B  \right ) \)
is a separable complete metric that turns \( \MALG \) into a Polish space.

In order to state our results in a convenient way, we will take 
the measure space to be the Cantor set \( \pre{\omega}{2} \) 
with the Lebesgue measure \( \mu \), also known as the Bernoulli or 
coin-tossing measure.

A point \( x \in \pre{\omega}{2} \) is said to have \markdef{density \( r \in [ 0 ; 1 ] \) 
in a measurable set \( A \subseteq \pre{\omega}{2} \)} if
\begin{equation}\label{eq:Lebesguedensitylimit}
\mathcal{D}_A ( x )  \equalsdef  \lim_{n \to \infty} \frac{\mu \left ( A \cap \Nbhd_{x \restriction n} \right )}
{ \mu \left ( \Nbhd_{x \restriction n} \right )} = r ,
\end{equation}
where \( \Nbhd_s = \setofLR{ x \in \pre{\omega}{2} }{s \subset x}\) is the basic open neighborhhod
determined by the finite sequence \( s \).
The map \( \mathcal{D}_A \) is called the \markdef{density function for the set \( A \)}.
Note that \( \mathcal{D}_A ( x ) \) does not necessarily exist for all \( x \), since the limit might not converge.
However, for all \( x \in \pre{\omega}{2} \)
\begin{equation*}\label{eq:densitycomplement0}
\mathcal{D}_A ( x ) = 1 - \mathcal{D}_{\neg A} ( x ) 
\end{equation*}
where \( \neg A \equalsdef \pre{\omega}{2} \setminus A \) is the complement of \( A \),
meaning that if one of the two limits exists, so does the other, and equality holds.
The following result, known as the Lebesgue Density Theorem
says that almost every \( x \in A \) has density \( 1 \) in \( A \).
\begin{theorem}\label{th:Lebesguedensity}
Let \( A \subseteq \pre{\omega}{2} \) be Lebesgue measurable.
Then 
\[
\Phi  ( A ) = \setofLR{ x \in \pre{\omega}{2} }{ x \text{ has density \( 1 \) in } A }
\]
is Lebesgue measurable, and \( \mu ( A \symdif \Phi  ( A ) ) = 0 \).
In other words, \( \mathcal{D}_A ( x ) \) agrees with the characteristic function of \( A \),
for almost every \( x \in \pre{\omega}{2} \).
\end{theorem}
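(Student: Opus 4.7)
The plan is to exploit the dyadic tree structure of \(\pre{\omega}{2}\), which turns the density computation into a Doob martingale argument. For each \(n\) let \(\mathcal{F}_n\) be the finite Boolean algebra generated by the basic clopen sets \(\Nbhd_s\) with \(s \in \pre{n}{2}\), and define
\[
f_n(x) = \frac{\mu\bigl(A \cap \Nbhd_{x \restriction n}\bigr)}{\mu\bigl(\Nbhd_{x \restriction n}\bigr)} ,
\]
so that each \(f_n\) is a simple function taking values in \([0,1]\) and depending only on \(x \restriction n\). Because
\[
\Phi(A) = \bigcap_{k \geq 1} \bigcup_{N} \bigcap_{n \geq N} \setofLR{x \in \pre{\omega}{2}}{f_n(x) > 1 - 1/k}
\]
and each of the innermost sets is a (clopen) finite union of the \(\Nbhd_s\), the set \(\Phi(A)\) is Borel --- in fact \(\bPi^{0}_{3}\) --- which disposes of the measurability assertion.

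Next I would verify that \((f_n)_n\) is a martingale for the filtration \((\mathcal{F}_n)_n\). Since \(\Nbhd_s\) splits as the disjoint union \(\Nbhd_{s \conc 0} \cup \Nbhd_{s \conc 1}\), finite additivity of \(\mu\) gives
\[
\int_{\Nbhd_s} f_{n+1} \, d\mu = \mu\bigl(A \cap \Nbhd_{s \conc 0}\bigr) + \mu\bigl(A \cap \Nbhd_{s \conc 1}\bigr) = \mu\bigl(A \cap \Nbhd_s\bigr) = \int_{\Nbhd_s} f_n \, d\mu
\]
for every \(s \in \pre{n}{2}\); since the \(\Nbhd_s\) generate \(\mathcal{F}_n\), this is the martingale property.

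The heart of the proof is then Doob's martingale convergence theorem: because \(0 \leq f_n \leq 1\), the sequence \((f_n)_n\) converges \(\mu\)-almost everywhere to a measurable function \(f_\infty \colon \pre{\omega}{2} \to [0,1]\) and, by dominated convergence, also in \(L^1(\mu)\). To identify \(f_\infty\), observe that \(\int_B f_\infty \, d\mu = \mu(A \cap B)\) for every \(B \in \bigcup_n \mathcal{F}_n\); since this algebra generates the Borel \(\sigma\)-algebra, a standard monotone class argument forces \(f_\infty\) to coincide almost everywhere with the characteristic function of \(A\). Consequently \(\mathcal{D}_A(x) = \chi_A(x)\) off a \(\mu\)-null set, which is precisely the conclusion \(\mu\bigl(A \symdif \Phi(A)\bigr) = 0\).

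The main obstacle is the almost-everywhere convergence supplied by Doob's theorem, which rests on the upcrossing inequality and is not entirely elementary. A more self-contained route would verify the statement first for clopen \(A\) (trivial: \(f_n(x) = \chi_A(x)\) for all sufficiently large \(n\)), then extend to open sets by monotone convergence of the associated martingales, to \(\Gdelta\) sets, and finally to arbitrary measurable \(A\) via outer regularity of \(\mu\). The nontrivial input becomes a weak-type \((1,1)\) inequality for the dyadic maximal function \(f^{*}(x) = \sup_n f_n(x)\), which on the Cantor space admits a short stopping-time argument on the binary tree.
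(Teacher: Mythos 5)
Your proposal is correct, but it takes a genuinely different route from the paper. The paper argues directly: it reduces the claim to showing that \( A \setminus \Phi(A) \) is null (the other half follows by applying this to \( \neg A \)), then decomposes \( A \setminus \Phi(A) \subseteq \bigcup_{\varepsilon \in \Q^+} B_\varepsilon \) where \( B_\varepsilon = \setofLR{x \in A}{\liminf_n \mu(\LOC{A}{x \restriction n}) < 1-\varepsilon} \), and for each \( B_\varepsilon \) of putative positive measure runs a Vitali-type covering argument: fix an open \( U \supseteq B_\varepsilon \) with \( \mu(U) < \mu(B_\varepsilon)/(1-\varepsilon) \), greedily build an antichain of nodes \( s \) with \( \Nbhd_s \subseteq U \) and \( \mu(\LOC{A}{s}) \leq 1-\varepsilon \), and derive a contradiction by measure overcount. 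That is, the paper proves what amounts to the weak-type \((1,1)\) maximal estimate directly on the tree, exactly the ``stopping-time argument on the binary tree'' you sketch in your final paragraph as a self-contained alternative. Your main argument, by contrast, recognizes \( f_n(x) = \mu(\LOC{A}{x \restriction n}) \) as a Doob martingale for the dyadic filtration, invokes Doob's almost-everywhere convergence, passes to \( L^1 \) by boundedness, and identifies the limit as \( \chi_A \) via a monotone class argument; this is clean and correct, and conceptually illuminating, but it imports the upcrossing inequality as a black box. The trade-off is exactly as you anticipate: the paper's covering argument is elementary and self-contained (needing only countable additivity and a greedy antichain construction), whereas your martingale proof is shorter modulo a nontrivial piece of probability theory. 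Both are standard and both work; the paper simply opts for the more self-contained of the two.
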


If \( A , B \subseteq \pre{\omega}{2} \) are measurable sets and \( \mu ( A\symdif  B ) = 0 \), then 
\( \mathcal{D}_A ( x ) = \mathcal{D}_B ( x ) \) for all \( x \) hence \( \Phi ( A ) = \Phi ( B ) \).
The map \( \Phi  \colon \MEAS \to \MEAS \) induces a function 
\[ 
\hat{\Phi}  \colon \MALG \to \MEAS 
\]
selecting a representative in each \( \equiv \)-equivalence class, where \( \equiv \) is equality up to a null set,
\[
A \equiv B \IFF A \symdif B \in \NULL.
\]

In the literature (see e.g.~\cite[Theorem 3.21, p.~17]{Oxtoby:1980kr},
or~\cite[Corollary 6.2.6, p.~184]{Cohn:1993fk}, or~\cite[Corollary 223B, p.~63]{Fremlin:2002mf}), 
the Lebesgue Density Theorem is stated (and proved) for  
\( \R^k\), rather than the Cantor space, with the density of a point \( x \in \R^k \)
in a measurable set \( A \subseteq \R^k \) defined as the limit
\[
\lim_{ \varepsilon \to 0} \frac{ \lambda^k ( A \cap \Ball_d ( x ; \varepsilon ) )}
{ \lambda^k ( \Ball_d ( x ; \varepsilon ) )},
\]
where \(  \Ball_d ( x ; \varepsilon ) = \setofLR{ y \in \R^k }
{ d ( y , x ) < \varepsilon }\) 
is the open ball centered around \( x \) of radius \( \varepsilon \),
and \( d \) and  \( \lambda^k \) are, respectively, the Euclidean metric and the Lebesgue measure on \( \R^k \).
Density functions can be defined for every Borel measure \( \mu \) on a metric space \( ( X , d ) \),
but the Lebesgue Density Theorem might not hold 
even when \( ( X , d ) \) is Polish (D.H.~Fremlin, personal communication).
On the other hand, for every Borel probability measure \( \mu \) on a standard Borel space \( X \), 
the algebra \( \MALG ( X , \mu ) \) admits a Borel selector, being isomorphic 
to the measure algebra on the Cantor set (see Proposition~\ref{prop:Pi03}).
This paper focuses on the Cantor space, so for the reader's benefit we include a proof 
of Theorem~\ref{th:Lebesguedensity} in Section~\ref{sec:proofofdensitythm}.

The sets of the form \( \Phi ( A ) \) are known to be \( \bPi^{0}_{3} \) 
--- see e.g.~\cite[p.~681]{Wilczynski:2002rz}.
In this paper we shall follow the logicians' notation and write \( \bSigma^{0}_{1}\) 
for the family of open sets, \( \bSigma^{0}_{n + 1} \) for the family of countable 
unions of  \( \bPi^{0}_{n}\) sets, \( \bPi^{0}_{n}\) for the family of
complements of \( \bSigma^{0}_{n}\) sets, and \( \bDelta^{0}_{n} \) for
\( \bSigma^{0}_{n} \cap \bPi^{0}_{n}\).
Therefore \( \bPi^{0}_{3}\) is simply the collection of all \( \mathbf{F}_{ \sigma \delta }\) sets, 
and, by a theorem of Wadge, an \( \mathbf{F}_{ \sigma \delta }\) set which is not \(  \mathbf{G}_{ \delta \sigma }\)
is complete \( \bPi^{0}_{3}\) (see~\cite[Section22.B]{Kechris:1995kc}).

We shall prove some results on the complexity of \( \Phi ( A ) \).

\begin{theorem}\label{th:Pi03complete}
There is an open set \(U \) and a closed set \( C \) such that 
\( \Phi  ( U ) = \Phi ( C ) \) is complete \( \bPi^{0}_{3} \).
\end{theorem}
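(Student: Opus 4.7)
The plan is to construct an open set \(U\) whose density-\(1\) set is \(\bPi^{0}_{3}\)-complete, and then take the closed witness \(C\) to be a closed set measure-equivalent to \(U\) --- for instance the closure \(\overline U\), provided the frontier turns out to be null. Because \(\hat \Phi\) factors through the measure algebra, \(\mu(U \symdif C) = 0\) will automatically give \(\Phi(C) = \Phi(U)\). Since \(\Phi(U) \in \bPi^{0}_{3}\) holds for any open \(U\), by the Wadge characterization cited in the introduction it suffices to continuously reduce a fixed \(\bPi^{0}_{3}\)-complete set, such as \(P = \setof{y \in \pre{\omega}{\omega}}{\lim_n y(n) = \infty}\), into \(\Phi(U)\).

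I would build \(U\) over a block decomposition \(\omega = \bigsqcup_n I_n\) with rapidly growing block lengths \(L_n = |I_n|\) and right endpoints \(\sigma_1 < \sigma_2 < \dots\). From \(s \in \pre{\sigma_{n+1}}{2}\), extract a natural number \(k_n(s) \in \{0, \dots, L_n\}\) reading off the block \(s \restriction I_n\) --- for example, the position of the first \(1\), or \(L_n\) if the block is identically zero. The set \(U\) is constructed so that, inside each cylinder \(\Nbhd_s\) with \(|s| = \sigma_{n+1}\), the relative density \(\alpha_s = \mu(U \cap \Nbhd_s)/\mu(\Nbhd_s)\) equals a prescribed value \(\beta^{(n)}_{k_n(s)}\), realized by attaching (or removing) a single open sub-cylinder of the appropriate relative measure. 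The weights \(\beta^{(n)}_k\) are calibrated so that \(\alpha_{x \restriction m} \to 1\) exactly when \((k_n(x))_n\) satisfies a divergence condition like \(k_n(x) \to \infty\). Composing with the continuous map sending \(y\) to the binary string whose \(n\)-th block places a single \(1\) at position \(\min(y(n), L_n - 1)\) then continuously reduces \(P\) into \(\Phi(U)\).

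The principal obstacle is calibrating the weights \(\beta^{(n)}_k\). The martingale identity \(\alpha_s = \tfrac{1}{2}(\alpha_{s \conc 0} + \alpha_{s \conc 1})\) forces the density at a block start (\(|s| = \sigma_n\)) to equal the average of end-of-block densities over all \(2^{L_n}\) completions of \(I_n\); a naive choice of weights makes this average stay bounded strictly below \(1\), sabotaging density convergence along \emph{every} path. The fix is to arrange that nontrivial dips occur only on a set of block outcomes whose total probability decays with \(n\): for instance, take \(\beta^{(n)}_k = 1\) unless \(k\) exceeds a threshold that grows with \(n\), so that the expected correction at block starts is summable in \(n\). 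With this calibration, the density on paths satisfying the encoded \(\bPi^{0}_{3}\)-complete condition genuinely converges to \(1\), \(\mu(U)\) remains positive, and the frontier \(\overline U \setminus U\) has null measure provided \(\sum_n 2^{-L_n} < \infty\); so \(C = \overline U\) satisfies \(\Phi(C) = \Phi(U)\), completing the argument.
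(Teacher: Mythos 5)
Your proposal correctly identifies the martingale identity as the central obstacle, but the fix you offer does not resolve it, and the tension is in fact fatal to this style of ``hard-coded density profile'' construction.

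To see the problem concretely, let $p_n(j) = \mathbb P(k_n = j)$ be the probability, under the fair-coin measure on block $I_n$, of reading off the value $j$. With your first-$1$ encoding, $p_n(j) = 2^{-j-1}$ for $j < L_n$ --- crucially, this is \emph{independent of} $n$. Now consider the two competing demands. On one hand, the martingale identity at each block boundary gives
\[
\alpha_{s\restriction\sigma_n} \;=\; \sum_j p_n(j)\,\beta^{(n)}_j
\;=\; 1 - \sum_j p_n(j)\bigl(1 - \beta^{(n)}_j\bigr),
\]
and for this to tend to $1$ uniformly --- which you need, since the left-hand side is the density along \emph{every} path --- the total probability of ``dipping'' outcomes (those $j$ with $\beta^{(n)}_j$ bounded away from $1$) must tend to $0$. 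Since $p_n(j)$ is $n$-independent, the only way to do this is to push the dipping $j$'s off to $\infty$, which is exactly your threshold proposal $\beta^{(n)}_j = 1$ for $j \le \theta_n$ with $\theta_n \to\infty$. On the other hand, a witness $y \notin P$ has a bounded subsequence $y(n_i) \le M$, so the encoded path has $k_{n_i}(x) \le M < \theta_{n_i}$ for large $i$, hence $\beta^{(n_i)}_{k_{n_i}} = 1$. The density along the image path therefore \emph{still} tends to $1$, and $f$ fails to be a reduction: $y\notin P$ but $f(y)\in\Phi(U)$. Flipping the threshold so that small $j$ dips restores the reduction but puts a fixed positive probability ($\ge p_n(0) = 1/2$) on dipping outcomes, so the martingale average stays bounded away from $1$ on every path and nothing has density $1$. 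There is no calibration of the $\beta^{(n)}_k$ that escapes this, as long as the densities are prescribed as a function of the block value alone.

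The paper sidesteps this obstruction by not prescribing a density profile at all. It first produces a closed set $C = \body{T}$ of positive measure with empty interior using a ``sparse sequence'' of nodes (Theorem~\ref{th:densePi03complete}), and then invokes the separate result (Theorem~\ref{th:emptyinteriorPi03}) that any nonempty $\mathcal T$-regular set with empty interior is already complete $\bPi^0_3$. The reduction from $\mathbf P_3$ there is built \emph{adaptively}: the map $\varphi$ walks down the density tree $\densitytree(C)$, and at each stage uses Proposition~\ref{prop:climb} together with the assumption $\mu(\LOC{C}{s}) < 1$ (empty interior) to locate, \emph{inside} the tree, a descendant whose density level $\rho$ is whatever the current matrix entry demands --- high if the column has been all zeros, dipping to a controlled level $j_0$ if a $1$ appears. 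No averaging constraint ever has to be checked, because the construction follows existing density behavior rather than legislating it. The open witness is then obtained by the rake-type open set $W$ of the paper, which has null frontier and satisfies $\Phi(\body T) \leqW \Phi(W)$. If you want to keep your ``build a single open set directly'' philosophy, the workable version is to first build the sparse closed set, then emulate the paper's $W$ construction; the block-encoding route with fixed $\beta^{(n)}_k$ does not close.
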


In fact there are many sets of the form \( \Phi  ( A ) \) which are complete \( \bPi^{0}_{3} \).

\begin{theorem}\label{th:emptyinteriorPi03}
If \( \emptyset \neq \Phi ( A ) \) has empty interior, then \( \Phi ( A  ) \) is complete \( \bPi^{0}_{3} \).
\end{theorem}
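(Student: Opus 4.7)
The plan is to derive \( \bPi^{0}_{3}\)-completeness of \( \Phi(A) \) from Theorem~\ref{th:Pi03complete} by exhibiting a Wadge reduction from the latter's \( \bPi^{0}_{3}\)-complete set. Let \( U \) be the open set provided by Theorem~\ref{th:Pi03complete}, so \( \Phi(U) \) is \( \bPi^{0}_{3}\)-complete; since \( \Phi(A) \in \bPi^{0}_{3} \) for every measurable \( A \), it suffices to produce a continuous function \( f \colon \pre{\omega}{2} \to \pre{\omega}{2} \) with \( y \in \Phi(U) \IFF f(y) \in \Phi(A) \). Throughout write \( \alpha_B(s) = \mu(B \cap \Nbhd_s)/\mu(\Nbhd_s) \), so that \( x \in \Phi(B) \IFF \alpha_B(x \restriction n) \to 1 \).

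The first step is a preparatory lemma extracting quantitative content from the empty interior hypothesis: for every \( \sigma \) with \( \alpha_A(\sigma) > 0 \), there exists \( \tau \supseteq \sigma \) with \( 0 < \alpha_A(\tau) < 1 \). Otherwise \( \alpha_A \) would be \( \{0,1\} \)-valued on all descendants of \( \sigma \); combined with \( \alpha_A(\sigma) > 0 \) and the averaging identity \( \alpha_A(\sigma) = \tfrac{1}{2}\alpha_A(\sigma \conc 0) + \tfrac{1}{2}\alpha_A(\sigma \conc 1) \), this forces \( \alpha_A(\sigma) = 1 \) and then \( \Nbhd_\sigma \subseteq \Phi(A) \), contradicting empty interior. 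Positivity \( \alpha_A(\tau) > 0 \) further guarantees, via the Lebesgue density theorem applied inside \( \Nbhd_\tau \), that \( \Phi(A) \cap \Nbhd_\tau \neq \emptyset \).

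The function \( f \) is then built stage by stage by a good/bad strategy. Fix \( x_0 \in \Phi(A) \), using \( \Phi(A) \neq \emptyset \). At each stage \( k \) we maintain a prefix \( \sigma_k \) of \( f(y) \) together with a current anchor \( z_k \in \Phi(A) \) extending \( \sigma_k \), with \( \sigma_0 \) empty and \( z_0 = x_0 \). After reading a window \( y \restriction m_k \) of \( y \), we compare \( \alpha_U(y \restriction m_k) \) to a vanishing threshold \( 1 - \epsilon_k \). If the threshold is met we take a \emph{good step}: extend \( \sigma_k \) to a longer prefix of \( z_k \), keeping \( z_{k+1} = z_k \). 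Otherwise we take a \emph{bad step}: extend \( \sigma_k \) first through some intermediate \( \tau \) with \( \alpha_A(\tau) \leq 1 - \delta \) (from the preparatory lemma), then pick a fresh anchor \( z_{k+1} \in \Phi(A) \cap \Nbhd_\tau \), and extend \( \tau \) along \( z_{k+1} \) to form \( \sigma_{k+1} \). With the right choice of \( m_k \) and \( \epsilon_k \to 0 \), the resulting \( f \) is continuous.

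Verification: if \( y \in \Phi(U) \) then from some stage on only good steps occur, the anchor stabilizes to some \( z \in \Phi(A) \), and \( f(y) = z \in \Phi(A) \). If \( y \notin \Phi(U) \) then bad steps occur infinitely often, each planting a prefix \( \tau \) of \( f(y) \) with \( \alpha_A(\tau) \leq 1 - \delta \); provided these \( \delta \)'s admit a common positive lower bound, \( \liminf_n \alpha_A(f(y) \restriction n) < 1 \) and \( f(y) \notin \Phi(A) \). The principal obstacle is exactly this uniform lower bound on the depth of a bad step: the preparatory lemma as stated yields only a non-uniform \( \alpha_A(\tau) < 1 \). I expect the needed quantitative strengthening to be proved by iterating the averaging identity along a König-style descent through the density tree, driving \( \alpha_A \) below any prescribed threshold, and by using compactness to rule out, under empty interior, the possibility that the descent stalls arbitrarily close to \( 1 \). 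Establishing this uniform control of the density tree of \( A \) is where the bulk of the argument lies.
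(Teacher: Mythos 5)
Your proposal runs into two distinct problems, and the one you flag is not the serious one.

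The ``uniform lower bound on the depth of a bad step'' that you identify as the principal obstacle is in fact easy to obtain, and no K\"onig-style compactness argument is needed. Replace \( A \) by \( \Phi(A) \), so \( A \) is \( \mathcal{T} \)-regular with empty interior; then \( \mu ( \LOC{A}{s} ) < 1 \) for every \( s \) (if \( \mu ( \LOC{A}{s} ) = 1 \) then \( \Nbhd_s \subseteq \supt^-(A) \subseteq \Phi(A) = A \) by~\eqref{eq:densityinteriorclosure}, contradicting empty interior). Hence \( \mu ( \Nbhd_s \setminus A ) > 0 \), and by the Lebesgue Density Theorem applied to \( \neg A \) almost every point of \( \Nbhd_s \setminus A \) has density \( 0 \) in \( A \); following such a point gives \( \tau \supseteq s \) with \( \mu ( \LOC{A}{\tau} ) < 1/2 \). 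So you may take \( \delta = 1/2 \) uniformly. (This is exactly Claim~\ref{cl:descending} with \( j = 0 \).)

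The genuine flaw lies in the shape of your good/bad dichotomy, and it is structural rather than technical. As you have set it up, each stage \( k \) performs a clopen test on \( y \) (compare \( \alpha_U ( y \restriction m_k ) \) against \( 1 - \varepsilon_k \), possibly with an adaptive search for a violation), the bad steps always drive \( \alpha_A \) below the same fixed value \( 1 - \delta \), and your verification identifies \( y \in \Phi(U) \) with ``finitely many bad steps''. But the set \( \setof{y}{\text{finitely many bad steps}} \) is \( \bSigma^{0}_{2} \): it is \( \exists k\, \neg B_k(y) \) where each \( B_k \) is a \( \bSigma^{0}_{1} \) condition (``the first \( k+1 \) violations can be found''). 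A \( \bSigma^{0}_{2} \) set cannot coincide with the \( \bPi^{0}_{3} \)-complete set \( \Phi(U) \), so the reduction you propose cannot exist: for any choice of schedule \( (m_k, \varepsilon_k) \) there will be \( y \in \Phi(U) \) whose \( \alpha_U \)-values converge to \( 1 \) but oscillate badly enough to trigger infinitely many bad steps, and/or \( y \notin \Phi(U) \) that your tests fail to catch. The fix is the one the paper uses: do not collapse every failure to a single depth \( 1 - \delta \). Instead, track a level function \( \rho(s) = n \) iff \( \mu ( \LOC{A}{s} ) \in [1-2^{-n}; 1-2^{-n-1}) \), reduce from the canonical complete set \( \mathbf{P}_3 = \setof{z}{\forall n\, \forall^\infty m\, z(n,m) = 0} \) rather than from \( \Phi(U) \), and let the descent target at stage \( n \) be the least row \( j_0 \leq n \) where the input has a \( 1 \) in column \( n \). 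It is precisely this input-dependent depth of descent --- not merely the fact that a descent occurs --- that makes the condition ``\( \rho ( f(z) \restriction i ) \to \infty \)'' genuinely \( \bPi^{0}_{3} \), matching the universal-over-rows structure of \( \mathbf{P}_3 \). Your proposal has the right raw materials (the density tree, climbs via Proposition~\ref{prop:climb}, descents along points of density \( 0 \)) but discards the level information that carries the third quantifier.
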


Not  every set \( \Phi  ( A ) \) is complete \( \bPi^{0}_{3} \) --- in fact the opposite is true.
In order to formulate the next result in a convenient form, recall that two subsets
\( A , B \subseteq \pre{\omega}{2} \) are Wadge equivalent \( A \Wequiv B \)
just in case each one is the continuous preimage of the other.
A \( \Wequiv \)-equivalence class is called a Wadge degree.

\begin{theorem}\label{th:belowDelta03}
For each \( \bDelta^{0}_{3} \)  
set \( A \subseteq \pre{\omega}{2}  \) there are
an open set \( U \) and a closed set \( C \) such that 
\( \Phi  ( U ) = \Phi ( C ) \Wequiv A \).
\end{theorem}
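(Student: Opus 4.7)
The plan is first to reduce the two-sided requirement $\Phi(U)=\Phi(C)$ to a one-sided construction on open sets with null boundary, and then to climb the Wadge hierarchy on $\bDelta^{0}_{3}$ inductively.

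\textbf{Reduction.} If $U$ is open and $\mu(\bar U\setminus U)=0$, then $U\equiv \bar U$ modulo the null ideal, so $\Phi(U)=\Phi(\bar U)$; taking $C=\bar U$ automatically fulfils the closed-side requirement. It therefore suffices to produce, for each $\bDelta^{0}_{3}$ set $A$, an open $U\subseteq \pre{\omega}{2}$ with $\mu(\bar U\setminus U)=0$ and $\Phi(U)\Wequiv A$.

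\textbf{Construction.} I would proceed by transfinite induction on the Wadge rank of $A$ within $\bDelta^{0}_{3}$. For clopen $A$, take $U=A$. For the inductive step I would exploit two features. First, \emph{locality}: if $U_n\subseteq \Nbhd_{s_n}$ are placed inside an antichain of clopens, then for $x\in \Nbhd_{s_n}$ the density of $U=\bigsqcup_n U_n$ at $x$ agrees with the density of $U_n$. This realises countable Wadge suprema by assembling inductively obtained lower-rank witnesses, one in each $\Nbhd_{s_n}$, with any remaining limit points of the antichain handled so as to contribute a controlled density. Second, \emph{successor accumulation}: the prototype $U'=\bigcup_n \Nbhd_{s\conc 0^n 1}$ satisfies $\Phi(U')=\overline{U'}=U'\cup\{s\conc 0^\omega\}$, since the density of $U'$ at $s\conc 0^\omega$ is exactly $1$; this produces a proper closed set of $\bPi^{0}_{1}$-complete Wadge degree. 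Iterated use of such accumulations on top of a lower-rank construction yields the next Wadge degree, and alternation of limit and successor operations traces out all of $\bDelta^{0}_{3}$.

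\textbf{Main obstacle.} The delicate point is hitting the target Wadge degree exactly, neither overshooting nor undershooting. Theorem~\ref{th:emptyinteriorPi03} is a warning: any nonempty $\Phi(U)$ with empty interior is already $\bPi^{0}_{3}$-complete, so for $A$ strictly below the $\bPi^{0}_{3}$-complete degree one must keep the interior of $\Phi(U)$ nonempty (automatic whenever $U\neq\emptyset$, since $U\subseteq\Phi(U)$). More subtly, one must prevent spurious density-$1$ boundary points from inflating the Wadge rank: at limit steps the density at the unique limit point of the antichain must be tuned to be either $0$ or exactly $1$ by controlling the ratios $\mu(U_n)/\mu(\Nbhd_{s_n})$, and at successor steps the newly adjoined accumulation points must sit in a closed null set that itself carries no extraneous Wadge complexity. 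Carrying out this Wadge bookkeeping at each stage, and in particular reproducing self-dual as well as non-self-dual degrees, is the main technical work.
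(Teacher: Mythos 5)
Your reduction step is correct and matches the paper's equation~\eqref{eq:D(closed)}: if \( U \) is open with \( \mu(\Fr U)=0 \), then \( \Phi(U)=\Phi(\Cl U) \) and one may take \( C=\Cl U \). Your ``locality'' idea (assembling lower-rank witnesses inside an antichain of clopen neighborhoods) is the right move for countable suprema and is realized in the paper by the \( \Rake \) and \( \Rakep \) constructions of Section~\ref{sec:climbing}; your successor step is essentially \( A\oplus B \).

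The gap is in the claim that ``alternation of limit and successor operations traces out all of \( \bDelta^{0}_{3} \).'' Countable suprema, successors, and \( \oplus \) together generate precisely the Wadge degrees of rank \( < \omega_1 \), i.e.\ \( \bDelta^{0}_{2} \); this is the content of Theorem~\ref{th:T-regularDelta02}. But the Wadge hierarchy on \( \bDelta^{0}_{3} \) has length \( \omega_1^{\omega_1} \) (Corollary~\ref{cor:lengthofWadge}), with many limit levels of cofinality \( \omega_1 \), and none of your proposed operations can land on such a level. Already the first non-\( \bDelta^{0}_{2} \) degrees, of rank exactly \( \omega_1 \), are out of reach: no sequence of countable suprema converges to a degree at a level of uncountable cofinality. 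What is missing are measure-theoretic analogues of Wadge's rank-multiplying operations \( A\mapsto A^\natural \), \( A\mapsto A^\flat \) and of the sum \( A\Wsum B \) (for additively decomposable ranks). Constructing these---the operations \( \NATURAL(A) \), \( \FLAT(A) \), \( \Sum(B,A,r) \) of Section~\ref{sec:Wadge-styleconstructions}---and verifying that they preserve \( \mathcal{T} \)-regularity, and indeed membership in \( \ran(\Phi\restriction\bSigma^{0}_{1})\cap\ran(\Phi\restriction\bPi^{0}_{1}) \), is the main technical content of the proof, and your outline contains nothing playing that role. Without an equivalent, the induction stalls at \( \bDelta^{0}_{2} \).

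A small inaccuracy: taking \( s=\emptyset \) in your prototype \( U'=\bigcup_n\Nbhd_{0^{(n)}1} \) gives \( \Phi(U')=\pre{\omega}{2} \), not a set of \( \bPi^{0}_{1} \)-complete Wadge degree; the closure adds only the single point \( 0^{(\infty)} \) and the resulting closed set is the whole space.
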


Although Theorems~\ref{th:Pi03complete} and~\ref{th:belowDelta03} could be merged into a single statement 
\begin{theorem}\label{th:global}
For every Wadge degree \( \boldsymbol{d} \subseteq \bPi^{0}_{3}\) there are
\( U \in \bSigma^{0}_{1}\) and \( C \in \bPi^{0}_{1}\) such that \( \Phi ( U ) = \Phi ( C ) \in \boldsymbol{d}\).
\end{theorem}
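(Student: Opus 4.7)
The plan is to deduce Theorem~\ref{th:global} directly from Theorems~\ref{th:Pi03complete} and~\ref{th:belowDelta03} by cataloguing which Wadge degrees can lie inside $\bPi^{0}_{3}$. The key input is the Wadge-theoretic fact already recalled in the introduction: a $\bPi^{0}_{3}$ set that is not $\bSigma^{0}_{3}$ (equivalently, not $\bDelta^{0}_{3}$) is automatically complete $\bPi^{0}_{3}$. Since each of $\bSigma^{0}_{3}$, $\bPi^{0}_{3}$, and $\bDelta^{0}_{3}$ is closed under continuous preimages, this dichotomy transfers from single sets to Wadge degrees: any Wadge degree $\boldsymbol{d} \subseteq \bPi^{0}_{3}$ is either (i) the unique non-self-dual degree consisting of $\bPi^{0}_{3}$-complete sets, or (ii) a Wadge degree contained in $\bDelta^{0}_{3}$.

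In case (i), Theorem~\ref{th:Pi03complete} directly provides $U \in \bSigma^{0}_{1}$ and $C \in \bPi^{0}_{1}$ with $\Phi(U) = \Phi(C)$ complete $\bPi^{0}_{3}$, so $\Phi(U) = \Phi(C) \in \boldsymbol{d}$. In case (ii), pick any representative $A \in \boldsymbol{d}$; since $A \in \bDelta^{0}_{3}$, Theorem~\ref{th:belowDelta03} yields $U \in \bSigma^{0}_{1}$ and $C \in \bPi^{0}_{1}$ with $\Phi(U) = \Phi(C) \Wequiv A$, whence $\Phi(U) = \Phi(C) \in \boldsymbol{d}$. No further obstacle arises: the substance of the result is already packed into the two cited theorems, and the present statement is just a clean repackaging that uses the classification of Wadge degrees sitting inside $\bPi^{0}_{3}$ to see that these two cases are exhaustive.
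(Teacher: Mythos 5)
Your proposal is correct and matches the paper's own approach: immediately before stating Theorem~\ref{th:global}, the paper explicitly observes that it is simply the merger of Theorems~\ref{th:Pi03complete} and~\ref{th:belowDelta03}, and no separate proof is offered. Your explicit appeal to the Wadge dichotomy --- every Wadge degree \( \boldsymbol{d} \subseteq \bPi^{0}_{3} \) either lies entirely inside \( \bDelta^{0}_{3} \) or is the unique degree of complete \( \bPi^{0}_{3} \) sets --- is precisely the (implicit) step that makes the two cases exhaustive.
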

\noindent
the proofs of the two results are different enough to warrant distinct statements.
Theorem~\ref{th:global} asserts that applying \( \Phi \) to very simple sets (like open or closed sets)
every conceivable complexity below \( \bPi^{0}_{3}\) can be attained.
This does not mean that every \( \Phi ( A ) \) is of the form  \( \Phi ( U ) \) or \( \Phi ( C ) \) 
with \( U \) open and \( C \) closed, since this would imply that every measurable 
set \( A \) is equivalent (up to a null set) to a closed or an open set,
which is far from being true.
Every measurable set is equivalent to a \( \bSigma^{0}_{2}\) (=\( \mathbf{F}_ \sigma \))
and to a \( \bPi^{0}_{2}\) (=\(  \mathbf{G}_ \delta  \)) and these are the least pointclasses 
that intersect every equivalence class in \( \MALG \).

\begin{theorem}\label{th:new}
\( \setofLR{\eq{A} \in \MALG}{ \eq{A} \cap \bDelta^{0}_{2} = \emptyset } \) is comeager in \( \MALG \).
\end{theorem}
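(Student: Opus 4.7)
Let $S = \setofLR{\eq{A} \in \MALG}{\eq{A} \cap \bDelta^{0}_{2} \neq \emptyset}$; the goal is to show $S$ is meager. The plan is to apply Pettis's theorem to the Polish group $(\MALG, \symdif)$. First I would observe that $S$ is a subgroup of this group: since $\bDelta^{0}_{2}$ is a Boolean algebra it is closed under $\symdif$, so if $B, B' \in \bDelta^{0}_{2}$ witness $\eq{A}, \eq{A'} \in S$, then $B \symdif B' \in \bDelta^{0}_{2}$ witnesses $\eq{A \symdif A'} \in S$.

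Next, I would show that $S$ is analytic, hence has the Baire property. A $\bDelta^{0}_{2}$ set can be coded by a pair $((F_n)_n, (G_n)_n)$ of sequences of closed subsets of $\pre{\omega}{2}$ subject to the Borel constraints $F_n \cap G_m = \emptyset$ for all $n, m$ and $\bigcup_n F_n \cup \bigcup_n G_n = \pre{\omega}{2}$; the map sending such a code to $\eq{\bigcup_n F_n} \in \MALG$ is Borel, and its image is precisely $S$. By Pettis's theorem, a subgroup of a Polish group that has the Baire property is either meager or open. Since $\MALG$ is arcwise connected (a continuous path between any two classes can be built out of Liapunov's convexity theorem applied to a suitable vector measure on $A_0 \symdif A_1$), every open subgroup coincides with $\MALG$. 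So the dichotomy reads: either $S$ is meager or $S = \MALG$.

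The main obstacle is to exclude the second alternative. This is settled by the remark just preceding the statement, that $\bSigma^{0}_{2}$ and $\bPi^{0}_{2}$ are the \emph{least} pointclasses meeting every $\equiv$-class: indeed this forces $\bDelta^{0}_{2} = \bSigma^{0}_{2} \cap \bPi^{0}_{2}$ to miss some class, so $S \neq \MALG$ and the proof is complete. If one prefers a self-contained argument, $\eq{A} \notin S$ can be exhibited directly: for any $B \in \bDelta^{0}_{2}$ the characteristic function $\chi_B$ is Baire class $1$, so its set of continuity points is comeager in $\pre{\omega}{2}$, and at each continuity point $x$ one has $\mathcal{D}_B(x) = \chi_B(x) \in \set{0, 1}$; thus $\eq{A} \in S$ forces $\Phi(A) \cup \Phi(\neg A)$ to be comeager in $\pre{\omega}{2}$, a property easily prevented by constructing $A$ whose density function is undefined on a non-meager set.
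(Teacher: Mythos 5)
Your approach is genuinely different from the paper's and is a nice idea in principle, but it has a gap at the point where you claim $S = \setofLR{\eq{A}}{\eq{A}\cap\bDelta^{0}_{2}\neq\emptyset}$ is analytic.

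The constraints you impose on a code $((F_n)_n,(G_n)_n)$ are not all Borel. The disjointness condition $F_n\cap G_m=\emptyset$ is indeed Borel in the hyperspace $K(\pre{\omega}{2})^\omega\times K(\pre{\omega}{2})^\omega$, but the covering condition $\bigcup_n F_n\cup\bigcup_n G_n=\pre{\omega}{2}$ is $\bPi^{1}_{1}$, not Borel: it asserts that a certain $\Gdelta$ set is empty, equivalently $\forall x\,\exists n\, (x\in F_n\vee x\in G_n)$, and such statements are $\bPi^{1}_{1}$-complete in the codes (this is the standard fact that the set of codes of $\bDelta^{0}_{2}$ sets within codes of $\bSigma^{0}_{2}$ sets is $\bPi^{1}_{1}$-complete; compactness of $\pre{\omega}{2}$ does not help, because the sets $F_n\cup G_n$ are closed rather than open). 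Taking the Borel image of a $\bPi^{1}_{1}$ set of codes therefore only bounds $S$ by $\bSigma^{1}_{2}$, and $\bSigma^{1}_{2}$ sets do not provably have the Baire property in $\ZFC$. Thus the hypothesis of Pettis's theorem has not been verified. I do not see a cheap fix: dropping the covering condition describes a strictly weaker, genuinely Borel property (two disjoint $\mathbf{F}_\sigma$ sets of full total measure approximating $A$ and $\neg A$), but this does not characterize membership in $S$, since the residual null $\Gdelta$ set need not be absorbable into either side while keeping the result $\bDelta^{0}_{2}$.

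The remaining pieces of your argument are sound: $S$ is a subgroup of the Polish group $(\MALG,\symdif)$ because $\bDelta^{0}_{2}$ is a Boolean algebra; $\MALG$ is connected (one can avoid Liapunov and simply transport along a measure-preserving Borel isomorphism with $[0,1]$ using the intervals $[0,t]$), so every open subgroup is all of $\MALG$; and $S\neq\MALG$, for which your Baire-class-$1$ argument is a correct and pleasant way to see that any $\eq{A}\in S$ must have $\Phi(A)\cup\Phi(\neg A)$ comeager. For comparison, the paper derives the theorem as an immediate corollary of the comeagerness, proved via the Banach--Mazur game, of the set $\mathscr{A}$ of classes whose every localization has strictly intermediate measure; for $\eq{A}\in\mathscr{A}$ any candidate $D\in\bDelta^{0}_{2}\cap\eq{A}$ would be simultaneously a dense $\Gdelta$ and a dense $\mathbf{F}_\sigma$-complement, contradicting the Baire category theorem. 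That argument sidesteps any complexity computation for $S$ and simultaneously delivers Theorem~\ref{th:Pi03comeagre}.
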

In other words, for the generic \( A \) there is no set \( D \) which is simultaneously 
\( \mathbf{F}_ \sigma \) and \( \mathbf{G}_ \delta \), and such that \( \mu ( A \symdif D ) = 0 \).

The import of Theorem~\ref{th:global}
is that, arguing in \( \ZF + \DC \) alone, the family of sets
\[
\mathcal{S} \equalsdef \ran ( \Phi ) = \setofLR{ \Phi ( A )}{ A\in \MEAS}
\]
intersects every Wadge degree
inside \( \bPi^{0}_{3}\), yet \( \mathcal{S} \) has the size of the continuum, being in bijection
with the Polish space \( \MALG \).
This should be contrasted with the fact that under the Axiom of Determinacy (\( \AD\)) the size 
of \( \bPi^{0}_{3}\) is much larger than the continuum~\cite{Hjorth:1998sv,Andretta:2007ce}.
Families \( \mathcal{S} \) of size continuum intersecting every Wadge degree in \( \bGamma\)
can be constructed under \( \AD \)  for every Borel boldface pointclass \( \bGamma\),
as L.~Motto Ros pointed out to us;
however the case of \( \mathcal{S} = \ran ( \Phi )\) and \( \bGamma = \bPi^{0}_{3} \)
is the only nontrivial, natural example of this phenomenon we know of.

Theorem~\ref{th:global} implies that for every Wadge degree \( \boldsymbol{d} \subseteq \bPi^{0}_{3}\), the sets
\[
\mathscr{W}_{\boldsymbol{d}} = \setofLR{\eq{A} \in \MALG }{ \Phi ( A ) \in \boldsymbol{ d} }
\]
are nonempty, hence \( \setofLR{ \mathscr{W}_{\boldsymbol{d}}}{ \boldsymbol{d} \subseteq \bPi^{0}_{3}}\)
is a partition of \( \MALG \).
Since the length of the Wadge hierarchy of \( \bDelta^{0}_{3}\) sets is \( \omega _1^{ \omega _1}\),
this defines a canonical well-quasi-order \( \preceq\) on \( \MALG \) of length \( \omega _1^{ \omega _1} + 1 \),
which, by the Kunen-Martin theorem, cannot be \( \bSigma^{1}_{1}\).
Actually the complexity of \( \preceq\) is \( \bSigma^{1}_{2}\) and its equivalence classes 
\( \mathscr{W}_{\boldsymbol{d}}\) are provably \( \bDelta^{1}_{2}\)
(Section~\ref{subsec:wqoonMALG}), hence they have
the property of Baire.
Clearly all but countably many \( \mathscr{W}_{\boldsymbol{d}}\) must be meager ---
in fact all but one.
\begin{theorem}\label{th:Pi03comeagre}
Let \( \boldsymbol{d} = \bPi^{0}_{3} \setminus \bDelta^{0}_{3} \) be the Wadge degree of all 
sets which are complete \( \bPi^{0}_{3}\).
Then \( \mathscr{W}_{ \boldsymbol{d}} \) is comeager in \( \MALG \).
\end{theorem}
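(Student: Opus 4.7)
The plan is to reduce to Theorem~\ref{th:emptyinteriorPi03} by Baire category. It suffices to show that
\[
\setofLR{ \eq{A} \in \MALG}{ \Phi ( A ) \neq \emptyset \AND \Int ( \Phi ( A ) ) = \emptyset}
\]
is comeager in \( \MALG \), since every such class belongs to \( \mathscr{W}_{\boldsymbol{d}}\) by Theorem~\ref{th:emptyinteriorPi03}. The nonemptiness half is immediate: \( \setofLR{ \eq{A}}{ \mu ( A ) > 0}\) is open dense (its complement is the single point \( \eq{\emptyset}\)), and by Theorem~\ref{th:Lebesguedensity} this already forces \( \Phi ( A )\) to have positive measure.

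The main work is to show that for each \( s \in \pre{<\omega}{2}\) the set
\[
D_s \equalsdef \setofLR{ \eq{A} \in \MALG}{ \Nbhd_s \not\subseteq \Phi ( A )}
\]
is comeager; intersecting over the countably many \( s\) will then yield the empty-interior condition. I plan to show that \( D_s\) contains the complement of
\[
K_s \equalsdef \setofLR{ \eq{A} \in \MALG}{ \mu ( \Nbhd_s \setminus A ) = 0 } .
\]
Now \( K_s\) is closed, being the zero set of the continuous function \( \eq{A} \mapsto \mu ( \Nbhd_s \setminus A)\), and has empty interior: every \( \eq{A}\) can be moved off \( K_s\) within an arbitrarily small \( \MALG\)-ball by removing from \( A\) a sufficiently small positive-measure subset of \( \Nbhd_s\). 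Hence \( K_s\) is closed nowhere dense and \( K_s^c\) is open dense.

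The crux is thus the inclusion \( D_s^c \subseteq K_s\). Suppose \( \mu ( \Nbhd_s \setminus A ) > 0\). Applying Theorem~\ref{th:Lebesguedensity} to the set \( \Nbhd_s \setminus A\) produces a point \( x \in \Nbhd_s\) of density \( 1\) in \( \Nbhd_s \setminus A\). For every \( n \geq \card{s}\) we have \( \Nbhd_{x \restriction n} \subseteq \Nbhd_s\), so the local densities of \( A\) and of \( \Nbhd_s \setminus A\) at \( x\restriction n \) sum to \( 1\); consequently \( \mathcal{D}_A ( x ) = 0 \), and \( x\) witnesses \( \eq{A} \in D_s\).

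I do not anticipate a real obstacle. The essential point is the rigidity implication \( \Nbhd_s \subseteq \Phi ( A ) \IMPLIES \mu ( \Nbhd_s \setminus A ) = 0 \), a clean consequence of the Lebesgue Density Theorem applied inside \( \Nbhd_s \). Once this is in place, the remaining assembly — a countable intersection of comeager \( D_s\), intersection with \( \setofLR{ \eq{A}}{ \mu ( A ) > 0}\), and invocation of Theorem~\ref{th:emptyinteriorPi03} — is routine Baire category bookkeeping.
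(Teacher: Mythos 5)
Your proof is correct, and its core reduction is the same as the paper's: one shows that for a comeager set of classes $\eq{A}$ the set $\Phi(A)$ is nonempty and has empty interior, and then invokes Theorem~\ref{th:emptyinteriorPi03}. The difference lies in how the comeagerness is established. The paper proves that
\[
\mathscr{A}=\setofLR{\eq{A}\in\MALG}{\forall n\ \bigl(\mu(A\cap V_n)>0\AND\mu(V_n\setminus A)>0\bigr)}
\]
is comeager by exhibiting a winning strategy for Player $\II$ in the Banach--Mazur game $G^{**}(\mathscr{A}^{\pm})$; this bundle of two conditions is then reused to prove Theorem~\ref{th:new} in the same breath. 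You instead observe directly that each set $K_s=\setofLR{\eq{A}}{\mu(\Nbhd_s\setminus A)=0}$ is a closed nowhere dense subset of $\MALG$ (closed as the zero set of the $1$-Lipschitz function $\eq{A}\mapsto\mu(\Nbhd_s\setminus A)$; nowhere dense because one can delete an arbitrarily small positive-measure piece of $A\cap\Nbhd_s$), and that $K_s^c\subseteq D_s$ by the Lebesgue Density Theorem applied to $\Nbhd_s\setminus A$. This makes the relevant set a dense $\Gdelta$ outright, with no appeal to the game-theoretic characterization of comeagerness --- a cleaner and more elementary route, if you only want Theorem~\ref{th:Pi03comeagre}. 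What your argument does not buy you for free is the companion condition $\mu(A\cap V_n)>0$ in the paper's $\mathscr{A}$, which is needed for Theorem~\ref{th:new}; but that condition, too, could be handled by the same direct ``closed nowhere dense'' computation, so nothing essential is lost.
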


Although most of the \( \mathscr{W}_{\boldsymbol{d}} \) are meager, 
they are topologically dense:

\begin{theorem}\label{th:dense}
If \( \boldsymbol{d} \subseteq \bPi^{0}_{3}\) is a Wadge degree and 
\( \boldsymbol{d} \neq \setLR{ \emptyset} , \setLR{ \pre{\omega}{2} } \), then 
\( \mathscr{W} _{\boldsymbol{d}} \) is dense in the topological space \( \MALG \), i.e.
\[ 
 \FORALL{ A \in \MEAS} \FORALL{\varepsilon > 0 } \EXISTS{B \in \MEAS} 
\bigl (  \mu ( A \symdif B ) < \varepsilon \AND  \Phi ( B ) \in \boldsymbol{d} \bigr ) .
\]
In fact we can take \( B \) such that \( \Phi ( B ) = \Phi ( U ) = \Phi ( C ) \) with 
\( U \in \bSigma^{0}_{1}\) and \( C \in \bPi^{0}_{1}\).
\end{theorem}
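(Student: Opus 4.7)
The plan is to obtain $B$ by embedding a seed for $\boldsymbol{d}$ inside a tiny clopen neighbourhood $\Nbhd_s$ and coarsening $A$ to a clopen set on the complement of $\Nbhd_s$. By Theorem~\ref{th:global} fix an open $U_0$ and a closed $C_0$ with $\Phi(U_0) = \Phi(C_0) \in \boldsymbol{d}$; since $\boldsymbol{d} \neq \setLR{\emptyset}, \setLR{\pre{\omega}{2}}$, I may also fix $y^{+} \in \Phi(U_0)$ and $y^{-} \notin \Phi(U_0)$. Given $A$ and $\varepsilon > 0$, choose $s \in \pre{<\omega}{2}$ with $\mu(\Nbhd_s) < \varepsilon/2$, and let $\iota_s \colon \pre{\omega}{2} \to \Nbhd_s$, $x \mapsto s \conc x$, be the translation homeomorphism.

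For each $m > \lh(s)$, enumerate the length-$m$ basic clopens disjoint from $\Nbhd_s$ as $\{\Nbhd_{t_j}\}_{j}$, set $\chi_j = 1$ when $\mu(A \cap \Nbhd_{t_j}) \geq \tfrac12 \mu(\Nbhd_{t_j})$ and $\chi_j = 0$ otherwise, and put $K_m = \bigcup_{\chi_j = 1} \Nbhd_{t_j}$. Define
\[
B = U = \iota_s(U_0) \cup K_m, \qquad C = \iota_s(C_0) \cup K_m.
\]
Since $\Nbhd_s$ and $K_m$ are clopen, $U$ is open and $C$ is closed.

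The main step is to force $\mu(A \symdif B) < \varepsilon$ by taking $m$ sufficiently large. Inside $\Nbhd_s$ the symmetric difference contributes at most $\mu(\Nbhd_s) < \varepsilon/2$, and the ``majority'' choice of each $\chi_j$ makes the outside contribution equal to
\[
\sum_{j} \min\bigl(\mu(A \cap \Nbhd_{t_j}),\, \mu(\Nbhd_{t_j} \setminus A)\bigr) = \int_{\pre{\omega}{2} \setminus \Nbhd_s} \min(d_m, 1 - d_m)\, d\mu,
\]
where $d_m(x) = \mu(A \cap \Nbhd_{x \restriction m})/\mu(\Nbhd_{x \restriction m})$. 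By Theorem~\ref{th:Lebesguedensity}, $d_m(x) \to 1$ for a.e.\ $x \in A$ and to $0$ for a.e.\ $x \notin A$, so $\min(d_m, 1-d_m) \to 0$ almost everywhere; since the integrand is bounded by $1/2$, dominated convergence drives the integral to $0$ as $m \to \infty$, and I fix $m$ making it less than $\varepsilon/2$. This density-theoretic estimate is the principal obstacle; the remainder is clopen bookkeeping.

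Finally, rescaling via $\iota_s$ gives $\mathcal{D}_B(x) = \mathcal{D}_{U_0}(\iota_s^{-1}(x))$ for $x \in \Nbhd_s$ and $\mathcal{D}_B(x) = \chi_j$ for $x \in \Nbhd_{t_j}$; thus $\Phi(U) = \Phi(C) = \iota_s(\Phi(U_0)) \cup K_m$, the closed case using $\Phi(C_0) = \Phi(U_0)$. The reduction $\Phi(U_0) \leqW \Phi(B)$ is witnessed by $\iota_s$ itself. Conversely, the piecewise map that agrees with $\iota_s^{-1}$ on $\Nbhd_s$, sends each $\Nbhd_{t_j}$ with $\chi_j = 1$ to the constant $y^{+}$, and each $\Nbhd_{t_j}$ with $\chi_j = 0$ to $y^{-}$, is continuous (its domain decomposes into clopen pieces) and witnesses $\Phi(B) \leqW \Phi(U_0)$; hence $\Phi(B) \Wequiv \Phi(U_0) \in \boldsymbol{d}$.
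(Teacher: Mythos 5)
Your proof is correct and follows essentially the same route as the paper's: seed a representative $\Phi(U_0)=\Phi(C_0)\in\boldsymbol{d}$ (obtained from Theorem~\ref{th:global}) inside a small basic clopen $\Nbhd_s$, approximate $A$ by a clopen set on the complement, and check that the Wadge degree is preserved. The paper packages the small-neighbourhood embedding as Lemma~\ref{lem:approximation} and verifies the Wadge equivalence via Wadge games, whereas you give the clopen approximation explicitly (majority vote plus dominated convergence, re-deriving the standard fact that every measurable set is well-approximated by a clopen) and verify the Wadge equivalence with explicit continuous reductions; the substance is the same.
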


If we look at \( \MALG \) as a Boolean algebra or, equivalently, as a forcing notion, 
there is a competing notion of ``dense set'': if \( \mathbb{B} \) is a Boolean algebra
then \( D \subseteq  \mathbb{B} \setminus \setLR{ 0_{\mathbb{B}}}\) is dense  iff 
\[
\FORALL{ b \in  \mathbb{B} \setminus \setLR{ 0_{\mathbb{B}}}}
\EXISTS{ d\in D } ( d \leq b ) .
\]
The set of all \( \eq{A} \in \MALG \) such that \( \Phi ( A ) \) has empty interior is dense 
in the sense of forcing, and from Theorem~\ref{th:emptyinteriorPi03} we shall obtain

\begin{theorem}\label{th:forcingdensePi03}
Let \( \boldsymbol{d} = \bPi^{0}_{3} \setminus \bDelta^{0}_{3} \) be the 
top Wadge degree in \( \bPi^{0}_{3}\), i.e., the Wadge degree of the complete 
\( \bPi^{0}_{3}\) sets.
Then \( \mathscr{W}_{ \boldsymbol{d}} \)
is dense in \( \MALG \) in the sense of forcing,
and it is the unique Wadge degree with this property.
\end{theorem}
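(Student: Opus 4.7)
The plan is to deduce both halves of the theorem from Theorem~\ref{th:emptyinteriorPi03} by way of the following elementary characterization of when $\Phi ( A )$ has nonempty interior:
\[
\Nbhd_s \subseteq \Phi ( A ) \iff \mu ( \Nbhd_s \setminus A ) = 0 .
\]
The reverse implication is immediate: for $x \in \Nbhd_s$ the cylinders $\Nbhd_{x \restriction n}$ eventually lie inside $\Nbhd_s$, so the density of $A$ at $x$ is $1$. For the forward implication, apply Theorem~\ref{th:Lebesguedensity} to $E \equalsdef \Nbhd_s \setminus A$: every $x \in E$ has density $1$ in $A$ (by hypothesis), hence density $0$ in $E \subseteq \neg A$, so $E \cap \Phi ( E ) = \emptyset$, forcing $\mu ( E ) = 0$.

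For the forcing-density part, fix $\eq{B} \in \MALG \setminus \set{0_{\MALG}}$; the goal is to produce $\eq{A} \leq \eq{B}$ with $\Phi ( A ) \in \boldsymbol{d}$. Let $S = \setof{s \in \pre{<\omega}{2}}{\mu ( \Nbhd_s \setminus B ) = 0}$, enumerate it as $s_0, s_1, \dots$, and for each $i$ pick a proper sub-cylinder $N_i$ of $\Nbhd_{s_i}$ with $\mu ( N_i ) \leq 2^{-i-2} \mu ( B )$. Set $A \equalsdef B \setminus \bigcup_i N_i$. Then $\mu ( A ) \geq \mu ( B ) / 2 > 0$, and for every $s$ the set $\Nbhd_s \setminus A$ has positive measure: either $\mu ( \Nbhd_s \setminus B ) > 0$ (when $s \notin S$), or $s = s_i \in S$ and $N_i \subseteq \Nbhd_s \setminus A$ has positive measure. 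By the characterization above $\Phi ( A )$ has empty interior; it is also nonempty, since $\mu ( \Phi ( A ) ) = \mu ( A ) > 0$ by Theorem~\ref{th:Lebesguedensity}. Hence Theorem~\ref{th:emptyinteriorPi03} yields $\Phi ( A ) \in \boldsymbol{d}$, and since $A \subseteq B$ modulo null, $\eq{A} \leq \eq{B}$ in $\MALG$.

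For uniqueness, let $\boldsymbol{e} \neq \boldsymbol{d}$ be a Wadge degree. The trivial cases are immediate: $\mathscr{W}_{\set{\emptyset}} = \set{0_{\MALG}}$ is not a subset of $\MALG \setminus \set{0_{\MALG}}$, and $\mathscr{W}_{\set{\pre{\omega}{2}}} = \set{1_{\MALG}}$ has no element below any $\eq{B} \neq 1_{\MALG}$. Otherwise $\boldsymbol{e} \subseteq \bDelta^{0}_{3}$ (since $\boldsymbol{d}$ is the unique Wadge degree in $\bPi^{0}_{3} \setminus \bDelta^{0}_{3}$), so by the contrapositive of Theorem~\ref{th:emptyinteriorPi03}, $\Phi ( A ) \in \boldsymbol{e}$ forces either $\mu ( A ) = 0$ or $\Phi ( A )$ to have nonempty interior. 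Let $B \subseteq \pre{\omega}{2}$ be the preimage of a fat Cantor set $C \subseteq [ 0 , 1 ]$ under the natural measure-preserving continuous surjection $\pre{\omega}{2} \to [ 0 , 1 ]$: then $B$ is closed, of positive measure, and satisfies $\mu ( \Nbhd_s \setminus B ) > 0$ for every $s$, since the image of $\Nbhd_s$ is a dyadic interval, which is not contained in $C$ because $C$ has empty interior. For any $\eq{A} \leq \eq{B}$ with $\eq{A} \neq 0_{\MALG}$, we have $\mu ( \Nbhd_s \setminus A ) \geq \mu ( \Nbhd_s \setminus B ) > 0$ for every $s$, so $\Phi ( A )$ has empty interior and is nonempty, placing $\Phi ( A ) \in \boldsymbol{d}$ and not in $\boldsymbol{e}$. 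Hence $\mathscr{W}_{\boldsymbol{e}}$ is not dense below $\eq{B}$ in the forcing sense.

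The whole argument is soft once the characterization of the interior of $\Phi ( A )$ is in place and Theorem~\ref{th:emptyinteriorPi03} is available; I expect no serious obstacle, the only minor fuss being the case-split over the two trivial Wadge degrees in the uniqueness half and the verification that the preimage of a fat Cantor set genuinely has the claimed two properties.
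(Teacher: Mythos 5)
Your proof is correct and follows the same overall strategy as the paper: both halves are deduced from Theorem~\ref{th:emptyinteriorPi03} once one knows (a) that conditions $\eq{B}$ for which $\Phi(B)$ has empty interior are forcing-dense, and (b) that below any such $\eq{B}$ every nonzero condition again has empty interior. The differences lie in the sub-constructions, and a few remarks are worth making.

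Your characterization $\Nbhd_s \subseteq \Phi(A) \iff \mu(\Nbhd_s \setminus A) = 0$ is exactly the identity $\Int \Phi(A) = \supt^-(A)$ of Lemma~\ref{lem:closureofDissupt}; the paper uses it implicitly, you reprove it, which is harmless. Your hole-punching construction for the density direction is a genuine simplification: the paper instead invokes Theorem~\ref{th:densePi03complete}, a sparse-sequence construction of nowhere-dense pruned subtrees of prescribed large measure, which is heavier machinery but is also used elsewhere (for Theorem~\ref{th:Pi03complete} and Proposition~\ref{prop:supports}). Your construction delivers only forcing-density, while the paper's choice of $\varepsilon$ also yields $\mu(A \symdif B) < \varepsilon$, hence topological density — this stronger conclusion is not needed for the stated theorem, and indeed you could recover it by shrinking your $N_i$'s.

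For the uniqueness half, the detour through a fat Cantor set in $[0,1]$ via the measure-preserving surjection works, but is superfluous: your own hole-punching construction applied to $B = \pre{\omega}{2}$ already produces a positive-measure set $A$ with $\mu(\Nbhd_s \setminus A) > 0$ for every $s$, which is precisely the witness you need, and it stays inside the Cantor space. (Alternatively, once density is established, any $\eq{B}$ with empty interior and $\eq{B} \neq 0_{\MALG}$ serves.) Either way the argument closes, and the handling of the two trivial degrees and of the observation that any other relevant degree lies in $\bDelta^0_3$ is correct.
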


Therefore when forcing with the measure algebra, it is enough to focus on 
conditions that are complete \( \bPi^{0}_{3}\) sets.

\subsection*{Plan of the paper.}
The paper is organized as follows.
In Sections~\ref{sec:notation} and~\ref{sec:easyfacts} we record some basic facts and the notations 
used throughout the paper, while Section~\ref{sec:Someexamples} is devoted to some examples and counterexamples.
The basics of the Wadge hierarchy of the Cantor space are developed in Section~\ref{sec:Wadgehierarchy},
where Theorem~\ref{th:dense} is deduced from Theorem~\ref{th:global}.
The main technical parts of the paper are Sections~\ref{sec:climbing} and~\ref{sec:Wadge-styleconstructions}
where measure-theoretic analogues of the Wadge constructions are developed, and 
Theorem~\ref{th:belowDelta03} is proved.
Finally, Theorems~\ref{th:Pi03complete}, \ref{th:emptyinteriorPi03}, \ref{th:new},
\ref{th:Pi03comeagre}, and~\ref{th:forcingdensePi03}
are proved in Section~\ref{sec:ClosedsetswithcomplicatedD}.

\subsection*{Acknowledgments}
Several people contributed with helpful discussion on the material of this paper.
In particular we wish to thank 
David Fremlin, Luca Motto Ros, and Asger T\"ornquist.
We owe a particular debt to Greg Hjorth --- to whom this paper is dedicated --- 
for most illuminating conversations at the early stages of this work.

\section{Notation}\label{sec:notation}
For the basics of descriptive set theory, measure theory, and the density topology, 
the reader is referred to~\cite{Kechris:1995kc,Oxtoby:1980kr,Wilczynski:2002rz}.

The length of \( x \in  \pre{ \leq\omega }{2}\equalsdef \pre{< \omega }{2} \cup \pre{\omega}{2} \)
is \( \dom ( x ) \) and it is usually denoted by \( \lh ( x ) \).
If \( s \in  \pre{< \omega }{2} \) and \( x \in   \pre{ \leq\omega }{2} \),
the concatenation of \( s \) with \( x \) is denoted with \( s \conc x \), or even \( s x \),
if there is no danger of confusion.
When \( x = \seqLR{i} \) and \( i \in \setLR{0, 1}\) we simply write \( s \conc i \),
while \( i^{(n)} \) denotes the sequence of length \(n \) and constant value \( i \).
Two sequences \( s , t \in \pre{< \omega }{2} \) are incompatible, in symbols 
\( s \perp t \), if \( s ( n ) \neq t ( n ) \) for some \( n < \lh ( s ) , \lh ( t ) \).
If \( A \subseteq \pre{\omega}{2} \) and \( s \in  \pre{< \omega }{2} \) then 
\[
\LOC{A}{s} = \setofLR{x \in \pre{\omega}{2} }{s \conc x \in A }
\]
is the \markdef{localization} of \( A \) at \( s \).
In particular~\eqref{eq:Lebesguedensitylimit} can be restated as
\[
\mathcal{D}_A ( x )  =  
\lim_{n \to \infty} \mu \left ( \LOC{A}{x \restriction n} \right ) = r .
\]
Similarly, if \( T \) is a tree on  \( 2 \), then 
\[
\LOC{T}{s} = \setofLR{u \in  \pre{< \omega }{2} }{s \conc u \in T}
\]
is the localization of \( T \) at \( s \).

The Lebesgue measure \( \mu \)  on the Cantor space is the unique Borel measure such that 
\( \mu ( \Nbhd_s ) = 2^{- \lh ( s ) }\), and for any measurable set \( A \), 
\[
 \mu ( \LOC{A}{s} ) = 
\frac{1}{2} ( \mu ( \LOC{A}{s \conc 0} ) + \mu ( \LOC{A}{s \conc 1 } ) ) 
\]
hence for every \( n \)
\[
 \mu ( A ) = 2^{-n} \bigl ( \sum_{\smash{s \in  \pre{n}{2} }} \mu ( \LOC{A}{s} ) \bigr ) .
\]
Therefore 
\begin{equation}\label{eq:series2}
\begin{split}
\mu ( A ) & = \sum_{n=0}^\infty 2^{-n - 1} \mu ( A )
\\
 & = \sum_{n=0}^\infty  2^{- 2n - 1}  \sum_{s \in  \pre{n}{2} } \mu ( \LOC{A}{s} )
 \\
 & =  \sum_{\smash{s \in  \pre{< \omega }{2} }} 2^{- 2 \lh ( s ) - 1} \mu ( \LOC{A}{s} )
\end{split} 
\end{equation}
and in particular, when \( A = \pre{\omega}{2} \)
\begin{equation}\label{eq:series1}
1= \sum_{s \in  \pre{< \omega }{2} } 2^{-  2\lh ( s ) - 1}  .
\end{equation}

Let  \( \mathcal{A} \subseteq  \pre{ < \omega }{2}\) be an antichain, i.e. a family of pairwise 
incompatible nodes. 
Then the \( \Nbhd_s \) (\( s \in  \pre{ < \omega }{2}\)) are pairwise disjoint hence  
\begin{equation}\label{eq:measureantichain}
\sum_{s \in \mathcal{A}} 2^{- \lh ( s ) } = \mu \bigl ( \bigcup_{s \in \mathcal{A}} \Nbhd_s \bigr ) \leq 1 . 
\end{equation}

\begin{lemma}\label{lem:approximation}
Let \( \mathcal{B} \) be a nonempty collection of measurable sets, closed under the operations 
\[
B \mapsto D \cup  t \conc B
\]
where \( D \in \bDelta^{0}_{1}\) and \( \Nbhd_t \cap D = \emptyset \). 
Then 
\[
 \FORALL{A\in \MEAS} \FORALL{ \varepsilon > 0} \EXISTS{B \in \mathcal{B}} 
 \bigl ( \mu ( A \symdif  B ) < \varepsilon \bigr ) .     
\]
In other words: \( \setofLR{ \eq{B}}{B \in \mathcal{B}} \) is topologically dense in \( \MALG \).
\end{lemma}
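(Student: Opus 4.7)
\medskip

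The plan is direct: use the closure property together with standard clopen approximation in the Cantor space. Fix a measurable set $A$ and $\varepsilon > 0$. I would first choose $t \in \pre{<\omega}{2}$ long enough that $\mu(\Nbhd_t) = 2^{-\lh(t)} < \varepsilon/2$; this guarantees that whatever happens inside $\Nbhd_t$ contributes at most $\varepsilon/2$ to any symmetric difference. Then I would use regularity of $\mu$ (or more elementarily, the fact that the algebra of clopen sets is dense in $\MALG$) to pick a clopen $C$ with $\mu(A \symdif C) < \varepsilon/2$, and set $D = C \setminus \Nbhd_t$, which is again clopen and is disjoint from $\Nbhd_t$ by construction.

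Now fix any $B_0 \in \mathcal{B}$ (possible since $\mathcal{B} \neq \emptyset$) and form
\[
B = D \cup t \conc B_0 .
\]
By the assumed closure property of $\mathcal{B}$, we have $B \in \mathcal{B}$. Since $D \subseteq \pre{\omega}{2} \setminus \Nbhd_t$ and $t \conc B_0 \subseteq \Nbhd_t$, the set $A \symdif B$ decomposes as the disjoint union of its trace on $\pre{\omega}{2} \setminus \Nbhd_t$ and its trace on $\Nbhd_t$. Outside $\Nbhd_t$ we have
\[
(A \symdif B) \setminus \Nbhd_t = (A \setminus \Nbhd_t) \symdif D = (A \symdif C) \setminus \Nbhd_t ,
\]
whose measure is bounded by $\mu(A \symdif C) < \varepsilon/2$. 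Inside $\Nbhd_t$, the trace is contained in $\Nbhd_t$ itself, so its measure is at most $2^{-\lh(t)} < \varepsilon/2$. Adding the two estimates yields $\mu(A \symdif B) < \varepsilon$, as required.

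I do not anticipate any real obstacle: the only non-trivial ingredient is the density of the clopen subalgebra in $\MALG$, which is standard for the Cantor space with its Bernoulli measure, and everything else is just bookkeeping around the split at $\Nbhd_t$. The only point worth paying attention to is to approximate $A$ \emph{before} removing $\Nbhd_t$, so that the approximation error on $\pre{\omega}{2} \setminus \Nbhd_t$ remains controlled independently of what $B_0$ looks like inside $\Nbhd_t$.
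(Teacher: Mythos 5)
Your proof is correct and takes essentially the same route as the paper: approximate $A$ by a clopen set, reserve a small cylinder $\Nbhd_t$ disjoint from it, plant a copy of some $B_0 \in \mathcal{B}$ there, and invoke the closure property. The only cosmetic difference is the order of quantifiers — the paper fixes a clopen $D \neq \pre{\omega}{2}$ first and then finds a small $\Nbhd_t$ in its complement, whereas you fix $t$ first and then trim the clopen approximant to $D = C \setminus \Nbhd_t$; both yield the same $\varepsilon/2 + \varepsilon/2$ bound.
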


\begin{proof}
Let \( A \in \MEAS \), \( B \in \mathcal{B}\) and \( \varepsilon > 0 \) be given.
Fix a clopen set \( D \neq \pre{\omega}{2} \) such that \( \mu ( A \symdif D ) < \varepsilon / 2 \).
Let \( t \) be such that \( \Nbhd_t \cap D = \emptyset \) and \( 2^{- \lh ( t ) } < \varepsilon /2\).
Then \( D \cup  t \conc B \in \mathcal{B} \) by assumption, and 
\( \mu ( A \symdif ( D \cup  t  \conc B )  ) < \varepsilon \).
\end{proof}

The \markdef{interior} and \markdef{closure} of a set \( A \) are denoted by 
\( \Int  A \) and \( \Cl  A \), respectively, while the symbol \( \overline{A} \) 
is reserved for a different concept (see Section~\ref{subsec:Wadgesconstructions}).
The \markdef{frontier} of \( A \) is the set 
\( \Fr  A = \Cl  A \cap \Cl ( \neg A ) = \Cl A \setminus \Int  A \).

If \( \mu \) is a finite Borel measure on a second countable topological space \( X \), the 
\markdef{support} of \( \mu \) is the smallest co-null closed set, that is 
\[ 
X \setminus \bigcup \setofLR{U \subseteq X}{ U \text{ open and } \mu ( U ) = 0 } .
\]
This notion suggests the following definition.
If \( A \) is measurable, the \markdef{inner support} of \( A \) 
\[
\supt^- ( A ) = \bigcup \setofLR{U}{ \mu ( U ) = \mu ( U \cap A ) \AND U \text{ open}}
\]
is the largest open set \( V \) such that \( \mu ( V ) = \mu ( V \cap A ) \).
The \markdef{outer support} of \( A \) 
\begin{equation*}
\begin{split}
\supt^+ ( A ) & = \neg \supt^- ( \neg A )
\\
 & =  \bigcap \setofLR{C}{ \mu (A \setminus C ) = 0  \AND C \text{ closed}}
\end{split} 
\end{equation*}
is the smallest closed set \( C \) that contains \( A \) up to a null set.
It is immediate to check that \( \Int A \subseteq\supt^- ( A ) \) 
and \( \supt^+ ( A ) \subseteq \Cl  A  \),
but the inclusions can be strict as \( \supt^+ \)and \( \supt^-\)
are invariant up to null sets.

\section{Easy facts}\label{sec:easyfacts}
\subsection{A coding of \texorpdfstring{\( \bPi^{0}_{3}\)}{Pi03} sets}\label{subsec:codingofPi03}
A clopen \( D \subseteq \pre{\omega}{2} \) is completely determined 
by a finite tree \( T \) on \( \setLR{0,1}\) such that 
\( D = \bigcup \setofLR{ \Nbhd_t }{ t \text{ a terminal node of } T}\).
In order to have a unique such \( T \) we require that there is no \( t \)
such that both \( t \conc 0 \) and \( t \conc 1\) are  terminal nodes of \( T \).
Let \( \mathcal{T} \) be the collection of all such trees.
A clopen subset of \( \pre{\omega}{2} \times \omega \times \omega \times \omega \) 
--- where this space is endowed with the product topology, and \( \omega \) is taken to be discrete --- 
can be identified with a function \( ( k , m , n ) \mapsto T_{k , m , n} \in \mathcal{T} \).
By standard arguments, every such function can be coded as an element of the Cantor space, 
so let \( \mathcal{K} \subseteq \pre{\omega}{2} \) be the set of all such codes, and let
\[
\pi \colon \mathcal{K} \to \bDelta^{0}_{1} ( \pre{\omega}{2} \times \omega \times \omega \times \omega ) 
\]
be the decoding bijection.
The map 
\[ 
\psi \colon \bDelta^{0}_{1} ( \pre{\omega}{2} \times \omega \times \omega \times \omega )  
\to \bPi^{0}_{3} ( \pre{\omega}{2} ), 
\qquad D \mapsto \bigcap_n \bigcup_m \bigcap_k D_{k,m,n} ,
\]
where 
\[
D_{k,m,n} = \setofLR{x \in \pre{\omega}{2} }{ ( x , k , m , n ) \in D}
\]
is surjective, hence \( \pi \circ \psi \colon \mathcal{K} \onto \bPi^{0}_{3} ( \pre{\omega}{2} ) \) 
can be construed as a coding of the \( \bPi^{0}_{3} \) subsets of the Cantor space.

By the Lebesgue Density Theorem~\ref{th:Lebesguedensity},
for any measurable sets \( A , B \subseteq \pre{\omega}{2} \)
\[
A \equiv B \IFF \FORALL{s \in \pre{< \omega}{2} } \left ( \mu \left (\LOC{A}{s} \right ) = 
\mu \left (\LOC{B}{s} \right )  \right ),
\]
and for any \( m \in  \omega \) and \( r \in [ 0 ; 1 ) \) the set  
\[ 
\setofLR{x \in \pre{\omega}{2} }{ \mu ( \LOC{A}{x \restriction m} ) > r} 
\]
is clopen.
Therefore the set
\begin{equation}\label{eq:closedfordensity}
\tilde{A} = \setofLR{( x , m , n , k ) \in \pre{\omega}{2} \times  \omega \times \omega \times \omega }
{ m \geq n \implies \mu \left (\LOC{A}{x \restriction m} \right ) > 1 - 2^{-k - 1}} 
\end{equation}
is clopen.
(The reason for the extra coordinate \( n \) in the definition of \( \tilde{A} \) will be clear shortly.)
Moreover \( \tilde{A} \)  depends on the equivalence class \( \eq{A} \in \MALG \), rather than on the set \( A \), i.e.
\[ 
A \equiv B \implies \tilde{A} = \tilde{B} ,
\]
and the map \( \MALG \to \bDelta^{0}_{1} ( \pre{\omega}{2} \times \omega \times \omega \times \omega ) \),
\( \eq{A} \mapsto \tilde{A} \) is Borel --- 
in the sense that its composition with \( \pi^{-1} \) yields a Borel function 
\( \MALG \to \mathcal{K} \).
(In fact this map falls short of being continuous in that the preimage 
of an open set is a Boolean combination of open sets.)
Since
\begin{equation}\label{eq:folklore}
x\in \Phi (A) \IFF \forall k \exists  n \forall m \geq n
\left (\mu ( \LOC{A}{x \restriction m} ) > 1 - 2^{-k - 1} \right )  
\end{equation}
then
\begin{equation*}
\hat{\Phi} ( \eq{A} ) = \bigcap_k \bigcup_n \bigcap_m \tilde{A}_{m ,n , k} .
\end{equation*}

\begin{proposition}\label{prop:Pi03}
\begin{enumerate-(a)}
\item\label{prop:Pi03-a}
\( \Phi  ( A ) \in \bPi^{0}_{3} \) for all measurable \( A \), and
\item\label{prop:Pi03-b}
the map \( \hat{\Phi} \colon \MALG \to \bPi^{0}_{3} \) is Borel in the codes,~i.e.\ there is
a Borel map \( \mathcal{F} \colon \MALG \to \mathcal{K}\)
such that \( \mathcal{F} ( \eq{A} ) \) is a code for \( \hat{\Phi} ( \eq{A} ) \).
\end{enumerate-(a)}
\end{proposition}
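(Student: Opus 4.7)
The plan is to deduce both parts from the equivalence~\eqref{eq:folklore} together with the clopen set \( \tilde{A} \) of~\eqref{eq:closedfordensity}, which have already been set up; the only substantive remaining task is to check that the encoding of \( \tilde{A} \) varies Borel-measurably with \( \eq{A} \).

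For part~\ref{prop:Pi03-a}, formula~\eqref{eq:folklore} yields directly
\[
\Phi(A) = \bigcap_k \bigcup_n \bigcap_m \tilde{A}_{m,n,k} .
\]
For any fixed triple \( (m,n,k) \) the slice \( \tilde{A}_{m,n,k} = \setofLR{x \in \pre{\omega}{2} }{ m \geq n \implies \mu (\LOC{A}{x\restriction m}) > 1 - 2^{-k-1}} \) depends only on \( x \restriction m \) and is therefore clopen, so \( \Phi (A ) \) is visibly \( \bPi^{0}_{3} \).

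For part~\ref{prop:Pi03-b}, I would first record the Lipschitz estimate
\[
\cardLR{\mu(\LOC{A}{s}) - \mu(\LOC{B}{s})} \leq 2^{\lh(s)} \mu ( A \symdif B ) ,
\]
which follows from \( \LOC{A}{s} \symdif \LOC{B}{s} = \LOC{A \symdif B}{s} \) and \( \mu ((A\symdif B) \cap \Nbhd_s) \leq \mu(A \symdif B) \). Consequently, for every \( s \in \pre{<\omega}{2} \) and every \( r \in [0;1) \), the set \( \setofLR{\eq{A} \in \MALG}{\mu(\LOC{A}{s}) > r} \) is open in \( \MALG \). For each triple \( (m,n,k) \) the clopen section \( \tilde{A}_{m,n,k} \) is determined by which sequences \( s \in \pre{m}{2} \) satisfy such a strict inequality (or is all of \( \pre{\omega}{2} \) when \( m < n \)), so its canonical tree in \( \mathcal{T} \) is read off from finitely many open conditions on \( \eq{A} \) and their negations. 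Encoding the resulting \( \omega^3 \)-indexed family of trees as an element of \( \mathcal{K} \) via the fixed bijection, each coordinate of the code is a Boolean combination of open subsets of \( \MALG \), hence Borel; so the map \( \mathcal{F}\colon \MALG \to \mathcal{K} \) sending \( \eq{A} \) to the code of \( \tilde{A} \) is Borel. By construction the \( \bPi^{0}_{3} \) set associated with this code by \( \psi \) is precisely \( \hat{\Phi} ( \eq{A} ) \), so \( \mathcal{F} \) witnesses the statement.

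I expect no serious obstacle: the proposition is essentially a bookkeeping corollary of the setup leading up to it. The only subtle observation, which the authors already flag parenthetically, is that \( \mathcal{F} \) is genuinely not continuous, because each bit of the code hinges on a strict inequality \( \mu(\LOC{A}{s}) > r \), giving an \( \bSigma^{0}_{1} \) (resp.\ \( \bPi^{0}_{1} \)) condition on \( \eq{A} \) rather than a clopen one; Boolean combinations of opens suffice for Borelness but not for continuity.
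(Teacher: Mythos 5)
Your argument is correct and takes essentially the same approach as the paper: part~(a) follows directly from~\eqref{eq:folklore}, and part~(b) takes \( \mathcal{F}(\eq{A}) = \tilde{A} \), just as the paper does. The only addition is that you supply, via the Lipschitz estimate \( \cardLR{\mu(\LOC{A}{s}) - \mu(\LOC{B}{s})} \leq 2^{\lh(s)}\mu(A \symdif B) \), the verification that \( \eq{A} \mapsto \tilde{A} \) is Borel (with preimages of opens being Boolean combinations of opens), a fact the paper asserts without proof in the discussion preceding the proposition.
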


\begin{proof}
Part~\ref{prop:Pi03-a} is folklore and it follows from~\eqref{eq:folklore} 
when the ambient space is \( \pre{\omega}{2} \) --- see~\cite{Wilczynski:2002rz} for a proof that
\( \Phi ( A ) \) is \( \bPi^{0}_{3} \) when the ambient space is \( [ 0 ; 1 ] \) and \( \mu \) is 
the Lebesgue measure.

For~\ref{prop:Pi03-b} just take \( \mathcal{F} ( \eq{A} ) = \tilde{A} \). 
\end{proof}

\subsection{Properties if \texorpdfstring{\( \Phi \)}{Phi}}\label{subsec:propertiesifPhi}
Let us list some easy facts about the density map \( \Phi  \).
\begin{subequations}
\begin{align}
& A \subseteq B \IMPLIES \Phi  ( A ) \subseteq \Phi ( B ) , \label{eq:densitymonotone}
\\
& \Phi ( \Phi ( A ) ) = \Phi ( A ) , \label{eq:idempotence}
\\
& \Phi  (  A  \cap B ) = \Phi  ( A ) \cap \Phi ( B ), \label{eq:densityintersection}
\\
& \Phi ( \emptyset ) = \emptyset \text{ and } \Phi ( \pre{\omega}{2} ) = \pre{\omega}{2} ,\label{eq:Phi(empty)}
\\
&\Phi  ( \neg A ) \subseteq \neg \Phi  ( A ) , \label{eq:densitycomplement}
\\
&\Phi  ( A \cup B ) \supseteq \Phi  ( A ) \cup \Phi ( B )  \text{, and more generally,}\label{eq:densityunion}
\\
& \Phi ( \bigcup_{i \in I} A_i ) \supseteq \bigcup_{i \in I} \Phi ( A_i ) . \label{eq:densityunion2}
\end{align}
\end{subequations}
The inclusions in~\eqref{eq:densitycomplement} and~\eqref{eq:densityunion} 
cannot be replaced by equalities, as can be seen by constructing appropriate 
counterexamples or by the following metamathematical argument.
If \( \Phi  ( \neg A ) = \neg \Phi  ( A ) \) for all \( A \) or, equivalently, 
\(  \Phi ( A \cup B ) = \Phi ( A ) \cup \Phi ( B )\) for all \( A , B \),
then \( \Phi \colon \MEAS \to \Bor \) would be a homomorphism of 
Boolean algebras such that \( \Phi ( A ) \equiv A \).
Such homomorphisms are called Borel liftings, and by work of Shelah~\cite{Shelah:1983fk} 
their existence is independent of \( \ZFC \).

By~\eqref{eq:densityintersection},~\eqref{eq:Phi(empty)}, and \eqref{eq:densityunion2} the family
\[
\mathcal{T} = \setofLR{A \in \MEAS}{ A \subseteq \Phi ( A ) }
\] 
is a topology on the Cantor set,
and it is called the \markdef{density topology}.
If \( A \) is open and \( x \in A \), then \( \Nbhd_{x \restriction n} \subseteq A \)
for all large enough \( n \), so 
\begin{equation}\label{eq:densityopen}
 A \in \bSigma^{0}_{1} \IMPLIES A \subseteq \Phi  ( A ) ,
\end{equation}
hence \( \mathcal{T} \) refines the standard topology.
Since \( \Phi ( A \setminus N ) = \Phi ( A ) \supseteq A \supseteq A \setminus N \), 
for any null set \( N \) and any open set \( A \), it follows that \( \mathcal{T} \) 
is strictly finer than the standard topology.
Note that the inclusion in~\eqref{eq:densityopen} can be strict by Example~\ref{xmp:densityopen} below.
Here is a list of the properties of \( \mathcal{T} \) --- for proofs see~\cite{Wilczynski:2002rz}
and the reference therein:
\begin{itemizenew}
\item
 For any \( A \subseteq \pre{\omega}{2} \)
 \[
\Int_ \mathcal{T}  A  = A \cap \Phi ( B )
 \]
where \( B \) is any measurable kernel of \( A \), that is: 
any measurable set \( B \subseteq A \) such that \( \mu ( B ) = \mu _* ( A ) \), with \( \mu _* \) the inner measure. 
\item
\( \mathcal{T} \) is neither first countable (hence not second countable) nor separable, but it is Baire.
\item
\( A \) is null if and only if it is meager in the topology \( \mathcal{T} \), if and only if 
it is nowhere dense in the topology \( \mathcal{T} \), if and only if it is closed
and discrete in the topology \( \mathcal{T} \).
\item
\( A = \Phi ( A ) \) if and only if \( A \) is a regular open set of the topology
\( \mathcal{T} \), that is 
\( A = \Int_\mathcal{T} \Cl_\mathcal{T}  A  \).
\end{itemizenew}
In view of this last property, a measurable set \( A \subseteq \pre{\omega}{2} \) such that \( A = \Phi ( A ) \) 
is called \markdef{\( \mathcal{T} \)-regular}.

Clearly
\begin{equation}\label{eq:densityclopen}
A \in \bDelta^{0}_{1} \IMPLIES  A  = \Phi ( A )
\end{equation}
but the converse implication does not hold ---
as we shall see below, there are sets \( A \) such that \( \Phi ( A ) = A \) of arbitrarily high
complexity in the pointclass \( \bPi^{0}_{3}\). 

By~\eqref{eq:densitycomplement} and~\eqref{eq:densityopen}
\begin{equation}\label{eq:densityclosed}
 A \in \bPi^{0}_{1} \IMPLIES \Phi  ( A ) \subseteq A.
\end{equation}
(Again by Example~\ref{xmp:densityopen} the inclusion can be strict.)
By monotonicity
\begin{equation}\label{eq:densityinteriorclosure0}
\Int  A  \subseteq \Phi ( A ) \subseteq \Cl  A   .
\end{equation}
Thus if \( A = \Phi  ( C ) \) for some closed \( C \), then
by~\eqref{eq:densityclosed} and monotonicity we may assume that \( C = \Cl  A \),
hence 
\begin{equation*}\label{eq:Phi(closed)}
A\in \ran \left ( \Phi  \restriction \bPi^{0}_{1} \right ) \IFF A = \Phi  ( \Cl A ) .
\end{equation*}
Similarly
\begin{equation*}\label{eq:Phi(open)}
A\in \ran \left ( \Phi  \restriction \bSigma^{0}_{1} \right ) \IFF A = \Phi  ( \Int  A ) .
\end{equation*}

If \( A \equiv B \) then \( \supt^- ( A ) = \supt^- ( B ) \) and \( \supt^+ ( A ) = \supt^+ ( B ) \),
in particular the inner/outer support of \( A \) is the same as the inner/outer 
support of \( \Phi  ( A ) \), but in general \( A \not\equiv \supt^\pm ( A )\).
In fact the sets \( \supt^\pm ( A ) \) are not complete invariants for the
relation of measure equivalence --- in other words, the map 
\( \MALG \to \bSigma^{0}_{1} \times  \bPi^{0}_{1} \), 
\( \eq{A} \mapsto ( \supt^- ( A ) , \supt^+ ( A ) ) \) is not injective
(Proposition~\ref{prop:supports}).

Using the preceding results, \eqref{eq:densityinteriorclosure0} can be refined to
\begin{equation} \label{eq:densityinteriorclosure}
\Int  A  \subseteq \supt^- ( A ) \subseteq \Phi  ( A ) \subseteq \supt^+ ( A ) \subseteq \Cl A ,
\end{equation}
hence
\begin{equation*}
 \mu  \left (\Fr  A \right ) = 0 
\IFF  \Int A \equiv \supt^- ( A ) \equiv
 A \equiv \supt^+ ( A ) \equiv \Cl A 
\end{equation*}
and thus 
\begin{equation*}
A = \Phi ( A ) \AND  \mu ( \Fr A ) = 0 \IFF  A = \Phi  ( \Cl A ) = \Phi ( \Int  A ) .
\end{equation*}
Therefore a \( \mathcal{T} \)-regular set is the image via \( \Phi \)  of an open 
and of a closed set if and only if its frontier is null,  i.e.
\begin{equation}\label{eq:D(closed)}
 A = \Phi ( A ) \AND  \mu ( \Fr A ) = 0 \IFF  A \in \ran (\Phi \restriction \bSigma^{0}_{1} ) 
\cap \ran ( \Phi \restriction \bPi^{0}_{1} ) .
\end{equation}

If \( A \equiv B \) then \( \Phi ( A ) = \Phi ( B ) \subseteq \Cl  B  \) hence 
\( \Cl \Phi ( A )  \subseteq \Cl  B  \). 
Therefore \( \Phi  ( A ) \) is a set \( X \equiv A \) such that \( \Cl X  \) is minimal.

\begin{lemma}\label{lem:closureofDissupt}
\( \Cl \Phi  ( A ) = \supt^+ ( A ) \) and 
\( \Int  \Phi  ( A ) = \supt^- ( A ) \).
\end{lemma}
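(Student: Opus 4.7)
The plan is to use the Lebesgue Density Theorem~\ref{th:Lebesguedensity} (which yields \( \Phi ( A ) \equiv A \)) together with the fact, noted just before the lemma, that both \( \supt^+ \) and \( \supt^- \) are invariants of the \( \equiv \)-class. The chain~\eqref{eq:densityinteriorclosure} already gives \( \supt^- ( A ) \subseteq \Phi ( A ) \subseteq \supt^+ ( A ) \), so one half of each equality is essentially free: taking closures, \( \Cl \Phi ( A ) \subseteq \supt^+ ( A ) \) since \( \supt^+ ( A ) \) is closed; and since \( \supt^- ( A ) \) is open and contained in \( \Phi ( A ) \), it is contained in \( \Int \Phi ( A ) \).

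For the reverse inclusion \( \supt^+ ( A ) \subseteq \Cl \Phi ( A ) \) I would argue by contrapositive. If \( x \notin \Cl \Phi ( A ) \), pick a basic clopen neighborhood \( \Nbhd_s \ni x \) disjoint from \( \Phi ( A ) \). Because \( \Phi ( A ) \equiv A \), we get \( \mu ( \Nbhd_s \cap A ) = \mu ( \Nbhd_s \cap \Phi ( A ) ) = 0 \), so the closed set \( C \equalsdef \neg \Nbhd_s \) satisfies \( \mu ( A \setminus C ) = 0 \). By the defining intersection of \( \supt^+ ( A ) \), this forces \( \supt^+ ( A ) \subseteq C \), and hence \( x \notin \supt^+ ( A ) \).

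For the reverse inclusion \( \Int \Phi ( A ) \subseteq \supt^- ( A ) \) I would set \( V \equalsdef \Int \Phi ( A ) \). Then \( V \) is open and \( V \subseteq \Phi ( A ) \), so using \( \mu ( \Phi ( A ) \symdif A ) = 0 \) we obtain
\[
\mu ( V \cap A ) = \mu ( V \cap \Phi ( A ) ) = \mu ( V ) ,
\]
which by the defining union of \( \supt^- ( A ) \) shows \( V \subseteq \supt^- ( A ) \).

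There is no serious obstacle here: the only point requiring care is to replace \( A \) by \( \Phi ( A ) \) (via \( \Phi ( A ) \equiv A \)) at the right moment, so that the topological hypothesis \emph{disjointness from \( \Phi ( A ) \)} or \emph{containment in \( \Phi ( A ) \)} can be converted into the measure-theoretic statement defining \( \supt^\pm ( A ) \).
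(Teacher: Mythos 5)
Your proposal is correct and follows essentially the same route as the paper: exploit \eqref{eq:densityinteriorclosure} for one inclusion in each equality, and convert the measure-zero identity \( \Phi(A)\equiv A \) into the defining property of \( \supt^\pm \) for the other. The only cosmetic difference is that for \( \supt^+(A)\subseteq\Cl\Phi(A) \) you argue contrapositively (a point outside the closure yields a closed co-null set omitting it), whereas the paper argues directly (a point in the outer support has all its neighborhoods of positive \( A \)-measure, hence meeting \( \Phi(A) \)); these are the same argument read in opposite directions.
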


\begin{proof}
As \( \supt^+ ( A ) \) is closed, it is enough to show that 
\( \supt^+ ( A ) \subseteq \Cl \Phi  ( A )  \).
Let \( x \in \supt^+ ( A ) \): then \( \mu ( U \cap A ) > 0 \) for every 
open neighborhood \( U \) containing \( x \), and since 
\( \mu \left (A \symdif \Phi  ( A ) \right ) = 0 \) then 
\( \mu \left ( U \cap \Phi  ( A ) \right ) > 0 \).
Therefore \( U \cap \Phi ( A ) \neq \emptyset \) 
hence \( x \in \Cl \Phi  ( A ) \).
This proves the first equality.

For the second, as \( \supt^- ( A ) \) is open, it is enough to show that 
\(\Int \Phi  ( A )  \subseteq \supt^- ( A ) \).
Let \( x \in V \subseteq \Phi  ( A ) \) with \( V \) open: as
\( V \cap A\equiv V \cap \Phi  ( A ) = V \)
then \( \mu ( V \cap A) = \mu ( V ) \), hence \( V \subseteq \supt^- ( A ) \).
\end{proof}

If \( C \) is closed and \( \mathcal{T} \)-regular, then \( \neg C \) is open
hence \( \Phi  ( \neg C  ) = \neg C \)
by~\eqref{eq:densityopen}, \eqref{eq:densityintersection} and~\eqref{eq:Phi(empty)}.
Therefore
\begin{equation}\label{eq:closedT-regular=>complementT-regular}
C \in \bPi^{0}_{1} \text{ and \( \mathcal{T} \)-regular } \IMPLIES \neg  C 
 \text{ is \( \mathcal{T} \)-regular.}
\end{equation}
Example~\ref{xmp:opennotdualistic} shows that~\eqref{eq:closedT-regular=>complementT-regular} fails if 
\( \bPi^{0}_{1} \) is replaced by \( \bSigma^{0}_{1} \).

\begin{definition}\label{def:D(A)}
For \( A\) a measurable set of positive measure, let 
\[ 
\densitytree ( A ) = \setofLR{s \in  \pre{ < \omega }{2} }{ \mu ( \LOC{A}{s} ) > 0 } .
\]
\end{definition}

Then \( \densitytree ( A ) \) is a pruned tree, and by the Lebesgue Density Theorem
it has no isolated branches.
Thus
\[
x \in \Phi ( A ) \IFF x \in \body{ \densitytree ( A ) } \AND \mathcal{D}_A ( x ) = 1 .
\]

\begin{proposition}
\( \body{\densitytree ( A ) } = \supt^+ ( A ) \).
\end{proposition}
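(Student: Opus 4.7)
The plan is to unpack the definitions on both sides and show the two inclusions by direct, essentially symmetric arguments, using the characterization $\supt^+(A) = \neg\,\supt^-(\neg A) = \neg \bigcup\{U \text{ open}: \mu(U \cap A) = 0\}$; in words, $x \in \supt^+(A)$ iff every open neighborhood of $x$ meets $A$ in positive measure. Membership in $\body{\densitytree(A)}$ says the same thing but restricted to basic neighborhoods $\Nbhd_{x \restriction n}$, since $\mu(\LOC{A}{s}) > 0$ iff $\mu(A \cap \Nbhd_s) > 0$. So the content of the proposition is really that ``positive measure with every open neighborhood'' and ``positive measure with every basic open neighborhood'' coincide, which will follow from second-countability of $\pre{\omega}{2}$.

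For the inclusion $\supt^+(A) \subseteq \body{\densitytree(A)}$, I would argue by contrapositive. Suppose $x \notin \body{\densitytree(A)}$, so there exists $n$ with $x \restriction n \notin \densitytree(A)$, i.e. $\mu(A \cap \Nbhd_{x \restriction n}) = 0$. Then $\Nbhd_{x \restriction n}$ is an open neighborhood of $x$ witnessing $x \in \supt^-(\neg A)$, hence $x \notin \supt^+(A)$.

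For the reverse inclusion $\body{\densitytree(A)} \subseteq \supt^+(A)$, take $x \in \body{\densitytree(A)}$ and suppose toward a contradiction that $x \notin \supt^+(A)$. Then $x \in \supt^-(\neg A)$, so there is an open set $U \ni x$ with $\mu(U \cap A) = 0$. Since the $\Nbhd_s$ form a basis at $x$, pick $n$ large enough that $\Nbhd_{x \restriction n} \subseteq U$; then $\mu(A \cap \Nbhd_{x \restriction n}) \leq \mu(U \cap A) = 0$, contradicting $x \restriction n \in \densitytree(A)$.

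There is no real obstacle here: the only thing to keep in mind is that the definition of $\densitytree(A)$ sees only the countable basis of clopen neighborhoods $\Nbhd_s$, while $\supt^+(A)$ is defined via arbitrary open sets, and the passage between the two uses nothing beyond the fact that $\{\Nbhd_s : s \in \pre{<\omega}{2}\}$ is a neighborhood basis at each point of $\pre{\omega}{2}$.
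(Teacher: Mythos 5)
Your proof is correct. You work from the pointwise characterization $x \in \supt^+(A)$ iff every open neighbourhood of $x$ meets $A$ in positive measure (which the paper itself uses in the proof of Lemma~\ref{lem:closureofDissupt}), and then observe that restricting to basic neighbourhoods $\Nbhd_{x\restriction n}$ loses nothing because these form a neighbourhood base. The paper's own proof of this proposition is phrased differently: it stays with the set-level definition of $\supt^+(A)$ as the intersection of all closed $C$ with $\mu(A \setminus C)=0$, first noting that $A \setminus \body{\densitytree(A)}$ is null (so $\body{\densitytree(A)}$ is one of the sets over which the intersection runs, giving $\supt^+(A)\subseteq\body{\densitytree(A)}$), and then showing that any pruned tree $T$ with $\mu(A\setminus\body{T})=0$ must contain $\densitytree(A)$ (giving the reverse inclusion, using that every closed set in $\pre{\omega}{2}$ is the body of a pruned tree). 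The mathematical content is the same; your version is slightly more topological and point-by-point, theirs slightly more combinatorial via the closed-set/pruned-tree correspondence. Both are equally elementary, and yours is complete as written.
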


\begin{proof}
Clearly \( A \setminus \body{\densitytree ( A )} \) is null.
If \( \mu ( A \setminus \body{ T } ) = 0 \) for some pruned tree \( T \), then 
\[
s \notin T \implies \mu ( A \cap \Nbhd_s ) = 0 \implies \mu ( \LOC{A}{s} ) = 0 \implies s \notin \densitytree ( A ) 
\]
that is: \( \densitytree ( A ) \subseteq T \).
\end{proof}

By~\eqref{eq:densityinteriorclosure}, 
\begin{equation}\label{eq:Phidensitytree}
 C \in \bPi^{0}_{1} \IMPLIES  \Phi ( C ) = \Phi ( \densitytree ( C ) ) .
\end{equation}

\begin{proposition}\label{prop:climb}
For every \( s \in \densitytree ( A ) \), if  \( \mu ( \LOC{A}{s} ) < r < 1 \) then
\[
\exists  t \supset s \left ( \mu ( \LOC{A}{t} )\geq r \AND 
\forall u \left (s \subseteq u \subset t \implies \mu ( \LOC{A}{u} ) \geq \mu ( \LOC{A}{s} ) \right ) \right ) .
\]
\end{proposition}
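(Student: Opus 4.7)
The plan is to argue by contradiction, combining the Lebesgue Density Theorem~\ref{th:Lebesguedensity} (which guarantees many paths out of $s$ on which the local density tends to $1$) with a measure comparison on a canonical antichain, to produce a path that stays at or above $m_{0} := \mu(\LOC{A}{s})$ all the way to a node where the local measure reaches $r$.

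Set $m_{0} = \mu(\LOC{A}{s}) \in (0,r)$ and let
\[
B = \setofLR{u \in \pre{<\omega}{2}}{s \subsetneq u,\; \mu(\LOC{A}{u}) < m_{0},\text{ and } \mu(\LOC{A}{u'}) \geq m_{0} \text{ for every } s \subseteq u' \subsetneq u}
\]
be the antichain of $\subseteq$-minimal proper extensions of $s$ at which the local measure first drops below $m_{0}$. Suppose no $t$ as in the conclusion exists. Then for every $t \supsetneq s$ with $\mu(\LOC{A}{t}) \geq r$ there is some $u$ with $s \subseteq u \subsetneq t$ and $\mu(\LOC{A}{u}) < m_{0}$; since $u = s$ is ruled out by $\mu(\LOC{A}{s}) = m_{0}$, the $\subseteq$-least such $u$ on the path from $s$ to $t$ lies in $B$, so $t$ extends some element of $B$. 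Next, $\mu(\Phi(A) \cap \Nbhd_{s}) = \mu(A \cap \Nbhd_{s}) = 2^{-\lh(s)} m_{0} > 0$, so $\Phi(A) \cap \Nbhd_{s}$ is nonempty; for any $x$ in it, \eqref{eq:folklore} yields $\mu(\LOC{A}{x \restriction n}) \to 1$, so some $t = x \restriction n$ satisfies $\mu(\LOC{A}{t}) \geq r$ and hence extends some $u \in B$, whence $x \in \Nbhd_{u}$. This proves $\Phi(A) \cap \Nbhd_{s} \subseteq \bigcup_{u \in B} \Nbhd_{u}$, and using $\Phi(A) \equiv A$ together with the disjointness of the cylinders one gets
\begin{align*}
2^{-\lh(s)} m_{0} &= \mu(A \cap \Nbhd_{s}) \leq \sum_{u \in B} \mu(A \cap \Nbhd_{u}) \\
&= \sum_{u \in B} 2^{-\lh(u)} \mu(\LOC{A}{u}) < m_{0} \sum_{u \in B} 2^{-\lh(u)} \leq 2^{-\lh(s)} m_{0},
\end{align*}
a contradiction when $B \neq \emptyset$; if $B = \emptyset$ then the inclusion $\Phi(A) \cap \Nbhd_{s} \subseteq \emptyset$ directly contradicts $m_{0} > 0$.

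The main obstacle is resisting the temptation to construct $t$ greedily along a single path, for instance by always descending into the heavier child: this produces a nondecreasing sequence of local measures but need not ever reach $r$ (the induced infinite path might converge to a limit in $[m_{0},r)$, with no immediate contradiction at hand). Working with the full antichain $B$ sidesteps this pitfall, because the pairwise disjointness of the cylinders $\Nbhd_{u}$ is exactly what supplies the numerical slack needed to close the strict inequality above.
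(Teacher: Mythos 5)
Your proof is correct and uses essentially the same approach as the paper: both argue by contradiction via the antichain of nodes where the local measure first drops below $\mu(\LOC{A}{s})$, combined with the Lebesgue Density Theorem to force density points into the union of the corresponding cylinders, and then a contradiction from the measure bound \eqref{eq:measureantichain} on an antichain. The paper's own proof routes through an intermediate maximal antichain $\mathcal{B}$ of first-hitting-$\geq r$ nodes before extracting the first-drop antichain $\mathcal{A}$, whereas you construct the first-drop antichain $B$ directly and replace the measure equality with a one-sided estimate, but this is a cosmetic streamlining rather than a different argument.
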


\begin{proof}
Replacing \( \LOC{A}{s} \) with \( A \) we may assume that \( s = \emptyset \).
Let 
\[
\mathcal{B} = \setofLR{t \in \densitytree ( A ) }{ \mu ( \LOC{A}{t}) \geq r \AND 
\forall t' \subset t \left (\mu (\LOC{A}{t'} < r ) \right )} .
\]
\begin{claim}
\( \mathcal{B} \) is a maximal antichain in \( \densitytree ( A )\).
\end{claim}

\begin{proof}
It is clear that \( \mathcal{B} \) is an antichain.
Suppose \( s \in \densitytree ( A ) \): by the Lebesgue Density Theorem, there is an \( x \in \Phi ( A ) \cap \Nbhd_s\).
Let \( n \) be least such that \( \mu ( \LOC{A}{x \restriction n} ) \geq r \).
Then \( x \restriction n \in \mathcal{B}\) and \( x \restriction n \) is compatible with \( s \).
\end{proof}

Towards a contradiction, suppose that for every \( t \in \mathcal{B} \) there is a minimal 
\( u_t  \subset t \) such that \( \mu ( \LOC{A}{u_t} ) < \mu ( A ) \), and let \( \mathcal{A} \)
be the set of all these \( u_t \).
It is easy to check that \( \mathcal{A} \) is also a maximal antichain in \( \densitytree ( A ) \).

\begin{claim}
\( \mu ( A ) = \sum_{ u\in \mathcal{A}} 2^{- \lh ( u ) } \mu ( \LOC{A}{u} ) \).
\end{claim}

\begin{proof}
It is enough to show that \( N = A \setminus \bigcup_{u \in \mathcal{A}} u \conc \LOC{A}{u} \) is null.
Otherwise, let \( x \in N \cap \Phi ( A ) \) and let \( n \) be least such that 
\( \mu ( \LOC{A}{ x \restriction n} ) \geq r \).
Therefore \( x \restriction n \in \mathcal{B} \) hence 
\( \exists  u \in \mathcal{A} \left ( u \subseteq x \restriction n \right ) \): a contradiction.
\end{proof}
Therefore by~\eqref{eq:measureantichain}
\[
\mu ( A ) < \sum_{ u \in \mathcal{A}} 2^{-\lh ( u )} \mu ( A ) \leq  \mu ( A ) ,
\]
a contradiction.
\end{proof}

\subsection{Dualistic sets}\label{subsec:dualisticsets}
By~\eqref{eq:densityclopen}, \( \Phi  ( \neg A ) = \neg \Phi  ( A ) \) 
for \( A \) clopen, or more generally if \( A \) has the following property.

\begin{definition}\label{def:manic}
A set is \textbf{dualistic} if it belongs to the family
\[
 \mathcal{M} =  \setofLR{A \in \MEAS}{ \FORALL{x \in \pre{\omega}{2} }
\left (  \mathcal{D}_A ( x ) \text{ exists and it is equal to either \( 0 \) or } 1 \right) } . 
\]
\end{definition}
Sets in \( \mathcal{M} \) have a very black-or-white vision of the points of the
space, so they should perhaps be called Manich\ae an
(hence the \( \mathcal{M} \)).
If  \( x \) witnesses that \( A \notin \mathcal{M} \),
then such \( x \) belongs to the complement of  \( \Phi  ( A ) \cup \Phi  ( \neg A ) \), 
so the inclusion in~\eqref{eq:densityunion} will be proper.

The family \( \mathcal{M} \) is an algebra containing \( \bDelta^{0}_{1}\), and it is
the largest algebra \( \mathcal{N} \subseteq \MEAS \) such that 
\( \Phi \restriction \mathcal{N} \colon \mathcal{N} \to \Bor \) is a lifting, 
i.e. a selector that is a homomorphism.
It does not contain every open or closed set and therefore it is not a \( \sigma \)-algebra 
(Example~\ref{xmp:opennotdualistic}), but it contains sets of arbitrarily high complexity.
In fact
\begin{equation}\label{eq:dualisticclosedunderequivalence}
 A \in \mathcal{M} \AND B \equiv A \IMPLIES B \in \mathcal{M}
\end{equation}
hence \( \mathcal{M} \supset \NULL\), and since the ideal \( \NULL \) 
contains sets of arbitrary complexity, the claim is proved.
On the other hand, if \( A \in \mathcal{M} \) then
\[
x \in \Phi  ( A ) \IFF \exists{n} \FORALL{m \geq n}\left ( \mu 
\left ( \LOC{A}{x \restriction m} \right ) > 1/2 \right )  
\]
hence using that \( \tilde{A}\) is clopen (see~\eqref{eq:closedfordensity}), \( \Phi  ( A ) \) is easily seen to 
be \( \bSigma^{0}_{2} \).
Since \( \neg \Phi  ( A ) = \Phi  ( \neg A ) \) and \( \neg A \in \mathcal{M} \), it follows that 
\[
A \in \mathcal{M} \IMPLIES \Phi  ( A ) \in \bDelta^{0}_{2} .
\]
By Example~\ref{xmp:densityopen} not every \( \bSigma^{0}_{1} \) (and therefore: 
not every \( \bDelta^{0}_{2} \)) set is \( \mathcal{T} \)-regular, hence 
\( \ran \left ( \Phi  \restriction \mathcal{M} \right ) \neq \bDelta^{0}_{2} \), and
by Theorem~\ref{th:T-regularDelta02}, \( \ran  \left ( \Phi  \restriction \mathcal{M} \right )  \)
intersects every Wadge degree in \( \bDelta^{0}_{2} \).
 
By~\eqref{eq:dualisticclosedunderequivalence}
\(A \in \mathcal{M} \IFF \Phi  ( A ) \in \mathcal{M}\),
and it is easy to check that
\begin{equation}\label{eq:T-regularcomplement=>dualistic}
A = \Phi  ( A ) \AND \neg A = \Phi  ( \neg A )  \IMPLIES A , \neg A\in \mathcal{M}.
\end{equation}
The notions of dualistic and \( \mathcal{T} \)-regular set are independent:
not every \( \mathcal{T} \)-regular set is in \( \mathcal{M} \), as there are sets \( X \) such that \( \Phi  ( X ) \) 
is \( \bPi^{0}_{3} \) complete (Theorem~\ref{th:Pi03complete})
 and not every set in \( \mathcal{M} \) is \( \mathcal{T} \)-regular
--- see~\eqref{eq:dualisticclosedunderequivalence} or Example~\ref{xmp:densityopen}.

A set is \( \mathcal{T} \)-clopen iff \( A = \Phi ( A ) \) and \( \neg A = \Phi ( \neg A ) \), 
so both \( A \) and \( \neg A \) are dualistic by~\eqref{eq:T-regularcomplement=>dualistic}.
Therefore 
\begin{equation*}\label{eq:T-regulardualisticPhiclopen}
A \text{ is \( \mathcal{T} \)-regular and dualistic} \IFF A \text{ is \( \mathcal{T} \)-clopen.} 
\end{equation*}
If \( A \) and \( B \) are clopen or, more generally, dualistic,
the inclusion in~\eqref{eq:densityunion} can be replaced with equality.
But if  \( A \) is such that \( \Phi  ( \neg A ) \subset \neg \Phi  ( A ) \) then 
\( \pre{\omega}{2} = \Phi  ( A \cup \neg A ) \supset \Phi  ( A ) \cup \Phi  ( \neg A ) \). 

\subsection{Examples}\label{sec:Someexamples}
Given a function \( f \colon \omega  \to \omega \setminus\setLR{0} \) consider the sets
\begin{align*}
U_f & = \bigcup \setofLR{\Nbhd_s}{ \exists {n \in \omega } \left ( s = 0^{( n )}  1^{( f ( n ) )} \right ) }
\\
V_f &  = \bigcup \setofLR{ \Nbhd_s}{ \FORALL{n \in \omega } \left ( s \perp 0^{( n )}  1^{( f ( n ) )}\right ) }
\\
& = \bigcup \setofLR{ \Nbhd_s}{ \exists{n} \exists{m} \left ( 0 < m < f ( n ) \AND
	s = 0^{(n)} 1^{(m)}0 \right ) }.
\end{align*}
By construction \( U_f \) and \( V_f \) are disjoint open sets, and together with \( \setLR{0^{(\infty)}} \) 
they partition \( \pre{\omega}{2} \).
Also
\begin{align*}
\Cl U_f  & = U_f \cup \setLR{ 0^{( \infty )}}
\\
\intertext{and}
U_f  & = \neg \left ( V_f \cup \setLR{0^{( \infty )}} \right ) 
\in \bSigma^{0}_{1} \setminus \bPi^{0}_{1} .
\end{align*}
Every point in \( U_f \) or in \( V_f \) has density \( 1\) in the respective set, so \( 0^{( \infty )} \)
is the only point where density must be established.

\begin{example}\label{xmp:densityopen}Dualistic sets which are open or closed but not \( \mathcal{T} \)-regular.

Suppose \( \FORALLS{\infty}{n}  f ( n ) = 1 \).
Then \( V_f \) is clopen.
Since \( 0^{( \infty )} \) has density \( 1 \) in \( U_f \),  then \( \Phi  ( U_f ) = U_f \cup \setLR{0^{( \infty )}}\).

Therefore \( U= U_f \) and its complement \( F = V_f \cup \setLR{ 0^{( \infty )}} \)
are examples of open (resp. closed) sets which are dualistic, but not \( \mathcal{T} \)-regular, since 
\( \Phi  ( U ) \supset U \) and \( \Phi  ( F ) \subset F \).
\end{example}

\begin{example}\label{xmp:opennotdualistic}
An open \( \mathcal{T} \)-regular set which is not dualistic.

Suppose that
\[ 
\EXISTSS{\infty}{n}  ( f ( n ) = 1  ) \AND 
\EXISTSS{\infty}{n}  ( f ( n ) \neq 1  )  .
\]
If \( f ( n ) = 1 \) then 
\begin{equation*}
\begin{split}
 \mu \bigl (\LOC{( U_f )}{0^{( n )}} \bigr ) & =  \frac{1}{2} \mu \bigl (\LOC{( U_f )}{0^{( n + 1)}} \bigr ) 
	+  \frac{1}{2} \mu \bigl (\LOC{( U_f )}{0^{( n )}1} \bigr )
\\
 & >  \frac{1}{2} \mu \bigl (\LOC{( U_f )}{0^{( n )}1} \bigr )
 \\
 & = \frac{1}{2},
\end{split} 
\end{equation*}
and if \( f ( n ) > 1 \) then 
\begin{equation*}
 \mu \bigl (\LOC{( U_f )}{0^{( n )}} \bigr ) 
 = \frac{1}{2}  \mu \bigl (\LOC{( U_f )}{0^{( n + 1)}} \bigr ) 
	+  \frac{1}{4} \mu \bigl (\LOC{( U_f )}{0^{( n )}11} \bigr )
  \leq \frac{1}{2} + \frac{1}{4} .
\end{equation*}
Therefore the density of \( 0^{(\infty )} \), if it exists, is neither \( 0 \) nor \( 1 \).
This implies that \( U_f \) is \( \mathcal{T} \)-regular, and that neither \( U_f \) nor its complement 
\( \pre{\omega}{2} \setminus U_f \) are dualistic.

Note that \( \pre{\omega}{2} \setminus U_f \) is not \( \mathcal{T} \)-regular
by~\eqref{eq:closedT-regular=>complementT-regular}.
\end{example}

\begin{example}\label{xmp:closedT-regular}
Dualistic \( \mathcal{T} \)-regular sets  which are open or closed.

Suppose \( \lim_{n \to \infty} f ( n  ) = + \infty \).
Fix \( k \) and choose \( N \) such that \( f ( n ) \geq k \) for all \( n \geq N \),
so that 
\[
\mu \left ( \LOC{(U_f)}{0^{(n)}} \right ) \leq 2^{- k} \sum_{i= 0}^\infty \frac{1}{2^i}  = 2^{-k + 1}
\]
which goes to \( 0 \) as \( k \to \infty \).
Therefore \( 0^{(\infty )} \) has density \( 0 \) in \( U_f \) and
\[ 
F = \pre{\omega}{2} \setminus U_f = V_f \cup \setLR{0^{( \infty )}}
\]
is an example of a \( \mathcal{T} \)-regular, nonempty closed set, 
hence by~\eqref{eq:closedT-regular=>complementT-regular}
and~\eqref{eq:T-regularcomplement=>dualistic} \( F \) and its complement \( U_f \) are in \( \mathcal{M} \).
\end{example}

\section{The Wadge hierarchy on the Cantor space}\label{sec:Wadgehierarchy}
If \( X \) and \( Y \) are topological spaces and \( A \subseteq X \) and \( B \subseteq Y \)
we write 
\[
( X , A ) \leqW ( Y , B ) 
\] 
just in case \( A = f^{-1} ( B ) \) for some continuous 
\( f \colon X \to Y \).
If \( X \) and \( Y \) are metric spaces and the function \( f \) is Lipschitz,
that is \( d_Y ( f ( x_1 ) , f ( x_2 ) ) \leq d_X ( x_1 , x_2  )\), 
we write 
\[ 
( X , A ) \leql ( Y , B ) .
\]
Whenever \( X = Y \) and the ambient space \( X \) is understood from the context,
we simply write \( A \leqW B \) or \( A \leql B \),
and the relations \( {\leqW} \) and \( \leql\) are pre-orders on \( \Pow ( X ) \) with remarkable properties, 
at least when \( X \) is Polish and zero-dimensional.
W.~Wadge was the first to conduct a systematic analysis in~\cite{Wadge:1983sp} 
of these preorders on the Baire space \( \pre{\omega}{\omega} \), whence
\( \leqW \) and \( \leql \) became known as \markdef{Wadge reducibility} and 
\markdef{Lipschitz reducibility}, respectively. 
Their induced equivalence relations are defined by
\begin{align*}
A \Wequiv B & \IFF A \leqW B \AND B \leqW A 
\\
A \lequiv B & \IFF A \leql B \AND B \leql A 
\end{align*}
and their equivalence classes are called, respectively, \markdef{Wadge degrees} and 
\markdef{Lipschitz degrees}.
The Wadge degree of \( A \) is denoted by \( \Wdeg{A} \).
The structure of the Wadge and Lipschitz degrees of the Borel subsets of \( \pre{\omega}{\omega} \)
has been completely analyzed in~\cite{Wadge:1983sp} and there are several accounts
of the basic facts about Wadge degrees in \( \pre{\omega}{\omega} \), see
e.g.~\cite{Andretta:2003th,Andretta:2006xv,Louveau:1983dk,Louveau:1988mi,Van-Wesep:1978fv}.
Most of the results and techniques apply to the Cantor space as well, but 
other parts of the theory require some reworking, so for the reader's benefit we will 
briefly summarize the main facts in this area.

Let us assume from now on that, unless otherwise stated, all sets in sight are Borel subsets of the Cantor space.
Thus \( A \leqW B \) and \( A \leql B \) mean that 
\( ( \pre{\omega}{2} , A) \leqW ( \pre{\omega}{2} , B ) \) and
\( ( \pre{\omega}{2} , A) \leql ( \pre{\omega}{2} , B ) \), respectively.
Since the subsets of the Cantor space are also subsets of the Baire space, 
we might want to study Wadge or Lipschitz reducibility within the ambient space \( \pre{\omega}{\omega} \),
and in this case we will write \( A \leqW^* B \) and \( A \leql^* B \) for
\( ( \pre{\omega}{\omega} , A) \leqW ( \pre{\omega}{\omega} , B ) \) and
\( ( \pre{\omega}{\omega}  , A) \leql ( \pre{\omega}{ \omega } , B ) \).

A set \( A \) is \markdef{self-dual} if \( A\leqW \neg A\) or, equivalently, if \( A \Wequiv \neg A\),
otherwise it is said to be \markdef{non-self-dual}. 
These notions are invariant under \( \Wequiv \) so we will speak of self-dual/non-self-dual degrees.
The \markdef{Lipschitz game} \( \GL ( A , B ) \) is the zero-sum, perfect information 
game of length \( \omega \) on \( \setLR{0 , 1} \) 
\[
\begin{tikzpicture}
\node (II) at (0,0) [anchor=base] { \( \II \)};
\node (I) at (0,1) [anchor=base] {\( \I \)};
\node (x_0) at (1,1) [anchor=base] {\( a_0 \)};
\node (x_1) at (2,0) [anchor=base] {\(b_0 \)};
\node (x_2) at (3,1) [anchor=base] {\( a_1\)} ;
\node (x_3) at (4,0) [anchor=base] {\( b_1 \)};
\node at (5.5,1) [anchor=base] {\( \cdots \)};
\node at (5.5,0) [anchor=base] {\( \cdots \)};
\node (dots) at (5.5,0) [anchor=base] {\( \cdots \)};
\node (A) at (x_3.south -| II.east) {}; 
\node (B) at (I.north -| II.south east) {}; 
\node (C) at ($0.5*(A) +0.5 *(B)$){}; 
\draw (II.south east) -- (I.north -| II.south east) 
(II.north west |- C.center) -- (dots.east |- C.center); 
\node at (C-|II.south west) [anchor=east]{\( \GL (A , B ) \)};
\begin{pgfonlayer}{background} 
\fill[gray!30,rounded corners] 
(II.south west) rectangle  (dots.east |-  I.north); 
\end{pgfonlayer} 
\end{tikzpicture} 
\]
where \( \II \) wins iff 
\[ 
(a_n )_n  \in A \IFF ( b_n )_n  \in B .
\]
Then \( \II \) has a winning strategy in \( \GL ( A , B ) \) iff \( A\leql  B \).
The \markdef{Wadge game} \( \GW ( A , B ) \) is similar to \( \GL ( A , B ) \) but \( \II \) has the option of
passing at any round, with the proviso that he must play infinitely many times.
Then \( \II \) has a winning strategy for \( \GW ( A , B ) \) iff \( A \leqW B \).

The moves of the games \( \GL \) and \( \GW \) are in \( \setLR{0,1}\) since
we are dealing with subsets of the Cantor space \( \pre{\omega}{2} \).
In most papers on the Wadge hierarchy the underlying space is the Baire space 
\( \pre{\omega}{\omega} \) so the moves are in \( \omega \), and
here we will denote this variant by \( \GL^* \) and \( \GW^* \):
the definition is as before and for \( A , B \subseteq \pre{\omega}{\omega} \)
\begin{align*}
( \pre{\omega}{\omega} , A ) \leql ( \pre{\omega}{\omega} , B ) & 
\iff \II \text{ has a winning strategy in } \GL^* ( A , B )
\\
( \pre{\omega}{\omega} , A ) \leqW ( \pre{\omega}{\omega} , B ) & 
\iff \II \text{ has a winning strategy in } \GW^* ( A , B )
\end{align*}

By results of Wadge and Martin, for all Borel sets \( A , B \subseteq \pre{\omega}{2} \)
Wadge's Lemma holds, that is
\begin{equation*}\label{eq:Wadge'sLemma}
A\leql B \OR \neg B \leql A,
\end{equation*}
and the relation \( \leql \) is well-founded on Borel sets.
Analogous results hold for \( \leqW \) as well.

The Wadge rank \( \Wrank{A} \) of a Borel set \( A \) is its height
in the pre-order \( \leqW \) --- for technical reasons we start counting from \( 1 \) rather than \( 0 \).
At the bottom of the hierarchy we have two non-self-dual degrees,
namely \( \Wdeg{\emptyset} = \setLR{\emptyset} \) and 
\( \Wdeg{ \pre{\omega}{2} } = \setLR{ \pre{\omega}{2} } \), and the self-dual degrees 
and non-self-dual pairs alternate, and since the Cantor space is compact,
there is a non-self-dual pair at all limit levels:
\[
\begin{tikzpicture}[degree/.style={circle,ball color=gray!60,inner sep=6pt},]
 \foreach \x in {0,2,4,10} \foreach \y in {0,2}
\node at (\x,\y) [degree] {};
 \foreach \z in {1,3,5,9,11}
\node at (\z,1) [degree] {};
\node at (6.5,1) {\( \cdots \cdots \)};
\node at (12.5,1) {\( \cdots \cdots \)};
\node at (8,2) [degree] {};
\node (A) at (8,0) [degree,pin=270:{\tiny limit level}]{};
\end{tikzpicture}
\]
This should be contrasted with the case of the Wadge hierarchy in the Baire space,
where self-dual degrees occur at limit levels of countable cofinality while 
non-self-dual pairs occur at all other limit levels.
Let us briefly justify the diagram above.
\begin{itemizenew}
\item 
If \( A\) is non-self-dual then
\begin{equation}\label{eq:oplus}
A\oplus \neg A = 0\conc A \cup 1 \conc \neg A 
\end{equation}
is a self-dual set immediately above \( A \).
\item
If \( A \) is self-dual, then 
\[
A^\triangledown = \bigcup_{n} 0^{(n)}\conc 1 \conc A 
\qquad\text{and}\qquad 
A^\circ = A^\triangledown \cup \setLR{0^{(\infty )}}
\]
are a non-self-dual pair immediately above \( A \).
\item
The tree \( \Wtree ( A ) = \setofLR{s \in  \pre{< \omega }{2} }{A \leqW \LOC{A}{s} } \)
detects the self-duality of \( A \) in the following sense. 
If \( s \) is a terminal node of \( \Wtree ( A ) \) then 
\[ 
\LOC{A}{s \conc 0 } , \LOC{A}{s \conc 1} \leW  \LOC{A}{s \conc 0 } 
\oplus  \LOC{A}{s \conc 1} = \LOC{A}{s} \Wequiv A 
\]
and by Wadge's Lemma either \( \LOC{A}{s \conc 0 } \leqW \LOC{A}{s \conc 1 } \)
or \( \neg \LOC{A}{s \conc 1 } \leqW \LOC{A}{s \conc 0 }\): the former would imply
\( \LOC{A}{s} \Wequiv\LOC{A}{s \conc 1} \) which is impossible, so 
\( \neg \LOC{A}{s \conc 1 } \leqW \LOC{A}{s \conc 0 }\) holds.
Similarly \( \neg \LOC{A}{s \conc 0 } \leqW \LOC{A}{s \conc 1 }\)
so \( \LOC{A}{s}\) (and hence \( A \)) is self-dual.
Therefore if \( A \) is non-self-dual then the tree \( \Wtree ( A ) \) is pruned.
Conversely, suppose \( A \) is self-dual.
By a result of Steel and Van Wesep \( A \leql \neg A \), and since
\( \LOC{A}{i} \lel A \) for \( i \in \setLR{0 , 1} \), any branch
of \( \Wtree ( A ) \) would yield an infinite \( \lel \)-descending chain: a contradiction.
Therefore if \( A \) is self-dual then the tree \( \Wtree ( A ) \) is well-founded, hence finite
by K\"onig's lemma.
This implies that at limit levels there is always a pair of non-self-dual degrees.
\item
If \( A_n \leW A_{n + 1} \) for all \( n \) then 
\begin{equation}\label{eq:trianglecircle}
( A_n )_n^\triangledown \equalsdef \bigcup_{n} 0^{(n)}\conc 1 \conc A_n 
\qquad\text{and}\qquad 
( A_n )_n^\circ \equalsdef \setLR{0^{(\infty )}} \cup ( A_n )_n^\triangledown
\end{equation}
give the least non-self-dual pair immediately above the \( A_n \)s.
\end{itemizenew}
\subsection{Proof of Theorem~\ref{th:dense}}\label{subsec:proofofThmdense}

We can now show how Theorem~\ref{th:global} implies that the sets
\( \mathscr{W}_{ \boldsymbol{d}} \) are topologically dense.

\begin{proof}
Let \( A \in \bPi^{0}_{3} \setminus \setLR{\emptyset , \pre{\omega}{2} }\) 
and let \( \boldsymbol{d} = \Wdeg{A} \).
By Theorem~\ref{th:global} there is a \( \mathcal{T} \)-regular \( B \in \Wdeg{A} \) 
such that \( B = \Phi ( U ) = \Phi ( C ) \) for some open set \( U \) and closed set \( C \).
Let \( D \neq \pre{\omega}{2} \) be  clopen and let \( \Nbhd_t \cap D = \emptyset \).
The function \( x \mapsto t \conc f ( x ) \) witnesses that \( A \leqW D \cup t \conc B \),
where \( f \) reduces \( A \) to \( B \).
Conversely \( \II \) wins \( \GW ( D \cup t \conc B , A ) \) as follows:
\begin{quote}
\( \II \) passes until \( \I \) reaches a position inside \( D \),
or else reaches a position oustide \( D \cup \Nbhd_t \), 
or else reaches \( t \).
In the first case \( \II \) plays an \( x \in B \), 
in the second case \( \II \) plays an \( x \notin B \)
in the third case \( \II \) applies the reduction witnessing \( B \leqW A\).
\end{quote}
Therefore \( D \cup t \conc B \in \Wdeg{A} \), and moreover \( D \cup t \conc B \)
is \( \mathcal{T} \)-regular.
Hence Lemma~\ref{lem:approximation} can be applied to the family 
\[ 
\mathcal{B} = 
\setofLR{X \in \Wdeg{A} }{ \EXISTS{U \in \bSigma^{0}_{1}} \EXISTS{C \in \bPi^{0}_{1}} 
( X = \Phi ( U ) = \Phi ( C ) ) } = \mathscr{W}_{\boldsymbol{d}} . \qedhere
\]
\end{proof}

\subsection{Wadge's constructions}\label{subsec:Wadgesconstructions}
Wadge defined the sum of two subsets of the Baire space as
\[
A \Wsum  B = \setofLR{s^+ \conc 0 \conc a}{ s \in \pre{ < \omega }{ \omega} \AND a\in A} \cup B^+
\]
where \( B^+ = \setofLR{b^+}{ b\in B} \) and for \( x \in  \pre{ \leq  \omega }{  \omega }\)
let \( x^+ = \seqofLR{x ( i ) + 1 }{ i \in \dom ( x )}\).
Since in the current set-up \( x \in  \pre{ \leq  \omega }{  2 } \), i.e., it is a 
sequence taking values values in \( 2 \) (rather than \( \omega \)),
then \( x^+ \) is replaced by 
\[
\overline{x} \colon  2 \cdot \dom ( x )\to 2 , \qquad \FORALL{i \in \dom ( x )}  
( \overline{x} ( 2i ) = \overline{x} ( 2i+ 1 ) = x ( i ) ) ,
\]
the sequence obtained from \( x \) by doubling each entry.
If \( T \) is a tree on \( 2 \) and \( A \subseteq \pre{\omega}{2} \) set 
\[
\overline{T} = \setofLR{ \overline{t}}{ t\in T} \quad \text{and} \quad
 \overline{A} = \setofLR{\overline{a}}{ a\in A} . \label{pag:double}
\]
Then for \( A , B \subseteq \pre{\omega}{2} \) let 
\[
A \Wsum  B = \setofLR{ \overline{s} \conc t \conc a}
{ s \in \pre{ < \omega }{ 2} \AND t \in \setLR{01 , 10} \AND a \in A} \cup \overline{B} .
\]
A straightforward adaptation of Wadge's arguments (see~\cite{Andretta:2007ce}
for proofs) yields that if  \( A \) is self-dual, then
\begin{gather*}
A \Wsum  \emptyset \Wequiv A^\triangledown \AND
 A \Wsum  \pre{\omega}{2} \Wequiv A^\circ ,
\\
 B \leqW C \iff A\Wsum  B \leqW A \Wsum  C ,
\\
A \leW B \implies \exists C \leqW B \left ( A \Wsum  C \Wequiv B \right ) ,
\\
 \Wrank{A \Wsum B } = \Wrank{A} + \Wrank{B},
\end{gather*}
and for any \( A \) (not necessarily self-dual)
\begin{equation}\label{eq:emptyinterior}
\Wrank{A} \geq  \omega \implies  A \setminus \Int A \Wequiv A . 
\end{equation}

Starting from \( \emptyset \) and \( \pre{\omega}{2} \) and using
the operations \((A , B ) \mapsto A \oplus B \), \( (A , B) \mapsto A \Wsum  B\)
and  the constructions in~\eqref{eq:oplus} and~\eqref{eq:trianglecircle}
it is easy to construct subsets of the Cantor space in any Wadge degree  of rank
\( < \omega _1 \).
To reach further heights we modify again two constructions from~\cite{Wadge:1983sp}.
Let
\begin{align*}
A^\natural & =\setofLR{\overline{s_1} \conc \eta_1 \conc 
		\overline {s_2} \conc \eta_2 \conc \dots 
		\conc \overline{s_n} \conc \eta_n \conc \overline{a}}
		{n \in \omega \AND s_i \in  \pre{< \omega }{ 2 } \AND \eta_i \in \setLR{01, 10}\AND a \in A }
\\
A^\flat & = A^\natural \cup \setofLR{x \in \pre{\omega}{2} }
		{ \EXISTSS{\infty}{n} \left (x ( 2n )\neq x ( 2n+1)  \right ) } .
\end{align*}
Both \( A^\natural \) and \( A^\flat \) have a self-similarity property, 
in the sense that \( \LOC{A^\natural }{\overline{s}} = A^\natural \) and 
\( \LOC{A^\flat }{\overline{s}} = A^\flat \) for any \( s \in  \pre{ < \omega }{2}\). 
The intuition behind the definition of \( A^\natural \) is that it is the union of \( \omega \)-many layers ---
at each layer there is a copy of \( \overline{A} \) and in order to 
leave the \( n \)-th layer and enter the \( n + 1 \)-st layer we must follow a string 
of the form
\[
\overline{s_1} \conc \eta_1 \conc \overline {s_2} \conc \eta_2 \conc \dots 
\conc \overline{s_n} \conc \eta_n \conc \overline{s_{n+1}} \conc \eta_{n+1}
\]
where the \( \eta_i \)'s are \( 01 \) or \(10 \).
Wadge's original definition was given for subsets of the Baire space
\( A \subseteq \pre{\omega}{\omega} \)  
\begin{align*}
A^\natural & =\setofLR{s_1 ^+ \conc 0 \conc s_2^+ \conc 0 \conc \dots 
\conc s_n^+ \conc 0 \conc x^+}{n \in \omega , s_i \in  \pre{< \omega }{ \omega } , x \in A }
\\
A^\flat & = A^\natural \cup \setofLR{x \in \pre{\omega}{\omega} }
{ \EXISTSS{\infty}{n} \left ( x ( n ) = 0  \right ) },
\end{align*}
and in~\cite{Wadge:1983sp} it is shown (see~\cite{Andretta:2006xv} for detailed proofs)
that whenever \( A \) is self-dual then:
\begin{subequations}
\begin{align}
& A^\natural \text{ and \( A^\flat \) are non-self-dual,} \label{eq:Wadgenatural1}
\\
& A^\natural \Wequiv \neg A^\flat , \label{eq:Wadgenatural2}
\\
& \Wrank{A^\natural} = \Wrank{A^\flat } = \Wrank{A}\cdot \omega _1 . \label{eq:Wadgenatural3}
\end{align}
\end{subequations}
The proofs of~\eqref{eq:Wadgenatural1} and~\eqref{eq:Wadgenatural2}
generalize to the Cantor space with minor adjustments.
For~\eqref{eq:Wadgenatural3} we must show that
\begin{enumerate-(A)}
\item\label{en:WadgenatA}
for every \( 1 \leq \alpha < \omega _1 \) there is a self-dual set \( A_ \alpha \) of 
Wadge rank \( \Wrank{A} \cdot \alpha \) if \( \alpha \) is a successor, or 
\( \Wrank{A} \cdot \alpha + 1 \) if \( \alpha \) is limit, 
and such that \( A_ \alpha \leqW A^\natural , A^\flat \), and
\item\label{en:WadgenatB}
if \( B \leW A^\natural , A^\flat \) then \( B \leqW A_ \alpha \), for some \( \alpha \).
\end{enumerate-(A)}
The sets \( A_ \alpha \) are constructed by induction on \( \alpha \)
by taking \( A_1 = A \), \( A_{ \alpha + 1} = A_ \alpha \Wsum  A\)
and, for \( \lambda \) limit, \( A_  \lambda = (A_{ \alpha _n})_n^\triangledown
\oplus (A_{ \alpha _n})_n^\circ \), where \( (\alpha _n)_n \) 
is increasing and converging to \( \lambda \).
To check that \( A_ \lambda \leqW A^\natural , A^\flat \) for \( \lambda \) limit
it is enough to check that 
\[
( A_{ \alpha _n})_n^\triangledown \leqW  A^\natural \qquad
\text{and} \qquad  (A_{ \alpha _n})_n^\circ \leqW  A^\natural .
\]
To prove the first inequality it is enough to show that \( \II \) wins 
\( \GL ( (A_{ \alpha _n})_n^\triangledown  , A^\natural ) \) as follows:
\begin{quote}
As long as \( \I \) plays \( 0\) let \( \II \) enumerate \( \overline{b} \), for some \( b \notin A \).
If \( \I \) plays \( 1 \)  at round \( n \), then \( \II \) plays \( 0 1 \) and then follows
a reduction witnessing \( A_{ \alpha _n} \leqW A^\natural \).
\end{quote}
If the real \( b \) is taken to be in \( A \), the strategy above shows that 
\( \II \) wins \( \GL ( (A_{ \alpha _n})_n^\circ  , A^\natural ) \).
Therefore~\ref{en:WadgenatA} is proved.
To prove~\ref{en:WadgenatB} fix a set \( B \leW A^\natural \).
By (a simple adaptation of) \cite[Claim 3.9, p.\ 49]{Andretta:2007ce} we may assume that 
\( \II \) wins \( \GW ( B ,  A^\natural )\) via some strategy \( \tau \) that always yields 
reals in \( ( \pre{\omega}{2} )^\natural \).
Let \( \mathcal{T} \) be the tree of attempts to construct a play for \( \I \) 
such that \( \tau \)'s reply is an element of 
\( \setofLR{x \in \pre{\omega}{2} }{\EXISTSS{\infty}{n} \left (x ( 2n )\neq x ( 2n+1)  \right ) }\).
To be more precise: call \( s \in  \pre{ < \omega }{2}\) a position for \( \I \)
\markdef{special} if:
\begin{enumerate-(i)}
\item
\( \tau \) does not pass when pitted against \( s \), that is \( \tau ( s ) \in \setLR{0,1}\), and
\item
\( \II \)'s position after this inning is of even length and of the form \( u \conc ( 1-i)\conc i \).
\end{enumerate-(i)}
Then
\begin{multline*}
\mathcal{T} = \setLR{ \emptyset} \cup\bigl \{ \seqLR{s_0 , \dots , s_n } \Mid  \FORALL{i \leq n } 
 ( s_0 \conc \dots \conc s_i \text{ is special} )
\\
{} \AND \forall t \subseteq s_0 \conc \dots \conc s_n \left ( t \text{ special} \implies \EXISTS{i\leq n} 
 \left (t = s_0 \conc \dots \conc s_i \right )  \right ) \bigr \} .
 \end{multline*}
By assumption on \( \tau \), the tree \( \mathcal{T}\) is well-founded, hence of rank \( \alpha < \omega _1\).
We will show that \( B \leqW A_{ \alpha +1} \).
If \( \alpha =0\) then \( \tau \)  induces a continuous function
\( f \colon \pre{\omega}{2}  \to \overline{ \pre{\omega}{2}} \) witnessing that 
\( B = f^{-1} ( \overline{A} ) \).
Thus \( B \leqW A = A_1\).
Suppose now \( \alpha > 0 \): as long as \( \I \) never reaches a special position, then 
\( \tau \) reduces \( B \) to \( \overline{A} \) as before; if at some stage \( \I \) reaches a
special position \( s \) for the first time, then the rank of the node \( \seqLR{s }\)
in \( \mathcal{T} \) will be \( \beta < \alpha \), hence by inductive assumption there is a 
continuous reduction of \( \LOC{B}{s} \) to \( A_{ \beta +1} \leqW A_ \alpha \).
Therefore \( B \leqW A_{ \alpha } \Wsum  A = A_{ \alpha +1}\), as required.
This proves~\ref{en:WadgenatB}, hence~\eqref{eq:Wadgenatural3} is established. 

\subsection{The hierarchy of \texorpdfstring{\( \bDelta^{0}_{3}\)}{Delta03} sets in the Cantor space}%
\label{subsec:Delta03hierarchy}

Since every winning strategy for \( \II \) in \( \GL ( A , B ) \) or in \( \GW ( A , B ) \) 
can easily be extended to a winning strategy for \( \II \) in \( \GL^* ( A , B ) \) or in \( \GW^* ( A , B ) \),
then 
\[ 
A \leqW B \implies A \leqW^* B \quad\text{and}\quad  A \leql B \implies A \leql^* B .
\]
The converse is not necessarily true: for example \( 0 \conc {}\pre{\omega}{2} \leql^* \pre{\omega}{2} \) 
but \( 0 \conc {}\pre{\omega}{2} \nleqW \pre{\omega}{2} \).
\begin{lemma}
Suppose \( A , B \subseteq \pre{\omega}{2} \) and \( B\) has empty interior in \( \pre{\omega}{2} \).
Then 
\[ 
A \leql^* B \implies A \leql B \quad\text{and}\quad A \leqW^* B \implies A \leqW B.
\]
\end{lemma}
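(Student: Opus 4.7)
The plan is to convert a winning strategy $\sigma$ for $\II$ in the larger game $\GL^*(A,B)$ (resp.\ $\GW^*(A,B)$) into a winning strategy $\tau$ for the corresponding Cantor-space game, patching $\sigma$'s output whenever it would use a value $\geq 2$. The key observation driving the construction is that any such ``bad'' $\sigma$-move is a safe certificate that $\I$'s play lies outside $A$: if $\sigma(s\restriction k)\geq 2$ for some $k$, then $\II$'s overall $\sigma$-play $y$ has a coordinate $\geq 2$, so $y\in\pre{\omega}{\omega}\setminus\pre{\omega}{2}$ and in particular $y\notin B\subseteq\pre{\omega}{2}$; since $\sigma$ is winning, this forces $s\notin A$. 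Hence from the first bad move onward I may freely overwrite $\sigma$'s output with any element of $\pre{\omega}{2}\setminus B$ without breaking correctness.

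The empty-interior assumption now supplies the raw material for overwriting: for each $u\in\pre{<\omega}{2}$, $\Nbhd_u$ is not contained in $B$, so I fix once and for all a choice function $u\mapsto z_u\in\Nbhd_u\setminus B$. For the Lipschitz case define $\tau\colon\pre{<\omega}{2}\to\{0,1\}$ as follows: on $s$ of length $n+1$, if $\sigma(s\restriction i)\in\{0,1\}$ for every $1\leq i\leq n+1$, let $\tau(s)=\sigma(s)$; otherwise, letting $k\leq n+1$ be least with $\sigma(s\restriction k)\geq 2$ and $u=\seq{\sigma(s\restriction 1),\dots,\sigma(s\restriction(k-1))}\in\pre{k-1}{2}$, set $\tau(s)=z_u(n)$. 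On any $s\in\pre{\omega}{2}$, $\tau$'s total play is either $\sigma$'s own play (when $\sigma$ never misbehaves, so winning is inherited) or exactly $z_u\in\pre{\omega}{2}\setminus B$ (when $\sigma$ first misbehaves at round $k$, in which case $s\notin A$ by the key observation). Either way $\tau$ wins, whence $A\leql B$.

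The Wadge case follows the same template, with the added complication of $\II$'s passing moves. As long as $\sigma$ outputs in $\{0,1,\mathrm{pass}\}$, $\tau$ copies $\sigma$ verbatim; at the first round $k$ with $\sigma(s\restriction k)\geq 2$, $\tau$ abandons $\sigma$ and plays (one bit per round, no further passes) the tail of $z_u$ following the prefix already committed, where $u\in\pre{<\omega}{2}$ is the subsequence of non-pass values among $\sigma(s\restriction 1),\dots,\sigma(s\restriction(k-1))$. The main (mild) obstacle is this Wadge bookkeeping: one must check that $\tau$'s accumulated non-pass output agrees with $\sigma$'s in the good case and with $z_u$ in the bad case, and that $\tau$ plays infinitely often --- automatic after the switch, since $z_u$ contributes $\omega$-many bits; and before the switch, automatic because a winning $\sigma$ must itself play infinitely often. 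No further conceptual issue arises, and $A\leqW B$ follows.
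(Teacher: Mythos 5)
Your proof is correct and is essentially the paper's argument: both track the first round at which the Baire-space strategy outputs a value outside $\{0,1\}$, observe that the residual $\II$-play then necessarily escapes $B$ (so $\I$'s play must lie outside $A$), and use the empty-interior hypothesis to precommit a point of $\pre{\omega}{2}\setminus B$ extending $\II$'s already-played prefix to follow from that round on. The paper packages the ``bad first move'' as a \emph{critical position} and modifies the strategy on the whole of $\pre{<\omega}{\omega}$ before restricting, whereas you define the repaired strategy directly on $\pre{<\omega}{2}$, but the content is the same.
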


\begin{proof}
Let \( \tau \) be a winning strategy for \( \II \) in the game \( \GL ^* ( A , B ) \).
We will transform \( \tau \) into  \( \tilde{\tau }\), still a winning strategy for \( \II \) in the same game 
so that its restriction to \(  \pre{<  \omega }{2}\) is a winning strategy in \( \GL ( A , B ) \).
(The result for Wadge reductions is proved similarly.)
Suppose that at some round of \( \GL ( A , B ) \) \( \I \) has reached a position \( p \)
and that \( \II \), following \(  \tau \), has reached a position \( q \).
Call such a \(  p \) \markdef{critical} iff its length is \( n + 1 \) and 
\begin{itemize}
\item
\( p \in  \pre{ n+1}{2}\), 
\item
\( \forall k < n \left ( q ( k ) \in \setLR{0,1} \right ) \), and
\item
\( q ( n ) \in  \omega \setminus \setLR{0,1}\).
\end{itemize}
(Note that \( \LOC{A}{p} \leql^* \LOC{B}{q} = \emptyset \) hence \( \LOC{A}{p} = \emptyset \).)
As \( \LOC{B}{q \restriction n} \neq \pre{\omega}{2}\) by our assumption on \( B \),
fix \( b_p \in \pre{\omega}{2} \setminus B \) such that \( b_p \supseteq q \restriction n \).
We are now ready to define \( \tilde{ \tau} \):
\begin{quote}
As long as \( \I \) does not reach a critical position, the \( \tilde{  \tau }\) is just \( \tau \).
As soon as \( \I \) reaches a critical position \( p \), then from this point on \( \tilde{  \tau } \) follows \( b_p \).
\end{quote}
We leave it to the reader to check that \( \tilde{  \tau } \) is a winning strategy 
for \( \II \) in the game \( \GL^* ( A , B ) \) such that its restriction to 
\(  \pre{<  \omega }{2}\) is a winning strategy for \( \II \) in \( \GL ( A , B ) \).
\end{proof}
By~\eqref{eq:emptyinterior} we obtain
\begin{corollary}\label{cor:equivalenceofWadgereducibility}
If \( A , B \subseteq \pre{\omega}{2} \) and \( \Wrank{B} \geq  \omega \), then
\[ 
 A \leqW^* B \iff A \leqW B.
\]
In particular the map \( \Wdeg{A}\mapsto \Wdeg{A}^* \) is well defined and injective, 
as long as \( \Wrank{A}\geq  \omega \).
\end{corollary}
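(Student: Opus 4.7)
The forward implication $A \leqW B \implies A \leqW^* B$ was already noted before the preceding lemma: any winning strategy for $\II$ in $\GW(A,B)$ extends trivially to one in $\GW^*(A,B)$, since a reduction using moves in $\set{0,1}$ is in particular a reduction using moves in $\omega$.

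For the non-trivial direction, the plan is to replace $B$ by a Wadge-equivalent set with empty interior and invoke the preceding lemma. Assume $\Wrank{B} \geq \omega$ and $A \leqW^* B$. By \eqref{eq:emptyinterior} applied to $B$, the set $B' \equalsdef B \setminus \Int B$ satisfies $B' \Wequiv B$ in $\pre{\omega}{2}$, and by construction $\Int B' = \emptyset$. Since $B \Wequiv B'$ gives $B \leqW B'$, the forward direction yields $B \leqW^* B'$, and composing with the hypothesis we get $A \leqW^* B'$. Now the preceding lemma applies to $B'$ (which has empty interior), so $A \leqW B'$. Composing with $B' \leqW B$ gives $A \leqW B$, as desired.

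For the final assertion, the map $\Wdeg{A} \mapsto \Wdeg{A}^*$ is well defined for arbitrary $A$: if $A \Wequiv A'$ in $\pre{\omega}{2}$, then by the forward direction (which needs no rank hypothesis) $A \leqW^* A'$ and $A' \leqW^* A$, so $\Wdeg{A}^* = \Wdeg{A'}^*$. For injectivity, suppose $A, A' \subseteq \pre{\omega}{2}$ both have Wadge rank $\geq \omega$ and $\Wdeg{A}^* = \Wdeg{A'}^*$. Then $A \leqW^* A'$ and $A' \leqW^* A$; applying the biconditional just proved (using $\Wrank{A'}, \Wrank{A} \geq \omega$) we conclude $A \leqW A'$ and $A' \leqW A$, i.e.\ $\Wdeg{A} = \Wdeg{A'}$.

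The only step requiring any real work is the one that replaces $B$ by $B'$, and even there the heavy lifting has already been done: the computation that $B \setminus \Int B \Wequiv B$ for sets of rank $\geq \omega$ is \eqref{eq:emptyinterior}, and the transfer from $\leqW^*$ to $\leqW$ in the empty-interior case is the content of the preceding lemma. So the corollary is essentially a formal consequence of those two facts.
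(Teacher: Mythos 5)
Your proof is correct and is exactly the argument the paper compresses into the single line ``By~\eqref{eq:emptyinterior} we obtain'': replace \( B \) by \( B \setminus \Int B \), which is Wadge equivalent to \( B \) by~\eqref{eq:emptyinterior} and has empty interior, then apply the preceding lemma to transfer \( \leqW^* \) to \( \leqW \). The well-definedness and injectivity parts are spelled out correctly as well, with the observation that well-definedness needs no rank hypothesis while injectivity does.
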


Wadge showed~\cite{Wadge:1983sp} that the length of the Wadge hierarchy 
of \( \bDelta^{0}_{2} \) and \( \bDelta^{0}_{3} \) subsets of the Baire space is, respectively, 
\( \omega _1 \) and \( \omega _1^{ \omega _1} \), hence
\begin{align*}
\sup \setofLR{\Wrank{A}}{A \in \bDelta^{0}_{2} } & \leq  \omega _1
\\
\sup \setofLR{\Wrank{B}}{B \in \bDelta^{0}_{3} } & \leq  \omega _1^{\omega_1} .
\end{align*}

\begin{theorem}\label{th:WadgehierarchyinCantorandBaire}
Let \( \mathcal{A} \subseteq \Pow ( \pre{\omega}{2} ) \) be the smallest family containing 
\( \emptyset \) and \(\pre{\omega}{2} \) and closed under the operations 
\begin{enumerate}[label={\upshape (O-\arabic*)}, leftmargin=3pc]
\item\label{th:WadgehierarchyinCantorandBaire-1}
\( A \mapsto \neg A\),
\item
\( ( A , B ) \mapsto A\Wsum B \),
\item
\( ( A , B ) \mapsto A \oplus B \),
\item
\( (A_n )_n \mapsto (A_n )_n^\triangledown\), and
\item\label{th:WadgehierarchyinCantorandBaire-5}
\( (A_n )_n \mapsto (A_n )_n^\circ\),
\end{enumerate}
and let \( \mathcal{B} \subseteq \Pow ( \pre{\omega}{2} )\) be the smallest 
family containing \( \emptyset \) and \(\pre{\omega}{2} \) and closed under the 
operations~\ref{th:WadgehierarchyinCantorandBaire-1}--\ref{th:WadgehierarchyinCantorandBaire-5} 
above and also closed under 
\begin{enumerate}[label={\upshape (O-\arabic*)}, leftmargin=3pc,resume]
\item\label{th:WadgehierarchyinCantorandBaire-6}
\( A \mapsto A^\natural \),
\item\label{th:WadgehierarchyinCantorandBaire-7}
\( A \mapsto A^\flat\).
\end{enumerate}

Then \( \mathcal{A} \subseteq \bDelta^{0}_{2}\) and \( \mathcal{B} \subseteq \bDelta^{0}_{3}\) 
and \( \mathcal{A} \) intersects every Wadge degree in \( \bDelta^{0}_{2}\) 
and \( \mathcal{B} \) intersects every Wadge degree in \( \bDelta^{0}_{3}\), that is
\begin{align*}
& \forall X \subseteq \pre{\omega}{2} \left ( X \in \bDelta^{0}_{2} 
\implies \exists  A\in \mathcal{A} \left ( A \Wequiv X \right ) \right )
\\
&\forall X \subseteq \pre{\omega}{2} \left ( X \in \bDelta^{0}_{3} 
\implies \exists  B\in \mathcal{B} \left ( B \Wequiv X \right ) \right ) .
\end{align*}
\end{theorem}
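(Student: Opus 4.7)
The plan is to split the proof into two independent parts: (i) showing the families $\mathcal{A}$ and $\mathcal{B}$ are contained in the claimed Borel classes, and (ii) showing they intersect every Wadge degree inside those classes. Part (i) is a routine structural induction on the construction of $\mathcal{A}$ and $\mathcal{B}$, while part (ii) is a transfinite induction on Wadge rank that leans heavily on the material already developed in Sections 4.1 and 4.2.

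For part (i), the base case is trivial since $\emptyset, \pre{\omega}{2} \in \bDelta^{0}_{1}$. Every $\bDelta^{0}_{n}$ class is closed under complements and finite boolean operations, so closure under (O-1) and (O-3) is immediate. For (O-2), note that $A\Wsum B = (\mathrm{Diag} \cap \overline{B}) \cup (\neg\mathrm{Diag} \cap h^{-1}(A))$, where $\mathrm{Diag} = \setofLR{x}{\forall n\, x(2n)=x(2n+1)}$ is closed and $h$ is the continuous map sending $x$ to its tail after the first pair of differing coordinates; each of the two pieces is an intersection of a set in $\bPi^{0}_{1}$ or $\bSigma^{0}_{1}$ with a set in $\bDelta^{0}_{n}$, hence itself in $\bDelta^{0}_{n}$. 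For (O-4) and (O-5), write $(A_n)_n^\triangledown$ as $\bigcup_n N_{0^{(n)}1} \cap (0^{(n)}1 \conc A_n)$, a countable union of $\bDelta^{0}_{n}$ sets living in disjoint clopen pieces; a direct computation shows both this set and its complement are in $\bSigma^{0}_{n}$, and the single point $0^{(\infty)}$ separating $\triangledown$ from $\circ$ is closed, so both stay in $\bDelta^{0}_{n}$. For (O-6) and (O-7), decompose $A^\natural$ as $\bigcup_{N\geq -1}\left(C_N \cap g_N^{-1}(A)\right)$, where $C_N\in \bPi^{0}_{1}$ records that the last position with $x(2k)\neq x(2k+1)$ equals $N$, and $g_N$ is the continuous decoding of the tail; since $\setofLR{x}{\text{infinitely many }n \text{ with } x(2n)\neq x(2n+1)}$ is $\bPi^{0}_{2}$, both $A^\natural$ and $A^\flat$, together with their complements, turn out to be in $\bSigma^{0}_{3}$, hence in $\bDelta^{0}_{3}$.

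For part (ii), I would proceed by transfinite induction on the Wadge rank, using the Cantor-space pattern described in Section 4: non-self-dual pairs at every limit level (since $\pre{\omega}{2}$ is compact) and strict alternation non-self-dual/self-dual at successor levels. At rank $1$ the pair $\set{\emptyset,\pre{\omega}{2}}$ is already in $\mathcal{A}\subseteq \mathcal{B}$. Given a non-self-dual pair $\set{A, \neg A}$ in $\mathcal{A}$ (resp.\ $\mathcal{B}$) at rank $\alpha$, the set $A \oplus \neg A$ realizes the unique self-dual degree at rank $\alpha+1$. Given a self-dual $A$ in the family at rank $\alpha$, $A\Wsum \emptyset \Wequiv A^\triangledown$ and $A\Wsum \pre{\omega}{2} \Wequiv A^\circ$ realize the non-self-dual pair immediately above, both of which land in $\mathcal{A}$ (resp.\ $\mathcal{B}$) by closure under (O-2). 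At a limit ordinal $\lambda$, choose a strictly increasing sequence $(\alpha_n)_n$ cofinal in $\lambda$ and self-dual representatives $A_n\in\mathcal{A}$ of rank $\alpha_n$ given by the inductive hypothesis; then $(A_n)_n^\triangledown$ and $(A_n)_n^\circ$ yield the non-self-dual pair at rank $\lambda$. This machine exhausts all Wadge ranks below $\omega_1$, so by Wadge's result that the $\bDelta^{0}_{2}$ hierarchy has length $\leq \omega_1$, every $\bDelta^{0}_{2}$ degree is hit by $\mathcal{A}$.

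For $\mathcal{B}$ the same induction reaches all ranks below $\omega_1$, but the Wadge hierarchy of $\bDelta^{0}_{3}$ has length $\omega_1^{\omega_1}$, so the operations (O-6) and (O-7) are needed to scale upward. Using $\Wrank{A^\natural}=\Wrank{A^\flat}= \Wrank{A}\cdot \omega_1$ and $A^\flat \Wequiv \neg A^\natural$, I would show by a secondary induction on the Cantor normal form in base $\omega_1$ that every rank $\alpha = \omega_1^{\gamma_1}\cdot \beta_1 + \dots + \omega_1^{\gamma_k}\cdot \beta_k < \omega_1^{\omega_1}$ is realized: a self-dual representative of rank $\omega_1^{\gamma}$ is obtained by applying $\natural$ to a self-dual representative of rank $\omega_1^{\gamma'}$ for $\gamma' < \gamma$ (after a $\oplus$ to pass from the non-self-dual $\natural$/$\flat$ pair to the self-dual degree one step above), together with $\Wsum$ at successor $\gamma$ and $\triangledown/\circ$ at limit $\gamma$; successive $\Wsum$'s then assemble the full normal form. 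The main obstacle I anticipate is exactly this bookkeeping for $\mathcal{B}$: threading $\natural$, $\Wsum$, and the limit operations so that every ordinal below $\omega_1^{\omega_1}$ is produced, while ensuring that at each stage we apply $\Wsum$ or $\natural$ to a self-dual input (as required by the rank formulas) and that the resulting representatives remain non-self-dual or self-dual exactly as dictated by the Cantor-space alternation.
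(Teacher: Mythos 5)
Your overall strategy --- structural induction for the upper bounds, transfinite induction on Wadge rank for exhaustion, with non‑self‑dual pairs at every limit level because \( \pre{\omega}{2} \) is compact --- is exactly what the paper does. The rank \( < \omega_1 \) part (the \( \mathcal{A} \) case) and the successor and countable‑cofinality cases for \( \mathcal{B} \) are identical to the paper's. The gap is precisely the part you yourself flag at the end: the uncountable‑cofinality case, which is where the actual content of the \( \mathcal{B} \) statement lives, is left as unverified ``bookkeeping'' in your sketch, and your Cantor‑normal‑form proposal is both more complicated and slightly off as stated.

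Two concrete problems. First, you write of ``a self‑dual representative of rank \( \omega_1^\gamma \)'', but for \( \gamma > 0 \) the ordinal \( \omega_1^\gamma \) is a limit, hence (in the Cantor space) sits at a non‑self‑dual pair; the self‑dual degree is one step up, at \( \omega_1^\gamma + 1 \). Since \( \Wrank{A^\natural} = \Wrank{A}\cdot\omega_1 \) and you must feed \( \natural \) a self‑dual set, you need the observation \( (\omega_1^\gamma + 1)\cdot\omega_1 = \omega_1^{\gamma+1} \) to land on the right level; your sketch elides this. Second, and more importantly, the CNF machinery is not actually needed: the paper's argument at \( \cof(\alpha) = \omega_1 \) is just (a) if \( \alpha = \beta_1 + \beta_2 \) with \( \beta_1, \beta_2 < \alpha \) use \( \Wsum \) on inductively obtained representatives, else (b) \( \alpha \) is additively indecomposable, so \( \alpha = \omega_1^\xi \); uncountable cofinality forces \( \xi \) to be a successor \( \gamma + 1 \), so \( \alpha = (\omega_1^\gamma + 1)\cdot\omega_1 \) and you apply \( \natural \) or \( \flat \) to the self‑dual representative of rank \( \omega_1^\gamma + 1 \) already available by induction. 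This single dichotomy replaces the two‑level ``build all \( \omega_1^\gamma \) first, then assemble via \( \Wsum \)'' plan and in particular needs no separate treatment of ``\( \Wsum \) at successor \( \gamma \) vs.\ \( \triangledown/\circ \) at limit \( \gamma \)'' --- phrasing that in your sketch doesn't quite parse and suggests the argument hasn't been closed. Until that case is carried out, the \( \mathcal{B} \) half of the theorem is not proved.
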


\begin{proof}
It is immediate to check that both \( \bDelta^{0}_{2}\) and \( \bDelta^{0}_{3}\) are closed 
under~\ref{th:WadgehierarchyinCantorandBaire-1}--\ref{th:WadgehierarchyinCantorandBaire-5}
and that \( \bDelta^{0}_{3}\) is closed under~\ref{th:WadgehierarchyinCantorandBaire-6}
and~\ref{th:WadgehierarchyinCantorandBaire-7}.
It is enough to prove by induction on \( \alpha \) that
\begin{subequations}
\begin{align}
\Wrank{X} =  \alpha <  \omega _1 & \implies \exists A \in \mathcal{A} \left (A \Wequiv X \right ) \label{eq:WadgehierarchyinCantorandBaire1}
\\
\Wrank{X} =  \alpha <  \omega _1^{\omega_1} & \implies \exists B \in \mathcal{B} \left ( B \Wequiv X \right ) .
\label{eq:WadgehierarchyinCantorandBaire2}
\end{align}
\end{subequations}
So fix \( \Wrank{X} =  \alpha <  \omega _1^{  \omega _1}\).
\begin{itemizenew}
\item
If \(  \alpha = 1\) then \( X = \emptyset\) or \( X = \pre{\omega}{2} \), 
so \( X \) belongs to both \( \mathcal{A} \) and \( \mathcal{B} \).
\item
If \(  \alpha =  \beta + 1\), then there is a set \( Y \) of rank \( \beta \)
which is in \( \mathcal{A} \) if \(  \alpha <  \omega _1\) or in \(  \mathcal{B}\) otherwise.
If \( X \) is self-dual, then \( X \Wequiv Y \oplus \neg Y \), and if \( X \) is non-self-dual, then 
either \( X \Wequiv Y^\triangledown \) or else  \( X \Wequiv Y^\circ \), so the theorem 
is proved when \( \alpha \) is a successor.
\item
Suppose now \( \alpha \) is limit.
\begin{itemizenew}
\item
If \( \cof (  \alpha ) =  \omega \) choose an increasing sequence \(  \alpha _n \to  \alpha \) 
and sets \( Y_n \) such that \( \Wrank{Y_n}=  \alpha _n \): then either 
\( X \Wequiv (Y_n)_n^\triangledown\) or else \( X \Wequiv (Y_n)_n^\circ \).
Since \( \cof (  \alpha ) =  \omega \) when \(  \alpha <  \omega _1\), 
then~\eqref{eq:WadgehierarchyinCantorandBaire1} is proved.
\item
Suppose now \( \cof (  \alpha ) =  \omega _1\).

If \( \alpha = \beta _1 + \beta _2 \) with \( \beta _1 , \beta _2 < \alpha \),
then --- by replacing \( \beta _1 \) with its successor if needed --- we may
assume that any set of Wadge rank \( \beta _1\) is self-dual.
By inductive assumption there are \( B_1 , B_2 \in \mathcal{B}\) such that 
\( \Wrank{B_i} = \beta _i\) and \( B_1 \Wsum  B_2 \Wequiv X \).
Since \( \mathcal{B}\) is closed under addition of sets we are done.

Therefore we may assume that \( \alpha \) is additively indecomposable, hence
\( \alpha = \omega _1^ \xi \cdot \nu \) with \( 1 \leq \nu < \omega _1\).
As \( \cof ( \alpha ) > \omega \), then \( \nu \) cannot be a limit or a successor ordinal \( > 1 \), hence \( \nu =1 \),
so \( \alpha = \omega _1 ^ \xi \).
Again by \( \cof ( \alpha ) > \omega \) it follows that \( \xi \) cannot be limit, so
\( \alpha =  \omega _1^ { \gamma}\cdot \omega _1 \).
If \( \gamma = 0 \) then \( \alpha = \omega _1\) and therefore either 
\( X \Wequiv D^ \natural\) or else \( X \Wequiv D^\flat \) with
\( D \) self-dual, hence~\eqref{eq:WadgehierarchyinCantorandBaire2} holds.
Thus we may assume that \( \gamma  > 0 \).
By inductive hypothesis there is a set \( B \in \mathcal{B}\) of Wadge rank 
\( \omega _1^ { \gamma} + 1\), and since \( \omega _1^ { \gamma} \)
is limit then \( B \) is self-dual.
As \( \alpha = ( \omega_1^ \gamma + 1 ) \cdot \omega _1 \)
hence by~\eqref{eq:Wadgenatural3} it follows that either \( X \Wequiv B^\natural \) 
or else \( X \Wequiv B^\flat \), and since \( \mathcal{B}\) is closed under 
operations~\ref{th:WadgehierarchyinCantorandBaire-6}
and~\ref{th:WadgehierarchyinCantorandBaire-7}, 
then~\eqref{eq:WadgehierarchyinCantorandBaire2} holds for \( \alpha \). 
\end{itemizenew}
\end{itemizenew}
This completes the induction and the theorem is proved.
\end{proof}

\begin{corollary}\label{cor:lengthofWadge}
The length of the Wadge hierarchy on \( \pre{\omega}{2} \) restricted to 
\( \bDelta^{0}_{2}\) is \( \omega _1\), and restricted to 
\( \bDelta^{0}_{3}\) is \( \omega _1^{  \omega _1} \).
\end{corollary}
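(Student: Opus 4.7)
The upper bounds are already in hand from the inequalities recorded just before Theorem~\ref{th:WadgehierarchyinCantorandBaire}: Wadge's length results for the $\bDelta^{0}_{2}$ and $\bDelta^{0}_{3}$ hierarchies on the Baire space, combined with the implication $A \leqW B \IMPLIES A \leqW^* B$, yield $\sup\{\Wrank{A} : A \in \bDelta^{0}_{2}\} \leq \omega_1$ and $\sup\{\Wrank{B} : B \in \bDelta^{0}_{3}\} \leq \omega_1^{\omega_1}$ on the Cantor space. What remains is to establish matching lower bounds, and my plan is to deduce them from Theorem~\ref{th:WadgehierarchyinCantorandBaire}.

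The key step is to show by transfinite induction on $\alpha$ that for every $\alpha < \omega_1$ there is a set in $\mathcal{A}$ of Wadge rank exactly $\alpha$, and for every $\alpha < \omega_1^{\omega_1}$ there is a set in $\mathcal{B}$ of Wadge rank exactly $\alpha$. This is carried out bottom-up using the very operations introduced in Theorem~\ref{th:WadgehierarchyinCantorandBaire}. Starting from $\emptyset$ at rank $1$, successor stages alternate between $A \mapsto A \oplus \neg A$ (producing a self-dual set of rank one higher from a non-self-dual one) and $A \mapsto A^\triangledown$ or $A \mapsto A^\circ$ (producing the non-self-dual pair immediately above a self-dual $A$). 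At limits of countable cofinality, $(A_n)_n^\triangledown$ and $(A_n)_n^\circ$ applied to an appropriately chosen increasing sequence $A_n$ supply the desired non-self-dual pair.

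For $\bDelta^{0}_{3}$, limits of uncountable cofinality are reached using $A \mapsto A^\natural$ and $A \mapsto A^\flat$: by~\eqref{eq:Wadgenatural3} these multiply the Wadge rank by $\omega_1$, and combined with $\Wsum$, which adds the ranks of self-dual sets, they give access to every ordinal of the form $\omega_1^{\gamma_1}\cdot n_1 + \cdots + \omega_1^{\gamma_k}\cdot n_k$, hence to every $\alpha < \omega_1^{\omega_1}$ via its $\omega_1$-base Cantor normal form. More precisely, one first produces a self-dual representative of rank $\omega_1^{\gamma}$ for each countable $\gamma$ (by a secondary induction: base case rank $1$, successor $\gamma \mapsto \gamma+1$ obtained by applying $\natural$ to a self-dual set of rank $\omega_1^\gamma$ and then $\oplus$ to restore self-duality, and countable-cofinality limits handled by $\triangledown,\circ$), and then sums finitely many such representatives using $\Wsum$.

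The main obstacle I anticipate is the self-duality bookkeeping: the rank formulas $\Wrank{A \Wsum B} = \Wrank{A}+\Wrank{B}$ and $\Wrank{A^\natural} = \Wrank{A^\flat} = \Wrank{A}\cdot\omega_1$ require $A$ (or its left argument) to be self-dual, so the induction must deliver a self-dual representative at every rank where such an operation is about to be invoked. This is arranged by interleaving $\oplus$, which converts a non-self-dual set into a self-dual one at the next rank, exactly as in the downward induction performed in the proof of Theorem~\ref{th:WadgehierarchyinCantorandBaire}. No new technical ideas beyond those already deployed there are required; once every rank below the upper bound has been realized, the corollary is immediate.
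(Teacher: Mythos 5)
Your plan tracks the paper's own: the corollary is an immediate consequence of Theorem~\ref{th:WadgehierarchyinCantorandBaire}, whose inductive proof is exactly the transfinite construction you sketch (starting from $\emptyset$, using $\oplus$ at successors, $\triangledown/\circ$ at countable-cofinality limits, and $\Wsum$, $\natural$, $\flat$ at uncountable-cofinality limits), and the upper bounds come, as you say, from Wadge's Baire-space bounds via $A\leqW B\implies A\leqW^* B$. Two slips in your \emph{more precisely} paragraph deserve attention. First, there is no self-dual set of rank $\omega_1^\gamma$, nor of rank~$1$: in the Cantor space rank~$1$ and every limit rank carry a non-self-dual pair. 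The paper works instead with a self-dual representative of rank $\omega_1^\gamma+1$ (which exists because $\omega_1^\gamma$ is a limit) and applies $\natural$ or $\flat$ using $(\omega_1^\gamma+1)\cdot\omega_1 = \omega_1^{\gamma+1}$; your stated invariant \emph{self-dual at rank $\omega_1^\gamma$} cannot be maintained, precisely because the $\oplus$-step you use to restore self-duality increments the rank by one. Second, the base-$\omega_1$ Cantor normal form of an ordinal below $\omega_1^{\omega_1}$ has coefficients that are arbitrary countable ordinals, not natural numbers, so finitely many $\Wsum$-applications to $\natural$-images do not reach, for instance, $\omega_1\cdot\omega$; that ordinal, however, has countable cofinality and is supplied by the $\triangledown/\circ$ stage rather than the $\Wsum$ stage. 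Neither slip is fatal, since the main induction you outline treats the three kinds of ordinal (successor, countable-cofinality limit, uncountable-cofinality limit) separately and correctly, exactly as in the proof of Theorem~\ref{th:WadgehierarchyinCantorandBaire}.
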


\subsection{A well-quasi-order on {\rm\( \MALG \)}}\label{subsec:wqoonMALG}
The Wadge hierarchy induces a well-quasi-order  \( \preceq\) on \( \MALG \) 
\[
\eq{A} \preceq \eq{B} \IFF \EXISTS{ f }  \FORALL{x \in \pre{\omega}{2} } 
\bigl ( x\in \hat{ \Phi } ( \eq{A} ) \iff f ( x ) \in \hat{ \Phi } ( \eq{B} ) \bigr ) 
\]
which is \( \bSigma^{1}_{2}\).
Similarly, for any Borel set \( C \) the sets 
\begin{align*}
P_{\leq C} = &  \setofLR{\eq{A} \in \MALG}{\EXISTS{ f }  \FORALL{x \in \pre{\omega}{2} } 
\bigl ( x\in \hat{ \Phi } ( \eq{A} ) \iff f ( x ) \in C \bigr )}
\\
P_{C \leq } = &  \setofLR{\eq{A} \in \MALG}{\EXISTS{ f }  \FORALL{x \in \pre{\omega}{2} } 
\bigl ( x\in C \iff f ( x ) \in \hat{ \Phi } ( \eq{A} ) \bigr )}
\end{align*}
are \( \bSigma^{1}_{2}\).
Therefore if \( \boldsymbol{d} \subseteq \bPi^{0}_{3}\) is a Wadge degree then  
\[ 
\begin{split}
\mathscr{W}_{ \boldsymbol{d}} & = \setofLR{\eq{A} \in \MALG }{ \Phi ( A ) \in \boldsymbol{ d} }
\\
 & = P_{\leq A_0} \cap P_{A_0 \leq}
\end{split} 
\]
is \( \bSigma^{1}_{2}\), where \( A_0 \) is a Borel set such that \( \Wdeg{A_0} \in \boldsymbol{d}\).

We will now observe that the \( \mathscr{W}_{ \boldsymbol{d}}\) are \( \bDelta^{1}_{2}\).
Since \( \setofLR{\eq{A} \in \MALG }{ \Phi ( A ) \in  \bDelta^{0}_{3} } = P_{ \leq C} \) with \( C \)
a complete \( \bSigma^{0}_{3}\) set, if \(  \boldsymbol{d} = \bPi^{0}_{3} \setminus \bDelta^{0}_{3} \) 
then \( \mathscr{W}_{ \boldsymbol{d}} \) is \( \bPi^{1}_{2}\) hence \( \bDelta^{1}_{2} \).
If \( \boldsymbol{d} \subseteq \bDelta^{0}_{3} \) we take cases, accordingly to whether 
it is self-dual or non-self-dual.
If  \( \boldsymbol{d} \) is self-dual, choose \( B_1 , B_2 , B_3 , B_4\) such that 
\( \Wdeg{B_1} , \Wdeg{B_2} \) are the immediate predecessors of \( \boldsymbol{d} \)
and \( \Wdeg{B_3} , \Wdeg{B_4} \) are the immediate successors of \( \boldsymbol{d} \):
then \( \MALG \setminus \mathscr{W}_{\boldsymbol{d}} = P_{\leq B_1} \cup P_{\leq B_2}
\cup P_{\geq B_3} \cup P_{\geq B_4} \) is \( \bSigma^{1}_{2}\).
If \( \boldsymbol{d} \) is self-dual, choose \( B \) such that \( \Wdeg{B} = \breve{\boldsymbol{d}} \):
then  \( \MALG \setminus \mathscr{W}_{\boldsymbol{d}} = P_{\leq B} \cup  P_{\geq B} \) 
is \( \bSigma^{1}_{2}\).

Theorems~\ref{th:Pi03complete} and~\ref{th:belowDelta03} will guarantee that every  
\( \mathscr{W}_{ \boldsymbol{d}} \) is nonempty, hence
the well-quasi-order \( \preceq\) has length \( \omega_1^{\omega_1} + 1 \).

\section{Climbing the \texorpdfstring{\( {\boldsymbol\Delta} ^{0}_{2} \)}{Delta02}-hierarchy}\label{sec:climbing}%
In this and the next  section, the  constructions of Section~\ref{subsec:Wadgesconstructions}  will be modified so that
they take \( \mathcal{T} \)-regular sets into \( \mathcal{T} \)-regular sets. 

If \( A \) and \( B \) are \( \mathcal{T} \)-regular, so is \( A \oplus B \).
But even if every \( A_n \) is \( \mathcal{T} \)-regular, there is no guarantee 
that \( (A_n )_n^\triangledown \) and \( ( A_n )_n^\circ \) will be \( \mathcal{T} \)-regular.
Our first goal is to fix this problem.

The sets \( U_f \) of the examples of Section~\ref{sec:Someexamples} are 
obtained by appending \( \pre{\omega}{2} \) to the terminal nodes of the tree
\[
\setofLR{s \in  \pre{ < \omega }{2} }{ \exists{n} \left ( s \subseteq 0^{( n)} 1^{( f ( n ) )} \right ) } ,
\]
which is shaped like a rake (Figure~\ref{fig:rake}).
\begin{figure}
   \centering
   \begin{tikzpicture}
\draw (0,0)--(-5, -5); \draw[densely dotted] (-5, -5) -- (-6,-6) node [below] {\( 0^{( \infty )} \)};
\filldraw (0,0)--++(2,-2) circle (2pt) node [right]{\( 1^{( f ( 0 ) )}\)}
(-1,-1)-- ++(1,-1) circle (2pt) node [right]{\(0 1^{( f ( 1 ) )}\)}
(-2,-2)-- ++(3,-3) circle (2pt) node [right]{\(0^{( 2 )} 1^{( f ( 2 ) )}\)}
 (-3,-3)-- ++(1,-1) circle (2pt) node [right]{\(0^{(3)} 1^{( f ( 3 ) )}\)}
  (-4,-4)-- ++(2,-2) circle (2pt) node [right]{\(0^{(4)} 1^{( f ( 4 ) )}\)};
\end{tikzpicture}
   \caption{A rake.}
   \label{fig:rake}
\end{figure}
This construction can be generalized by appending different sets at the terminal nodes:
for any \( f \colon \omega  \to \omega \setminus \setLR{0} \) and any sequence of 
sets \( A_n \) (\(n \in \omega \)), let
\[
\Rake ( f ; ( A_n )_n ) = \bigcup_{n} 0^{(n)} 1^{(f ( n ))} \conc A_n .
\]
When \( A_n = A \) for all \( n \), we write \( \Rake ( f ; A ) \).
Note that the sets  \( U_f \) of Section~\ref{sec:Someexamples} are 
exactly the sets \(\Rake ( f ; \pre{\omega}{2} ) \).

There are times when we need rakes with a pole and densely packed tines.
In our case we need the tree whose terminal nodes are the sequences 
\( 0^{(n)} 1 \conc s \) of length \(n + f ( n ) \) --- in Figure~\ref{fig:rake+} 
the nodes different from the ones of the form \( 0^{( n)} 1^{( f ( n ) )} \) are 
drawn in a paler shade of gray.
\begin{figure}
   \centering
\begin{tikzpicture}
\draw (0,0)--(-5, -5); \draw[densely dotted] (-5, -5) -- (-6,-6) ;
\filldraw (0,0)--++(2,-2) circle (2pt) 
(-1,-1)-- ++(0.7,-1) circle (2pt) 
(-2,-2)-- ++(3,-3) circle (2pt) 
 (-3,-3)-- ++(0.7,-1) circle (2pt) 
  (-4,-4)-- ++(2,-2) circle (2pt) ;
\filldraw[very thick, gray!50] (1,-1)--+(-0.7,-1) circle (2pt)
(-1,-3)--+(-0.7,-1)
 (0,-4)--+(-0.7,-1) circle (2pt)
(-3,-5)--+(-0.7,-1) circle (2pt);
\filldraw[very thick, gray!50] (-1.7, -4)--+(-0.5 ,-1) circle (2pt);
\filldraw[very thick, gray!50] (-1.7, -4)--+(0.5 ,-1) circle (2pt);
\end{tikzpicture}
   \caption{A rake with a pole and densely packed tines.}
   \label{fig:rake+}
\end{figure}
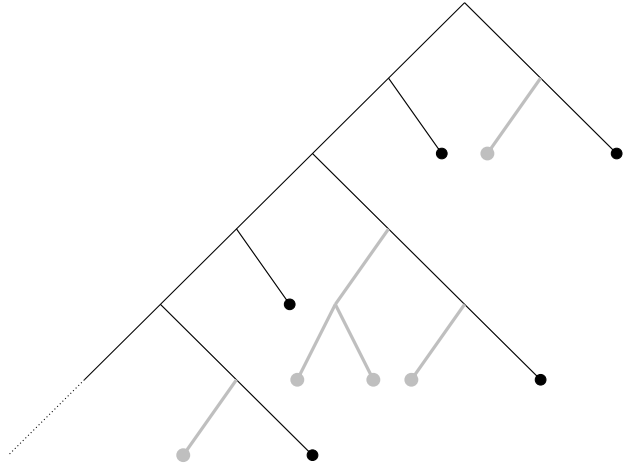
Let \( \Rakep ( f ; ( A_n )_n )\) be the set obtained by appending a copy of \( A_n \) to 
the \( n \)th terminal node
\tikz \fill circle (2pt);, 
and by taking the basic open sets in all other terminal nodes
\tikz \fill[gray!50] circle (2pt);, 
together with the zero-sequence, that is
\begin{multline*}
\Rakep ( f ; ( A_n )_n ) = \setLR{0^{( \infty )}} \cup \Rake ( f ; ( A_n )_n ) \cup {}
\\
\bigcup \setofLR{\Nbhd_t}{ \exists n
\left (\lh ( t ) = n + f ( n )  \AND t \neq 0^{(n)}1^{(f ( n ))} \AND t \supseteq 0^{(n)} 1 \right )}.
\end{multline*}
Note that the \( \Rake \) and \( \Rakep\) constructions commute with the \( \Phi \) operation, 
in the sense that if \( \lim_n f ( n ) = + \infty \), then 
\begin{align*}
\Phi \left ( \Rake ( f  , (A_n)_n ) \right )  & =  \Rake ( f  , ( \Phi (A_n) )_n ) 
\\
\Phi \left ( \Rakep ( f  , (A_n)_n ) \right )  & =  \Rakep ( f  , ( \Phi (A_n) )_n ) .
\end{align*}

\begin{proposition}\label{prop:Rake}
Let \( f \colon \omega \to \omega \setminus \setLR{0} \) and \( A_n \subseteq \pre{\omega}{2} \).
Then
\[
( A_n )_n^\triangledown \Wequiv \Rake ( f ; ( A_n )_n ) .
\]
Suppose moreover that \( \lim_n f ( n ) = \infty \).
Then:
\begin{itemize}
\item
if \( A_n \in \mathcal{M} \) for every \( n \), then 
\( \Rake ( f ; (A_n)_n ) \in \mathcal{M} \),
\item
if \( A_n \in \ran ( \Phi\restriction \bPi^{0}_{1} ) \) for every \( n \), then 
\( \Rake ( f ; (A_n)_n ) \in \ran ( \Phi\restriction \bPi^{0}_{1} ) \),
\item
if \( A_n \in \ran ( \Phi\restriction \bSigma^{0}_{1} ) \) for every \( n \), then 
\( \Rake ( f ; (A_n)_n ) \in \ran ( \Phi\restriction \bSigma^{0}_{1} ) \).
\end{itemize}
\end{proposition}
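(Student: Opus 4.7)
The plan is to establish the Wadge equivalence via explicit continuous reductions in both directions and then, under the hypothesis \( \lim_n f(n) = + \infty \), to deduce the three preservation statements from the commutation identity \( \Phi ( \Rake ( f , (A_n)_n ) ) = \Rake ( f , ( \Phi (A_n) )_n ) \) recorded just before the proposition.

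For \( (A_n)_n^\triangledown \leqW \Rake(f; (A_n)_n) \) I would set \( h(0^{( \infty )}) = 0^{( \infty )} \) and \( h(0^{(n)} \conc 1 \conc y) = 0^{(n)} \conc 1^{(f(n))} \conc y \); continuity is evident prefix by prefix, and unique parsing yields the biconditional \( x \in (A_n)_n^\triangledown \iff h(x) \in \Rake(f;(A_n)_n) \). For the reverse direction I would describe a winning strategy for \( \II \) in \( \GW (\Rake(f;(A_n)_n), (A_n)_n^\triangledown) \): while \( \I \) plays \( 0 \)s, \( \II \) echoes them; after \( \I \)'s first \( 1 \) (at round \( n \)), \( \II \) passes until \( \I \) has produced \( f(n) \) consecutive \( 1 \)s, at which point \( \II \) plays \( 1 \) and thereafter copies \( \I \)'s moves; if instead \( \I \) plays a \( 0 \) before completing those \( f(n) \) ones, \( \II \) commits to \( 0 \)s forever, outputting \( 0^{( \infty )} \notin (A_n)_n^\triangledown \). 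Note that this part makes no use of \( \lim f = + \infty \).

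Now assume \( \lim_n f(n) = + \infty \). For preservation of \( \mathcal{M} \) I would compute \( \mathcal{D}_{\Rake(f;(A_n)_n)}(x) \) by cases. At \( x = 0^{( \infty )} \) the bound \( \mu(\LOC{\Rake(f;(A_n)_n)}{0^{(n)}}) \leq \sum_{m \geq n} 2^{-(m-n)-f(m)} \) together with \( f(m) \to \infty \) forces density \( 0 \). For \( x \neq 0^{( \infty )} \) with first \( 1 \) at position \( n \), either \( x \) lacks \( f(n) \) consecutive \( 1 \)s starting at position \( n \) (so \( \LOC{\Rake(f;(A_n)_n)}{x \restriction m} = \emptyset \) for all sufficiently large \( m \), giving density \( 0 \)) or \( x = 0^{(n)} \conc 1^{(f(n))} \conc a \) and a direct self-similarity computation reduces the localizations of \( \Rake(f;(A_n)_n) \) at \( x \restriction m \) to the localizations of \( A_n \) at \( a \restriction (m - n - f(n)) \), giving \( \mathcal{D}_{\Rake(f;(A_n)_n)}(x) = \mathcal{D}_{A_n}(a) \in \setLR{0,1} \) by the hypothesis on \( A_n \).

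For the \( \bSigma^{0}_{1} \) case, if \( A_n = \Phi(U_n) \) with \( U_n \) open, the set \( U = \bigcup_n 0^{(n)} \conc 1^{(f(n))} \conc U_n \) is a union of translates of open sets and hence open, and the commutation identity gives \( \Phi(U) = \Rake(f; (A_n)_n) \). For the \( \bPi^{0}_{1} \) case, writing \( A_n = \Phi(C_n) \) with \( C_n \) closed, I would take \( C = \Rake(f; (C_n)_n) \cup \setLR{0^{( \infty )}} \): a convergent sequence \( x_k \to x \) in \( \Rake(f;(C_n)_n) \) with \( x_k = 0^{(n_k)} \conc 1^{(f(n_k))} \conc c_k \) either has \( n_k \) bounded (in which case a subsequence fixes \( n_k = n \) and \( c_k \) converges inside the closed \( C_n \), so \( x \in C \)) or has \( n_k \to \infty \) (forcing \( x = 0^{( \infty )} \in C \)). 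The main obstacle I anticipate is precisely this closure check: adjoining \( \setLR{0^{( \infty )}} \) is genuinely necessary, but being of measure \( 0 \) it is invisible to \( \Phi \), so \( \Phi(C) = \Phi(\Rake(f; (C_n)_n)) = \Rake(f; (A_n)_n) \) again by commutation.
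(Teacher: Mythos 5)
Your proposal is correct and follows essentially the same route as the paper: the Wadge equivalence is established by the same pair of strategies (your explicit map \(h\) is exactly the function induced by the paper's Lipschitz strategy for one direction, and your Wadge strategy for the other matches the paper's), and the three preservation claims are obtained, as in the paper, from the commutation identity applied to \(\Rake(f;(U_n)_n)\) and to \(\setLR{0^{(\infty)}}\cup\Rake(f;(C_n)_n)\). The only difference is that you spell out the density computation for the \(\mathcal{M}\)-preservation and the closedness check, both of which the paper leaves to the reader; your estimates there are correct.
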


\begin{proof}
\( \II \) wins \( \GW (( A_n)_n^\triangledown , \Rake ( f ; (A_n)_n )  ) \) as follows:
\begin{quote}
As long as \( \I \) plays \( 0 \)'s then \( \II \) copies \( \I \)'s moves.
If \( \I \) reaches a position \( 0^{(n )} 1 \) then \( \II \) plays \( 1 \) from now on
until position \( 0^{( n)} 1^{(f ( n ) )} \) is reached: at this point \( \II \) will copy 
the moves \( \I \) played after position \( 0^{(n)} 1\).
\end{quote}

Conversely \( \II \) wins \( \GW ( \Rake ( f ; (A_n)_n ) , ( A_n)_n^\triangledown  ) \) as follows:
\begin{quote}
As long as \( \I \) plays \( 0 \)'s then \( \II \) copies \( \I \)'s moves.
If  after \( 0^{(n )} \) \( \I \) starts playing \( 1 \)s, then \( \II \) passes until \( \I \) 
has reached position \( 0^{(n)} 1^{( f ( n ) )} \): at that point \( \II \) plays \( 1 \) 
and  from now on copies \( \I \)'s moves.

If instead \(\I \) does not reach \( 0^{(n)} 1^{( f ( n ) )} \), i.e., \( \II \) plays \( 0 \) 
after \(  0^{(n)} 1^{(m)} \) with \( m < f ( n ) \) so that his play will not be in 
\( \Rake ( f ; (A_n)_n ) \), then \( \I \) plays \( 0 \)'s from now on so that the resulting 
play will be \( 0^{( \omega )} \notin ( A_n)_n^\triangledown\).
\end{quote}

Suppose now that \( f ( n ) \to \infty \).
If \( A_n = \Phi ( C_n ) \) with \( C_n \) closed for all \( n \), then
\[ 
\Rake ( f ; (A_n)_n )  = \Phi \bigl ( \setLR{0^{(\infty )}} 
\cup \Rake ( f ; ( C_n)_n ) \bigr ) .
\]
Similarly if \( A_n = \Phi (U_n ) \) with \( U_n \) open, then 
\( \Rake ( f ; (A_n)_n )  = \Phi ( \Rake ( f ; ( U_n)_n ) ) \in \ran ( \Phi \restriction \bSigma^{0}_{1})\).
Finally,  if \( A_n \in \mathcal{M} \) for all \( n \), then \( \Rake ( f ; (A_n)_n )  \in \mathcal{M} \).
\end{proof}

Arguing as in Proposition~\ref{prop:Rake} we obtain:
\begin{proposition}\label{prop:Rakep}
Let \( f \colon \omega \to \omega \setminus \setLR{0} \) and \( A_n \subseteq \pre{\omega}{2} \).
Then
\[
( A_n )_n^\circ \Wequiv \Rakep ( f ; ( A_n )_n ) .
\]
Suppose moreover that \( \lim_n f ( n ) = \infty \).
Then:
\begin{itemize}
\item
if \( A_n \in \mathcal{M} \) for every \( n \), then 
\( \Rakep ( f ; (A_n)_n ) \in \mathcal{M} \),
\item
if \( A_n \in \ran ( \Phi\restriction \bPi^{0}_{1} ) \) for every \( n \), then 
\( \Rakep ( f ; (A_n)_n ) \in \ran ( \Phi\restriction \bPi^{0}_{1} ) \),
\item
if \( A_n \in \ran ( \Phi\restriction \bSigma^{0}_{1} ) \) for every \( n \), then 
\( \Rakep ( f ; (A_n)_n ) \in \ran ( \Phi\restriction \bSigma^{0}_{1} ) \).
\end{itemize}
\end{proposition}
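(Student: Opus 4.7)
The plan is to follow the template of Proposition~\ref{prop:Rake}, adjusting only for the two features that distinguish \( \Rakep \) from \( \Rake \): the pole \( \setLR{0^{(\infty)}} \) and the dense filler. As in the \( \Rake \) case, the Wadge equivalence will come from two strategies in the Wadge games, and the three structural closure claims will all reduce to combining the commutation identity \( \Phi(\Rakep(f;(B_n)_n)) = \Rakep(f;(\Phi(B_n))_n) \) recorded in the excerpt with a small amount of pointset topology.

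For the Wadge equivalence I would reuse, nearly verbatim, the \( \II \)-strategies from Proposition~\ref{prop:Rake}, with one addendum per direction. To witness \( (A_n)_n^\circ \leqW \Rakep(f;(A_n)_n) \), player \( \II \) follows the \( \Rake \)-strategy (copy zeros; upon \( \I \) playing a first \( 1 \) at stage \( n \), produce a block \( 1^{(f(n))} \) and then tail-copy \( \I \)'s moves after position \( 0^{(n)}1 \)); if \( \I \) never plays a \( 1 \), so does \( \II \), landing on \( 0^{(\infty)} \in \Rakep \). For the reverse reduction \( \Rakep(f;(A_n)_n) \leqW (A_n)_n^\circ \), \( \II \) copies zeros and passes as soon as \( \I \) plays a first \( 1 \) at stage \( n \). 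If \( \I \) reaches position \( 0^{(n)}1^{(f(n))} \) without deviating from the all-ones pattern, then \( \II \) plays \( 0^{(n)}1 \) and tail-copies, so the outcome lies in \( 0^{(n)}1 \conc A_n \) iff \( \I \)'s continuation is in \( A_n \); if instead \( \I \) plays a \( 0 \) within the next \( f(n)-1 \) rounds, then \( \I \)'s play is already committed to the dense filler and thus to \( \Rakep \), so \( \II \) can safely play zeros forever and output \( 0^{(\infty)} \in (A_n)_n^\circ \); if \( \I \) plays \( 0^{(\infty)} \), \( \II \) has been copying it.

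Next I would handle the three preservation claims by first observing the routine identity
\[
\neg \Rakep(f;(B_n)_n) = \bigcup_n 0^{(n)} 1^{(f(n))} \conc \neg B_n ,
\]
which follows from a case split on the number of leading zeros of \( x \). When \( A_n = \Phi(C_n) \) with \( C_n \) closed, this complement is a union of open sets, so \( \Rakep(f;(C_n)_n) \) is closed and its \( \Phi \)-image is \( \Rakep(f;(A_n)_n) \) by the commutation identity. When \( A_n = \Phi(U_n) \) with \( U_n \) open, I would remove the null point \( 0^{(\infty)} \) to obtain an open set: its complement \( \setLR{0^{(\infty)}} \cup \bigcup_n 0^{(n)}1^{(f(n))} \conc \neg U_n \) is closed, because the only limit point of that union lying outside of it is \( 0^{(\infty)} \), which has been added. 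Removing a null set does not alter \( \Phi \), so the commutation identity again delivers an open witness. For preservation of \( \mathcal{M} \), I would check the density of every \( x \) in \( R \equalsdef \Rakep(f;(A_n)_n) \): if \( x \) lies in the dense filler at some level, a short enough basic neighborhood of \( x \) is contained in \( R \) and the density is \( 1 \); if \( x = 0^{(n_0)} 1^{(f(n_0))} \conc a \), then \( \mathcal{D}_R(x) = \mathcal{D}_{A_{n_0}}(a) \in \setLR{0,1} \) by hypothesis.

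The main obstacle is the remaining point \( x = 0^{(\infty)} \) in the \( \mathcal{M} \)-case. A direct expansion of \( \mu(R \cap \Nbhd_{0^{(m)}}) \) using the disjoint decomposition of \( R \) into the tines and the dense filler at every level \( n \geq m \) yields
\[
2^m \mu\bigl(R \cap \Nbhd_{0^{(m)}}\bigr) = 1 - \sum_{n \geq m} 2^{m-n-f(n)} \bigl(1 - \mu(A_n)\bigr) .
\]
Since each summand is bounded by \( 2^{-(n-m)} \cdot 2^{-f(n)} \), the hypothesis \( \lim_n f(n) = \infty \) forces the tail to vanish as \( m \to \infty \), so \( \mathcal{D}_R(0^{(\infty)}) = 1 \). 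This closes the pointwise case analysis and gives \( R \in \mathcal{M} \).
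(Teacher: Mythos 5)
Your proof is correct and carries out precisely the adaptation of Proposition~\ref{prop:Rake} that the paper leaves to the reader with the phrase ``arguing as in Proposition~\ref{prop:Rake}'': the complement identity, the observation that \( \Rakep(f;(C_n)_n) \) is already closed (so no pole needs to be added, unlike in the \( \Rake \) case) and that deleting the null pole turns \( \Rakep(f;(U_n)_n) \) into an open set, and the commutation identity together do the work. The only step not present in the \( \Rake \) template, the computation \( \mathcal{D}_R(0^{(\infty)}) = 1 \) for the dualistic case, is carried out correctly via the expansion \( 2^m\mu(R\cap\Nbhd_{0^{(m)}}) = 1 - \sum_{n\geq m} 2^{m-n-f(n)}(1-\mu(A_n)) \), whose tail vanishes since \( f(n)\to\infty \).
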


We are now ready to prove Theorem~\ref{th:belowDelta03} for Wadge 
degrees contained in \( \bDelta^{0}_{2}\).

\begin{theorem}\label{th:T-regularDelta02}
The class 
\[
\mathcal{N} \equalsdef  \mathcal{M} \cap \ran ( \Phi \restriction \bPi^{0}_{1} ) \cap
 \ran ( \Phi \restriction \bSigma^{0}_{1} )
\] 
intersects every Wadge degree in \( \bDelta^{0}_{2}\), that is 
\( \FORALL{A \in \bDelta^{0}_{2}} \EXISTS{B \in \mathcal{N} } \bigl ( A \Wequiv B \bigr ) \).
\end{theorem}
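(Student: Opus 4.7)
The plan is to argue by induction on Wadge rank, paralleling the $\bDelta^{0}_{2}$ portion of the proof of Theorem~\ref{th:WadgehierarchyinCantorandBaire}. That induction builds, for every rank $\alpha < \omega _1$, a representative at rank $\alpha$ using only the operations $\neg$, $\oplus$, $\triangledown$ and $\circ$ applied to $\emptyset$ and $\pre{\omega}{2}$: the operation $\Wsum$ is invoked only when $\cof ( \alpha ) = \omega _1$, which cannot happen below $\omega _1$. It therefore suffices to show that $\mathcal{N}$ contains $\emptyset$ and $\pre{\omega}{2}$, and is closed (up to Wadge equivalence) under each of $\neg$, $\oplus$, $\triangledown$ and $\circ$.

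The closures under $\triangledown$ and $\circ$ are already packaged in Propositions~\ref{prop:Rake} and~\ref{prop:Rakep}: given $A_n \in \mathcal{N}$ and any $f \colon \omega \to \omega \setminus \{0\}$ with $\lim_n f ( n ) = \infty$, the sets $\Rake ( f ; (A_n )_n )$ and $\Rakep ( f ; (A_n )_n )$ are Wadge equivalent to $(A_n )_n^\triangledown$ and $(A_n )_n^\circ$ respectively, and the three bullets in each proposition guarantee they lie in $\mathcal{M}$, in $\ran ( \Phi \restriction \bPi^{0}_{1} )$ and in $\ran ( \Phi \restriction \bSigma^{0}_{1} )$ simultaneously. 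Closure under $\oplus$ is routine: if $A = \Phi ( U_A ) = \Phi ( C_A )$ and $B = \Phi ( U_B ) = \Phi ( C_B )$ with $U_\ast$ open and $C_\ast$ closed, then $A \oplus B = \Phi ( 0\conc U_A \cup 1 \conc U_B ) = \Phi ( 0 \conc C_A \cup 1 \conc C_B )$, and the density of $A \oplus B$ at $i \conc y$ equals the density of $A$ or $B$ at $y$, which is $0$ or $1$.

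The main obstacle is the step for $\neg$, since neither $\ran ( \Phi \restriction \bSigma^{0}_{1} )$ nor $\ran ( \Phi \restriction \bPi^{0}_{1} )$ alone is visibly closed under complementation. I would use the characterisation~\eqref{eq:D(closed)}: for $A \in \mathcal{N}$ this yields $A = \Phi ( A )$ with $\mu ( \Fr A ) = 0$. Since $A$ is dualistic, $\Phi ( \neg A ) = \neg \Phi ( A ) = \neg A$, so $\neg A$ is also $\mathcal{T}$-regular; combined with $\Fr ( \neg A ) = \Fr ( A )$ being null, \eqref{eq:D(closed)} places $\neg A$ in $\ran ( \Phi \restriction \bSigma^{0}_{1} ) \cap \ran ( \Phi \restriction \bPi^{0}_{1} )$. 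Finally $\mathcal{M}$ is obviously stable under complement, so $\neg A \in \mathcal{N}$. This is precisely why $\mathcal{N}$ is defined as the intersection of all three classes. With the four closure properties in hand, the induction (copying verbatim the base, successor and $\cof = \omega$ limit cases from the proof of Theorem~\ref{th:WadgehierarchyinCantorandBaire}) delivers a representative in $\mathcal{N}$ for every Wadge degree below $\omega _1$, which is exactly the content of Theorem~\ref{th:T-regularDelta02}.
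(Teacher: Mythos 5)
Your proof is correct, and its skeleton is the same as the paper's: induction on Wadge rank below $\omega_1$, with Propositions~\ref{prop:Rake} and~\ref{prop:Rakep} handling the non-self-dual degrees and $\oplus$ handling the self-dual (successor) degrees. The one genuine difference is how the self-dual step is closed out. You prove outright that $\mathcal{N}$ is closed under complementation --- combining $A \in \mathcal{M} \implies \Phi(\neg A) = \neg\Phi(A)$ with the characterization~\eqref{eq:D(closed)} to move the null-frontier condition across the complement --- and then apply $\oplus$ to $B$ and $\neg B$. The paper instead sidesteps any closure-under-$\neg$ lemma: writing $A \Wequiv C \oplus \neg C$ with $\Wrank{C} = \Wrank{\neg C} < \Wrank{A}$, it invokes the inductive hypothesis \emph{twice}, once for $C$ and once for $\neg C$, to produce $B_1, B_2 \in \mathcal{N}$ with $B_1 \Wequiv C$ and $B_2 \Wequiv \neg C$, and takes $B_1 \oplus B_2$. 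Your route is slightly longer but more modular, and it establishes the mildly stronger fact that $\mathcal{N}$ is literally closed under $\neg$ (not merely up to Wadge equivalence), nicely highlighting why all three constraints in the definition of $\mathcal{N}$ are needed for that lemma. The paper's route is shorter and leans only on the inductive hypothesis; it makes no use of the $\mathcal{M}$-component of $\mathcal{N}$ in the self-dual case, so the role of $\mathcal{M}$ there is solely to feed the Rake/Rakep propositions. Either way the induction closes, so the proposal is a valid, essentially equivalent proof.
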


\begin{proof}
The result is proved by induction on \( \Wrank{A} < \omega _1 \),
using the fact that \( A \in \bDelta^{0}_{2} \iff  \Wrank{A} < \omega _1 \)
(Corollary~\ref{cor:lengthofWadge}).
The case \( \Wrank{A} = 1 \) is trivial, since it implies that \( A = \pre{\omega}{2} \)
or \( A = \emptyset \) hence \( A \) is \( \mathcal{T} \)-regular and \( A \in\mathcal{N}\),
so we may assume that \( \Wrank{A} > 1 \).

If either \( \Wrank{A} \) is limit, or  \( A \) is non-self-dual and \( \Wrank{A} \) 
is a successor ordinal, then apply the inductive assumption to
Propositions~\ref{prop:Rake} and~\ref{prop:Rakep} so that 
\( A \Wequiv B \) where either \( B = \Rake ( f , ( A_n )_n ) \in \mathcal{N}\) 
or \( B = \Rakep ( f , ( A_n )_n ) \in \mathcal{N}\).

If \( A \) is self-dual then \( A \Wequiv C \oplus \neg C \), 
hence by inductive assumption there are \( B_1 , B_2  \in \mathcal{N} \)
such that \( C \Wequiv B_1 \) and \( \neg C \Wequiv B_2 \).
Then \( B_1 \oplus B_2 \in \mathcal{N}\) 
and \( A \Wequiv  B_1 \oplus B_2 \).

As every Wadge degree in \( \bDelta^{0}_{2} \) is obtained via these operations,
the result is proved.
\end{proof}

%

Using the results proved so far, together with Example~\ref{xmp:opennotdualistic}
for the last inclusion we obtain
\begin{corollary}\label{cor:ManicheanDelta02}
If  \( \mathcal{T} \) is the density topology, then
\[
 \setofLR{A}{ A\text{ is \( \mathcal{T} \)-clopen}} = \ran ( \Phi ) \cap \mathcal{M} 
 = \ran ( \Phi  \restriction \mathcal{M} ) \subset  \bDelta^{0}_{2}\cap \ran ( \Phi) .
\]
\end{corollary}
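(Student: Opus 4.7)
The plan is to verify the three relations in the statement by unpacking the relevant definitions and assembling facts already collected in Section~\ref{subsec:dualisticsets}. First I would note that \( \ran ( \Phi ) \) coincides with the class of \( \mathcal{T} \)-regular sets: by idempotence~\eqref{eq:idempotence}, every \( B \in \ran ( \Phi ) \) satisfies \( B = \Phi ( B ) \), and conversely \( B = \Phi ( B ) \) trivially places \( B \) in \( \ran ( \Phi ) \). As recorded just before~\eqref{eq:T-regularcomplement=>dualistic}, a measurable set is \( \mathcal{T} \)-clopen if and only if it is both \( \mathcal{T} \)-regular and dualistic; since \( \mathcal{M} \) is by Definition~\ref{def:manic} the collection of dualistic sets, the first equality \( \setofLR{A}{A \text{ is } \mathcal{T} \text{-clopen}} = \ran ( \Phi ) \cap \mathcal{M} \) follows immediately.

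For the second equality, the inclusion \( \ran ( \Phi \restriction \mathcal{M} ) \subseteq \ran ( \Phi ) \cap \mathcal{M} \) uses the implication \( A \in \mathcal{M} \implies \Phi ( A ) \in \mathcal{M} \) noted in Section~\ref{subsec:dualisticsets}: if \( B = \Phi ( A ) \) with \( A \in \mathcal{M} \), then both \( B \in \ran ( \Phi ) \) and \( B \in \mathcal{M} \). For the reverse inclusion, I would observe that any \( B \in \ran ( \Phi ) \cap \mathcal{M} \) satisfies \( B = \Phi ( B ) \) by \( \mathcal{T} \)-regularity, and \( B \in \mathcal{M} \), so \( B \) itself serves as the dualistic preimage witnessing \( B \in \ran ( \Phi \restriction \mathcal{M} ) \).

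For the final, strict inclusion \( \ran ( \Phi \restriction \mathcal{M} ) \subset \bDelta^{0}_{2} \cap \ran ( \Phi ) \), the inclusion follows directly from the implication \( A \in \mathcal{M} \implies \Phi ( A ) \in \bDelta^{0}_{2} \) established in Section~\ref{subsec:dualisticsets}. To see the inclusion is proper, I would invoke Example~\ref{xmp:opennotdualistic}, which produces an open (hence \( \bDelta^{0}_{2} \)) \( \mathcal{T} \)-regular set \( U_f \) that fails to be dualistic: then \( U_f = \Phi ( U_f ) \in \ran ( \Phi ) \cap \bDelta^{0}_{2} \), while \( U_f \notin \mathcal{M} \) combined with the equivalence \( A \in \mathcal{M} \iff \Phi ( A ) \in \mathcal{M} \) rules out any representation \( U_f = \Phi ( A ) \) with \( A \in \mathcal{M} \).

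No step presents a genuine obstacle; the argument is pure bookkeeping over the lemmata and examples of Sections~\ref{subsec:dualisticsets} and~\ref{sec:Someexamples}, and the only non-routine ingredient is the appeal to Example~\ref{xmp:opennotdualistic} to ensure the last inclusion is strict.
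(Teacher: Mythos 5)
Your proof is correct and spells out exactly the chain of facts the paper has in mind: the characterization of $\mathcal{T}$-clopen as $\mathcal{T}$-regular plus dualistic, the equivalence $A \in \mathcal{M} \iff \Phi(A) \in \mathcal{M}$ (coming from $A \equiv \Phi(A)$ and~\eqref{eq:dualisticclosedunderequivalence}), the bound $\Phi(\mathcal{M}) \subseteq \bDelta^0_2$, and Example~\ref{xmp:opennotdualistic} for strictness. The paper leaves the bookkeeping implicit ("using the results proved so far, together with Example~\ref{xmp:opennotdualistic}"), so your argument is simply a careful expansion of the same route.
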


\section{Wadge-style constructions}\label{sec:Wadge-styleconstructions}
The next goal is to define operations on subsets of the Cantor space that are
the analogues of \( \overline{A}\), \( A \Wsum  B \), \( A^\flat \), and \( A^\natural\), and that 
preserve \( \mathcal{T}\)-regularity.
In order to avoid repetitions, let's agree that in this section, unless otherwise stated, 
\( A \) and \( B \) vary over measurable subsets of \( \pre{\omega}{2} \) and
\( 0 < \mu ( A ) , \mu (B) < 1 \).

Since \( \overline{A} \) is always null, we must 
add some extra open sets on the side.
To this end we define canonical clopen sets.

\begin{definition}\label{def:O(r)}
For \( r \in [ 0 ; 1 ) \) let 
\[
 k = k( r ) = \text{the least \( h > 0 \) such that } r \leq 1 - 2^{- h} , 
\]
and let 
\begin{equation*}
 \uu ( r )  =   0^{( k - 1 )} 1 ,
\end{equation*}
and let
\[
O ( r ) = \pre{\omega}{2}  \setminus \Nbhd_{ \uu ( r )} = 
\Nbhd_{0^{(k)}}\cup \bigcup_{m + 1< k} \Nbhd_{0^{(m)} 1} .
\]
\end{definition}
Figure~\ref{fig:u(r)} may help the reader to visualize the node \( \uu ( r ) \) and the set \( O ( r ) \) 
as the union of \( k\) basic open sets.

\begin{remark}\label{rmk:O(r)}
The definition of \( \uu(r) \) (and hence of \( O ( r ) \)) seems unduly strange, but it has the merit that 
given any \( x \in \pre{\omega}{2} \setminus \setLR{0^{( \infty )}} \) there is a unique \( u \subset x \)
that is of the form \( \uu ( r ) \), a crucial fact for proving~\eqref{eq:incompatiblenodesofE}.
\end{remark}

It is easy to check that
\begin{equation*}
 r \leq \mu \left (O ( r ) \right ) = 1 - 2^{ k ( r ) } < 1  ,
\end{equation*}
and that for any measurable set \( B \)
\begin{equation}\label{eq:O(r)increasing}
 r < r' \IMPLIES  \mu \left ( O ( r ) \cup \uu ( r ) \conc B  \right ) \leq 
\mu \left ( O ( r' ) \cup \uu ( r' ) \conc B  \right ) .
\end{equation}

\begin{figure}[t]
   \centering
\begin{tikzpicture}
\node (1) at (1 , -1) [label=90:\( 1 \)]{};
\node (01) at (0 , -2) [label=90:\( 01 \)]{};
\node (001) at (-1 , -3) [label=90:\( 001 \)]{};
\node (00000) at (-5 , -5) [label=90:\( 0^{(k)}\)]{};
\fill [top color=gray, bottom color=gray!60] (1 , -1)--(0.5 , -2)--(1.5 , -2)--cycle;
\fill [top color=gray, bottom color=gray!60] (0 , -2) --(-0.5 , -3)--(0.5 , -3)--cycle;
\fill [top color=gray, bottom color=gray!60] (-1 , -3) --(-1.5 , -4)--(-0.5 , -4)--cycle;
\fill [top color=gray, bottom color=gray!60] (-5 , -5) --(-5.5 , -6)--(-4.5 , -6)--cycle;
\filldraw (-4, -4) -- (-3 , -5)  circle (2pt) node [right] {\(  \uu ( r ) \)};
\draw (0,0)--(-2.5,-2.5)  (-3.5,-3.5) -- (-5 , -5)
(0,0)--(1, -1)--(0.5 , -2)
(1, -1)--(1.5 , -2)
(-1,-1) --(0 , -2) --(-0.5 , -3)
(0 , -2) --(0.5 , -3)
(-2,-2)-- (-1 , -3)--(-1.5 , -4)
(-1 , -3)--(-0.5 , -4)
(-5 , -5) --(-5.5 , -6)
(-5 , -5)--(-4.5 , -6);
\draw[loosely dotted] (-2.5, -2.5) -- (-3.5,-3.5);
\end{tikzpicture}
   \caption{The open set \( O ( r ) \) and the node \(\uu ( r ) \).}
   \label {fig:u(r)}
\end{figure}

We are now ready to define the analogue of \( \overline{A} \).

\subsection{The analogue of \texorpdfstring{\( \overline{A} \)}{A-}}
Fix once and for all 
\[
( r_n )_n \text{ a strictly increasing sequence of reals in \( ( 0 ; 1 ) \) such that } \lim_n r_n =  1 .
\]
\begin{definition}\label{def:Plus}
For \( r \in [ 0 ; 1) \) 
\begin{multline*}
\Plus (A , ( r_n )_n , r ) 
\\
{} = \overline{A} \cup 
\bigcup \setofLR{\overline{s} \conc \eta \conc O ( \max \setLR{ r , r_{\lh ( s ) } \cdot \mu (\LOC{A}{s}) } )}
{ s \in  \pre{ < \omega }{2} \AND \eta \in \setLR{01,10} } .
\end{multline*}
When there is no danger of confusion we will simply write \( \Plus ( A , r ) \)
and if  \( r = 0 \) we write \( \Plus (A )  \).
\end{definition}

\begin{remark}
The naive approach would suggest to define \( \Plus ( A ) \) as the union of \( \overline{A}  \) and the sets
of the form \( \overline{s} \conc i \conc ( 1 - i ) \conc O (  \mu (\LOC{A}{s} ) ) \).
The problem is that if \( A \) has full measure when localized at \( s \), then  
\( O ( \mu (\LOC{A}{s} ) ) \) should be an open set of measure \( 1 \),  
and there would be no room left to move out of \( \Plus ( A ) \).
Thus the values \( \mu ( \LOC{A}{s} ) \) are reduced by the factor \( r_{\lh ( s ) } \).
The parameter \( r \) is needed for Definition~\ref{def:FLAT(A)}, but for 
the time being the reader can safely ignore it and always think of \( r = 0 \).
\end{remark}

Note that 
\begin{equation}\label{eq:FrPlus}
  \Fr ( \Plus ( A , r ) ) \subseteq \overline{ \pre{\omega}{2} } ,
\end{equation}
hence \( \Fr \Plus ( A , r ) \) is  null.
The set of exit nodes for \( \Plus ( A , r ) \) is the set 
\[ 
\mathcal{E}^{\Plus} ( A , ( r_n )_n , r ) = \mathcal{E}^{\Plus} ( A , r ) 
\]
of all nodes of the form 
\[
\overline{s} \conc i \conc ( 1 - i ) \conc \uu ( \max \setLR{r , r_{\lh ( s )} \cdot \mu ( \LOC{A}{s} ) } ) 
\]
and let 
\[
\mm ( s , r ) = \mm ( s ) = \lh ( \uu ( \max \setLR{r , r_{\lh ( s )} \cdot \mu ( \LOC{A}{s} ) } ) ) 
\]
so that by construction 
\[ 
\mu ( O ( \max \setLR{r , r_{\lh ( s )} \cdot \mu ( \LOC{A}{s} ) } ) )
= 1 - 2^{- \mm ( s )} .
\]
Note that \( z \in \pre{\omega}{2} \) is not in \( \Plus (A , r )  \) if and only if either
\begin{itemize}
\item
\( z = \overline{x} \) and \( x \notin A\), or else
\item
\( z \supset e \) for some unique \( e \in \mathcal{E}^{\Plus} ( A , r ) \),
that is: \( z \) \emph{exits} from  \( \overline{A} \) through \( e \), 
hence the reason for the name \emph{exit nodes}.
\end{itemize}

\subsection{The analogue of \texorpdfstring{\( A\Wsum  B\)}{A+B}}\label{subsec:A+B}
\begin{definition}\label{def:Sum}
For \( r \in [ 0 ; 1 ) \) let 
\[ 
\Sum ( B , A , (r_n )_n , r ) = \Sum ( B , A , r ) \equalsdef
\Plus ( A , r ) \cup \bigcup_{e \in \mathcal{E}^{\Plus} ( A , r )} e \conc B .
\]
\end{definition}
Note that for all \( s \in  \pre{ <  \omega }{2} \)
\begin{equation*}
\LOC{ \Plus (A , ( r_n)_n , r )}{\bar{s}}   = \Plus (\LOC{A}{s}, (r_n)_{n \geq \lh ( s )} , r ) 
\end{equation*}
and
 \begin{equation*}
\begin{split}
	 \LOC{\Sum ( B , A , (r_n)_n , r )}{\overline{s}} & = \LOC{\Plus ( A , (r_n )_n , r )}{\overline{s}}
	\cup \bigcup \setofLR{e \conc B}{ \overline{s} \conc e \in \mathcal{E}^{\Plus} ( A , r )} 
\\
& = \Sum ( B , \LOC{A}{s} , (r_n)_{n \geq \lh ( s )} , r ).
\end{split}
\end{equation*}
Therefore for any \( s \in  \pre{ < \omega }{2} \) and any \( i \in 2 \), 
\begin{equation}\label{eq:measurelocalizationSum}
 \mu ( \LOC{ \Sum ( B , A , r ) }{\overline{s} \conc i \conc ( 1 - i ) } )
=  \mu ( O ( \max \setLR{ r ,  r_{\lh ( s )}
	 \cdot \mu (\LOC{A}{s} ) } ) ) 
	 +   \frac{\mu ( B )}{ 2^{\mm ( s )}} 
\leq 1 .
\end{equation}
As \( \overline{ \pre{\omega}{2} } \) is null, 
then~\eqref{eq:series1} and~\eqref{eq:measurelocalizationSum} imply that 
\begin{equation}\label{eq:mu(Plus)}
\begin{split}
\hskip -0.7cm \mu \left ( \Sum ( B , A , r ) \right ) & 
= \smash{\sum_{s \in  \pre{< \omega }{2} } }
	2^{-2 \lh ( s ) - 2} \Bigl [ \mu (\LOC{\Sum ( B , A , r )}{\overline{s}\conc 01} )
\\
&\hphantom{{} = \smash{\sum_{s \in  \pre{< \omega }{2} } }
	2^{-2 \lh ( s ) - 2} \Bigl [} {} + \mu (\LOC{ \Sum ( B , A , r ) }{\overline{s}\conc 10}  ) \Bigr ] 
\\
& \leq \frac{1}{2} \Bigl [ \mu ( O ( \max \setLR{ r ,  r_0
	 \cdot \mu ( A) } ) ) + \frac{ \mu (B)}{2^{\mm (\emptyset )  }} \Bigr ]
\\
& \qquad\qquad \qquad{}+  \sum_{s \in  \pre{< \omega }{2}\setminus \setLR{ \emptyset} } 
	 2^{- 2 \lh ( s) -1} 
\\
& = \frac{1}{2} \mu ( O ( \max \setLR{ r ,  r_0
	 \cdot \mu ( A) } ) ) + \frac{ \mu (B)}{2^{\mm (\emptyset ) + 1 }} +  \frac{1}{2}.
\end{split}
\end{equation}
Note that if \( \mu ( B ) < 1 \) then the inequality in~\eqref{eq:measurelocalizationSum}
and hence the one in~\eqref{eq:mu(Plus)} are strict.
Since \( \Plus ( A , r ) = \Sum ( \emptyset , A , r ) \) we obtain an upper 
bound for the measure of \( \Plus ( A , r ) \):
if \( m \) is least such that \( r , r_0 \cdot \mu ( A ) \leq 1-2^{- m} \) so that 
\( \mu ( O ( \max \setLR{ r , r_0 \cdot \mu ( A ) } ) ) = 1 - 2^{-m} \), then 
\begin{equation}\label{eq:upperbound}
  \mu \left ( \Plus (A , r ) \right )  < 1 - 2^{ - m  - 1} .
\end{equation}
Since \( \max\setLR{r , r_{\lh ( s )} \cdot \mu (\LOC{A}{s} )} \geq r , r_0  \cdot \mu (\LOC{A}{s} ) \),
we obtain two lower bounds for the measure of \( \Plus (A , r ) \).
The first one, which is only of interest when \( r > 0 \), is
\begin{equation*}
 \mu \left ( \Plus (A , r ) \right )  \geq r \cdot\sum_{s \in  \pre{< \omega  }{2} } 2^{- 2 \lh ( s ) - 1}
= r  ,
\end{equation*}
and therefore 
\begin{equation}\label{eq:estimate1}
\mu \bigl (\LOC{ \Plus (A , r )}{\overline{s}} \bigr )  \geq r 
\end{equation}
for any \( s \in  \pre{ <  \omega }{2} \).
For the second one, by~\eqref{eq:series2} we have
\begin{equation}\label{eq:estimate2}
\mu \left ( \Plus (A , r ) \right )  \geq  \sum_{s \in  \pre{< \omega }{2} } 
 \frac{ r_0 \cdot \mu (\LOC{A}{s} ) }{2^{ 2 \lh ( s ) + 1}} = r_0 \cdot \mu \left ( A \right )  .
\end{equation}

Then~\eqref{eq:upperbound} and~\eqref{eq:estimate2} imply that
\begin{equation}\label{eq:SpAT-regular}
 r_{\lh ( s )}\cdot \mu ( \LOC{A}{s} ) \leq
  \mu \bigl ( \LOC{ \Plus (A , r )}{\bar{s}} \bigr ) 
\leq 1 - 2^{ - m - 1} ,
\end{equation}
where  \( m \) is least such that \( r , r_{\lh ( s )} \cdot \mu ( \LOC{A}{s} ) \leq 1-2^{-m} \).
Therefore for \( i \in 2 \)
\begin{equation}
 \begin{split}\label{eq:SpAT-regular2}
\mu \bigl ( \LOC{ \Plus (A , r )}{\bar{s}\conc i} \bigr )  & 
= \frac{1}{2} \cdot \mu \bigl ( \LOC{ \Plus (A , r )}{\bar{s}\conc ii} \bigr )
 + \frac{1}{2} \cdot \mu \bigl ( \LOC{ \Plus (A , r )}{\bar{s}\conc i \conc ( 1 - i)} \bigr )
\\
 & \geq \frac{ r_{\lh ( s ) + 1}\cdot \mu \bigl ( \LOC{A}{s\conc i} \bigr ) +
\max \setLR{r , r_{\lh ( s )} \cdot \mu ( \LOC{A}{s} )} }{2} .
\end{split}
\end{equation}

\begin{proposition}\label{prop:A^+equivalentPlus(A)}
If \( r \in [ 0 ; 1) \) then 
\[
\Sum ( B , A , r ) \Wequiv  B \Wsum  A .
\]
\end{proposition}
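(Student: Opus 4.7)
The plan is to give explicit continuous reductions in both directions by exploiting the parallel ``block structure'' shared by \( B \Wsum A \) and \( \Sum ( B , A , r ) \). In both sets, a fully doubled input \( \overline{y} \) belongs to the set iff \( y \in A \): for \( B \Wsum A \) this is the definition of \( \overline{A} \), and for \( \Sum ( B , A , r ) \) one uses that a doubled sequence cannot extend any node of the form \( \overline{s} \conc \eta \) with \( \eta \in \setLR{01,10} \), hence cannot extend any exit node either. Any non-doubled input has a unique first ``break'' \( \overline{s} \conc \eta \), after which membership in \( B \Wsum A \) reduces to \( z \in B \) for the remaining tail \( z \), whereas membership in \( \Sum ( B , A , r ) \) reduces to \( z \in O ( \max \setLR{ r , r_{\lh ( s )} \cdot \mu ( \LOC{A}{s})}) \cup \uu \conc B \), where \( \uu = \uu (\max \setLR{ r , r_{\lh ( s )} \cdot \mu ( \LOC{A}{s}) } ) \). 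The two reductions will simply insert or remove the finite block \( \uu \) after the break.

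For \( B \Wsum A \leqW \Sum ( B , A , r ) \), I would define the continuous function \( f \) by: \( f ( \overline{y} ) = \overline{y} \); for \( x = \overline{s} \conc \eta \conc z \) with \( \overline{s} \conc \eta \) the first break, \( f ( x ) = \overline{s} \conc \eta \conc \uu \conc z \). Continuity is immediate because \( \uu \) is computable from the finite prefix \( \overline{s} \). In the doubled case, \( f ( x ) \in \Sum ( B , A , r ) \iff y \in A \iff x \in B \Wsum A \); in the break case, \( f ( x ) \in \Sum ( B , A , r ) \iff z \in B \iff x \in B \Wsum A \).

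For the converse \( \Sum ( B , A , r ) \leqW B \Wsum A \), the key ingredient is Remark~\ref{rmk:O(r)}: any tail \( z \) after the break either extends the uniquely determined node \( \uu \) or, after at most \( \lh ( \uu ) \) bits, enters some \( \Nbhd_v \) with \( v \perp \uu \), which is a basic piece of \( O ( \cdot ) \). Fix \( b_0 \in B \), which exists since \( \mu ( B ) > 0 \), and define \( g \) by: \( g ( \overline{y} ) = \overline{y} \); \( g ( \overline{s} \conc \eta \conc \uu \conc w ) = \overline{s} \conc \eta \conc w \); and \( g ( \overline{s} \conc \eta \conc z ) = \overline{s} \conc \eta \conc b_0 \) when \( z \) enters a component of \( O ( \cdot ) \). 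The three cases are decidable from finite initial segments of the input, so \( g \) is continuous, and \( g ( y ) \in B \Wsum A \iff y \in \Sum ( B , A , r ) \) is verified case by case, mirroring the computation for \( f \).

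The main obstacle is purely organizational, namely checking that the three mutually exclusive cases (doubled throughout; broken, then extending \( \uu \); broken, then entering \( O ( \cdot ) \)) are exhaustive and that all decisions can be made from finite initial segments. No measure estimate is needed at this step; the bounds \eqref{eq:measurelocalizationSum}--\eqref{eq:SpAT-regular2} play no role in the Wadge equivalence and are reserved for the subsequent \( \mathcal{T} \)-regularity analysis using \( \Plus \) and \( \Sum \).
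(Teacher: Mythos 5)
Your proof is correct and takes essentially the same approach as the paper's, which phrases the identical two reductions as winning strategies for player $\II$ in the games $\GL(B\Wsum A, \Sum(B,A,r))$ and $\GW(\Sum(B,A,r), B\Wsum A)$. Your $f$ inserting the $\uu$-block at the first break is the forward strategy written as a function, and your $g$ with the fallback $b_0\in B$ when the tail enters $O(\cdot)$ matches the $\GW$ strategy's instruction for $\II$ to play ``a sequence in $B$'' after $\I$ leaves the exit node.
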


\begin{proof}
Player \( \II \) wins \( \GL ( B \Wsum  A , \Sum ( B , A , r ) ) \) via the following strategy:
\begin{quote}
As long as \( \I \)'s positions are of the form \( \overline{s}\) or \( \overline{s} \conc i \conc (1 - i) \)
with \( i \in 2 \), then \( \II \) copies \( \I \)'s moves.
If ever \( \I \) reaches a position of the form \( \overline{s} \conc i \conc (1 - i ) \), then 
\( \II \) plays \( \uu ( \max \setLR{r , r_{\lh ( s )} \cdot \mu ( \LOC{A}{s} ) } ) \)
reaching the exit node extending his current position, and then copies \( \I \)'s moves.
\end{quote}
Player \( \II \) has a winning strategy in the game 
\( \GW (  \Sum ( B , A , r ) , B \Wsum  A )\):
\begin{quote}
As long as \( \I \)'s positions are of the form \( \overline{s}\) or \( \overline{s} \conc i \conc (1 - i) \)
with \( i \in 2 \), then \( \II \) copies \( \I \)'s moves.
If ever \( \I \) reaches a position of the form \( \overline{s} \conc i \conc (1 - i) \), then 
\( \II\) passes until \( \I \) commits himself by either reaching the exit node extending
his current position, or else reaches a position incompatible with such exit node:
then in the first case \( \II \) copies \( \I\)'s moves, and in the second case \( \II \) plays 
a sequence  in \( B \).
\end{quote}
Therefore \( \Sum ( B , A , r ) \Wequiv B \Wsum  A  \).
\end{proof}

By a similar argument one could show that \( \overline{A} \Wequiv  \Plus (A , r )  \)
if the set \( A \) is dense,  but we have no use for this fact.

If \(  x \in \Phi  ( A ) \) then 
\( r_n \cdot \mu \bigl ( \LOC{A}{x \restriction n} \bigr ) \to 1\),
so \( \mu \bigl ( \LOC{ \Plus (A , r )}{\bar{x}\restriction 2n} \bigr ) \to 1 \) 
by~\eqref{eq:SpAT-regular}, and since by~\eqref{eq:SpAT-regular2}
\[
\mu \bigl ( \LOC{ \Plus (A , r )}{\bar{x}\restriction 2n + 1} \bigr ) \geq 
\frac{ r_{n + 1}\cdot \mu \bigl ( \LOC{A}{x \restriction n + 1} \bigr ) +
\max \setLR{r , r_n \cdot \mu ( \LOC{A}{ x \restriction n } )} }{2} \to 1 ,
\]
then \( \bar{x} \in \Phi  \bigl (  \Plus (A , r ) \bigr ) \).
Conversely, if  \( x \notin \Phi  ( A ) \) pick an increasing sequence \( n_k \) 
such that \( \sup_k \mu \left ( \LOC{A}{x \restriction n_k} \right )  < 1\),
hence there is an \( m \) such that for all \( k \)
\[
 r , r_{n_k} \cdot \mu ( \LOC{A}{x \restriction n_k } ) < 1-2^{-m}
\]
thus by~\eqref{eq:SpAT-regular}
\[
 \mu \bigl ( \LOC{\Plus (A , r )}{ \overline{x \restriction n_k} } \bigr ) \leq 1 - 2^{-m - 1} 
\]
and therefore \( \bar{x} \notin \Phi ( \Plus (A , r ) ) \).
Therefore we have shown that 
\[
 x \in \Phi  ( A ) \IFF 
\overline{x} \in \Phi  ( \Plus ( A , r  ) )  .
\]
If \( x \in  \pre{\omega}{2}\setminus  \overline{ \pre{\omega}{2} } \)
it is easy to check that 
\( x \in  \Plus ( A , r  ) \iff x \in \Phi \left (  \Plus ( A ,  r  )  \right ) \), so that
\begin{equation}\label{eq:T-regular=>T-regular}
A \text{ is \( \mathcal{T} \)-regular} \IMPLIES \Plus (A , r ) \text{ is \( \mathcal{T} \)-regular.} 
\end{equation}

\begin{proposition}\label{prop:Sumisregular}
If \( A \) and \( B \) are \( \mathcal{T} \)-regular, then so is \( \Sum ( B , A , r ) \).

Moreover if \( A, B \in \ran ( \Phi \restriction \bSigma^{0}_{1} ) \cap \ran ( \Phi \restriction \bPi^{0}_{1} ) \),
then \( \Sum ( B , A , r )\in \ran ( \Phi \restriction \bSigma^{0}_{1} ) \cap \ran ( \Phi \restriction \bPi^{0}_{1} ) \).
\end{proposition}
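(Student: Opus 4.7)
The plan is to verify that $S := \Sum(B,A,r)$ satisfies $S=\Phi(S)$ by case analysis on where a point $z$ sits, then deduce the ``moreover'' assertion from~\eqref{eq:D(closed)}. Writing $P := \Plus(A,r)$, we have the disjoint union $S = P \cup \bigcup_{e\in\mathcal{E}^{\Plus}(A,r)} e\conc B$, and every $z\in \pre{\omega}{2}$ falls in one of three classes: (i) $z=\overline{x}$ for some $x\in\pre{\omega}{2}$; (ii) $z$ properly extends a unique exit node $e\in \mathcal{E}^{\Plus}(A,r)$ (uniqueness coming from Remark~\ref{rmk:O(r)}); (iii) $z$ lies in some clopen filling $\overline{s}\conc\eta\conc O(\cdot)$ and does not extend any exit node. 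In case (iii), $z$ has a basic open neighborhood contained in $P\subseteq S$, whence $z\in S\cap \Phi(S)$ by~\eqref{eq:densityopen}. In case (ii), $\LOC{P}{e}=\emptyset$ while $\LOC{S}{e}=B$; writing $z=e\conc y$, the $\mathcal{T}$-regularity of $B$ yields $z\in\Phi(S) \iff y\in\Phi(B)=B \iff z\in e\conc B \iff z\in S$.

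Case (i) carries the real content. Since $P$, $\bigcup_e \Nbhd_e$ and $\overline{\pre{\omega}{2}}\setminus\overline{A}$ disjointly cover $\pre{\omega}{2}$, with the last piece being null, for every $s$ one gets
\begin{equation*}
\mu(\LOC{S}{\overline{s}})=\mu(\LOC{P}{\overline{s}})+\mu(B)\bigl(1-\mu(\LOC{P}{\overline{s}})\bigr),
\end{equation*}
and~\eqref{eq:measurelocalizationSum} together with the identity $\mu(\LOC{X}{s})=\tfrac{1}{2}(\mu(\LOC{X}{s\conc 0})+\mu(\LOC{X}{s\conc 1}))$ controls the odd-length prefixes of $\overline{x}$. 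Recalling the identity $\overline{x}\in\Phi(P)\iff x\in A$ proved just before the proposition, if $x\in A$ the display above tends to $1$, so $\overline{x}\in\Phi(S)$; if instead $x\notin A$, the estimate derived there yields $\mu(\LOC{P}{\overline{x\restriction n_k}})\leq 1-2^{-m-1}$ along some subsequence for a fixed $m$, so $\mu(\LOC{S}{\overline{x\restriction n_k}})\leq 1-2^{-m-1}(1-\mu(B))<1$ by the standing hypothesis $\mu(B)<1$, and thus $\overline{x}\notin\Phi(S)$. Since $\overline{x}\in S\iff x\in A$ in either subcase, $\mathcal{T}$-regularity of $S$ is established.

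For the moreover part, by~\eqref{eq:D(closed)} it suffices to show $\mu(\Fr S)=0$. From $\Fr S\subseteq \Fr P\cup\bigcup_e\Fr(e\conc B)$, the bound~\eqref{eq:FrPlus} gives $\mu(\Fr P)=0$; since each $\Nbhd_e$ is clopen, $\Fr(e\conc B)=e\conc\Fr B$, and the antichain property of $\mathcal{E}^{\Plus}(A,r)$ together with~\eqref{eq:measureantichain} yields $\mu(\bigcup_e e\conc\Fr B)\leq\mu(\Fr B)$, which equals $0$ by the hypothesis on $B$ and~\eqref{eq:D(closed)}. Hence $\mu(\Fr S)=0$, completing the proof. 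The chief difficulty is case (i): one must show that the mass added by the attached copies of $B$ does not push the density at $\overline{x}$ to $1$ when $x\notin A$, and this is precisely where the standing assumption $\mu(B)<1$ is indispensable.
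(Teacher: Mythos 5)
Your proof is correct, and the heart of it — Case~(i) — takes a genuinely cleaner route than the paper's. Where the paper, for $x\notin A$, re-expands $\mu\bigl(\Sum(B,\LOC{A}{y\restriction n_k},(r_n)_{n\geq n_k},r)\bigr)$ from scratch (a three-line sum estimate mirroring~\eqref{eq:mu(Plus)}--\eqref{eq:upperbound}), you instead observe the exact transfer identity
\[
\mu(\LOC{S}{\overline{s}})=\mu(\LOC{P}{\overline{s}})+\mu(B)\bigl(1-\mu(\LOC{P}{\overline{s}})\bigr),
\]
which follows from the null-modulo decomposition $\pre{\omega}{2}=\LOC{P}{\overline{s}}\sqcup\bigcup_{e'}\Nbhd_{e'}\sqcup(\text{null})$ and the antichain property of the exit nodes. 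Once this identity is in hand, the bound $\mu(\LOC{P}{\overline{x\restriction n_k}})\leq 1-2^{-m-1}$ (already established in the text just before the proposition when proving the $\mathcal{T}$-regularity of $\Plus(A,r)$) immediately transfers to $\mu(\LOC{S}{\overline{x\restriction n_k}})\leq 1-2^{-m-1}(1-\mu(B))<1$. This recycles the $\Plus$ estimate instead of redoing it, and it isolates exactly where $\mu(B)<1$ is used. Cases (ii) and (iii) coincide with the paper's argument modulo phrasing.

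One small expository gap: in the ``moreover'' part you assert $\Fr S\subseteq\Fr P\cup\bigcup_e\Fr(e\conc B)$ without justification. This is not a general fact — for a countable union $\bigcup_n X_n$, the closure can acquire accumulation points lying in no $\Cl X_n$, so $\Fr\bigl(\bigcup_n X_n\bigr)$ need not be contained in $\bigcup_n\Fr X_n$. It does hold here, but for a structural reason worth stating: any accumulation point $z$ of $\bigcup_e\Nbhd_e$ that is not in any single $\Nbhd_e$ must be of the form $\overline{y}$, hence $z\in\overline{\pre{\omega}{2}}$; and since $\overline{\pre{\omega}{2}}\subseteq\Cl P\setminus\Int P=\Fr P$ (both inclusions hold for $0<\mu(A)<1$), such $z$ are absorbed into $\Fr P$. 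With that observation noted, your inclusion and hence the conclusion $\mu(\Fr S)=0$ are in order — which is, up to packaging, the same null-frontier computation the paper carries out.
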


\begin{proof}
Let \( x \in \Sum ( B , A , r ) \).
If \( x \in \Plus ( A , r ) \), then \( x \in \Phi ( \Plus ( A , r ) )\) by~\eqref{eq:T-regular=>T-regular}
hence \( x \in \Phi ( \Sum ( B , A , r ) )\) by monotonicity of \( \Phi \).
If instead \( x = e \conc b \) with \( e \in \mathcal{E}^{\Plus} ( A , r ) \) and \( b \in B \),
then \( x\in \Phi ( \Sum ( B , A , r ) ) \) as \( b \in \Phi ( B ) \).
Therefore \( \Sum ( B , A , r ) \subseteq \Phi ( \Sum ( B , A , r ) )\).

Conversely, suppose \( x \notin \Sum ( B , A , r ) \), which means that either
\begin{enumerate-(A)}
\item\label{prop:Sumisregular-a}
\( x = \overline{y}\) with \( y\notin A \), or else
\item\label{prop:Sumisregular-b}
\( x = e\conc y\) with \( e \in \mathcal{E}^{\Plus} ( A , r )\)
and \( y \notin B \).
\end{enumerate-(A)}

If~\ref{prop:Sumisregular-a} holds pick an increasing sequence \( ( n_k )_k \) such that 
\( \sup_k \mu (\LOC{A}{y \restriction n_k } ) < 1 \), and let 
\[ 
\tilde{r} = \max \setLR{ r ,  \textstyle\sup_k  r_{n_k} \cdot\mu (\LOC{A}{y \restriction n_k } )  } 
\qquad\text{and}\qquad  \tilde{u} = \uu ( \tilde{r} ) .
\]
We must show that there is a fixed \( m > 0 \) such that for all \( k \)
\[ 
\begin{split}
\mu ( \LOC{\Sum ( B , A , ( r_n )_n , r )}{x  \restriction 2 n_k } )  & =
\mu (\Sum ( B , \LOC{A}{y \restriction n_k} , ( r_n )_{n \geq n_k} , r ) ) 
\\
& < 1 - 2^{-m - 1}
\end{split}
\]
hence \( x \notin \Phi  ( \Sum ( B , A , r ) ) \).
Choose \( m \) such that 
\[ 
\mu ( O ( \tilde{r} ) ) + 2^{-\lh ( \tilde{u} )}\cdot \mu ( B )  < 1 - 2^{-m} .
\] 
To simplify the notation let 
\[ 
S_k = \Sum  ( B , \LOC{A}{y\restriction n_k} , (r_n)_{n \geq n_k} , r )
\qquad\text{and}\qquad 
 \rho_k = r_{n_k} \cdot\mu (\LOC{A}{y \restriction n_k } ) .
\]
Arguing as in~\eqref{eq:mu(Plus)} and~\eqref{eq:upperbound} and using~\eqref{eq:O(r)increasing}
\begin{align*}
\MoveEqLeft
 \mu \bigl(\Sum (B , \LOC{A}{y\restriction n_k} , (r_n)_{n \geq n_k} , r  ) \bigr )  
\\
& =  \frac{1}{2} \Bigl [ \mu ( O ( \max \setLR{ r , \rho _k } ) ) 
+ \frac{ \mu ( B ) }{2^{ \lh \uu ( \max \setLR{ r , \rho_k } )  }} \Bigr ]
+\sum_{s \in  \pre{< \omega }{2} \setminus\setLR{\emptyset} } 
	 2 \cdot  \mu \bigl (\overline{s} \conc 01 \conc 
	 \LOC{(S_k)}{\overline{s} \conc 01 } \bigr ) 
 \\
& \leq \frac{1}{2} \Bigl [ \mu ( O ( \tilde{r} ) ) + \frac{ \mu ( B ) }{2^{ \lh  \uu ( \tilde{r} ) }} \Bigr ]
+ {\sum_{s \in  \pre{< \omega }{2} \setminus\setLR{\emptyset} } }
	2 \cdot  \mu \bigl (\overline{s} \conc 01 \conc 
	 \LOC{(S_k)}{\overline{s} \conc 01 } \bigr ) 
\\
& < \frac{1}{2} ( 1 - 2^{-m} ) + \smash{\sum_{s \in  \pre{< \omega }{2} \setminus\setLR{\emptyset} } 
 	2^{-2 \lh ( s )- 1}}
\\
& = 1 - 2^{-m - 1 }
\end{align*}
which is what we had to prove.

If instead~\ref{prop:Sumisregular-b} holds then 
\( \LOC{\Sum ( B , A , r ) }{x \restriction\lh ( e ) + n } = \LOC{B}{y \restriction n}\) 
for all \( n \), hence \( y \notin B = \Phi (B ) \) and therefore \( x \notin \Phi ( \Sum ( B , A , r ) ) \).

Thus either way \( x \notin \Phi ( \Sum ( B , A , r ) ) \), and this completes the proof that 
\( \Sum ( B , A , r ) \) is \( \mathcal{T}\)-regular.

Suppose now \( A , B \in \ran ( \Phi \restriction \bSigma^{0}_{1} ) \cap 
\ran ( \Phi \restriction \bPi^{0}_{1} ) \) towards proving that 
\[ 
\Sum ( B , A , r ) \in \ran ( \Phi \restriction \bSigma^{0}_{1} ) \cap 
\ran ( \Phi \restriction \bPi^{0}_{1} ) .
\]
By~\eqref{eq:D(closed)} and regularity of \( \Sum ( B , A , r ) \),
it is enough  to show that \( \mu (\Fr B ) = 0 \)
implies that \( \mu ( \Fr \Sum ( B , A , r ) ) = 0 \).
Since 
\[
 \Fr ( \Sum ( B , A , r ) ) \setminus \overline{ \pre{\omega}{2} } 
= \textstyle\bigcup_{e \in \mathcal{E}^{\Plus} (A , r)} e \conc \Fr B
\]
is a countable union of null sets and \( \overline{ \pre{\omega}{2}}  \) is null, 
the result follows. 
\end{proof}

Since \( \Plus ( A , r ) = \Sum ( \emptyset , A , r ) \), we obtain at once
\begin{corollary}\label{cor:Plusregular}
If \( A \) is \( \mathcal{T}\)-regular, then so is \( \Plus ( A , r )\).
Moreover if \( A \in \ran ( \Phi \restriction \bSigma^{0}_{1} ) \cap \ran ( \Phi \restriction \bPi^{0}_{1} ) \)
then \( \Plus ( A , r ) \in \ran ( \Phi \restriction \bSigma ^{0}_{1} ) \cap \ran ( \Phi \restriction \bPi^{0}_{1} ) \).
\end{corollary}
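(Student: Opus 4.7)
The plan is to observe that $\Plus(A,r)$ is literally a special case of the $\Sum$ construction and then invoke Proposition~\ref{prop:Sumisregular}. Specifically, applying Definition~\ref{def:Sum} with $B=\emptyset$ gives
\[
\Sum(\emptyset,A,r) = \Plus(A,r) \cup \bigcup_{e\in \mathcal{E}^{\Plus}(A,r)} e\conc \emptyset = \Plus(A,r).
\]
Thus every statement about $\Sum(B,A,r)$ that is proved uniformly in $B$ specialises to a statement about $\Plus(A,r)$.

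The next step is to check that $B=\emptyset$ satisfies the input hypotheses of Proposition~\ref{prop:Sumisregular}. For $\mathcal{T}$-regularity, this is immediate from~\eqref{eq:Phi(empty)}, which gives $\Phi(\emptyset)=\emptyset$. For membership in $\ran(\Phi\restriction \bSigma^{0}_{1})\cap \ran(\Phi\restriction \bPi^{0}_{1})$, it suffices to note that $\emptyset$ is simultaneously open and closed, so $\emptyset=\Phi(\emptyset)$ witnesses both memberships. With these observations in hand, Proposition~\ref{prop:Sumisregular} yields both conclusions of the corollary in one stroke.

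The only real caveat — and the step I expect to warrant a brief comment — is that the blanket convention of Section~\ref{sec:Wadge-styleconstructions} requires $0<\mu(B)<1$, which fails for $B=\emptyset$. However, inspecting the proof of Proposition~\ref{prop:Sumisregular}, the condition $\mu(B)<1$ is used only to ensure that the measure estimates~\eqref{eq:measurelocalizationSum} and~\eqref{eq:mu(Plus)} yield strict inequalities; when $\mu(B)=0$ these inequalities trivially remain strict (in fact they collapse to the corresponding estimates for $\Plus(A,r)$ alone). Hence the argument for case~\ref{prop:Sumisregular-a} in the proof of the proposition — the only non-trivial case, since case~\ref{prop:Sumisregular-b} is vacuous when $B=\emptyset$ — goes through unchanged and delivers the $\mathcal{T}$-regularity of $\Plus(A,r)$. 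The preservation of membership in $\ran(\Phi\restriction \bSigma^{0}_{1})\cap \ran(\Phi\restriction \bPi^{0}_{1})$ then follows by the same final computation, using~\eqref{eq:FrPlus} to see that $\Fr\Plus(A,r)\subseteq \overline{\pre{\omega}{2}}$ is null and invoking~\eqref{eq:D(closed)}.
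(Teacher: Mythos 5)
Your proposal is correct and is exactly the paper's own argument: the paper derives Corollary~\ref{cor:Plusregular} by noting that \( \Plus(A,r)=\Sum(\emptyset,A,r) \) and invoking Proposition~\ref{prop:Sumisregular}. Your extra paragraph verifying that the blanket convention \( 0<\mu(B)<1 \) can be safely dropped when \( B=\emptyset \) is sound and slightly more careful than the paper, which glosses over this point.
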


\subsection{The analogues of \texorpdfstring{\( A^\natural\)}{A-natural} and \texorpdfstring{\( A ^\flat\)}{A-flat}.}
All the constructions seen so far, as well as the ones in this section, are based 
on the idea of attaching a set to a node of a tree ---
but sometimes the set needs to be padded before attaching it.

\begin{definition}
For \( n > 0 \), the \markdef{\( n \)-th padding} of a set \( A \subseteq \pre{\omega}{2} \) is
\[
\begin{split}
 \mathrm{P}_n ( A ) &  = \left (1^{ ( n ) } \conc A \right ) \cup \bigcup \setofLR{\Nbhd_s}
{ s\in  \pre{ n}{2}\wedge s \neq 0^{ ( n ) } , 1^{ ( n ) }}
\\
 &  = \left (1^{ ( n ) } \conc A \right ) \cup \left ( \pre{\omega}{2} \setminus ( \Nbhd_{0^{ ( n ) } } 
 \cup \Nbhd_{1^{ ( n ) }} ) \right ) .
\end{split}
\]
\end{definition}
Thus \( \mathrm{P}_1 ( A ) = 1 \conc A \) and \( \LOC{ ( \mathrm{P}_n ( A ) ) }{1^{ ( n ) }} = A \).
Moreover
\begin{equation}\label{eq:padding}
\mu ( \mathrm{P}_n ( A ) ) = 1 - 2^{ - n } \bigl ( 2 - \mu ( A )  \bigr ) . 
\end{equation}

We start with defining \( \NATURAL ( A ) \), the analogue of \( A^\natural\).
First define 
\[ 
\mathcal{E}^{\NATURAL} ( A ) = \bigcup_{n> 0} \mathcal{E}_n ^{\NATURAL} ( A ) ,
\]
the set of all exit nodes for \( \NATURAL ( A ) \), where 
\(  \mathcal{E}^{\NATURAL}_n (A) \) is the set of all sequences of the form
\[
v_1 \conc 1\conc  v_2 \conc 1\conc \dots \conc v_n 
\]
where 
\[
v_i = \overline{s_i} \conc \eta_i  \conc \uu ( r_{\lh ( s_i )} \cdot \mu ( \LOC{A}{ s_i } ) ) 
\]
and \( s_1 , \dots , s_n \in  \pre{< \omega }{2} \) and \( \eta _1 , \dots , \eta _n \in \setLR{01 , 10} \). 
If \( e \in \mathcal{E}^{\NATURAL}_n (A) \) and 
\( e' \in \mathcal{E}^{\NATURAL}_{n'}  (A)\) then exactly one of the disjuncts below holds:
\begin{equation}\label{eq:incompatiblenodesofE}
\left (e \subset e' \AND n < n' \right ) \OR \left ( e' \subset e \AND n' < n\right ) \OR 
\left ( e = e' \AND n = n' \right ) \OR \left ( e \perp e' \right ) . 
\end{equation}
In particular, the elements in \( \mathcal{E}_n^{\NATURAL} ( A ) \) are pairwise incompatible, and 
\[
 \FORALL{e \in \mathcal{E}^{\NATURAL}_n ( A )} \FORALL{j < n} 
 \EXISTSONE{e' \in \mathcal{E}_j^{\NATURAL} ( A )}  ( e' \subset e  )
\]
so that if \( x \in \pre{\omega}{2} \) passes through infinitely many points 
of  \( \mathcal{E}^{\NATURAL} ( A ) \) then 
\begin{equation}\label{eq:xcrossesinfinitelymanylayers}
  x = \bigcup_{n} e_n
\end{equation}
with \( e_n \in \mathcal{E}_n^{\NATURAL} (A) \) and \( e_1 \subset e_2 \subset e_3 \subset \cdots \)\,.

\begin{definition}\label{def:NAT(A)}
\(\NATURAL ( A ) =   \bigcup_{ e \in \mathcal{E} ^{\NATURAL}  (A) }
e  \conc 1 \conc \Plus ( A ) \).
\end{definition}

Two remarks on \(  \NATURAL  (A) \)'s definition are in order.
\begin{remarks}
\begin{enumerate-(a)}
\item
\( \NATURAL  (A) \) is obtained by attaching the \( 1 \)-padding of \( \Plus (A) \)
to each \( e \in \mathcal{E}^{\NATURAL} ( A ) \), hence it
can be seen as a tree of sets: 
to move from a set at level \( n \)
to a set at level \( n + 1 \) we exit level \( n \) by following a node of the form 
\( \overline{s}\conc i \conc (1 - i ) \conc \uu ( r_{\lh ( s )} \cdot \mu (\LOC{A}{s} ) ) \conc 1 \)
--- choosing different strings \( s \) will take us to different nodes at level \( n + 1 \).
The digit `\( 1 \)' that separates different levels will ensure that every \( x \) as in~\eqref{eq:xcrossesinfinitelymanylayers}
will not have density \( 1 \) 
in \( \NATURAL (A) \), implying \( \mathcal{T} \)-regularity.
\item
Given any \( x \in \pre{\omega}{2} \) we have five mutually exclusive possibilities:
\begin{enumerate-(i)}
\item
\( x \) does not extend any node of \( \mathcal{E}^{\NATURAL} (A) \), hence \( x \notin \NATURAL  (A) \),
\item
\( x \) extends infinitely many nodes of \( \mathcal{E}^{\NATURAL} (A) \), hence 
it is of the form~\eqref{eq:xcrossesinfinitelymanylayers} and it 
is a branch of the tree of sets.
Also in this case  \( x \notin \NATURAL  (A) \).
\item
\( x \) extends  \( e \conc 0 \) with \( e \in \mathcal{E}^{\NATURAL} ( A ) \).
Then \(  x \notin \NATURAL  (A) \) by 
part~\ref{lem:opendisjointfromNAT-b} of Lemma~\ref{lem:opendisjointfromNAT} below.
\item
\( x \) is of the form \( e \conc 1 \conc \overline{y} \), and
\( e \) is the largest exit node contained in \( x \).
Then \( x \in \NATURAL  (A) \iff y \in A \).
\item
\( x \) extends \( e \conc 1 \conc \overline{s} \conc \eta \) for some \( \eta \in \setLR{ 01 , 10 }\), and
\( e \) is the largest exit node contained in \( x \).
By maximality  \( x \supset e \conc 1 \conc \overline{s} \conc \eta \conc v \) for some 
\( v \perp \uu ( r_{\lh ( s )} \cdot \mu ( \LOC{A}{s} )) \) hence \( x \in \NATURAL  (A) \).
\end{enumerate-(i)}
\end{enumerate-(a)}
\end{remarks}

\begin{lemma}\label{lem:opendisjointfromNAT}
Let \( e , e' \in \mathcal{E}^{\NATURAL} (A) \):
\begin{enumerate-(a)}
\item\label{lem:opendisjointfromNAT-a}
If \( e \subset e' \) then \( \left (e \conc 1 \conc \Plus ( A ) \right ) \cap  \Nbhd_{e'} = \emptyset \), 
hence 
\[ 
e \neq e' \IMPLIES \left ( e \conc 1 \conc \Plus ( A ) \right ) \cap 
\left ( e' \conc 1 \conc \Plus ( A ) \right ) = \emptyset .
\]
\item\label{lem:opendisjointfromNAT-b}
\(  \FORALL{e \in \mathcal{E}^{\NATURAL}(A) } 
\left ( \Nbhd_{e \conc 0} \cap \NATURAL  (A) = \emptyset \right ) \).
\end{enumerate-(a)}
\end{lemma}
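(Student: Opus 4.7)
The plan is to treat part~\ref{lem:opendisjointfromNAT-a} as the substantive step and then deduce~\ref{lem:opendisjointfromNAT-b} by a short case analysis. For~\ref{lem:opendisjointfromNAT-a}, fix $e \in \mathcal{E}_n^{\NATURAL}(A)$ and $e' \in \mathcal{E}_{n'}^{\NATURAL}(A)$ with $e \subset e'$. By~\eqref{eq:incompatiblenodesofE} $n < n'$, and unwinding the definition of the exit nodes yields $e' \supseteq e \conc 1 \conc v_{n+1}$, where $v_{n+1} = \overline{s} \conc \eta \conc \uu(\rho)$ with $s = s_{n+1}$, $\eta = \eta_{n+1} \in \setLR{01,10}$, and $\rho = r_{\lh(s)} \cdot \mu(\LOC{A}{s})$. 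Suppose, toward a contradiction, that $x \in (e \conc 1 \conc \Plus(A)) \cap \Nbhd_{e'}$, and write $x = e \conc 1 \conc y$ with $y \in \Plus(A)$. Then $y \supseteq v_{n+1}$.

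I would next show that no point of $\Plus(A)$ can extend $v_{n+1}$. First, $y$ cannot lie in $\overline{A} \subseteq \overline{\pre{\omega}{2}}$: since $\eta \in \setLR{01,10}$ one has $y(2\lh(s)) \neq y(2\lh(s)+1)$, which breaks the doubling pattern characterising $\overline{\pre{\omega}{2}}$. Second, if $y \in \overline{s'} \conc \eta' \conc O(\rho')$ for some $s'$, $\eta' \in \setLR{01,10}$, and $\rho' = r_{\lh(s')} \cdot \mu(\LOC{A}{s'})$, then the prefix $\overline{s'} \conc \eta'$ of $y$ breaks the doubling at position $\lh(\overline{s'})$. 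By uniqueness of the first position at which $y$'s doubling pattern breaks, $\lh(\overline{s'}) = \lh(\overline{s})$, so $s' = s$, $\eta' = \eta$, and hence $\rho' = \rho$; together with Remark~\ref{rmk:O(r)} this forces $\uu(\rho') = \uu(\rho)$. But $y \in \overline{s} \conc \eta \conc O(\rho)$ means precisely that $y$ does not extend $\overline{s} \conc \eta \conc \uu(\rho) = v_{n+1}$, contradicting $y \supseteq v_{n+1}$. This establishes the first assertion of~\ref{lem:opendisjointfromNAT-a}; the second follows because, for distinct $e, e' \in \mathcal{E}^{\NATURAL}(A)$, \eqref{eq:incompatiblenodesofE} leaves only the possibilities of incompatibility (making $\Nbhd_e \cap \Nbhd_{e'} = \emptyset$) or comparability (reducing to the first assertion).

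For~\ref{lem:opendisjointfromNAT-b}, suppose $x \in \Nbhd_{e \conc 0} \cap \NATURAL(A)$, so that $x = e'' \conc 1 \conc y$ for some $e'' \in \mathcal{E}^{\NATURAL}(A)$ and $y \in \Plus(A)$, while $x(\lh(e)) = 0$. The four cases of~\eqref{eq:incompatiblenodesofE} dispose of this: if $e'' = e$ or $e'' \supset e$, then $e''$ extends $e \conc 1$ (directly or via the exit-node structure), forcing $x(\lh(e)) = 1$, a contradiction; if $e'' \perp e$, then $\Nbhd_{e''} \cap \Nbhd_e = \emptyset$, again incompatible with $x \in \Nbhd_{e''} \cap \Nbhd_{e \conc 0}$; and if $e'' \subset e$, then $x \in (e'' \conc 1 \conc \Plus(A)) \cap \Nbhd_e$, which is empty by~\ref{lem:opendisjointfromNAT-a}. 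The main obstacle is the argument in~\ref{lem:opendisjointfromNAT-a}, and specifically the combinatorial step that locates the ``doubled-then-$\eta$'' prefix of $y$ uniquely, thereby forcing the parameters of any $O$-piece of $\Plus(A)$ containing $y$ to coincide with those embedded in $v_{n+1}$; the rest reduces to bookkeeping and the dichotomy provided by~\eqref{eq:incompatiblenodesofE}.
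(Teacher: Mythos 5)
Your proof is correct and follows essentially the same route as the paper's: for part~\ref{lem:opendisjointfromNAT-a} you reduce to showing that no element of $\Plus(A)$ can extend the piece $v_{n+1}$ of $e'$ sitting past $e \conc 1$, and you rule out membership in $\overline{A}$ via the doubling break at $\eta$, then pin down the parameters of the $O$-piece by uniqueness of the first break --- exactly the paper's argument, merely made more explicit (the appeal to Remark~\ref{rmk:O(r)} at the end is superfluous once $\rho' = \rho$, but harmless). For part~\ref{lem:opendisjointfromNAT-b} your four-way case split via~\eqref{eq:incompatiblenodesofE} matches the paper's (which collapses the first three cases into ``$e'' \subseteq e$ or $e'' \perp e$ gives incompatibility'' and handles $e'' \subset e$ via part~\ref{lem:opendisjointfromNAT-a}).
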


\begin{proof}
\ref{lem:opendisjointfromNAT-a}
Let \( e = v_1 \conc 1 \conc v_2 \conc 1 \conc \dots \conc 1 \conc v_n \) and 
\( e' = e \conc 1\conc v_{n+1} \conc 1\conc \dots \conc 1 \conc v_{n+k}\).
Towards a contradiction suppose that there is an element of the Cantor space of the form \( e \conc 1 \conc x \) 
with \( x\in \Plus ( A ) \) that belongs to \( \Nbhd_{e' }\), that is 
\[
x = \overline{s_{n+1}} \conc \eta_{n+1} \conc \uu ( r_{\lh ( s_{n+1} ) } 
\cdot \mu ( \LOC{A}{s_{n + 1} }) )  \conc  y ,
\]
for some \( y \).
As \( x \notin \overline{ A} \) then \( x \) belongs to some 
\( \overline{s} \conc i \conc ( 1 - i ) \conc O ( r_{\lh ( s )} \cdot \mu ( \LOC{A}{s} )) \)
for some \( s \in  \pre{ < \omega }{2}\) and \( i \in 2 \).
This implies that \( s = s_{n+1} \) and \( \eta_{n+1} = i \conc ( 1 - i ) \) and
\[ 
\uu ( r_{\lh ( s_{n + 1}) }\cdot \mu ( \LOC{A}{s_{n + 1} }) )  \conc y 
\in O (  r_{\lh ( s_{n + 1} ) } \cdot \mu ( \LOC{A}{s} ) ) ,
\]
which contradicts Definition~\ref{def:O(r)}.

\ref{lem:opendisjointfromNAT-b}
It is enough to show that 
\[
\FORALL{e , e' \in \mathcal{E}^{\NATURAL} (A) } 
\left ( \Nbhd_{e' \conc 0} \cap \left ( e \conc 1 \conc \Plus ( A , r )  \right ) = \emptyset  \right ) .
\]
If \( e' \subseteq e \) then \(e' \conc 0\perp e \conc 1 \) hence the result holds at once.
If instead \( e \subset e' \) we apply part~\ref{lem:opendisjointfromNAT-a}.
\end{proof}

We now construct \( \FLAT ( A ) \), the analogue of \( A^\flat \).
First define 
\[ 
\mathcal{E}^{\FLAT} ( A )  = \bigcup_{n> 0} \mathcal{E}^{\FLAT} _n  ( A ) ,
\]
the set of all exit nodes of \( \FLAT ( A ) \), where
\(  \mathcal{E}^{\FLAT} _n (A) \) is the set of all sequences of the form
\[
w_1 \conc 1^{( \hh ( 1 ) )} \conc w_2 \conc 1^{( \hh ( 2 ) )} \conc \dots \conc 1 ^{( \hh ( n - 1 ) )} \conc w_n
\]
with
\[
w_i = \overline{s_i} \conc \eta_i \conc \uu (\max \setLR{r_i, r_{\lh ( s_i )} \cdot \mu ( \LOC{A}{ s_i} )})
\]
and \( s_1 , \dots , s_n \in  \pre{ < \omega }{2} \), \( \eta_1 , \dots , \eta_n \in \setLR{01 , 10}\) and
\[
\hh ( i )  = \min k \left ( 1 - 2^{ - k + 1 } \geq r_i \right ) .
\]
Notice that the elements of  \( \mathcal{E}^{\FLAT} ( A ) \) differ from
the ones of \( \mathcal{E}^{\NATURAL} ( A ) \) in that the \( \uu\) 
part is different and we use \( 1^{ ( \hh ( r_i ) )}\) to separate the blocks.
We leave it to the reader to check that the elements of \( \mathcal{E}^{\FLAT}(A) \)
 have properties similar to the ones in \( \mathcal{E}^{\NATURAL} (A)\)
--- in particular~\eqref{eq:incompatiblenodesofE} holds if 
\( e \in \mathcal{E}^{\FLAT}_n (A) \) and 
\( e' \in \mathcal{E}^{\FLAT}_{n'}  (A)\).

\begin{definition}\label{def:FLAT(A)}
\[ 
\FLAT ( A )  =  \Bigl ( \bigcup_{n>0} \bigcup_{ e \in \mathcal{E}_n^{\FLAT}(A) }  
e  \conc \mathrm{P}_{\hh ( n )} ( \Plus ( A ,  r_n ) ) \Bigr )
\cup \setofLR{x}{\exists^\infty e \in \mathcal{E}^{\FLAT} (A) \left ( e \subseteq x \right )} .
\]
\end{definition}

\begin{remarks}
\begin{enumerate-(a)}
\item
\( \FLAT ( A ) \) is the disjoint union of two sets.
The first one, like the case of \( \NATURAL ( A ) \), can be seen as a tree of sets hence it is stratified in layers,
the second one is the set of all branches through this tree.
\item
Given any \( x \in \pre{\omega}{2} \) we have six mutually exclusive possibilities:
\begin{enumerate-(i)}
\item
\( x \) does not extend any node of \( \mathcal{E}^{\FLAT} (A) \), hence \( x \notin \FLAT  (A) \),
\item
\( x \) extends infinitely many nodes of \( \mathcal{E}^{\FLAT} (A) \), hence 
it is in \( \FLAT ( A ) \).
In this case we will see that \( x \in \Phi ( \FLAT ( A ) ) \).
\item
\( x \) extends  \( e \conc 0^{( \hh ( n ) )} \) with \( e \in \mathcal{E}^{\FLAT}_n ( A ) \).
Then \(  x \notin \FLAT  (A) \) by 
part~\ref{lem:opendisjointfromFLAT-b} of Lemma~\ref{lem:opendisjointfromFLAT} below.
\item
\( x \) is of the form \( e \conc 1^{( \hh ( n ) ) } \conc \overline{y} \), and
\( e \in \mathcal{E}_n^{\FLAT} ( A ) \) is the largest exit node contained in \( x \).
Then \( x \in \FLAT ( A ) \iff y \in A \).
\item
\( x \) extends \( e \conc 1^{(\hh ( n ) )} \conc \overline{s} \conc \eta \) for some \( \eta \in \setLR{ 01 , 10 }\), and
\( e \) is the largest exit node contained in \( x \).
By maximality  \( x \supset e \conc 1^{(\hh ( n ) )} \conc \overline{s} \conc \eta \conc v \) for some 
\( v \perp \uu (\max \setLR{r_{ n + 1 } , r_{\lh ( s )} \cdot \mu ( \LOC{A}{s} ) } ) \) hence \( x \in \FLAT  (A) \).
\item
\( x \) extends \( e \conc t \) with \( e \in \mathcal{E}^{\FLAT}_n ( A ) \) and 
\( t \in  {}^{ \hh ( n ) } 2 \setminus \setLR{ 0^{( \hh ( n ))} , 1^{ (\hh ( n ) )}} \).
Then \( x \in \FLAT ( A ) \cap \Phi ( \FLAT ( A ) ) \).
\end{enumerate-(i)}

\end{enumerate-(a)}
\end{remarks}

The following is proved as Lemma~\ref{lem:opendisjointfromNAT}.
\begin{lemma}\label{lem:opendisjointfromFLAT}
Let \( e \in \mathcal{E}^{\FLAT}_n (A) \) and \( e' \in \mathcal{E}^{\FLAT}_{n'} (A) \):
\begin{enumerate-(a)}
\item\label{lem:opendisjointfromFLAT-a}
If \( e \subset e' \) then \( \left (e \conc 1^{ ( \hh ( n ) ) } \conc \Plus ( A , r_n ) \right ) \cap  \Nbhd_{e'} = \emptyset \), 
hence 
\[ 
e \neq e' \IMPLIES \left ( e \conc 1^{ ( \hh ( n ) ) } \conc \Plus ( A , r_n ) \right ) \cap 
\left ( e' \conc 1^{ ( \hh ( n' ) ) } \conc \Plus ( A , r_{n'} ) \right ) = \emptyset .
\]
\item\label{lem:opendisjointfromFLAT-b}
\(  \FORALL{e \in \mathcal{E}^{\FLAT} ( A ) } 
\left ( \Nbhd_{e \conc 0^{ ( \hh ( n ) )  }} \cap \FLAT ( A ) = \emptyset \right ) \).
\end{enumerate-(a)}
\end{lemma}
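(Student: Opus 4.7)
The plan is to mirror the proof of Lemma~\ref{lem:opendisjointfromNAT} with three adaptations accounting for the differences between \( \FLAT \) and \( \NATURAL \): the single-bit separator \( 1 \) is replaced by the block \( 1^{(\hh(n))} \), whose length depends on the level; the \( \uu \)-ending of the \((n{+}1)\)-st block of \( e' \) uses the parameter \( r_{n+1} \) while the attached piece is \( \Plus(A, r_n) \); and \( \FLAT(A) \) contains the extra branches summand \( \setofLR{x}{\exists^\infty e \in \mathcal{E}^{\FLAT}(A)\ (e \subseteq x)} \) to handle in part~(b).

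First I would record the unique-parsing property of exit nodes in \( \mathcal{E}^{\FLAT}(A) \): each such element factors uniquely as \( w_1 \conc 1^{(\hh(1))} \conc \cdots \conc w_n \). Indeed \( \overline{s_i} \) is a block of doubled bits, \( \eta_i \in \setLR{01, 10} \) breaks the doubling pattern and so marks the \( \overline{s_i}\)–\(\eta_i \) boundary, each \( \uu \)-tail is recoverable as the unique prefix of the form \( 0^{(k-1)} 1 \) occurring after \( \eta_i \) (Remark~\ref{rmk:O(r)}), and the separators \( 1^{(\hh(i))} \) have known lengths. Consequently, for \( e \subset e' \) with \( e \in \mathcal{E}^{\FLAT}_n(A) \) and \( e' \in \mathcal{E}^{\FLAT}_{n'}(A) \), the first \( n \) blocks of \( e' \) coincide with those of \( e \), giving the factorization \( e' = e \conc 1^{(\hh(n))} \conc w_{n+1} \conc 1^{(\hh(n+1))} \conc \cdots \conc w_{n'} \).

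For part~(a), suppose toward a contradiction that some \( e \conc 1^{(\hh(n))} \conc x \in \Nbhd_{e'} \) has \( x \in \Plus(A, r_n) \). Then \( x \) extends \( w_{n+1} = \overline{s_{n+1}} \conc \eta_{n+1} \conc \uu(\max\setLR{r_{n+1}, \tilde r}) \), where \( \tilde r = r_{\lh(s_{n+1})} \cdot \mu(\LOC{A}{s_{n+1}}) \). Since \( \eta_{n+1} \) breaks doubling, \( x \notin \overline{A} \), so \( x \in \overline{s} \conc \eta \conc O(\max\setLR{r_n, r_{\lh(s)} \mu(\LOC{A}{s})}) \) for some \( s, \eta \). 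Matching initial segments forces \( s = s_{n+1} \) and \( \eta = \eta_{n+1} \), reducing the question to whether \( \uu(\max\setLR{r_{n+1}, \tilde r}) \conc y \) can belong to \( O(\max\setLR{r_n, \tilde r}) \); the compatibility analysis underlying Definition~\ref{def:O(r)} and Remark~\ref{rmk:O(r)} rules this out, yielding the contradiction. The second conclusion of~(a), for \( e \neq e' \), follows by dealing with \( e \perp e' \) trivially and applying the first claim otherwise (WLOG \( e \subset e' \)).

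For part~(b), it is enough to show that \( \Nbhd_{e \conc 0^{(\hh(n))}} \) is disjoint from both summands of \( \FLAT(A) \). Against the first summand \( \bigcup_{e'} e' \conc 1^{(\hh(n'))} \conc \Plus(A, r_{n'}) \), I case-split on the relation between \( e \) and \( e' \): the cases \( e \perp e' \), \( e = e' \), and \( e \subsetneq e' \) are immediate (the last using the factorization above, which forces \( \Nbhd_{e'} \subseteq \Nbhd_{e \conc 1^{(\hh(n))}} \)); the case \( e' \subsetneq e \) reduces, after cancelling \( e' \conc 1^{(\hh(n'))} \), to part~(a) at a lower level. Against the branches summand, any \( x \) passing through infinitely many exit nodes and extending \( e \) must continue through some \( e'' \supsetneq e \) in \( \mathcal{E}^{\FLAT}(A) \), which by the factorization starts with \( e \conc 1^{(\hh(n))} \), and so \( x \notin \Nbhd_{e \conc 0^{(\hh(n))}} \). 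I expect the main obstacle to be the \( \uu \)-compatibility step of~(a), which requires a careful comparison of \( k(\max\setLR{r_n, \tilde r}) \) with \( k(\max\setLR{r_{n+1}, \tilde r}) \) in order to extract the contradiction from Definition~\ref{def:O(r)}.
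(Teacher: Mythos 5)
Your reduction of part~(a) to a \( \uu \)-versus-\( O \) disjointness claim is the right move, but the claim you then assert --- that the compatibility analysis behind Definition~\ref{def:O(r)} and Remark~\ref{rmk:O(r)} ``rules this out'' --- is \emph{not} an instance of the trivial fact \( O(r) \cap \Nbhd_{\uu(r)} = \emptyset \), because here the two parameters do not match: the glued set is \( \Plus(A,r_n) \), whose \( O \)-piece at \( s=s_{n+1} \) carries \( \max\setLR{r_n,\tilde r} \), while the \( \uu \)-tail of \( w_{n+1} \) carries \( \max\setLR{r_{n+1},\tilde r} \) with \( r_n < r_{n+1} \). You flag the comparison of \( k(\max\setLR{r_n,\tilde r}) \) with \( k(\max\setLR{r_{n+1},\tilde r}) \) as the main obstacle but do not carry it out, and that is exactly where the argument breaks. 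Write \( k_1 = k(\max\setLR{r_n,\tilde r}) \leq k_2 = k(\max\setLR{r_{n+1},\tilde r}) \). If \( k_1=k_2 \) the two \( \uu \)-nodes coincide and the \( \NATURAL \)-style contradiction goes through. But if \( k_1<k_2 \) then \( \uu(\max\setLR{r_{n+1},\tilde r}) = 0^{(k_2-1)}1 \) extends \( 0^{(k_1)} \), hence
\[
\Nbhd_{\uu(\max\setLR{r_{n+1},\tilde r})} \subseteq \Nbhd_{0^{(k_1)}} \subseteq O(\max\setLR{r_n,\tilde r}),
\]
so \( \uu(\max\setLR{r_{n+1},\tilde r}) \conc y \in O(\max\setLR{r_n,\tilde r}) \) for \emph{every} \( y \), and no contradiction results. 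This case is not vacuous: take \( s_{n+1}=\emptyset \), so \( \tilde r = r_0\cdot\mu(A) < r_0 \leq r_n \), together with any level \( n \) at which \( k(r_n) < k(r_{n+1}) \); infinitely many such \( n \) exist since \( (r_n)_n \) is a strictly increasing sequence converging to \( 1 \).

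This is therefore a genuine gap, not an omitted routine verification. The \( \NATURAL \) argument closes only because there the parameter \( r \) is the \emph{same} (namely \( 0 \)) on both sides of the comparison; in the \( \FLAT \) case the glued \( \Plus(A,r_n) \) and the \( \uu \)-tail of the next exit block carry consecutive indices, and the naive parsing argument does not reconcile them. Unless those two parameters are made to agree (for instance by gluing \( \Plus(A,r_{n+1}) \) at \( e\in\mathcal{E}^{\FLAT}_n(A) \), or by using \( r_{i-1} \) rather than \( r_i \) in the \( w_i \)-blocks), the disjointness in part~(a) does not follow by the \( \NATURAL \) parsing argument; and part~(b), as you set it up, inherits the gap through the case \( e'\subsetneq e \) of your case split. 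A more minor point: in part~(b) the first summand of \( \FLAT(A) \) is \( e'\conc\mathrm{P}_{\hh(n')}(\Plus(A,r_{n'})) \), not just \( e'\conc 1^{(\hh(n'))}\conc\Plus(A,r_{n'}) \); the extra padding piece \( e'\conc\bigl(\pre{\omega}{2}\setminus(\Nbhd_{0^{(\hh(n'))}}\cup\Nbhd_{1^{(\hh(n'))}})\bigr) \) needs to be checked too, though that check is routine.
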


Fix an \( s \in  \pre{ < \omega }{2}\).
Since 
\( \LOC{\FLAT ( A )}{e \conc 1^{( \hh ( n ) )} \conc \overline{s}} \supseteq \LOC{\Plus (A , r_n )}{\overline{s}} \) 
when \( e \in \mathcal{E}_n^{\FLAT} (A)\), then~\eqref{eq:estimate1} implies
\begin{equation*}\label{eq:localizedFlat0}
\FORALL{e \in \mathcal{E}_n^{\FLAT} (A) } 
\bigl ( \mu ( \LOC{\FLAT ( A )}{e \conc 1^{( \hh ( n ) )} \conc \overline{s}} ) \geq r_n \bigr ) .
\end{equation*}

\begin{lemma}\label{lem:intermediatevalues}
\( \FORALL{e \in \mathcal{E}_n^{\FLAT} (A) } \FORALL{k \leq \hh ( n ) } 
\bigl (  \mu ( \LOC{\FLAT ( A )}{e \conc 1^{( k)} } ) \geq r_n\bigr ) \).
\end{lemma}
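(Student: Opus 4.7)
The plan is to localize $\FLAT(A)$ at $e \conc 1^{(k)}$ and decompose the result into a ``filler'' part coming from the padding $\mathrm{P}_{\hh(n)}$ applied to $\Plus(A,r_n)$, together with the deeper part hidden behind the extra prefix $1^{(\hh(n)-k)}$. First I would handle the top case $k = \hh(n)$: since $\LOC{\mathrm{P}_{\hh(n)}(\Plus(A,r_n))}{1^{(\hh(n))}} = \Plus(A,r_n)$, and no other piece of $\FLAT(A)$ removes measure (higher exit nodes $e' \supset e$ in $\mathcal{E}^{\FLAT}(A)$ must satisfy $e' \supseteq e \conc 1^{(\hh(n))}$ by the structural property~\eqref{eq:incompatiblenodesofE} adapted to $\mathcal{E}^{\FLAT}$, and the same goes for branches through $\mathcal{E}^{\FLAT}(A)$), setting $\mu_n = \mu(\LOC{\FLAT(A)}{e \conc 1^{(\hh(n))}})$ yields $\mu_n \geq \mu(\Plus(A,r_n)) \geq r_n$ via~\eqref{eq:estimate1} applied with $s = \emptyset$.

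Next, for $1 \leq k < \hh(n)$, I would split a generic $y$ with $e \conc 1^{(k)} \conc y \in \FLAT(A)$ according to whether $y \supset 1^{(\hh(n)-k)}$ (contributing measure $2^{-(\hh(n)-k)} \mu_n$) or $y$ starts with some $t \in \pre{\hh(n)-k}{2} \setminus \setLR{1^{(\hh(n)-k)}}$; in the latter case $1^{(k)} \conc t$ is neither $0^{(\hh(n))}$ nor $1^{(\hh(n))}$, so $y$ automatically belongs to the filler of $\mathrm{P}_{\hh(n)}$. This gives
\[
\mu(\LOC{\FLAT(A)}{e \conc 1^{(k)}}) = 1 - 2^{-(\hh(n)-k)}(1 - \mu_n) \geq 1 - (1 - r_n) = r_n,
\]
where the inequality uses $\mu_n \geq r_n$ and $2^{-(\hh(n)-k)} \leq 1$.

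The main obstacle will be the boundary case $k = 0$, where the filler is missing \emph{both} $\Nbhd_{0^{(\hh(n))}}$ and $\Nbhd_{1^{(\hh(n))}}$ rather than just the latter. Here Lemma~\ref{lem:opendisjointfromFLAT}\ref{lem:opendisjointfromFLAT-b} would be invoked to discard the $0^{(\hh(n))}$ half, so that
\[
\mu(\LOC{\FLAT(A)}{e}) = 1 - 2^{-\hh(n)}(2 - \mu_n).
\]
The bound $\mu_n \geq r_n$ alone is now insufficient; instead I would appeal to the defining inequality $2^{-\hh(n)+1} \leq 1 - r_n$ for $\hh(n)$. Since $2 - \mu_n \leq 2$, this yields $2^{-\hh(n)}(2 - \mu_n) \leq 2^{-\hh(n)+1} \leq 1 - r_n$, whence $\mu(\LOC{\FLAT(A)}{e}) \geq r_n$. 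This is precisely the point at which the seemingly peculiar choice of $\hh$ in the definition of $\FLAT(A)$ earns its keep.
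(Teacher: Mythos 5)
Your proposal is correct and follows essentially the same route as the paper: the case $k = \hh(n)$ reduces to the preceding estimate with $s = \emptyset$, the intermediate cases $0 < k < \hh(n)$ amount to the paper's monotonicity claim (which your explicit computation $\mu(\LOC{\FLAT(A)}{e \conc 1^{(k)}}) = 1 - 2^{-(\hh(n)-k)}(1 - \mu_n)$ makes precise), and the boundary case $k = 0$ is exactly the paper's invocation of~\eqref{eq:padding} combined, as you note, with the defining inequality for $\hh(n)$. You spell out the arithmetic the paper leaves implicit, but there is no new idea or different decomposition involved.
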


\begin{proof}
The case \( k = \hh ( n ) \) is the preceding inequality with \( s = \emptyset \), and for \( 0 < k < k' \) use that 
\(  \mu (  \LOC{\FLAT ( A )}{e \conc 1^{( k)} } ) \geq \mu (  \LOC{\FLAT ( A )}{e \conc 1^{( k' )} }) \).
If \( k = 0 \) then use~\eqref{eq:padding}.
\end{proof}

By Definition~\ref{def:Plus} 
\[ 
\mu \bigl ( \LOC{\Plus ( A, r_n ) }{ \overline{s} \conc i \conc ( 1 - i)} \bigr ) =
 \mu ( O (\max \setLR{r_n , r_{\lh ( s )}\cdot \mu ( \LOC{A}{s} )}) ) \geq r_n , 
\]
hence, arguing as in~\eqref{eq:SpAT-regular2}, 
\( \mu ( \LOC{ \Plus (A , r_n )}{\overline{s}\conc i} ) \geq r _n \) too. 
Therefore for all \( e \in \mathcal{E}_n^{\FLAT} (A)  \), 
all \( s \in  \pre{< \omega }{2}  \), and \( i \in 2 \)
\begin{equation*}\label{eq:localizedFlat}
  \mu ( \LOC{\FLAT ( A )}{e \conc 1^{( \hh ( n ) )} \conc \overline{s} \conc i } ) ,
\mu ( \LOC{\FLAT ( A )}{e \conc 1^{( \hh ( n ) )} \conc \overline{s} \conc i \conc ( 1 - i )} ) \geq r_n .
\end{equation*}
To simplify the notation, let \( \eta = i \conc ( 1 - i )\) and 
\( e' = e \conc 1^{( \hh ( n ) )} \conc \overline{s} \conc \eta \conc u \in \mathcal{E}_{n + 1} ^{\FLAT } ( A ) \) 
where 
\[ 
u = \uu ( \max \setLR{ r_{n+1} , r_{\lh ( s)}\cdot \mu ( \LOC{A}{s} ) }) .
\]
If  \( v \subset u \) then \( u = 0^{(m)} 1\) and  \(v = 0^{(k)} \) for some \(k \leq m \).
\begin{figure}
   \centering
\begin{tikzpicture}
\fill [top color=gray, bottom color=gray!60] (1 , -1)--(0.5 , -2)--(1.5 , -2)--cycle;
\fill [top color=gray, bottom color=gray!60] (0 , -2) --(-0.5 , -3)--(0.5 , -3)--cycle;
\fill [top color=gray, bottom color=gray!60] (-4 , -4) --(-4.5 , -5)--(-3.5 , -5)--cycle;
\draw (-2 , -4) -- (-2.5 , -5)--(-1.5 , -5)--cycle;
\node  at (-1,-1)[inner sep=1pt, label=0:\( 0\)]{};
\node  at (1,-1)[inner sep=1pt, label=180:\( 1\)]{};
\node  at (0,-2)[inner sep=1pt, label= 180:\( 01\)]{};
\node at (-3,-3)[inner sep=1pt, label= 0:\( 0^{(m-k)}\)]{};
\node at (-2,-4)[inner sep=1pt, label= 0:\( 0^{(m-k)}1\)]{};
\node at (-4,-4)[inner sep=1pt, label= 0:\( 0^{(m-k+1)}\)]{};
\node at (-2,-5)[inner sep=0pt, label=270:\( \LOC{\FLAT(A)}{e'}\)]{};
\draw (0,0)--(-1.5,-1.5)  (-2.5,-2.5) -- (-4 , -4)
(0,0)--(1, -1)--(0.5 , -2)
(1, -1)--(1.5 , -2)
(-1,-1) --(0 , -2) --(-0.5 , -3)
(0 , -2) --(0.5 , -3)
(-3,-3)--(-2,-4)
(-4, -4) -- (-3.5 , -5)
(-4, -4) -- (-4.5 , -5) ;
\draw[loosely dotted] (-1.5, -1.5) -- (-2.5,-2.5);
\draw [thick,decorate,decoration=brace] (2,0) -- (2, -6) ;
\node at (2,-3) [anchor=west] {\( {}= \LOC{\FLAT(A)}{e \conc  1^{( \hh ( n ) )}\conc \overline{s} \conc \eta \conc 0^{(k)}}\)};
\end{tikzpicture}
   \caption{}
   \label{fig:natural}
\end{figure}
Thus (see Figure~\ref{fig:natural})
\[
 \LOC{\FLAT(A)}{e  \conc 1^{( \hh ( n ) )}\conc \overline{s} \conc \eta \conc 0^{(k)}} = 
 \Nbhd_1 \cup \Nbhd_{01} \cup \dots \cup 0^{( m - k )}1\conc \LOC{\FLAT(A)}{e'} \cup \Nbhd_{0^{( m - k +1 )}}
\]
and
\begin{equation*}
\begin{split}
\mu ( \LOC{\FLAT(A)}{e \conc  1^{( \hh ( n ) )} \conc \overline{s} \conc \eta \conc 0^{(k)}} ) 
	& = 1 - \frac{1}{2^{m-k+1}} + \frac{\mu ( \LOC{\FLAT(A)}{e'})}{2^{m-k+1}}
\\
 & \geq 1 - \frac{1}{2^{m-k+1}} + \frac{r_{n+1}}{2^{m-k+1}}
\\
& \geq r_{n+1} .
\end{split} 
\end{equation*}
Therefore
\begin{equation}\label{eq:e_n}
 \FORALL{e_n \in \mathcal{E}_n^{\FLAT} (A)}
 \FORALL{ e_{ n + 1} \in \mathcal{E}_{n + 1}^{\FLAT} (A)}
 \FORALL{t} 
 \bigl ( e_n \subset t \subseteq e_{n + 1}
\IMPLIES  \mu \bigl ( \LOC{\FLAT ( A )}{t} \bigr ) \geq r_{n} \bigr ) . 
\end{equation}

\begin{proposition}\label{prop:NAT<=>nat}
If \( A \neq \emptyset , \pre{\omega}{2} \) then
\[ 
\NATURAL (A)  \Wequiv A^\natural \qquad\text{ and }\qquad \FLAT (A)  \Wequiv A^\flat .
\]
\end{proposition}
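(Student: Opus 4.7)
The plan is to prove each Wadge equivalence by exhibiting explicit continuous reductions in both directions; I will describe the $\NATURAL$ case in detail and then indicate how the $\FLAT$ case is handled analogously.

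For $A^\natural \leqW \NATURAL(A)$, define $f \colon \pre{\omega}{2} \to \pre{\omega}{2}$ by first prepending a fixed initial exit node $v_0 \conc 1$, where $v_0 = 01 \conc \uu(r_0 \cdot \mu(A)) \in \mathcal{E}_1^{\NATURAL}(A)$ corresponds to $s_0 = \emptyset$ and $\eta_0 = 01$, and then processing $x$ block by block: each block $\overline{s_j}\eta_j$ of $x$ (parsed left-to-right) is replaced by $v_j \conc 1 = \overline{s_j}\eta_j \uu_j \conc 1$ with $\uu_j = \uu(r_{\lh(s_j)} \mu(\LOC{A}{s_j}))$; if the parsing of $x$ yields only finitely many blocks followed by a tail $\overline{a}$, the tail is copied verbatim. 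Continuity of $f$ is immediate from the finite-prefix dependence of each output bit. Using Lemma~\ref{lem:opendisjointfromNAT} and the description of the $O$-part of $\Plus(A)$, if $x = \overline{s_1}\eta_1 \cdots \overline{s_n}\eta_n \overline{a}$ with $n \geq 0$, then $f(x)$ has maximal exit-node prefix $e_{n+1} = v_0 \conc 1 \conc v_1 \conc 1 \conc \cdots \conc 1 \conc v_n$ and tail $\overline{a}$, whence $f(x) \in \NATURAL(A) \iff \overline{a} \in \Plus(A) \iff a \in A \iff x \in A^\natural$; and if $x$ has infinitely many blocks, then $f(x)$ traverses infinitely many exit nodes so, by the remarks following Definition~\ref{def:NAT(A)}, $f(x) \notin \NATURAL(A)$, matching $x \notin A^\natural$.

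For the reverse inequality $\NATURAL(A) \leqW A^\natural$, the obstacle is that $y = \overline{x} \notin \NATURAL(A)$ for every $x$, whereas a naive ``strip-the-$\uu_j \conc 1$'' map would give $\overline{x} \in A^\natural \iff x \in A$. The remedy uses the hypothesis $A \neq \emptyset, \pre{\omega}{2}$: fix $a_0 \notin A$ and $a_1 \in A$. Define $g \colon \pre{\omega}{2} \to \pre{\omega}{2}$ by a continuous state machine that parses $y$ level by level, tracking which exit node $e_n \in \mathcal{E}_n^{\NATURAL}(A)$ has been traversed. During the initial level-$0$ building phase, $g$ emits bits of $\overline{a_0}$ (crucially \emph{not} of $y$), so that if $y = \overline{x}$ then $g(y) = \overline{a_0} \notin A^\natural$. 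Whenever $y$ plays a mismatched pair $\eta_j$, $g$ emits $\eta_j$ and reads the next $\lh(\uu_j) + 1$ bits of $y$ against the expected $\uu_j \conc 1$: if they match, $g$ silently skips them and advances to level $n+1$, thereafter copying $y$'s bits verbatim during building phases; if they deviate during the $\uu_j$-check, $g$ enters the ``wrong-$\uu$'' state; and if $\uu_j$ matches but the separator bit is $0$, $g$ enters the ``wrong-sep'' state. In the wrong-sep state at any level, and in the wrong-$\uu$ state at level $0$, Lemma~\ref{lem:opendisjointfromNAT} and the $z \perp \uu$ analysis of the $O$-part show $y \notin \NATURAL(A)$, so $g$ emits $\overline{a_0}$ forever, yielding $g(y) \notin A^\natural$; in the wrong-$\uu$ state at any level $\geq 1$, the tail of $y$ after its maximal exit node $e_n$ lies in the $O$-part of $\Plus(A)$, so $y \in \NATURAL(A)$, and $g$ emits $\overline{a_1}$ forever, yielding $g(y) \in A^\natural$. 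A routine case analysis on $y$ using the post-Definition~\ref{def:NAT(A)} dichotomy verifies $y \in \NATURAL(A) \iff g(y) \in A^\natural$.

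The $\FLAT$ version is handled by the same scheme with the obvious modifications: the separator $1$ is replaced by $1^{(\hh(n))}$, the body $\Plus(A)$ by $\mathrm{P}_{\hh(n)}(\Plus(A, r_n))$, and the $\uu$'s are those of Definition~\ref{def:FLAT(A)}. Crucially, $\FLAT(A)$ explicitly contains every point passing through infinitely many exit nodes, matching the $\setofLR{x}{\exists^\infty n \, (x(2n) \neq x(2n+1))}$ clause in $A^\flat$: for such an input $y$, the analogue of $f$ produces an output traversing infinitely many exit nodes (hence in $\FLAT(A)$), and the analogue of $g$ produces an output with infinitely many mismatched pairs (hence in $A^\flat$). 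Points whose separators are ``nonstandard'' $t \in \pre{\hh(n)}{2} \setminus \setLR{0^{(\hh(n))}, 1^{(\hh(n))}}$ are in $\FLAT(A)$ and are matched by having $g$ emit a fixed infinite-$\eta$ sequence that is automatically in $A^\flat$. The main obstacle in both constructions is the careful bookkeeping in the deviation cases, which rests on Lemmas~\ref{lem:opendisjointfromNAT} and~\ref{lem:opendisjointfromFLAT}, on the estimate~\eqref{eq:e_n}, and on the defining property of the $O$-component of $\Plus$.
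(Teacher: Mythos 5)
Your proposal is correct and takes essentially the same route as the paper: the forward reduction prepends a fixed exit node and inserts the \( \uu \)-blocks at each mismatch (the paper phrases this as a Lipschitz reduction into the localization \( \LOC{\NATURAL(A)}{e\conc 1} \)), while the reverse reduction is the same Wadge-game strategy, emitting a default \( \overline{a_0} \) with \( a_0\notin A \) at level \( 0 \), stripping \( \uu\conc 1 \) blocks at transitions, and diverting to \( \overline{a_1} \) or \( \overline{a_0} \) according to whether a deviation lands in the \( O \)-part or in \( \Nbhd_{e\conc 0} \). Your state-machine phrasing and explicit handling of the \( \FLAT \) cases (nonstandard separators, infinitely many exit nodes) just spell out what the paper leaves to the reader.
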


\begin{proof}
We first look at \(  \NATURAL (A)\) and \( A^\natural \).
Fix \( e \in \mathcal{E}^{\NATURAL} ( A ) \): we will show that 
\( A^\natural \leql \LOC{\NATURAL ( A )}{ e \conc 1} \), hence 
\( A^\natural \leql \NATURAL ( A ) \).
Player \( \II \) wins \( \GL ( A^\natural , \LOC{\NATURAL (A) }{ e \conc 1 } ) \) by copying
\( \I \)'s moves and playing an appropriate \( \uu\)-node followed by \( 1 \)
whenever \( \I \) breaks a sequence \( \overline{s} \) by playing \( 01\) or \( 1 0 \).
Conversely \( \II \) wins \( \GW ( \NATURAL (A) , A^\natural ) \) as follows:
\begin{quote}
Player \( \II \) enumerates a sequence \( \overline{a} \) with \( a \notin A\), 
until \( \I \) reaches, if ever, a position 
\( \overline{s_1} \conc \eta_1 \conc \uu ( r_{\lh ( s _1 ) } \cdot \mu ( \LOC{A}{s_1} ) ) \conc 1 \) 
with \( \eta_1 \in \setLR{01 ,10}\).
Suppose \( \I \) has reached such position: then \( \II \) plays \( 01\) and  from now on copies
\( \I \)'s moves, removing the sequences of the form \( \uu \conc 1 \).
This works as long as \( \I \) plays inside the tree generated by the nodes
in \( \mathcal{E}^{\NATURAL} ( A ) \).
Suppose at some stage \( \I \) goes astray and leaves this tree:
\begin{itemizenew}
\item
if \( \I \) enters an open set of the form \( O ( r ) \) then \( \II \) plays from now 
on \( \overline{a} \) with \( a \in A \), 
\item
if \( \I \) followed the relevant \( \uu\) node but after that played \( 0 \) 
instead of \( 1 \), then \( \II \) from now enumerates a sequence
\( \overline{a} \) with \( a \notin A \).
\end{itemizenew}
\end{quote}
This proves the first equivalence.
The second equivalence is similar and it is left to the reader.
\end{proof}

\begin{lemma}\label{lem:nullfrontierFLAT}
The set
\( \setofLR{ x \in \pre{\omega}{2} }{\exists^\infty e \in \mathcal{E}^{\NATURAL} ( A )\left ( e \subset x  \right )} \) is null.
Similarly for 
\( \setofLR{ x \in \pre{\omega}{2} }{\exists^\infty e \in \mathcal{E}^{\FLAT} ( A )\left ( e \subset x  \right )}\).
\end{lemma}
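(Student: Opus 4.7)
The plan is to show that, for both families of exit nodes, the set of $x$ extending infinitely many of them is a decreasing intersection of open sets whose measures decay geometrically to zero. For $\mathcal{E}^{\NATURAL}(A)$ let
\[
B_n = \bigcup_{e \in \mathcal{E}^{\NATURAL}_n(A)} \Nbhd_e .
\]
By~\eqref{eq:incompatiblenodesofE}, distinct elements of $\mathcal{E}^{\NATURAL}_n(A)$ are pairwise incompatible, every $e' \in \mathcal{E}^{\NATURAL}_{n+1}(A)$ extends a unique $e \in \mathcal{E}^{\NATURAL}_n(A)$, and any two exit nodes both extended by a common $x$ must be comparable with distinct levels. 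Hence $(B_n)_n$ is decreasing and
\[
\setofLR{x \in \pre{\omega}{2}}{\exists^\infty e \in \mathcal{E}^{\NATURAL}(A)\,(e \subset x)} = \bigcap_{n \geq 1} B_n .
\]

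The key computation is the conditional measure. Fix $e \in \mathcal{E}^{\NATURAL}_n(A)$; every $e' \in \mathcal{E}^{\NATURAL}_{n+1}(A)$ with $e' \supset e$ is uniquely of the form $e \conc 1 \conc v$ with $v \in \mathcal{E}^{\Plus}(A)$. The basic clopen sets $\Nbhd_v$ (for $v \in \mathcal{E}^{\Plus}(A)$) are pairwise disjoint and, modulo the null set $\overline{\pre{\omega}{2}}$, exhaust the complement of $\Plus(A)$, so
\[
\mu\bigl(B_{n+1} \cap \Nbhd_e\bigr) = \tfrac{1}{2}\bigl(1 - \mu(\Plus(A))\bigr)\,\mu(\Nbhd_e).
\]
By~\eqref{eq:estimate2}, $\mu(\Plus(A)) \geq r_0 \cdot \mu(A) > 0$, so summing over the pairwise disjoint $\Nbhd_e$ yields $\mu(B_{n+1}) \leq c \cdot \mu(B_n)$ with $c = \tfrac{1}{2}(1 - r_0 \mu(A)) < 1$. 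Iterating gives $\mu(B_n) \leq c^n \to 0$, and the intersection is null.

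The proof for $\mathcal{E}^{\FLAT}(A)$ is structurally identical. Setting $B^{\flat}_n = \bigcup_{e \in \mathcal{E}^{\FLAT}_n(A)} \Nbhd_e$, one observes that level-$(n+1)$ extensions of a fixed $e \in \mathcal{E}^{\FLAT}_n(A)$ are exactly the $e \conc 1^{(\hh(n))} \conc v$ with $v \in \mathcal{E}^{\Plus}(A, r_{n+1})$ (the shift from $r = 0$ to $r = r_{n+1}$ comes from the extra $\max$ in the definition of $w_{n+1}$). This yields $\mu(B^{\flat}_{n+1}) = 2^{-\hh(n)} (1 - \mu(\Plus(A, r_{n+1})))\,\mu(B^{\flat}_n)$, and since $\mu(\Plus(A, r_{n+1})) \geq r_{n+1} \geq r_0 > 0$ by~\eqref{eq:estimate1} with $s = \emptyset$, the contraction ratios are uniformly below $1$ (in fact $\to 0$).

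The only non-routine point is the bijective identification of level-$(n+1)$ extensions of a given level-$n$ exit node with the corresponding $\mathcal{E}^{\Plus}$-family, which rests on the uniqueness statement of~\eqref{eq:incompatiblenodesofE}. Once this is granted, the uniform measure contraction is an immediate consequence of the lower bounds on $\mu(\Plus(\cdot))$ already established in Section~\ref{sec:Wadge-styleconstructions}.
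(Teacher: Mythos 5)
Your proof is correct, and it takes a genuinely different route in the key step. The decomposition is the same: the set in question is $\bigcap_n B_n$ where $B_n$ is the union of the basic clopen sets over the level-$n$ exit nodes, and you establish geometric decay of $\mu(B_n)$. Where you diverge from the paper is in how the decay is obtained. The paper argues purely combinatorially: since every exit node ends in the digit $1$ (because $\uu(r)$ has the form $0^{(k)}1$), the parent nodes $e' \restriction (\lh(e')-1)$ of distinct level-$(n+1)$ extensions of a fixed $e$ are pairwise incompatible, so $\sum_{e'} 2\mu(\Nbhd_{e'}) \leq \mu(\Nbhd_e)$ and hence $\mu(U_{n+1}) \leq \tfrac{1}{2}\mu(U_n)$, with no measure-theoretic input. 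You instead identify the level-$(n+1)$ extensions of $e$ with $e\conc 1\conc v$ for $v \in \mathcal{E}^{\Plus}(A)$, observe that these $\Nbhd_v$ partition $\neg\Plus(A)$ up to a null set, and derive the exact identity $\mu(B_{n+1}\cap\Nbhd_e) = \tfrac{1}{2}(1-\mu(\Plus(A)))\mu(\Nbhd_e)$, then invoke the lower bound~\eqref{eq:estimate2} to get a ratio $<1$. Your route produces a sharper contraction factor and an exact recursion, at the cost of importing the measure estimates on $\Plus$ (which in turn use the standing hypothesis $\mu(A)>0$); the paper's route is more elementary and self-contained. You also explicitly sketch the $\FLAT$ case, which the paper leaves to the reader; your identification of the level-$(n+1)$ extensions there with $\mathcal{E}^{\Plus}(A, r_{n+1})$ and the resulting recursion with factor $2^{-\hh(n)}(1-\mu(\Plus(A,r_{n+1})))$ are both correct.
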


\begin{proof}
We shall prove only the first statement, leaving the second to the reader.
Let \( U_n = \bigcup_{e \in \mathcal{E}_n^{\NATURAL} ( A ) } \Nbhd_{e} \) and
\( U_0 = \pre{\omega}{2} \).
Then  \( U_{n + 1} \subseteq U_n \) and
\[ 
\bigcap_{n} U_n =
\setofLR{ x \in \pre{\omega}{2} }{\exists^\infty e 
\in \mathcal{E}^{\NATURAL} ( A ) \left ( e \subset x  \right )} .
\]
The result  will be proved by establishing that \( \mu  ( U_{n + 1} ) \leq  \mu ( U_n ) / 2 \).
As \( U_{n+1} \) is the disjoint union \( \bigcup_{e\in \mathcal{E}^{\NATURAL}_n ( A )} V_e\)
where \( V_e = \bigcup \setofLR{ \Nbhd_{e'}}{ e \subset e'
\in \mathcal{E}^{\NATURAL}_{n + 1} ( A )}\), it is enough to show that
\( \mu ( V_ e ) \leq  \mu ( \Nbhd_e ) / 2 \) for all \( e \in \mathcal{E}^{\NATURAL}_n ( A ) \).
Fix \( e \in \mathcal{E}^{\NATURAL}_n ( A ) \) and let 
\( E = \setofLR{ e' \in \mathcal{E}_{n + 1}^{\NATURAL} ( A ) }{ e \subset e'} \).
If \( e_0 , e_1 \in E \) are distinct,
then by Definition~\ref{def:O(r)} of the nodes \( \uu \), there are
 \( s_0 , s_1 \in  \pre{ < \omega }{ 2}\),
\( \eta_0 , \eta_1\in \setLR{01, 10}\) and \( k_i \in \omega \) such that  
\( e_i = e \conc 1 \conc \overline{s_i} \conc \eta_i \conc 0^{( k _ i )} \conc 1 \),
hence \( \Nbhd_{e \conc 1 \conc \overline{s_0} \conc \eta_0 \conc 0^{( k_0 )}} \cap
\Nbhd_{e \conc 1 \conc \overline{s_1} \conc \eta_1 \conc 0^{( k_1 )}} = \emptyset \).
Therefore 
\begin{align*}
\mu ( \Nbhd_e ) & \geq \sum_{ e' \in E} 
\mu ( \Nbhd_{ e' \restriction \lh ( e' ) - 1})
\\
& =  \sum_{ e' \in E  } 
2 \mu ( \Nbhd_{ e' })
\\
& = 2 \mu ( V_e ) 
\end{align*}
as required.
\end{proof}

\begin{proposition}\label{prop:NATisT-regular}
If \( A \) is \( \mathcal{T} \)-regular then \( \NATURAL  (A) \) and \( \FLAT ( A )\) are \( \mathcal{T} \)-regular.

Moreover, if \( A \) is in \( \ran( \Phi \restriction \bPi^{0}_{1} ) \cap  \ran( \Phi \restriction \bSigma^{0}_{1} ) \), 
then so are \( \NATURAL  (A) \) and \( \FLAT ( A )\).
\end{proposition}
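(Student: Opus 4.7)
The plan is to establish $\NATURAL(A) = \Phi(\NATURAL(A))$ by verifying the two inclusions separately, leveraging the $\mathcal{T}$-regularity of $\Plus(A)$ (Corollary~\ref{cor:Plusregular}) and the block structure of $\NATURAL(A)$; the argument for $\FLAT(A)$ runs in parallel, with estimate~\eqref{eq:e_n} and Lemma~\ref{lem:opendisjointfromFLAT} replacing their $\NATURAL$-counterparts, and with the branch set absorbed into $\FLAT(A)$. For the inclusion $\NATURAL(A) \subseteq \Phi(\NATURAL(A))$, Lemma~\ref{lem:opendisjointfromNAT}\ref{lem:opendisjointfromNAT-a} shows the decomposition $\NATURAL(A) = \bigsqcup_{e \in \mathcal{E}^{\NATURAL}(A)} e \conc 1 \conc \Plus(A)$ is a disjoint union, so each $x \in \NATURAL(A)$ has a unique maximal exit-node prefix $e$; writing $x = e \conc 1 \conc z$ with $z \in \Plus(A) = \Phi(\Plus(A))$, the inclusion $e \conc 1 \conc \Plus(A) \subseteq \NATURAL(A)$ yields $\mu(\LOC{\NATURAL(A)}{x \restriction m}) \geq \mu(\LOC{\Plus(A)}{z \restriction (m - \lh(e) - 1)}) \to 1$.

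For the reverse inclusion I argue contrapositively, partitioning $x \notin \NATURAL(A)$ into three regimes following the enumeration in the remarks after Definition~\ref{def:NAT(A)}. In Regime~I, $x$ lies in an open neighborhood disjoint from $\NATURAL(A)$: this covers $x \supset e \conc 0$ for some exit node $e$ (by Lemma~\ref{lem:opendisjointfromNAT}\ref{lem:opendisjointfromNAT-b}) as well as $x$ first breaking the doubling pattern at some $\overline{s} \conc \eta$ and then deviating from the unique compatible $\uu$-node; in both sub-cases~\eqref{eq:densityinteriorclosure} gives $x \notin \Cl(\NATURAL(A)) \supseteq \Phi(\NATURAL(A))$. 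In Regime~II, $x$ traverses infinitely many exit nodes $e_1 \subsetneq e_2 \subsetneq \cdots$, and Lemma~\ref{lem:opendisjointfromNAT}\ref{lem:opendisjointfromNAT-b} forces $\LOC{\NATURAL(A)}{e_n} \subseteq \Nbhd_1$, so $\mu(\LOC{\NATURAL(A)}{e_n}) \leq 1/2$ along this subsequence. In Regime~III, either $x = \overline{y}$ is purely doubled, or $x = e \conc 1 \conc z$ with $e$ the maximal exit-node prefix and $z \notin \Plus(A)$. Here the key tool is the universal estimate
\[
\mu(\NATURAL(B)) \leq \sum_{s \in \pre{< \omega}{2}} 2^{-2 \lh(s) - 2} = \tfrac12,
\]
valid for any $B$ and any strictly increasing $(r_i)_i \subseteq (0, 1)$, because $\mm(s) \geq 1$ always and by~\eqref{eq:series1}. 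For $x = \overline{y}$, the localization $\LOC{\NATURAL(A)}{\overline{y \restriction n}}$ is itself a shifted $\NATURAL$-construction over $\LOC{A}{y \restriction n}$ with sequence $(r_{n + i})_i$, so the bound gives density at most $\tfrac12$ immediately. For $x = e \conc 1 \conc z$, the localization $\LOC{\NATURAL(A)}{e \conc 1 \conc z \restriction k}$ decomposes as the disjoint union of the leaf $\LOC{\Plus(A)}{z \restriction k}$ and the higher-level exit contributions (themselves bounded by $\tfrac12$ via the universal estimate applied to the shifted $\NATURAL$-structure); combining this with $z \notin \Phi(\Plus(A))$ on an infinite subsequence of $k$, the total density stays bounded away from $1$.

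For the ``moreover'' part, $\mu(\Fr \Plus(A)) = 0$ already holds by~\eqref{eq:FrPlus} (since $\Fr \Plus(A) \subseteq \overline{\pre{\omega}{2}}$), independently of any frontier hypothesis on $A$; the case analysis above yields the inclusion
\[
\Fr(\NATURAL(A)) \subseteq \overline{\pre{\omega}{2}} \cup \Bigl( \bigcup_{e \in \mathcal{E}^{\NATURAL}(A)} e \conc 1 \conc \Fr(\Plus(A)) \Bigr) \cup \setofLR{x}{\exists^\infty e \in \mathcal{E}^{\NATURAL}(A) \, ( e \subset x)},
\]
a null set by~\eqref{eq:FrPlus} and Lemma~\ref{lem:nullfrontierFLAT}, and then~\eqref{eq:D(closed)} places $\NATURAL(A)$ in $\ran(\Phi \restriction \bSigma^{0}_{1}) \cap \ran(\Phi \restriction \bPi^{0}_{1})$. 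The argument for $\FLAT(A)$ is parallel, with the branch set now lying inside $\FLAT(A)$ and having full density via~\eqref{eq:e_n}, yet still contributing only a null set to the topological frontier. I expect the main obstacle to be the sub-case $x = e \conc 1 \conc z$ of Regime~III of the reverse inclusion: verifying that the higher-level exit contributions really assemble into a shifted $\NATURAL$-construction falling under the universal $\tfrac12$ bound requires a careful self-similarity computation locally, and one must simultaneously control the leaf part using $\mathcal{T}$-regularity of $\Plus(A)$ uniformly in the chosen subsequence.
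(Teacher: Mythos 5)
Your overall architecture matches the paper's proof closely: both establish $\NATURAL(A)\subseteq\Phi(\NATURAL(A))$ via the disjoint-union structure and $\mathcal{T}$-regularity of $\Plus(A)$, and both argue the reverse inclusion by a case split on why $x\notin\NATURAL(A)$. Your Regimes~I and~II align with the paper's Cases A (first subcase), C, and B respectively, and your treatment of the ``moreover'' part via the frontier inclusion and Lemma~\ref{lem:nullfrontierFLAT} is exactly what the paper does. Where you diverge is in Regime~III, and that is where a genuine gap appears.

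You replace the paper's local estimates with a ``universal estimate'' $\mu(\NATURAL(B))\le\frac12$. That bound is in fact correct --- $\NATURAL(B)\subseteq\bigcup_{e\in\mathcal{E}_1^{\NATURAL}(B)}\Nbhd_e$ and every first-level exit node has a $\uu$-tail of length $\ge 1$ past $\overline{s}\conc\eta$, so $\sum_{e\in\mathcal{E}_1}2^{-\lh(e)}\le\sum_s 2^{-2\lh(s)-2}=\frac12$ by~\eqref{eq:series1} --- but your one-line justification skips this. More to the point, applying it to the localization $\LOC{\NATURAL(A)}{\overline{y\restriction n}}$ is not automatic: that set is \emph{not} $\NATURAL(\LOC{A}{y\restriction n})$ with shifted $(r_i)$ (the $\uu$-nodes at levels $\ge 2$ are still computed from localizations of $A$, not of $\LOC{A}{y\restriction n}$). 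The estimate does survive because it only depends on the tree shape and the fact that every $\uu$-tail is nonempty, but this needs to be said; as written the ``shifted $\NATURAL$-construction'' claim is false. The paper sidesteps all this with a cruder but one-line bound: for each $\eta\in\{01,10\}$, one of the two children $\LOC{\NATURAL(A)}{\overline{s}\conc\eta\conc i}$ is empty, giving $\le 3/4$, which is all that's needed.

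The more serious gap is in the subcase $x=e\conc 1\conc\overline{y}$ with $y\notin A$. Your proposal is: decompose $\LOC{\NATURAL(A)}{e\conc 1\conc\overline{y\restriction n}}$ as the disjoint union of the leaf $\LOC{\Plus(A)}{\overline{y\restriction n}}$ and the higher-level exit contributions $C_n$, bound the leaf by $1-c$ on a subsequence and bound $C_n$ by $\frac12$, and conclude. But $(1-c)+\frac12$ can exceed $1$, so these two constant bounds do \emph{not} combine into a bound away from $1$. What actually saves the argument is a geometric refinement you have not stated: $C_n$ is supported inside the exit-node region of the leaf, and inside each exit-node neighborhood it occupies at most half (the $0$-child is empty by Lemma~\ref{lem:opendisjointfromNAT}\ref{lem:opendisjointfromNAT-b}). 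Hence $\mu(C_n)\le\frac12(1-\mu(\text{leaf}))$ and therefore $\mu(\text{leaf})+\mu(C_n)\le\frac12(1+\mu(\text{leaf}))\le 1-\frac{c}{2}$. The paper avoids this decomposition entirely: in Case~D it observes directly that the two basic open sets $i\conc(1-i)\conc\uu_k\conc 0$ (of relative measure $\ge 2^{-L-3}$ each, where $L$ bounds $\lh(\uu_k)$ along the subsequence) are disjoint from $\NATURAL(A)$ inside $\Nbhd_{e\conc 1\conc\overline{y\restriction m_k}}$, giving the clean bound $1-2^{-L-2}$ with no bookkeeping. Your flagged ``main obstacle'' is exactly this, and the sketch as it stands does not close it --- either the refined half-density estimate or the paper's two-empty-open-sets argument is needed.
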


\begin{proof}
First we deal with \( \NATURAL (A) \).
Suppose \( x \in \NATURAL (A) \): then there is \( n \in \omega \), \( e \in \mathcal{E}_n ^{\NATURAL} ( A ) \) 
and \( y \in \Plus ( A ) = \Phi  ( \Plus( A ) ) \) such that \( x = e \conc 1 \conc y \).
As 
\[
\lim_{m \to \infty} \mu \bigl ( \LOC{ \NATURAL (A)}{x \restriction m} \bigr )
\geq \lim_{m \to \infty} \mu \bigl ( \LOC{ \Plus (A )}{y \restriction m} \bigr ) = 1 
\]
then \( x \in \Phi  ( \NATURAL (A) ) \).

Suppose now \( x \notin \NATURAL (A) \) towards proving that \( x \notin \Phi  ( \NATURAL (A) ) \).
We distinguish four cases.
\begin{description}
\item[Case A]\label{caseA}
\( x \) extends no \( e \in \mathcal{E}^{\NATURAL} ( A ) \).
The either 
\begin{itemizenew}
\item
\( x \supset \overline{s}\conc \eta \conc v \) with \( \eta \in \setLR{01 , 10}\)
and \( v \perp \uu ( r_{\lh ( s ) } \cdot \mu ( \LOC{A}{s} ) ) \),
or else
\item
\( x = \overline{y} \) for some \( y \in \pre{\omega}{2} \).
\end{itemizenew}

In the first case \( x \in \Nbhd_{\overline{s}\conc \eta \conc v }\) and this basic open 
set is disjoint from \( \NATURAL ( A ) \), hence \( \mathcal{D}_{ \NATURAL ( A )} ( x  ) = 0 \).

In the second case: given \( m \), for any \( \eta \in \setLR{01,10}\) there is an \( i \in 2\) such that 
\( \LOC{\NATURAL ( A ) }{ x \restriction 2 m  \conc \eta \conc i} = \emptyset \),
hence \( \mu (  \LOC{\NATURAL ( A ) }{ x \restriction 2 m } ) \leq 3/4\).
In particular, \( x \notin \Phi ( \NATURAL ( A ) )\).

\item[Case B]\label{caseB}
\( x = \bigcup_{n} e_n \) with \( e_n \in \mathcal{E}_n^\mathrm{Nat} ( A ) \), 
hence \( \mu \bigl ( \LOC{\NATURAL (A)}{e_n } \bigr )  \leq \frac{1}{2}\)
by part~\ref{lem:opendisjointfromNAT-b} of Lemma~\ref{lem:opendisjointfromNAT}, 
and therefore \( x \notin \Phi  ( \NATURAL (A) ) \).
\item[Case C]
\( x \) extends \( e \conc 0 \) for some \( e \in \mathcal{E}^{\NATURAL} ( A ) \).
Then \( \mathcal{D}_{ \NATURAL ( A )} ( x ) = 0 \) 
by part~\ref{lem:opendisjointfromNAT-b} of Lemma~\ref{lem:opendisjointfromNAT}. 
\item[Case D]
 \( x = e \conc 1 \conc \overline{y} \) with \( y \notin A \), and 
\( e \) is the largest exit node contained in \( x \).
Since \( A = \Phi  ( A ) \), fix an increasing sequence \( ( m_k )_k \) and an \( \varepsilon > 0 \)
such that \( \mu ( \LOC{A}{ y \restriction m_k } ) < 1 - \varepsilon \), for all \( k \in \omega \).
Then there is an \( L \in \omega \) such that 
\[
\lh  \uu \bigl ( r_{m_k} \cdot \mu ( \LOC{A}{y \restriction m_k} ) \bigr ) \leq L 
\]
for all \( k \in \omega \).
By part~\ref{lem:opendisjointfromNAT-b} of Lemma~\ref{lem:opendisjointfromNAT}, 
\( \LOC{\NATURAL ( A )}{ e \conc 1 \conc \overline{y} \restriction 2 m_k }\) is disjoint 
from the two basic open neighborhoods given by 
\( i \conc (1 - i ) \conc \uu ( r_{m_k} \cdot \mu ( \LOC{A}{y \restriction m_k} ) ) \conc 0 \)
with \( i \in \setLR{0,1 }\), hence 
\[
\forall k \Bigl ( \mu \bigl (\LOC{\NATURAL ( A )}{ e \conc 1 \conc \overline{y} \restriction 2 m_k } \bigr ) 
\leq 1 - 2^{-L - 2} \Bigr ) ,
\]
proving that \( x \notin \Phi  ( \NATURAL (A) ) \).
\end{description}
Therefore \( \NATURAL (A) =  \Phi  ( \NATURAL (A) ) \).

Now we turn to \( \FLAT ( A ) \). 
If \( x \) extends infinitely many \( e \in \mathcal{E}^{ \FLAT} ( A ) \)
then \( \mu ( \LOC{\FLAT ( A )}{ x \restriction n} ) \to 1\) 
by~\eqref{eq:e_n}, hence \( x \in \Phi ( \FLAT ( A ) ) \).
Suppose now \( x \in \FLAT ( A ) \)  and \( x \supset e \in \mathcal{E}_n^{\FLAT} ( A ) \)
for some largest \( n \).
Then either
\begin{itemizenew}
\item
\( x = e \conc s \conc y \) with \( s \in  {}^{ \hh ( n ) }2  \) and \( s \neq 1^{ ( \hh ( n ) )} , 0^{ ( \hh ( n ) )} \).
Then \( x \) is in the interior of \( \FLAT ( A ) \) hence \( x \in \Phi ( \FLAT ( A ) ) \).
\item
\( x \supset e \conc 1^{( \hh( n ) )} \conc \overline{s} \conc \eta \conc v \) with 
\( v \perp \uu ( \max \setLR{ r_{n + 1} , r_{\lh ( s )} \cdot \mu ( \LOC{A}{s} )}) \)
and \( \eta \in \setLR{01,10}\).
Again \( x \) is in the interior of \( \FLAT ( A ) \).
\item
\( x = e \conc 1^{( \hh ( n ) ) } \conc \overline{y} \) with \( y\in A = \Phi ( A ) \).
By \( \mathcal{T} \)-regularity  \( \overline{y} \in \Plus ( A , r_n ) \), 
and since \( \LOC{ \FLAT ( A ) }{ e \conc 1^{( \hh ( n ) )}\conc \overline{y} \restriction m} 
\supseteq \LOC{\Plus ( A , r_n )}{\overline{y} \restriction m} \)
it follows that \( x \in \Phi  ( \FLAT ( A ) ) \).
\end{itemizenew}
Therefore 
\[
x \in \FLAT ( A ) \implies  x\in \Phi ( \FLAT ( A ) ) .
\]
Suppose now \( x \notin \FLAT ( A ) \) towards proving that \( x \notin \Phi ( \FLAT ( A ) ) \).
We distinguish three cases.

\begin{description}
\item[Case E]
\( x \) extends no \( e \in \mathcal{E}^{\FLAT} ( A ) \).
Then proceed as in Case A.
\item[Case F]
\( x \) extends \( e \conc 0^{ ( \hh ( n ) ) } \) for some \( e \in \mathcal{E}_n^{\FLAT} ( A ) \).
Then \( \mathcal{D}_{  \FLAT ( A )} ( x ) = 0 \)
by part~\ref{lem:opendisjointfromFLAT-b} of Lemma~\ref{lem:opendisjointfromFLAT}. 
\item[Case G]
 \( x = e \conc 1^{( \hh ( n ) )} \conc \overline{y} \) with \( y \notin A \), and 
\( e \in \mathcal{E}_n^{\FLAT} ( A ) \) is the largest exit node contained in \( x \).
As in Case D, fix an increasing sequence \( ( m_k )_k \) and an \( \varepsilon > 0 \)
such that \( \mu ( \LOC{A}{ y \restriction m_k } ) < 1 - \varepsilon \), for all \( k \in \omega \).
Then there is an \( L \in \omega \) such that \( \forall k \left ( \lh ( \uu_k ) \leq L \right ) \), where 
\[
\uu_k =  \uu \bigl ( \max \setLR{ r_{n+1} ,  r_{m_k} \cdot \mu ( \LOC{A}{y \restriction m_k} ) } \bigr ) .
\]
By part~\ref{lem:opendisjointfromFLAT-b} of Lemma~\ref{lem:opendisjointfromFLAT}, 
\( \LOC{\FLAT ( A )}{ e \conc 1^{( \hh ( n  ) )} \conc \overline{y} \restriction 2 m_k }\) is disjoint 
from the two basic open neighborhoods given by 
\( i \conc (1 - i ) \conc \uu_k \conc 0^{( \hh ( n + 1 ) )} \)
with \( i \in \setLR{0,1 }\), hence 
\[
\forall k \Bigl ( \mu \bigl (\LOC{ \FLAT ( A )}{ e \conc 1^{( \hh ( n ) )} \conc \overline{y} \restriction 2 m_k } \bigr ) 
\leq 1 - 2^{-1 -L - \hh ( n + 1 ) } \Bigr ) .
\]
\end{description}

Suppose now \( A = \Phi ( A ) \) towards proving that 
\( \mu ( \Fr  \NATURAL ( A ) ) = \mu ( \Fr \FLAT ( A ) ) = 0 \), and hence 
that \( \NATURAL ( A ) , \FLAT ( A )  \in \ran ( \Phi \restriction \bPi^{0}_{1} ) \cap 
\ran ( \Phi \restriction \bSigma^{0}_{1} ) \), 
by~\eqref{eq:D(closed)}.
Using~\eqref{eq:FrPlus} it is easy to check that
\begin{align*}
\Fr  \NATURAL ( A ) & \subseteq \setofLR{ x \in \pre{\omega}{2} }
{\exists^\infty e \in \mathcal{E}^{\NATURAL} ( A )\left ( e \subset x  \right )} 
\cup \bigcup_{ s \in  \pre{ < \omega }{2} } s \conc \overline{ \pre{\omega}{2} }
\\
\Fr  \FLAT ( A )  & \subseteq \setofLR{ x \in \pre{\omega}{2} }
{\exists^\infty e \in \mathcal{E}^{\FLAT} ( A )\left ( e \subset x  \right )} 
\cup \smash{\bigcup_{ s \in  \pre{ < \omega }{2} }}s \conc \overline{ \pre{\omega}{2} }
\end{align*}
and since \( \overline{ \pre{\omega}{2} } \) is null, we are done.

\end{proof}
\subsection{Proof of Theorem~\ref{th:belowDelta03}}

By Corollary~\ref{cor:lengthofWadge} it is enough to show by induction on 
\( \alpha < \omega _1^{ \omega _1} \)  that for each Borel set \( A \) of Wadge rank \( \alpha \),
there is an open set \( U \) and a closed set \( D \) such that 
\( \Phi ( U ) =  \Phi ( D ) \Wequiv A\).
Theorem~\ref{th:T-regularDelta02} takes care of the case when \( \alpha <  \omega _1\)
so we may assume that \(  \alpha \geq  \omega _1 \).
Let \( A \subseteq \pre{\omega}{2} \) be a set of Wadge rank \( \alpha \).
If \( \alpha \) is either a successor ordinal or a limit ordinal of countable cofinality
proceed as in the proof of Theorem~\ref{th:T-regularDelta02}, so we may assume 
that 
\begin{equation}\label{eq:cof(alpha)=omega1}
 \cof ( \alpha ) =  \omega _1 .
\end{equation}
Suppose that \( \alpha  =  \beta  + \gamma \) with \( \beta , \gamma <  \alpha \):
by replacing \( \beta \) with \( \beta + 1\) if needed, we may assume that any
\( B \) of Wadge rank \( \beta \) is self-dual.
Then \( A \Wequiv B \Wsum  C \) for some \( C \) of Wadge rank \( \gamma \).
By inductive assumption and Proposition~\ref{prop:Sumisregular} then 
\( A \Wequiv \Phi ( U ) = \Phi ( D ) \) for some \( U \in \bSigma^{0}_{1}\) and \( D \in \bPi^{0}_{1} \).
Therefore we may assume that 
\begin{equation*}\label{eq:alphaadditivelyindecomposable}
  \alpha \text{ is additively indecomposable.} 
\end{equation*}
Write \( \alpha =  \omega_1^\xi \cdot \delta + \mu \) with  \( \mu , \xi < \omega _1\):
by indecomposability \( \mu = 0 \) and 
therefore \( \delta \) is not a successor ordinal \( > 1 \), while
by~\eqref{eq:cof(alpha)=omega1} \( \delta \) cannot be limit.
Thus \(  \alpha = \omega_1^\xi \): by~\eqref{eq:cof(alpha)=omega1}
\( \xi \) cannot be be limit hence we may assume that 
\begin{equation*}
  \alpha =  \omega _1 ^{\nu  + 1}.
\end{equation*}
Let \( B \) be a set of Wadge rank \(  \omega _1^  \nu + 1 \), so that \( B \) is self-dual.
Then \( A \) is Wadge equivalent to either \( B^\natural \) or else to \( B^\flat \).
By inductive assumption \( B \Wequiv C \) for some 
\( C \in \ran ( \Phi \restriction \bSigma^{0}_{1}) \cap \ran ( \bPi^{0}_{1} ) \) hence 
\( \NATURAL ( C ) , \FLAT ( C ) \in \ran ( \Phi \restriction \bSigma^{0}_{1}) 
\cap \ran ( \Phi \restriction \bPi^{0}_{1} ) \)
by Proposition~\ref{prop:NATisT-regular}.
By Proposition~\ref{prop:NAT<=>nat} \( A \) is Wadge equivalent to either \( \NATURAL ( C ) \) 
or \( \FLAT ( C ) \), and this completes the proof of Theorem~\ref{th:belowDelta03}.

\section{Attaining the maximal complexity}\label{sec:ClosedsetswithcomplicatedD}
In this section we shall prove Theorems~\ref{th:Pi03complete},
\ref{th:emptyinteriorPi03}, \ref{th:Pi03comeagre} and~\ref{th:forcingdensePi03},
and a result on supports (see Section~\ref{sec:easyfacts}). 

\subsection{Proof of Theorem~\ref{th:emptyinteriorPi03}}
Let \( A\neq \emptyset \) be \( \mathcal{T} \)-regular, with empty interior.
We will show that \(  \mathbf{P}_3 \leqW A \), where 
\[
\mathbf{P}_3 = \setofLR{z \in  \pre{ \omega \times \omega }{2} }{ \FORALL{n} 
\FORALLS{\infty}{m} z ( n , m ) = 0 }.
\]
Since \( \mathbf{P}_3 \) is a complete \( \bPi^{^0}_{3} \) set~\cite[p.~179]{Kechris:1995kc}
the result follows.
Recall that \( \densitytree ( A ) \) is the tree of Definition~\ref{def:D(A)}.
Given a \( 0 \)-\( 1 \) matrix \( a = \seqofLR{a ( i , j ) }{ i , j < n} \) of order \( n \), a sequence 
\( \varphi ( a ) \in \densitytree ( A ) \) will be constructed so that 
\[
a \subset b \IMPLIES \varphi  ( a ) \subset \varphi  ( b )
\]
and therefore 
\[
f \colon  \pre{ \omega \times \omega }{2}  \to \body{ \densitytree ( A )}, \qquad 
f ( z ) = \bigcup_{n} \varphi ( z \restriction n \times n )
\]
is continuous.
The function \( f \) will witness that \( \mathbf{P}_3 \leqW \Phi  ( A ) \).

Let \( I_n = [ 1 - 2^{ - n } ; 1 - 2^{ - n - 1} ) \) and let 
\( \rho \colon \densitytree ( A )  \to \omega \) be 
\[
 \rho ( s ) = n \IFF \mu ( \LOC{A}{s} ) \in I_n .
\]
The map \( \rho \) is well defined since \( \mu (\LOC{A}{s} ) \neq 1 \) for all \( s \),
by the assumption on \( A \).
If \( s \in \densitytree ( A ) \) then 
\begin{equation*}
 \mu ( \LOC{A}{s} ) \geq 1 - 2^{- n} \IMPLIES 
 \mu ( \LOC{A}{s \conc i } ) = 2 \mu ( \LOC{A}{s} ) -  \mu ( \LOC{A}{s \conc ( 1 - i ) } ) 
 \geq 1 - 2^{- n + 1} 
\end{equation*}
hence 
\begin{equation}\label{eq:descending}
 \FORALL{s \in \densitytree ( A ) } \FORALL{n > 0} \bigl ( \rho ( s) \geq n \implies  
\rho ( s \conc 0 ) , \rho ( s \conc 1 ) \geq n - 1 \bigr )  .
\end{equation}
It follows at once that 
\[
x \in \Phi ( A ) \IFF x \in \body{ \densitytree ( A )} \wedge  \lim_{n \to \infty} \rho ( x \restriction n ) = \infty .
\]

\begin{claim}\label{cl:descending}
Suppose \( s \in \densitytree ( A ) \).
For any \( j < \rho ( s ) \) there is a \(s \subset t \in \densitytree ( A ) \)
such that \( \rho ( t ) = j \) and 
\( \forall u \left ( s \subseteq u \subseteq t \implies \rho ( u ) \geq j \right ) \).
\end{claim}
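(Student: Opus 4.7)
The plan is to pick $t$ to be a shortest proper extension of $s$ with $\rho(t) \leq j$, and then to verify the three required properties using~\eqref{eq:descending} and minimality of $t$.

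The heart of the argument is showing such a $t$ exists. I would argue by contradiction: suppose $\mu(\LOC{A}{u}) \geq 1 - 2^{-j-1}$ for every $u \supseteq s$ (this includes $s$ itself, since $\rho(s) > j$ gives $\rho(s) \geq j + 1$). Setting $B = \neg A \cap \Nbhd_s$, one has $\mu(\LOC{B}{u}) = 1 - \mu(\LOC{A}{u}) \leq 2^{-j-1}$ for every $u \supseteq s$, so $\mathcal{D}_B(x) \leq 2^{-j-1} < 1$ at every $x \in \Nbhd_s$ where the density exists. The Lebesgue Density Theorem then forces $\mu(B) = 0$, equivalently $\mu(\LOC{A}{s}) = 1$. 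The halving identity $\mu(\LOC{A}{u}) = \frac{1}{2}\bigl(\mu(\LOC{A}{u \conc 0}) + \mu(\LOC{A}{u \conc 1})\bigr)$ and induction yield $\mu(\LOC{A}{u}) = 1$ for every $u \supseteq s$, so every $x \in \Nbhd_s$ satisfies $\mathcal{D}_A(x) = 1$, and hence, by $\mathcal{T}$-regularity, belongs to $\Phi(A) = A$. Thus $\Nbhd_s \subseteq A$, contradicting empty interior.

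Once existence is secured, let $t' = t \restriction (\lh(t) - 1)$. Either $t' = s$ (and then $\rho(t') = \rho(s) > j$) or $s \subsetneq t'$ (and then $\rho(t') > j$ by minimality of $t$); either way $\rho(t') \geq j + 1$, so~\eqref{eq:descending} yields $\rho(t) \geq j$, which combined with $\rho(t) \leq j$ gives $\rho(t) = j$. For any $u$ with $s \subseteq u \subsetneq t$, either $u = s$ (and $\rho(s) > j$) or minimality of $t$ forces $\rho(u) > j$; in particular $\rho(u) \geq j$, as required.

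The main obstacle is the existence step: it is the sole place where the two hypotheses on $A$ (empty interior and $\mathcal{T}$-regularity) interact, via the Lebesgue Density Theorem applied to $\neg A \cap \Nbhd_s$. Everything else is routine bookkeeping from~\eqref{eq:descending} and the minimality of $t$.
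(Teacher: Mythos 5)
Your proof is correct in outline and reaches the same conclusion, but the existence step goes by a different route than the paper's. The paper argues directly: since \( \mu(\LOC{A}{s}) < 1 \) (already established from \( \mathcal{T} \)-regularity plus empty interior), the Lebesgue Density Theorem furnishes an \( x \in \Nbhd_s \) of density \( 0 \) in \( A \), and one takes \( t \subset x \) to be the shortest node past \( s \) on that single branch with \( \mu(\LOC{A}{t}) < 1 - 2^{-j-1} \); then \eqref{eq:descending} applied to the parent forces \( \rho(t) = j \). You instead argue by contradiction, applying Lebesgue to \( B = \neg A \cap \Nbhd_s \) to conclude \( B \) is null, and then minimize over \emph{all} proper extensions of \( s \) rather than over a fixed branch. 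Both are fine; the paper's version is shorter because the nonexistence of density-\(0\) points in \( \Nbhd_s \) is exactly the statement \( \mu(\LOC{A}{s}) = 1 \), so the contradiction you work to set up is available immediately (your iteration of the halving identity after reaching \( \mu(\LOC{A}{s}) = 1 \) is unnecessary: that single equality already contradicts the well-definedness of \( \rho(s) \), which was noted at the start of the section).

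One logical slip to flag: the negation of ``there is a proper extension \( t \supsetneq s \) in \( \densitytree(A) \) with \( \rho(t) \leq j \)'' is not ``\( \mu(\LOC{A}{u}) \geq 1 - 2^{-j-1} \) for every \( u \supseteq s \)'', but rather ``for every \( u \supsetneq s \), \( \mu(\LOC{A}{u}) = 0 \) or \( \mu(\LOC{A}{u}) \geq 1 - 2^{-j-1} \)''. Under the correct negation your Lebesgue argument does not immediately go through, because \( \LOC{B}{u} \) could have measure \( 1 \) below nodes of \( A \)-measure \( 0 \), and such points can have \( B \)-density \( 1 \). The gap is easily patched: if some \( u \supsetneq s \) had \( \mu(\LOC{A}{u}) = 0 \), taking a shortest such \( u \) and applying the halving identity to its parent \( u' \) (which has \( \mu(\LOC{A}{u'}) \geq 1 - 2^{-j-1} \geq 1/2 \)) would give the sibling of \( u \) mass \( \geq 1 \), contradicting \( \mu(\LOC{A}{v}) < 1 \) for all \( v \). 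With that sentence added, the contradiction hypothesis collapses to the one you actually used, and the rest of the argument is sound.
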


\begin{proof}
Let \( x \in \Nbhd_s \) have density \( 0 \) in \( A \).
By~\eqref{eq:descending} let \( t \subset x \) be the shortest node extending \( s \) 
such that \( \mu ( \LOC{A}{t} ) < 1 - 2^{- j - 1} \).
\end{proof}

If \( a \) is the empty matrix, then \( \varphi ( a ) = \emptyset \), and if 
\( a = \seqofLR{a( i , j ) }{ i , j \leq n} \) is a matrix of order \( n + 1 \), 
we set 
\[ 
 \varphi ( a ) = t
\] 
where \( t \) is defined as follows:
\begin{description}
\item[Case 1] 
\( \FORALL{j \leq n} a ( j , n ) = 0 \).
By Proposition~\ref{prop:climb} let \( t \in \densitytree ( A ) \) be an extension of  
\( \varphi ( a \restriction n \times n ) \) such that 
\( \rho ( t ) = n + 1 \) and 
\[ 
\forall u \left ( \varphi ( a \restriction n \times n ) \subseteq u \subseteq t
\implies \rho ( u ) \geq \rho \circ \varphi ( a \restriction n \times n ) \right ) .
\]
\item[Case 2] 
\( \EXISTS{j \leq n} a ( j , n ) = 1 \).
Let \( j_0 \) be the least such \( j \) and by Proposition~\ref{prop:climb} 
and Claim~\ref{cl:descending} let \( t \in \densitytree ( A ) \) be such that 
\( t \supset \varphi ( a \restriction n \times n ) \), \( \rho ( t ) = j _0  \), and
\[
 \forall v \left ( \varphi ( a \restriction n \times n ) \subseteq v \subseteq t
\implies \rho ( v ) \geq \min \setLR{ \rho ( \varphi ( a \restriction n \times n ) ) , \rho  ( t ) } \right ).
\]
\end{description}

Suppose \( z \in \mathbf{P}_3 \).
For every \( k \in \omega \) choose \(m_k \) such that \( \FORALL{m \geq m_k } a( k , m ) = 0 \)
and let 
\[
M_k = \max \setLR{m_0 , \dots , m_k} .
\] 
Therefore for every \( n \geq \max \setLR{ k , M_k }\) the least \( j \leq n \) such that \( z ( j , n ) = 1 \) 
--- if such \( j \) exists --- is larger than \( k \) and therefore 
\( \rho ( \varphi ( z \restriction n \times n ) ) \geq k \).
This shows that \( \lim_{i \to \infty } \rho ( f ( z ) \restriction i ) = \infty \)
hence \( f ( z ) \in \Phi  ( A ) \).

Conversely suppose \( z \notin \mathbf{P}_3 \).
Let \( n_0 \) be the least \( n \) such that the \( n \)th row contains infinitely many \( 1 \)s,
i.e. \( \EXISTSS{\infty}{m} z ( n_0 , m ) = 1 \) and 
\( \FORALL{i < n_0} \FORALLS{\infty}{m} z ( i , m ) = 0 \).
Then for arbitrarily large \( n \), \( \varphi ( z \restriction n \times n ) \) is computed 
as in Case 2, hence \( \rho ( f ( z) \restriction i ) = n_0 \)
for infinitely many \( i \).
In particular \( \lim_{i \to \infty} \rho \left (f ( z ) \restriction i \right ) \neq \infty \), 
hence \( f ( z ) \in \body{ \densitytree ( A ) } \setminus \Phi  ( A ) \).

This finishes the proof of Theorem~\ref{th:emptyinteriorPi03}.

\subsection{Closed sets with empty interior and the proof of Theorem~\ref{th:Pi03complete}}
\begin{theorem}\label{th:densePi03complete}
Let \( S \) be a perfect pruned tree such that \( \mu \body{S} > 0 \) and let \( \varepsilon > 0 \) be given.
Then there is a pruned tree \( T \subseteq S \) such that \( \body{T} \) 
has empty interior in \( \body{S} \), \( \mu ( \body{T} ) + \varepsilon  > \mu ( \body{S} )\).
\end{theorem}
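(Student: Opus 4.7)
The plan is a ``fat Cantor set'' style thinning of $S$. Enumerate $S$ as $\seqof{s_n}{n \in \omega}$ (possible since $S \subseteq \pre{< \omega}{2}$), and for each $n$ use the prunedness of $S$ to choose $t_n \in S$ with $s_n \subseteq t_n$ and $\lh(t_n)$ large enough that $\mu(\Nbhd_{t_n}) = 2^{-\lh(t_n)} < \varepsilon \cdot 2^{-n-2}$. No perfectness of $S$ is needed for this step; prunedness alone guarantees that every $s_n$ admits arbitrarily long extensions in $S$.

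Next, set
\[
T_0 = \setof{u \in S}{\forall n \, ( t_n \not\subseteq u )}
\quad\text{and}\quad
T = \setof{u \in T_0}{\exists x \in \body{T_0} \, ( u \subseteq x )},
\]
so that $T$ is the pruning of $T_0$. A direct check (if $t_n \subseteq v \subseteq u$ then $t_n \subseteq u$) shows $T_0$ is a subtree of $S$, and consequently $T$ is a pruned subtree of $S$ satisfying
\[
\body{T} = \body{T_0} = \body{S} \setminus \bigcup_n \Nbhd_{t_n} .
\]

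The measure inequality is then immediate:
\[
\mu(\body{T}) \geq \mu(\body{S}) - \sum_n \mu(\Nbhd_{t_n}) > \mu(\body{S}) - \varepsilon / 2 ,
\]
which gives $\mu(\body{T}) + \varepsilon > \mu(\body{S})$. For the empty-interior property, any nonempty relatively open subset of $\body{S}$ contains some $\Nbhd_{s_n} \cap \body{S}$ with $s_n \in S$; as $t_n \in S$ is an extension of $s_n$ and $S$ is pruned, there exists $x \in \body{S}$ with $t_n \subseteq x$, and this $x$ belongs to $\Nbhd_{s_n} \cap \body{S}$ but not to $\body{T}$. Hence no nonempty relatively open subset of $\body{S}$ is contained in $\body{T}$.

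There is no substantive obstacle: the construction is just the standard trick that turns a closed set of positive measure into a nowhere dense one of almost the same measure, relativized inside $\body{S}$. The only minor care required is to prune $T_0$ so as to ensure $T$ is itself a pruned tree, which does not alter the body of the tree.
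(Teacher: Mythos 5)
Your proof is correct and somewhat more economical than the one in the paper. Both arguments thin out \( S \) by deleting basic neighborhoods \( \Nbhd_{t_n} \) of small total measure, with the nodes \( t_n \) dense below \( S \), but the details differ. The paper constructs what it calls a \emph{sparse sequence}: pairwise incompatible nodes \( t_n \) of strictly increasing prescribed lengths, built by an induction that tracks the \( \lhd \)-least node of \( S \) not yet below some \( t_i \), and which invokes perfectness of \( S \) to guarantee the induction does not stall. You instead enumerate \( S \) as \( ( s_n )_n \) and extend each \( s_n \) to a sufficiently long \( t_n \) by prunedness alone, making no attempt at incompatibility; this is harmless, since the bound \( \mu\bigl( \bigcup_n \Nbhd_{t_n} \bigr) \leq \sum_n 2^{-\lh ( t_n )} \) tolerates overlaps, and density below \( S \) is automatic because every node of \( S \) is some \( s_n \). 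Your argument therefore drops the perfectness hypothesis entirely. You are also right to take the pruning of \( T_0 \) explicitly: the paper defines \( T \) directly as \( \setof{u \in S}{ \forall n \, ( t_n \not\subseteq u ) } \) and tacitly treats it as pruned, but this can fail in general (a node \( u \in T \) whose only immediate successor in \( S \) happens to equal some \( t_n \) would be terminal in \( T \)), so the explicit pruning step closes a small gap. What the paper's heavier construction buys is reusability: the sparse-sequence machinery, with its control on incompatibility and lengths, reappears in Corollary~\ref{cor:regularwithlargefrontier} and Proposition~\ref{prop:supports}, whereas your argument is optimized for this one theorem.
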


\begin{proof}
The tree \( T \) will be defined as
\[
T = \setofLR{u \in  S }{ \FORALL{n \left (  t_n \nsubseteq u \right )} }
\]
for an appropriate sequence \(( t_n )_n \subseteq S\).
Density amounts to say that
\begin{equation} \label{eq:conditionsont1}
\FORALL{s \in S } \exists {n} 
\left ( s \subseteq t_n \OR t_n \subseteq s \right ) 
\end{equation}
and since the sets \( \Nbhd_{t_n } \cap \body{S} \) are disjoint,
\begin{equation}\label{eq:conditionsont2}
 n \neq m \IMPLIES t_n \perp t_m .
\end{equation}
Let \( ( \ell_n )_n \) be a strictly increasing sequence of natural numbers 
such that \( \ell_0 > 0 \).
A sequence \( (t_n )_ n \)  that satisfies~\eqref{eq:conditionsont1},~\eqref{eq:conditionsont2} and
\begin{gather}
  \forall n , m < \omega  \left (n < m \IMPLIES \lh ( t_n ) < \lh ( t_m )  \right )  \label{eq:conditionsont3}
 \\
 \exists ^{\infty} n \left (\lh ( t_{n + 1} ) > \lh ( t_n ) + 1 \right )  \label{eq:conditionsont4}
 \\
\forall n \left ( \ell_n \leq \lh ( t_n )  \right )  \label{eq:conditionsont5}
\end{gather}
is called a  \markdef{sparse sequence of order \( ( \ell_n )_n \) for \( S \)}. 
By~\eqref{eq:conditionsont5}  
\[ 
\sum_{n=0}^\infty 2^{-\lh ( t_n )} \leq \sum_{n=0}^\infty 2^{- \ell_n} 
\]
so if \( ( \ell_n )_n \) grows fast enough, then \( \mu ( \body{S} \setminus \body{T} ) < \varepsilon \)
as required. 

To show the existence of such sequence, let \( \lhd \) be the well-order of \(  \pre{< \omega }{2}\) obtained by ordering the 
nodes according to their length and comparing nodes of equal length given by the lexicographic order:
\begin{equation}\label{eq:triangleorder}
s \mathrel{\lhd} t \IFF \lh ( s ) < \lh ( t ) \OR \left ( \lh ( s ) = \lh ( t ) \AND s \lelex t \right ) .
\end{equation}
We shall define inductively \( t_n , u_n \in S \) such that 
\begin{subequations}
\begin{align}
& u_n \text{ is the \( \lhd \)-least \( u \in S \) such that } \FORALL{i < n} ( u \perp t_i ) , \label{eq:conditionsonu_n1}
\\
&   t_n \supset u_n \AND \lh ( t_n ) \geq \max \setLR{\ell _n , t_{n-1} } \AND \exists u \in S
\left ( u \perp t_n \AND \FORALL{i < n} ( u \perp t_i )  \right ) .  \label{eq:conditionsonu_n2}
\end{align}
\end{subequations}
Suppose \( u_i , t_i \) have been defined for all \( i < n \) and satisfy~\eqref{eq:conditionsonu_n1}
and~\eqref{eq:conditionsonu_n2}.
By~\eqref{eq:conditionsonu_n2} there is a  \( \lhd \)-least \( u_n \in S \) which is incompatible with 
\( t_0 , \dots , t_{n-1} \).
As \( S \) is perfect, there exist \( t_n  , u \in S \) incompatible extensions of \( u_n \),
such that \( \lh ( t_n ) \geq \ell_n , \lh ( t_{n - 1} ) \). 
Since \( t_n \supseteq u_n \) and  \( u_n \perp t_i \) for \( i < n \),
it follows that the \( t_n \)'s are pairwise incompatible, i.e.~\eqref{eq:conditionsont2} holds.
Given \( s \in S \) such that \( t_k \nsubseteq s \) for all \( k \), pick \( n \) least such that \( s \lhd u_{n + 1} \):
since \( s = u_n \subseteq t_n \) is impossible, then  \( s \) must be compatible 
with some \( t_i \) with \( i \leq n \), hence \( s \subset t_i \).
Therefore~\eqref{eq:conditionsont1} holds.
Moreover it is trivial to arrange the construction so that \( \lh ( t_n ) + 1 < \lh ( t_{n+1} ) \) 
for infinitely many (or even for every) \( n \), hence~\eqref{eq:conditionsont4} holds as well.
\end{proof}

In particular, taking \( S =  \pre{ < \omega }{2}\), a closed set of positive measure
and empty interior \( C = \body{T} \) is obtained.
By~\eqref{eq:densityclosed} \( \Phi ( C ) \subseteq C \) hence also \( \Phi ( C ) \) has empty 
interior and therefore \( \Phi ( C ) \) is complete \( \bPi^{0}_{3}\) by Theorem~\ref{th:emptyinteriorPi03}.
In other words we have shown that 
\begin{equation*}
\exists T \text{ perfect pruned tree such that \(  \Phi ( \body{T} ) \)  is complete } \bPi^{0}_{3}   
\end{equation*}
which is half of Theorem~\ref{th:Pi03complete}. 
To prove the other half, for any \( r \in ( 0 ; 1 ] \) 
pick an increasing sequence \( ( n_k )_k \) such that 
\( r = \sum_{k = 0}^\infty 2^{-n_k - 1}  \) and let 
\[
 O^* ( r ) = \bigcup_{k \in \omega } \Nbhd_{ 0^{ ( n_k - 1 )} 1 } .
\]
The set \( O^* ( r ) \) is open, \( \Fr ( O^* ( r ) ) = \setLR{ 0^{ ( \infty ) }}\),
and \( \mu ( O^* ( r )  ) = r \).
For \( T \) as above, consider the open set
\[
W = \bigcup_{s \in T} \bigcup_{i \in 2} \overline{s} \conc i \conc ( 1 - i ) \conc O^* ( \mu \body{\LOC{T}{s} } ) .
\]
Then \( \Fr W \subseteq \overline{\body{T}} \cup \setofLR{x \in \pre{\omega}{2} }{\FORALLS{\infty}{ n} x ( n ) = 0 }\) 
is null hence \( \Phi ( W ) = \Phi ( \Cl ( W ) ) \).
It is enough to prove

\begin{proposition}
The map \( x \mapsto \overline{x} \) witnesses \( \Phi ( \body{T} ) \leqW \Phi ( W )\).
\end{proposition}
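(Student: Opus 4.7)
The map $x \mapsto \overline{x}$ is plainly Lipschitz (hence continuous), so the plan is to verify the equivalence
\[
x \in \Phi ( \body{T} ) \iff \overline{x} \in \Phi ( W ) ,
\]
by computing $\LOC{W}{\overline{x \restriction m}}$ directly from the definition of $W$. The crucial disjointness observation is that if $s_1 \subsetneq s_2$ are in $T$, then the two coordinates of $\overline{s_2}$ at positions $\lh ( \overline{s_1} )$ and $\lh ( \overline{s_1} ) + 1$ both equal $s_2 ( \lh ( s_1 ) )$, whereas any block $i \conc ( 1 - i )$ consists of two distinct digits. This makes the basic pieces $\overline{s} \conc i \conc ( 1 - i ) \conc O^* ( \mu \body{\LOC{T}{s}} )$ constituting $W$ pairwise disjoint, and ensures that the only such pieces visible below $\overline{x \restriction m}$ come from nodes $s \in T$ with $s \supseteq x \restriction m$.

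Writing such $s$ as $x \restriction m \conc s'$ with $s' \in T_m \equalsdef \LOC{T}{x \restriction m}$, I would then obtain
\[
\LOC{W}{\overline{x \restriction m}} = \bigcup_{s' \in T_m} \bigcup_{i \in 2} \overline{s'} \conc i \conc ( 1 - i ) \conc O^* ( \mu \body{\LOC{T_m}{s'}} ) ,
\]
that is, the localization is the very same $W$-construction applied to the subtree $T_m$. Combining this identification with~\eqref{eq:series2}, the measure of the $W$-construction on any pruned tree $T'$ telescopes as $\sum_{s \in T'} 2^{- 2 \lh ( s ) - 1} \mu \body{\LOC{T'}{s}} = \mu \body{T'}$, whence
\[
\mu \bigl ( \LOC{W}{\overline{x \restriction m}} \bigr ) = \mu \body{\LOC{T}{x \restriction m}}
\]
whenever $x \restriction m \in T$, while both sides vanish for all large enough $m$ when $x \notin \body{T}$. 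An analogous bookkeeping at odd levels will give
\[
\mu \bigl ( \LOC{W}{\overline{x \restriction m} \conc x ( m )} \bigr ) = \tfrac{1}{2} \bigl ( \mu \body{\LOC{T}{x \restriction m}} + \mu \body{\LOC{T}{x \restriction m + 1}} \bigr ) .
\]

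Together these three formulas yield $\mathcal{D}_W ( \overline{x} ) = 1$ if and only if $\lim_m \mu \body{\LOC{T}{x \restriction m}} = 1$, i.e.\ if and only if $x \in \Phi ( \body{T} )$; otherwise the density at $\overline{x}$ is either $0$ (when $x \notin \body{T}$) or bounded away from $1$ (when $x \in \body{T} \setminus \Phi ( \body{T} )$), and in both cases $\overline{x} \notin \Phi ( W )$. The one mildly delicate step is to justify the disjointness observation carefully enough to identify $\LOC{W}{\overline{x \restriction m}}$ with the $W$-construction on $T_m$; once that is in hand everything else is elementary measure arithmetic together with the inclusion $\Phi ( \body{T} ) \subseteq \body{T}$ furnished by~\eqref{eq:densityclosed}.
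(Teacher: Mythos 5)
Your proposal is correct and follows essentially the same route as the paper's proof: compute $\mu \bigl ( \LOC{W}{\overline{t}} \bigr ) = \mu \body{\LOC{T}{t}}$ via the telescoping identity~\eqref{eq:series2}, do the analogous bookkeeping at odd levels, and read off the density equivalence. The only difference is expository: you make explicit the disjointness of the pieces $\overline{s} \conc i \conc ( 1 - i ) \conc O^* ( \cdot )$ (which the paper uses implicitly when it writes $\mu ( \LOC{W}{\overline{t}} )$ as a sum over $s \in \LOC{T}{t}$), and you package the localization as a $W$-construction on the subtree $T_m$ rather than computing its measure directly; both are harmless restatements of the same argument.
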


\begin{proof}
By~\eqref{eq:series2}
\begin{equation*}
\begin{split}
\mu ( \LOC{W}{\overline{t}}) & = \sum_{ s \in \LOC{T}{t}} 2^{- 2 \lh ( s ) - 1} \mu \body{ \LOC{T}{t \conc s} } 
\\
 &  = \sum_{ s \in  \pre{ < \omega }{2} } 2^{- 2 \lh ( s ) - 1} \mu \body{ \LOC{T}{t \conc s} }
 \\
 & = \mu \body{ \LOC{T}{t} } .
\end{split} 
\end{equation*}
 Suppose \( x \in \Phi ( \body{T} ) \).
 Then \( \mu ( \LOC{W}{ \overline{x} \restriction 2 n } ) = \mu ( \body{ \LOC{T}{ x \restriction n} } ) \to 1\).
 Since \( \mu ( \LOC{W}{ \overline{x} \restriction 2 n + 1 } ) = \frac{1}{2}  \mu ( \body{ \LOC{T}{ x \restriction n} } ) 
 + \frac{1}{2}  \mu ( \body{ \LOC{T}{ x \restriction n + 1 } } ) \to 1 \), then \( \overline{x} \in \Phi ( W )\).
Conversely, if \( x\notin \Phi ( \body{T} ) \), take \( ( n_k )_k \) such that 
\( \mu (\body{\LOC{T}{x \restriction n_k} } ) < 1 - \varepsilon \)
for some \( \varepsilon \), hence 
\( \mu ( \LOC{W}{ \overline{x} \restriction 2 n_k } ) = \mu ( \body{\LOC{T}{ x \restriction n_k } } ) < 1 - \varepsilon \),
 hence \( \overline{x} \notin \Phi ( W ) \).
\end{proof}

Using sparse sequences it is possible to show that the assumption in Theorem~\ref{th:emptyinteriorPi03}
cannot be weakened by requiring that \( A \) be \( \mathcal{T} \)-regular and with a frontier of positive measure.

\begin{corollary}\label{cor:regularwithlargefrontier}
There is a \( \mathcal{T} \)-regular open set \( U \) such that \( \mu ( \Fr U ) > 0 \).
\end{corollary}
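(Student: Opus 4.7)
The goal reduces to constructing a closed set $C \subseteq \pre{\omega}{2}$ of positive measure with empty interior such that $\limsup_{n} \mu(\LOC{C}{x \restriction n}) > 0$ for every $x \in C$; then $U := \pre{\omega}{2} \setminus C$ is open, $\Fr U = C$ has positive measure, and $U$ is $\mathcal{T}$-regular, because $U \subseteq \Phi(U)$ by~\eqref{eq:densityopen} while the $\limsup$ condition, via $\mathcal{D}_U(x) = 1 - \mathcal{D}_C(x)$, forbids any $x \in C$ from having density $1$ in $U$.

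The construction refines Theorem~\ref{th:densePi03complete}. A raw sparse sequence produces $C$ of arbitrarily large measure and empty interior, but then almost every $x \in C$ has Lebesgue density $1$ in $C$, hence density $0$ in $U$, placing the conull set $\Phi(C) \cap C$ inside $\Phi(U) \setminus U$ and ruining $\mathcal{T}$-regularity. To remedy this we build a sparse sequence $(t_n)_n$ satisfying the \emph{uniform} bound
\[
\mu(\LOC{U}{s}) \;=\; \sum_{n \,:\, t_n \supseteq s} 2^{-(\lh(t_n) - \lh(s))} \;\leq\; 1 - \alpha, \qquad \text{for every } s \in T,
\]
where $T := \setofLR{u \in \pre{<\omega}{2}}{\forall n\,(t_n \nsubseteq u)}$ and $\alpha > 0$ is fixed. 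This bound immediately gives $\mu(\LOC{C}{s}) \geq \alpha$ for every $s \in T$, whence $\mu(C) \geq \alpha$; $C$ has empty interior since each $s \in T$ is properly extended by a sparse node outside $T$; and every $x \in C$ satisfies $\mathcal{D}_U(x) \leq 1 - \alpha < 1$.

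Concretely, fix a rapidly growing gauge $\ell \mapsto h_\ell$ (e.g.\ $h_\ell = 2^\ell$) and process $\pre{<\omega}{2}$ level by level: at stage $\ell$, for every surviving $s$ of length $\ell$, adjoin $t_{s,\ell} := s \conc 0^{h_\ell - 1} \conc 1$ to the sparse sequence. The contributions to $\mu(\LOC{U}{s})$ split into exits $t_{s', \ell'}$ with $s' \supseteq s$, which sum geometrically via the trivial count $\lvert T \cap \pre{\ell + k}{2} \cap \setofLR{s'}{s' \supseteq s}\rvert \leq 2^k$ to at most $\sum_{k \geq 0} 2^{-h_{\ell + k}}$, and exits $t_{s', \ell'}$ with $s' \subsetneq s \subsetneq t_{s', \ell'}$, which occur only when $s$ ends in long strings of zeros. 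The main technical point will be controlling this second kind: a super-linearly growing $h_\ell$ ensures only boundedly many $\ell' < \ell$ satisfy $\ell' + h_{\ell'} > \lh(s)$, each contributing at most $2^{-(\ell' + h_{\ell'} - \lh(s))}$, and the total telescopes to a small constant uniform in $s$. Combined with the geometric estimate from the first kind, both contributions stay below $1 - \alpha$ for some $\alpha > 0$, delivering the required $U$.
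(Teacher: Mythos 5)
Your overall strategy is the same as the paper's: set $U = \neg C$ for a closed, nowhere dense $C$ of positive measure, then show $\mu ( \LOC{U}{s} )$ is \emph{uniformly} bounded away from $1$ for all $s$ in the tree of $C$, so no $x \in C$ can have density $1$ in $U$, while $U \subseteq \Phi ( U )$ because $U$ is open. The difference is how $C$ is built. The paper simply reuses the sparse sequence $( t_n )_n$ supplied by Theorem~\ref{th:densePi03complete} with the linear gauge $\ell_n = 2n$, so $\lh ( t_n ) \geq 2n$ forces $\mu ( \LOC{U}{t} ) = 2^{\lh ( t )} \sum_{t_n \supset t} 2^{-\lh ( t_n )}$ to be controlled by a geometric tail; the estimate is a three-line chain. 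You instead rebuild $C$ from scratch with a homogeneous, level-by-level ``fan'' in which every surviving node of length $\ell$ spawns the exit $s \conc 0^{h_\ell - 1} \conc 1$. This is a workable variant, and your two-case split of the localization sum (exits properly extending $s$ versus exits $t_{s',\ell'}$ with $s' \subsetneq s \subsetneq t_{s',\ell'}$) is the natural bookkeeping for it. Two things should be tightened. First, the claim that a super-linearly growing gauge makes ``only boundedly many'' $\ell' < \lh ( s )$ satisfy $\ell' + h_{\ell'} > \lh ( s )$ is false: for $h_\ell = 2^\ell$ the count is roughly $\lh ( s ) - \log_2 \lh ( s )$, which is unbounded. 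What actually saves the estimate is that the exponents $\ell' + h_{\ell'} - \lh ( s )$ increase by at least $1 + h_{\ell'}$ between consecutive contributing $\ell'$, so the series $\sum_{\ell'} 2^{-( \ell' + h_{\ell'} - \lh ( s ) )}$ converges super-geometrically to a bound below $1$; and it is not a ``small constant'' --- the worst term is $1/2$ --- so one must check the two contributions together still leave a positive $\alpha$. Second, you should verify the exits are pairwise incompatible: for $s \subsetneq s' = s \conc 0^{k}$ this amounts to $\ell + h_\ell \neq \ell' + h_{\ell'}$, which holds since $h$ is non-decreasing. With those details supplied your proof is correct and delivers the same conclusion as the paper's, at the cost of redoing some of the Theorem~\ref{th:densePi03complete} bookkeeping rather than quoting it.
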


\begin{proof}
Let \( \ell_n = 2n \) and let \( T \) be the closed set with empty interior 
constructed from a sparse sequence of order \( ( \ell_n )_n \).
Let \( U = \neg \body{T} \).
Then \( \mu ( U ) = \sum_{n = 0}^\infty  2^{-2n - 2} = 2 / 3\) and for \( t \in T \),
\begin{align*}
\mu ( \LOC{U}{t} ) & = 2^{\lh ( t ) } \sum_{ t_n \supset t} 2^{- \lh ( t_n )} 
\\
 & \leq 2^{\ell _k - 1 } \sum_{n = k}^\infty 2^{- \ell_n} &&(\text{for some } k = k ( t ))
\\
& \leq 2^{2 k + 1} \sum_{n = k}^\infty 2^{- 2 n - 2 } 
\\
& = \textstyle\frac{2}{3} .
\end{align*}
Therefore \( \Phi ( U ) = U \) but \( \Fr ( U ) = \body{T} \) has positive measure.
\end{proof}

\subsection{Supports are not complete invariants}
Using a sparse sequence it is possible to show that the inner and outer supports are not 
complete invariants for measure equivalence.

\begin{proposition}\label{prop:supports}
There are measurable sets \( A \not \equiv B \) such that
\[
 \supt^{- } ( A ) = \supt^{- } ( B ) \quad\text{and}\quad \supt^{+ } ( A ) = \supt^{+ } ( B ) .
\]
\end{proposition}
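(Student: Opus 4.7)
The plan is to produce witnesses of the form $A = U$, $B = U \cup E$, where $U$ is the $\mathcal{T}$-regular open set from Corollary~\ref{cor:regularwithlargefrontier} (whose frontier $\body T$ has positive measure and empty interior, so $U$ is dense) and $E \subseteq \body T$ is a Borel set to be chosen with $0 < \mu(E) < \mu(\body T)$. Then $\mu(A \symdif B) = \mu(E) > 0$ will give $A \not\equiv B$ automatically.

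For the supports to agree, I will first extract a \emph{support property} of $\body T$ from the $\mathcal{T}$-regularity of $U$: for every $x \in \body T$ and every $s \subset x$, $\mu(\Nbhd_s \cap \body T) > 0$. Otherwise $\mu(\LOC{U}{x \restriction n}) = 1$ eventually, forcing $x \in \Phi(U) \setminus U$ and contradicting $U = \Phi(U)$. I then choose $E$ so that for every $s$ with $\mu(\Nbhd_s \cap \body T) > 0$, both $\mu(\Nbhd_s \cap E)$ and $\mu(\Nbhd_s \cap (\body T \setminus E))$ are strictly positive. The main technical step is this choice of $E$; I plan to handle it by Baire category in the Polish measure algebra $\MALG(\body T, \mu \restriction \body T)$, since for each of the countably many relevant $s$ the condition $0 < \mu(F \cap \Nbhd_s) < \mu(\body T \cap \Nbhd_s)$ defines an open dense subset (open by continuity of $\mu$; dense by non-atomicity, since one can always perturb $F$ by adding or removing a small subset of $\Nbhd_s \cap \body T$).

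The verification is then direct: by $\mathcal{T}$-regularity and Lemma~\ref{lem:closureofDissupt}, $\supt^-(A) = \Int U = U$ and $\supt^+(A) = \Cl U = \pre{\omega}{2}$. Monotonicity of $\supt^+$ yields $\supt^+(B) \supseteq \supt^+(A) = \pre{\omega}{2}$, hence equality. For $\supt^-(B) = U$: if an open $V$ satisfied $\mu(V \setminus B) = 0$ and contained some $x \in \body T$, a basic $\Nbhd_s \subseteq V$ with $x \in \Nbhd_s$ would have $\mu(\Nbhd_s \cap \body T) > 0$ by the support property and therefore $\mu(\Nbhd_s \cap (\body T \setminus E)) > 0$ by the choice of $E$, contradicting $\mu(V \setminus B) = 0$. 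Hence $\supt^-(B) \subseteq U$, while $U \subseteq \supt^-(B)$ is trivial. The sole delicate step is thus the construction of $E$; everything else unfolds directly from the definitions.
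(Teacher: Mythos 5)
Your proof is correct, and it shares the same skeleton as the paper's: take $A = U$ and $B = U \cup E$ with $U$ the $\mathcal{T}$-regular dense open set from Corollary~\ref{cor:regularwithlargefrontier} and $E$ a suitably chosen Borel subset of $\body T = \Fr U$. The difference is entirely in how $E$ is produced. The paper takes $E = \body{T'}$, where $T'$ is carved out of $T$ by a second sparse sequence of order $(3n+2)_n$; the crucial point is then that $\body T \setminus \body{T'}$ is open and dense in $\body T$, so near every $x \in \body T$ one finds $y$ extending some $t'_k$, with $\LOC{B}{y\restriction m} = \LOC{U}{y\restriction m}$ for large $m$, hence of density $\neq 1$ in $B$. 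You instead observe the ``support property'' --- $\mathcal{T}$-regularity of $U$ forces $\mu(\Nbhd_s \cap \body T) > 0$ whenever $\Nbhd_s$ meets $\body T$, i.e.\ $T = \densitytree(\body T)$ --- and then obtain $E$ existentially by a Baire category argument in the Polish algebra $\MALG(\body T, \mu\restriction\body T)$, using non-atomicity and continuity of $[F] \mapsto \mu(F \cap \Nbhd_s)$ to see that the countably many constraints $0 < \mu(F \cap \Nbhd_s) < \mu(\body T \cap \Nbhd_s)$ are each open and dense. Both routes are sound; yours trades explicitness for generality, and it cleanly isolates the property of $T$ (full-measure support in every relevant $\Nbhd_s$) that the paper uses only implicitly. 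One small remark: since the $s = \emptyset$ case of your constraints already gives $0 < \mu(E) < \mu(\body T)$, the condition $A \not\equiv B$ is not a separate hypothesis but comes for free from the Baire-generic choice.
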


\begin{proof}
Let \( U \) and \( T\) be as in Corollary~\ref{cor:regularwithlargefrontier}.
For \( \ell_n ' = 3n+2 \) let \( ( t'_n )_n \) be a sparse sequence of order 
\( ( \ell_n' )_n \) in \( T \), and let \( T' = \setofLR{u \in T}{ \forall n \left (t'_n \nsubseteq u \right )}\).
Finally let
\[
A = U \qquad\text{and}\qquad B = U \cup \body{T'} .
\] 
As \( \mu \body{T} = \frac{1}{3} > \sum_{n=0}^{\infty} 2^{ - 3 n - 2} \) it follows 
that \( \mu \body{T'} > 0 \), hence \( \mu ( A ) < \mu ( B ) < 1 \).
As \( U \) is open and dense, then \( \supt^+ ( A ) = \supt^+ ( B ) = \pre{\omega}{2} \),

By Corollary~\ref{cor:regularwithlargefrontier} and~\eqref{eq:densityinteriorclosure},
\( U = \Phi ( A ) = \supt^- ( A ) \) hence it is enough to show that \( U = \supt^- (  B )\).
Again by~\eqref{eq:densityinteriorclosure} it is enough to show that \( U = \Int \Phi ( B ) \).
By monotonicity \( U = \Phi ( A ) \subseteq \Phi ( B ) \), so it is 
enough to check that \( \Int \Phi ( B )  \subseteq U \).
Given \( x \in \body{T} \) and \( n \in \omega \)  it is enough to show that there are elements 
in \( \Nbhd_{x \restriction n}\)  whose density in \( B \) is not \( 1 \).
But this is immediate since \( \body{T} \setminus \body{T'} \) is open and dense in \( \body{T} \).
\end{proof}

By Lemma~\ref{lem:closureofDissupt} we obtain

\begin{corollary}
There are measurable sets \( A \not \equiv B \) such that 
\[
\Cl \Phi ( A ) = \Cl \Phi ( B ) \quad\text{and}\quad \Int \Phi ( A ) = \Int \Phi ( B ) .
\]
\end{corollary}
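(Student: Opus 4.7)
The plan is to deduce the corollary immediately from the two results cited: Proposition~\ref{prop:supports}, which produces measurable sets $A \not\equiv B$ with $\supt^-(A) = \supt^-(B)$ and $\supt^+(A) = \supt^+(B)$, and Lemma~\ref{lem:closureofDissupt}, which identifies $\Cl \Phi(A)$ with $\supt^+(A)$ and $\Int \Phi(A)$ with $\supt^-(A)$ for every measurable $A$.

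First I would invoke Proposition~\ref{prop:supports} to fix a witnessing pair $A, B$ (concretely, the sets $A = U$ and $B = U \cup \body{T'}$ constructed there from a sparse sequence, so that $A \not\equiv B$ while both supports coincide). Then I would apply Lemma~\ref{lem:closureofDissupt} twice, once to $A$ and once to $B$, to convert each equality of supports into an equality of topological closures or interiors of the density sets:
\[
\Cl \Phi(A) = \supt^+(A) = \supt^+(B) = \Cl \Phi(B),
\qquad
\Int \Phi(A) = \supt^-(A) = \supt^-(B) = \Int \Phi(B).
\]
Since $A \not\equiv B$ was already guaranteed by the proposition, this concludes the argument.

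There is no real obstacle here; the corollary is a one-line consequence of the two preceding results, and its only content is the translation of the support-invariance statement of Proposition~\ref{prop:supports} into the language of the topological closure and interior of the canonical representatives $\Phi(A)$ and $\Phi(B)$ via Lemma~\ref{lem:closureofDissupt}.
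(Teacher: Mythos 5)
Your proof is correct and matches the paper's own argument exactly: the corollary is stated in the paper as an immediate consequence of Proposition~\ref{prop:supports} via Lemma~\ref{lem:closureofDissupt}, which is precisely the substitution you perform. Nothing to add.
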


%
%

\subsection{Density in the sense of forcing and the proof of Theorem~\ref{th:forcingdensePi03}}
The Boolean algebra \( \MALG \) is endowed with a partial order 
\[
\begin{split}
 \eq{A} \leq \eq{B} & \iff \mu ( A \setminus B ) =  0
\\
 & \iff A \cap B \equiv A .
\end{split}
\]
The minimum of \( \MALG \) is \( \eq{ \emptyset}\) the collections of null sets,
and is denoted by \( 0 \).
If \( A , B \) are \( \mathcal{T} \)-regular, then~\eqref{eq:densityintersection} implies that
\[
\eq{A} \leq \eq{B} \iff A \subseteq B .
\]
We will say that \( \eq{B} \in \MALG \) has empty interior just in case \( \Int ( \Phi ( B ) ) = \emptyset \),
hence if \( \eq{B } \) has empty interior, then every \( \eq{A} \leq \eq{B} \) has also empty interior.
From Theorem~\ref{th:emptyinteriorPi03} we obtain

\begin{corollary}
If \( 0 <  \eq{B} \in \MALG \) has empty interior then \( \Phi ( A ) \)
is \( \bPi^{0}_{3}\) complete, for every \( 0 < \eq{A} \leq \eq{B} \).
In particular, \( \mathscr{W}_{ \boldsymbol{d}}\) is not dense in the sense of forcing
in \( \MALG \) for any Wadge degree \( \boldsymbol{d} \subseteq \bDelta^{0}_{3}\).
\end{corollary}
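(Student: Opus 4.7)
My plan is to reduce the first assertion directly to Theorem~\ref{th:emptyinteriorPi03}, and then obtain the second assertion by combining the first with the already-noted fact that the collection of $\eq{B} \in \MALG$ with $\Int\Phi(B)=\emptyset$ is dense in $\MALG$ in the sense of forcing.

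First I would fix $0 < \eq{A} \leq \eq{B}$ with $\eq{B}$ of empty interior. The relation $\eq{A} \leq \eq{B}$ means $\mu(A \setminus B) = 0$, so replacing $A$ by $A \cap B$ (which does not alter $\eq{A}$ nor $\Phi(A)$) I may assume outright that $A \subseteq B$. By monotonicity~\eqref{eq:densitymonotone}, $\Phi(A) \subseteq \Phi(B)$, and since the topological interior is monotone, $\Int\Phi(A) \subseteq \Int\Phi(B) = \emptyset$, so $\Phi(A)$ has empty interior in $\pre{\omega}{2}$. On the other hand $\eq{A} > 0$ says $\mu(A) > 0$, and so by the Lebesgue Density Theorem~\ref{th:Lebesguedensity} we have $\mu(\Phi(A)) = \mu(A) > 0$, hence in particular $\Phi(A) \neq \emptyset$. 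Both hypotheses of Theorem~\ref{th:emptyinteriorPi03} being satisfied, $\Phi(A)$ is complete $\bPi^{0}_{3}$.

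For the second assertion I would fix a Wadge degree $\boldsymbol{d} \subseteq \bDelta^{0}_{3}$ and any $\eq{X} \in \MALG$ with $\eq{X} > 0$. By the density (in the sense of forcing) of the empty-interior elements, there is $0 < \eq{B} \leq \eq{X}$ with $\Int\Phi(B) = \emptyset$. The first part of the corollary, applied to every $0 < \eq{A} \leq \eq{B}$, tells me that $\Phi(A)$ is complete $\bPi^{0}_{3}$, so $\Wdeg{\Phi(A)} = \bPi^{0}_{3}\setminus \bDelta^{0}_{3}$; since $\boldsymbol{d} \subseteq \bDelta^{0}_{3}$, this Wadge degree is disjoint from $\boldsymbol{d}$, and hence no $\eq{A} \in \mathscr{W}_{\boldsymbol{d}}$ sits below $\eq{B}$. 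Thus $\mathscr{W}_{\boldsymbol{d}}$ fails to be dense in $\MALG$ below $\eq{X}$, which is exactly the negation of forcing-density.

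There is no genuinely hard step here once Theorem~\ref{th:emptyinteriorPi03} is available; the entire argument is a matter of transferring ``empty interior'' downward through the partial order on $\MALG$ and then invoking the forcing-density of such conditions. The only point requiring slight care is to distinguish the Boolean-algebra order on $\MALG$ from set-theoretic inclusion, and to notice that because $\Phi$ is both monotone on $\MEAS$ and invariant under $\equiv$, its quotient $\hat\Phi$ on $\MALG$ respects $\leq$ well enough to drag the property ``$\Int\hat\Phi(\cdot) = \emptyset$'' down from $\eq{B}$ to every $\eq{A}\leq \eq{B}$.
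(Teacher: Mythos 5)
Your proof is correct and follows essentially the same route as the paper: the paper too reduces the first assertion directly to Theorem~\ref{th:emptyinteriorPi03} by noting that empty interior passes downward in the order on $\MALG$, and obtains the second assertion from the existence (indeed, forcing-density) of conditions with empty interior. The only cosmetic difference is that the paper observes the downward transfer of ``empty interior'' via the identity $\Int\Phi(A)=\supt^-(A)$ from Lemma~\ref{lem:closureofDissupt}, whereas you first pass to a representative with $A\subseteq B$ and then use monotonicity of $\Phi$ and of $\Int$; both are one-line arguments and amount to the same thing.
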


On the other hand, 

\begin{proposition}\label{prop:densebelownonemptyinterior}
For every Wadge degree \( \boldsymbol{d} \subseteq \bDelta^{0}_{3}\) and every 
\( \eq{A} \in \MALG \) with nonempty interior, there is a 
\( \eq{B} \in  \mathscr{W}_{ \boldsymbol{d}}\) with \( \eq{B} \leq \eq{A} \).
\end{proposition}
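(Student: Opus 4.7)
The plan is to reduce the statement to Theorem~\ref{th:belowDelta03} by relocating a $ \mathcal{T} $-regular witness of the prescribed Wadge degree into a basic clopen neighborhood contained in $ \Phi ( A ) $. First I would use the nonempty interior of $ \eq{A} $ to pick $ t \in  \pre{ < \omega }{2} $ with $ \Nbhd_t \subseteq \Int \Phi ( A ) \subseteq \Phi ( A ) $; since $ \Phi ( A ) \equiv A $, this gives the crucial identity $ \mu ( \Nbhd_t \setminus A ) = 0 $. The trivial degree $ \boldsymbol{d} = \setLR{ \emptyset} $ is disposed of by taking $ B = \emptyset $, and setting aside the degenerate case $ \boldsymbol{d} = \setLR{ \pre{\omega}{2}} $ (where $ \mathscr{W}_{ \boldsymbol{d}} = \setLR{ \eq{ \pre{\omega}{2}}} $ and the conclusion forces $ \eq{A} = \eq{ \pre{\omega}{2}} $), I would apply Theorem~\ref{th:belowDelta03} to any element of $ \boldsymbol{d} $ to obtain a closed set $ C \subseteq \pre{\omega}{2} $ with $ \Phi ( C ) \in \boldsymbol{d} $.

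I would then define $ B = t \conc C $. The inequality $ \eq{B} \leq \eq{A} $ in $ \MALG $ is immediate: $ B \subseteq \Nbhd_t $, so $ \mu ( B \setminus A ) \leq \mu ( \Nbhd_t \setminus A ) = 0 $. The key calculation is $ \Phi ( B ) = t \conc \Phi ( C ) $. Indeed, for $ x = t \conc y \in \Nbhd_t $ and $ n \geq \lh ( t ) $ we have $ \LOC{B}{x \restriction n} = \LOC{C}{y \restriction ( n - \lh ( t ) ) } $, making the density of $ B $ at $ x $ equal to the density of $ C $ at $ y $; whereas for $ x \notin \Nbhd_t $ the localizations $ \LOC{B}{x \restriction n} $ are eventually empty and the density at $ x $ vanishes.

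The final step is to verify that $ \Phi ( B ) \Wequiv \Phi ( C ) $, so that $ \Phi ( B ) \in \boldsymbol{d} $. One direction, $ \Phi ( C ) \leqW \Phi ( B ) $, is witnessed by the continuous map $ y \mapsto t \conc y $. For the converse, using that $ \boldsymbol{d} \neq \setLR{ \pre{\omega}{2}} $ I would fix $ y_1 \in \pre{\omega}{2} \setminus \Phi ( C ) $ and define $ f \colon \pre{\omega}{2} \to \pre{\omega}{2} $ by $ f ( t \conc y ) = y $ for $ x = t \conc y \in \Nbhd_t $ and $ f ( x ) = y_1 $ otherwise; since $ \Nbhd_t $ is clopen, $ f $ is continuous, and by construction $ x \in \Phi ( B ) \iff f ( x ) \in \Phi ( C ) $. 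Therefore $ \eq{B} \in \mathscr{W}_{ \boldsymbol{d}} $ as required.

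The argument is essentially forced once one realizes that Theorem~\ref{th:belowDelta03} can be ``copied'' inside any basic clopen neighborhood contained in $ \Phi ( A ) $; the only real nuisance is the bookkeeping around the two trivial Wadge degrees, for which the general construction either degenerates or only applies in the special case $ \eq{A} = \eq{ \pre{\omega}{2}} $.
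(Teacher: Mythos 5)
Your proof is correct and follows essentially the same route as the paper's (terse) argument: localize a representative of \( \boldsymbol{d} \) obtained from Theorem~\ref{th:belowDelta03} inside a basic clopen set contained in \( \Phi ( A ) \), and check that the localization preserves both the Wadge degree and \( \mathcal{T} \)-regularity, which gives the required condition below \( \eq{A} \). Your explicit treatment of the two trivial degrees \( \setLR{\emptyset} \) and \( \setLR{ \pre{\omega}{2} } \) — which the paper silently omits — is a sensible addition, and your observation that for \( \boldsymbol{d} = \setLR{ \pre{\omega}{2} } \) the conclusion only holds when \( \eq{A} = \eq{ \pre{\omega}{2} } \) correctly identifies the point where the proposition, read literally, degenerates.
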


\begin{proof}
Suppose \( A \) is \( \mathcal{T} \)-regular and suppose \( \Nbhd_s \subseteq A \).
By Theorem~\ref{th:belowDelta03}
let \( D = \Phi ( D ) \in \boldsymbol{d}\): since 
\( s \conc D \in \boldsymbol{d}\) is also \( \mathcal{T} \)-regular 
we are done.
\end{proof}

Given \( A \) of positive measure and an \( \varepsilon > 0 \), choose 
a perfect pruned tree \( S \) such that \( \body{S} \subseteq A\) and \( \mu ( A \setminus \body{S} ) < \varepsilon / 2\).
Let \( T \subseteq S \) be a perfect pruned tree such that \( \mu ( \body{S} \setminus \body{T} ) < \varepsilon / 2 \) 
and such that \( C = \body{T}\) has empty interior in \( \body{S} \).
Then \( \eq{C} \leq \eq{A} \) and since \( \Phi ( C ) \subseteq C \) then \( \eq{C} \) has empty interior.
Therefore we have shown that

\begin{proposition}
Let \( \mathscr{W} \) be the collection of all \( \eq{A} \in \MALG \) with empty interior.
Then 
\[
 \FORALL{ \varepsilon > 0} \FORALL{ \eq{A} \in \MALG } \EXISTS{ \eq{B} \in \mathscr{W} }
 \bigl ( \eq{B} \leq \eq{A} \AND \mu ( A \symdif B ) < \varepsilon \bigr ).   
\]
\end{proposition}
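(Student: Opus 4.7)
The plan is to leverage Theorem~\ref{th:densePi03complete}, which already produces an empty-interior subset of any positive-measure perfect closed set while preserving most of its measure; all that is needed is to first replace \(A\) by such a closed set lying inside it.

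The trivial case \(\mu(A) = 0\) is handled by \(\eq{B} = 0\), so I focus on \(\mu(A) > 0\) and proceed in three steps. First, by inner regularity of the Lebesgue measure on \(\pre{\omega}{2}\), pick a closed set \(F \subseteq A\) with \(\mu(A \setminus F) < \varepsilon/2\). After discarding the (countable, hence null) set of isolated points of \(F\) I may assume \(F = \body{S}\) for a perfect pruned tree \(S\) with \(\mu(\body{S}) > 0\). Second, apply Theorem~\ref{th:densePi03complete} to \(S\) with \(\varepsilon/2\) in place of \(\varepsilon\): this yields a pruned subtree \(T \subseteq S\) such that \(B \equalsdef \body{T}\) has empty interior in \(\body{S}\) and \(\mu(\body{S}) < \mu(B) + \varepsilon/2\). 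Third, verify that \(\eq{B}\) does the job: clearly \(B \subseteq A\) modulo null sets, so \(\eq{B} \leq \eq{A}\), and
\[
\mu(A \symdif B) = \mu(A \setminus B) \leq \mu(A \setminus \body{S}) + \mu(\body{S} \setminus B) < \varepsilon.
\]

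It remains to check \(\eq{B} \in \mathscr{W}\), i.e.\ \(\Int \Phi(B) = \emptyset\). Since \(B\) is closed, \eqref{eq:densityclosed} gives \(\Phi(B) \subseteq B\), so it suffices to show that \(\Int B = \emptyset\) in \(\pre{\omega}{2}\). Any basic open \(\Nbhd_t \subseteq B\) would intersect \(\body{S}\) in a nonempty relatively open subset of \(\body{S}\) still contained in \(B\), contradicting the fact that \(B\) has empty interior in \(\body{S}\). There is no genuine obstacle here; the only mildly delicate point is precisely this transfer of emptiness of interior from the subspace \(\body{S}\) to the ambient Cantor space, which is immediate from the definition of the subspace topology. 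All the actual work was carried out in Theorem~\ref{th:densePi03complete}; this proposition is essentially its packaging, via a single application of inner regularity, for use in the forcing-style density statement.
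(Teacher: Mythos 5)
Your argument matches the paper's proof essentially line for line: inner regularity to obtain a perfect closed subset of \(A\), Theorem~\ref{th:densePi03complete} to carve out a closed subset with empty relative interior while preserving most of the measure, and \eqref{eq:densityclosed} to conclude \(\Int\Phi(B)=\emptyset\). One small correction: a single pass of discarding isolated points of \(F\) need not yield a perfect set, so you should instead invoke the Cantor--Bendixson theorem (the perfect kernel of \(F\) differs from \(F\) by a countable, hence null, set), though this does not change the structure of the argument.
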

This and Theorem~\ref{th:emptyinteriorPi03} yield 
(a slight strengthening of) Theorem~\ref{th:forcingdensePi03}.

\subsection{Proofs of Theorems~\ref{th:new} and~\ref{th:Pi03comeagre}}
Fix \( ( V_n )_n \)  an enumeration without repetitions of  
\( \setofLR{ \Nbhd_s}{ s \in  \pre{ < \omega }{2} } \).

\begin{proposition}
The set
\[
\mathscr{A} = \setofLR{ \eq{A} \in \MALG }
	{ \forall n \left ( \mu ( A \cap V_n ) > 0 \AND \mu ( V_n \setminus A ) > 0  \right )}
\]
is comeager in \( \MALG \).
\end{proposition}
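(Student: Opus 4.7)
The plan is to write $\mathscr{A} = \bigcap_n \mathscr{A}_n$, where
\[
\mathscr{A}_n = \setofLR{ \eq{A} \in \MALG }{\mu ( A \cap V_n ) > 0 \AND \mu ( V_n \setminus A ) > 0 },
\]
and to show that each $\mathscr{A}_n$ is open and dense in $\MALG$; then $\mathscr{A}$ is comeager by the Baire category theorem (applied in the Polish space $\MALG$).

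Openness of $\mathscr{A}_n$ follows from the fact that, for any fixed measurable $V$, the function $\MALG \to [0;1]$, $\eq{A} \mapsto \mu ( A \cap V )$, is continuous, indeed $1$-Lipschitz, since $|\mu ( A \cap V ) - \mu ( B \cap V )| \leq \mu ( A \symdif B )$. Hence both $\setofLR{ \eq{A} }{ \mu ( A \cap V_n ) > 0 }$ and $\setofLR{ \eq{A} }{ \mu ( V_n \setminus A ) > 0 }$ are preimages of open sets under continuous maps, and their intersection $\mathscr{A}_n$ is open.

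For density, fix $\eq{A} \in \MALG$ and $\varepsilon > 0$. Since $V_n$ is a nonempty basic clopen set, one can choose two disjoint basic clopen subsets $W_1, W_2 \subseteq V_n$ with $0 < \mu ( W_i ) < \varepsilon / 2$ (simply take two incompatible extensions of the defining node of $V_n$ of sufficiently large length). Set
\[
B = ( A \setminus W_2 ) \cup W_1 .
\]
Then $W_1 \subseteq B \cap V_n$ and $W_2 \subseteq V_n \setminus B$, so $\mu ( B \cap V_n ) \geq \mu ( W_1 ) > 0$ and $\mu ( V_n \setminus B ) \geq \mu ( W_2 ) > 0$, i.e.\ $\eq{B} \in \mathscr{A}_n$. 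Moreover $A \symdif B \subseteq W_1 \cup W_2$, so $\mu ( A \symdif B ) < \varepsilon$, proving density.

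There is no real obstacle here: the only minor point is recognising that $\MALG$ is Polish (this is noted in the introduction) so that Baire category applies, and the continuity of $\eq{A} \mapsto \mu ( A \cap V_n )$, which is immediate from the definition of the metric on $\MALG$. Having verified these, the conclusion $\mathscr{A} = \bigcap_n \mathscr{A}_n$ is a countable intersection of open dense sets and hence comeager.
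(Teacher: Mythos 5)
Your proof is correct, and it takes a genuinely different and more elementary route than the paper's. The paper splits $\mathscr{A}$ into $\mathscr{A}^-$ and $\mathscr{A}^+$ and establishes comeagerness of each by exhibiting a winning strategy for Player $\II$ in the Banach--Mazur game $G^{**}$, which requires a somewhat delicate inductive bookkeeping of radii and centers. You instead observe directly that $\mathscr{A} = \bigcap_n \mathscr{A}_n$ with each $\mathscr{A}_n$ open (because $\eq{A}\mapsto\mu(A\cap V_n)$ is $1$-Lipschitz on $\MALG$) and dense (the perturbation $B=(A\setminus W_2)\cup W_1$ by small clopen pieces of $V_n$), and then invoke Baire category in the Polish space $\MALG$. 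Your argument avoids the game-theoretic machinery entirely, is shorter, and in fact yields the stronger conclusion that $\mathscr{A}$ is a dense $\mathbf{G}_\delta$, whereas the paper's Banach--Mazur argument only delivers comeagerness directly. The paper's approach buys nothing extra here; your version is the one a reader would prefer.
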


\begin{proof}
We will show that the sets
\begin{align*}
 \mathscr{A} ^- &= \setofLR{ \eq{A}\in \MALG }{ \forall n \left ( \mu ( V_n \setminus A ) > 0 \right )}
 \\
 \mathscr{A} ^+ &= \setofLR{ \eq{A}\in \MALG }{ \forall n \left ( \mu ( A \cap V_n ) > 0 \right )}
\end{align*}
are comeager, and this will suffice since \( \mathscr{A} = \mathscr{A}^- \cap \mathscr{A} ^+ \).
Let us start with \( \mathscr{A}^- \).

By a result of Banach and Mazur (see~\cite[Theorem 8.33]{Kechris:1995kc}),
it is enough to show that Player \( \II \) has a winning strategy in the game 
\( G^{**} (  \mathscr{A}^-) \) in which the two
players choose alternatively nonempty open subsets of \( \MALG \)
\[
\begin{tikzpicture}
\node (II) at (0,0) [anchor=base] { \( \II \)};
\node (I) at (0,1) [anchor=base] {\( \I \)};
\node (x_0) at (1,1) [anchor=base] {\( U_0 \)};
\node (x_1) at (2,0) [anchor=base] {\(U_1 \)};
\node (x_2) at (3,1) [anchor=base] {\( U_2 \)} ;
\node (x_3) at (4,0) [anchor=base] {\( U_3 \)};
\node at (5.5,1) [anchor=base] {\( \cdots \)};
\node at (5.5,0) [anchor=base] {\( \cdots \)};
\node (dots) at (5.5,0) [anchor=base] {\( \cdots \)};
\node (A) at (x_3.south -| II.east) {}; 
\node (B) at (I.north -| II.south east) {}; 
\node (C) at ($0.5*(A) +0.5 *(B)$){}; 
\draw (II.south east) -- (I.north -| II.south east) 
(II.north west |- C.center) -- (dots.east |- C.center); 
\node at (C-|II.south west) [anchor=east]{\( G^{**} (  \mathscr{A}^- ) \)};
\begin{pgfonlayer}{background} 
\fill[gray!30,rounded corners] 
(II.south west) rectangle  (dots.east |-  I.north); 
\end{pgfonlayer} 
\end{tikzpicture} 
\]
such that
\( U_0 \supseteq U_1 \supseteq U_2 \supseteq U_3 \supseteq \dots \),
and \( \II \) wins iff \( \bigcap_{n} U_n \subseteq  \mathscr{A}^- \).
Since \( \MALG \) is a metric space with distance \( \delta ( \eq{A} , \eq{B} ) =  \mu ( A \symdif B ) \),
without loss of generality we may assume that each \( U_n \) is an open ball
\[ 
U_n = \Ball_ \delta ( \eq{A_n} ; \varepsilon _ n ) 
	= \setofLR{\eq{B} \in \MALG }{ \delta ( \eq{B} , \eq{A_n} ) < \varepsilon _n } .
\] 
The strategy for \( \II \) requires that 
\begin{enumerate}
\item\label{th:Pi03comeagre-ii}
\( \Cl U_{ 2 n + 1 } \subseteq U_{2n}\),
\item\label{th:Pi03comeagre-iii}
\( \varepsilon _{ 2 n + 1 } < 2^{-n} \),
\item\label{th:Pi03comeagre-iv}
\( \varepsilon _{ 2 n + 1 } \leq \mu ( V_n \setminus A_{2 n + 1 } ) \).
\end{enumerate}
Conditions~\eqref{th:Pi03comeagre-ii} and~\eqref{th:Pi03comeagre-iii} are easily satisfied.
For~\eqref{th:Pi03comeagre-iv} pick \( A_{ 2 n + 1 } '  \)  such that 
\[ 
r \equalsdef  \mu ( A_{ 2 n } \symdif A_{ 2 n + 1 }' ) = \delta  ( \eq{ A_{ 2 n } } , \eq{ A_{ 2 n + 1 } ' } ) < \varepsilon _{ 2 n } ,
\]
and let \( \varepsilon _{ 2 n + 1 } ' <  \min \setLR{ \varepsilon _{ 2 n } - r, 2^{-n} }\).
We have two cases.
 \begin{description}
\item[Case 1] 
\( \mu ( V_n \cap A'_{ 2 n + 1 } ) = 0 \).
Then let \( A_{2 n + 1} = A_{2 n + 1} ' \) and \( \varepsilon _{ 2 n + 1 } \leq 
 \varepsilon _{ 2 n + 1 } ' , \mu ( V_n )  \).
\item[Case 2] 
\( \mu ( V_n \cap A_{ 2 n + 1 }' ) > 0 \).
Then take \( V'_n \subseteq A_{2 n + 1}'  \cap V_n \) such that 
\[
 0 < \mu ( V_n' ) < \varepsilon _{ 2 n } - r .
\]
Let \( A_{2 n + 1 } = A_{ 2 n + 1 }' \setminus V_n' \).
Then
\[
\begin{split}
\delta ( \eq{A_{ 2 n}} , \eq{ A_{2 n + 1} } ) & \leq
\delta ( \eq{A_{ 2 n}} , \eq{ A_{2 n + 1} '}  ) + \delta ( \eq{A_{ 2 n + 1 }'} , \eq{ A_{2 n + 1} } )
\\
& < r +  \varepsilon _{ 2 n} - r  
\\
&= \varepsilon _{ 2 n} ,
\end{split}
\]
hence \( \eq{A_{ 2 n + 1 }} \in \Ball_ \delta  ( \eq{A_{2 n}} ; \varepsilon _{ 2 n } ) \).
Choose \( \varepsilon _{ 2 n + 1} \leq \varepsilon _{ 2 n + 1} ' , \mu ( V_n ' ) / 2 \)
such that \( \Cl \Ball_ \delta  ( \eq{A_{2 n + 1}} ; \varepsilon _{ 2 n + 1 } ) 
\subseteq \Ball_ \delta ( \eq{A_{2 n}} ; \varepsilon _{ 2 n } ) \). 
\end{description}
We leave it to the reader to verify that conditions~\eqref{th:Pi03comeagre-ii}--\eqref{th:Pi03comeagre-iv} are verified.

Let us check that this is a winning strategy for \( \II \).
Conditions~\eqref{th:Pi03comeagre-ii} and~\eqref{th:Pi03comeagre-iii} imply that \( ( \eq{A_n} )_n \) 
is a Cauchy sequence converging to some \( \eq{A_ \infty} \), 
and that \( \bigcap_{n}  U_n = \setLR{ \eq{ A_\infty }}\).
Since \( \delta ( \eq{A_\infty} , \eq{ A_{ 2 n + 1} } ) = \mu ( A_\infty \symdif A_{ 2 n + 1 } ) < \varepsilon _{ 2 n + 1 }\),
condition~\eqref{th:Pi03comeagre-iv} implies that 
\( \mu ( V_n \setminus A_\infty ) > 0 \).

We are now going to show that Player \( \II \) has a winning strategy in 
\( G^{**} ( \mathscr{A}^+ ) \), proving thus that \( \mathscr{A}^+ \) is comeager.
Conditions~\eqref{th:Pi03comeagre-ii} and \eqref{th:Pi03comeagre-iii}
are as before, while~\eqref{th:Pi03comeagre-iv} is replaced by
\begin{enumerate}[label={(\arabic*${}'$)}]\addtocounter{enumi}{2}
\item \label{th:Pi03comeagre-v}
\( \varepsilon  _{n + 1} \leq \mu ( V_n \cap A_{ 2 n + 1} ) \).
\end{enumerate} 
To satisfy~\ref{th:Pi03comeagre-v} pick \( A'_{2 n + 1}\) such that 
\( r = \mu ( A_{ 2 n} \symdif A_{ 2 n + 1} ' ) < \varepsilon _{ 2 n} \)
and let \( \varepsilon '_{ 2 n + 1} <  \min \setLR{ \varepsilon _{ 2 n } - r, 2^{-n} }\) 
as before.
We have two cases.
 \begin{description}
\item[Case 1\({}'\)] 
\( \mu ( V_n \cap A_{ 2 n + 1} ' ) > 0 \).
Then take \( A _{ 2 n + 1} = A _{ 2 n + 1}' \) and
\( \varepsilon _{ 2 n + 1}\leq  \varepsilon _{ 2 n + 1 } ' , \mu ( V_n \cap A _{ 2 n + 1}' )  \).
\item[Case 2\({}'\)] 
\( \mu ( V_n \cap A_{ 2 n + 1} ' ) =  0 \).
Then take \( V'_n \subseteq V_n \) such that \( \mu ( V_n' ) < \varepsilon _{ 2 n } - r \)
and let \( A _{ 2 n + 1} = A _{ 2 n + 1}' \cup V'_n \) so that
\( \eq{A_{ 2 n + 1}} \in \Ball_ \delta ( \eq{ A_{ 2 n}} ; \varepsilon _{ 2 n} ) \).
Choose \( \varepsilon _{ 2 n + 1} \leq \varepsilon _{ 2 n + 1}' , \varepsilon _{ 2 n} - r \).
\end{description}
As before, playing according to this strategy guarantees that 
\( \bigcap_{n} U_n = \setLR{ \eq{ A_\infty } } \) with \( \mu ( A_\infty ) > 0 \)
and since \( \mu ( A_\infty \symdif A_{ 2 n + 1} ) < \varepsilon _{ 2 n + 1} \), condition~\ref{th:Pi03comeagre-v} 
implies that \( \mu ( A_\infty \cap V_n ) > 0 \).
\end{proof}

Theorems~\ref{th:Pi03comeagre} and~\ref{th:new} now follow easily.

\begin{proof}[Proof of Theorem~\ref{th:Pi03comeagre}]
Let \( A = \Phi ( A ) \) and \( \eq{A} \in \mathscr{A} \).
Then \( V_n \nsubseteq A \) for all \( n \), hence \( \Int ( A ) = \emptyset \).
Therefore by Theorem~\ref{th:emptyinteriorPi03}
\[
\mathscr{A} \setminus\setLR{ \eq{ \emptyset}} \subseteq 
\setofLR{\eq{A}\in \MALG}{ \Phi ( A ) \text{ is complete } \bPi^{0}_{3} } \qedhere
\]
\end{proof}

\begin{proof}[Proof of Theorem~\ref{th:new}]
Let  \( \eq{A} \in \mathscr{A} \) and, towards a contradiction, suppose 
\( \mu ( A \symdif D ) = 0 \) with \( D \in \bDelta^{0}_{2}\).
By construction \( \mu ( D \cap V_n ) , \mu ( V_n \setminus D ) > 0 \) for all \( n \),
hence \( D \) would be dense and co-dense, contradicting Baire's category theorem.
Therefore
\[
\mathscr{A}  \setminus\setLR{ \eq{ \emptyset}}  \subseteq 
\setofLR{\eq{A}\in \MALG}{ \eq{A} \cap \bDelta^{0}_{2} = \emptyset } \qedhere
\]
\end{proof}

\section{Proof of the Lebesgue density theorem in the Cantor space}\label{sec:proofofdensitythm}
It is enough to show that \( A \setminus \Phi  ( A ) \) is null for every measurable set \( A \).
As \( A \setminus \Phi  ( A ) \subseteq \bigcup_{ \varepsilon \in \Q^+ } B_ \varepsilon  \) where 
\[
B_ \varepsilon  = \setof{x \in A}{\liminf_{n \to \infty} \mu ( \LOC{A}{x \restriction n} )  <1  - \varepsilon  },
\]
it is enough to show that each \( B_ \varepsilon \) is null.
Arguing as on page~\pageref{eq:folklore}, it is easy to check that 
each \( B_ \varepsilon \) is measurable.
Towards a contradiction, suppose that \( B = B_ \varepsilon \) is not null for some fixed \( \varepsilon < 1\).
Choose \( U \supseteq B \) open and such that \( \mu ( U ) < \mu ( B ) / (1 - \varepsilon ) \).
Let 
\[
\mathcal{B} = \setofLR{s \in  \pre{< \omega }{2} }{ \Nbhd_s \subseteq U \AND \mu ( \LOC{A}{ s} ) 
\leq 1 - \varepsilon   }.
\]
By definition of \( B \), any one of its points has arbitrarily small neighborhoods \( \Nbhd_s \) 
such that \( \mu ( \LOC{A}{s} ) \leq 1 - \varepsilon \), that is
\begin{equation}\label{eq:th:Lebesguedensity1}
 \FORALL{x \in B} \EXISTSS{\infty}{m} \left ( x \restriction m \in \mathcal{B} \right ) .  
\end{equation}
If \( \mathcal{A} \subseteq \mathcal{B} \) is an antichain (i.e., \( \Nbhd_s \cap \Nbhd_t = \emptyset \) 
for distinct \( s , t \in \mathcal{A} \)) then 
\begin{equation*}
\begin{split}
 \mu  ( B \cap \textstyle\bigcup_{s \in \mathcal{A}} \Nbhd_s ) & 
\leq \textstyle \sum_{s \in \mathcal{A}} \mu ( A \cap \Nbhd_s )
\\
 & \leq ( 1 - \varepsilon ) \cdot  \textstyle\sum_{s \in \mathcal{A}} \mu ( \Nbhd_s )
 \\
 & \leq ( 1 - \varepsilon ) \cdot \mu ( U ) 
 \\
 & < \mu  ( B ) ,
\end{split} 
\end{equation*}
hence 
\begin{equation}\label{eq:th:Lebesguedensity2}
 \FORALL{\mathcal{A} \subseteq \mathcal{B}} \left ( \mathcal{A} \text{ antichain} \IMPLIES 
\mu  ( B \setminus \textstyle\bigcup_{s \in \mathcal{A}} \Nbhd_s ) > 0 \right ) .
\end{equation}
Construct pairwise incompatible \( s_n \in \mathcal{B} \) as follows.
Let \( s_0 \in \mathcal{B} \) be arbitrary, and suppose \( s_0 , \dots , s_n \) have been chosen:
by~\eqref{eq:th:Lebesguedensity2} the set 
\( B \setminus \left ( \Nbhd_{s_0} \cup \dots \cup \Nbhd_{s_n} \right ) \) 
is not null,
and for any \( x \) in this set there are arbitrarily large \( m \) such that 
\( x \restriction m \in \mathcal{B} \) by~\eqref{eq:th:Lebesguedensity1}.
In particular, the collection 
\( \mathcal{B}_n = \setofLR{ s \in \mathcal{B}}{ \FORALL{i \leq n} \left ( s_i \perp s \right ) } \)
is nonempty, so let \( s_{n + 1 }\) be an element of \( \mathcal{B} \) of minimal length.
Since \( \mathcal{B}_n \supset \mathcal{B}_{n + 1} \) it follows that 
\( \lh ( s_n ) \leq \lh ( s_{n + 1} ) \) for all \( n \), hence \( \lh ( s_n ) \to \infty \).
As \( \setofLR{s_n}{n \in \omega } \subseteq \mathcal{B}\) is an antichain, there is an
\( \bar{x} \in B \setminus \bigcup_{n} \Nbhd_{s_n} \) and
by~\eqref{eq:th:Lebesguedensity1} there is an \( \bar{m} \) such that 
\( \bar{s} = \bar{x} \restriction \bar{m} \in \mathcal{B} = \mathcal{B}_{-1} \).
We will show by induction on \( n \) that \( \bar{s} \in \mathcal{B}_n \) ---
as \( \lh ( \bar{s} ) < \lh ( s_n ) \) for large enough \( n \), and \( \bar{s} \in \mathcal{B}_{n + 1 }\),
this would contradict the choice of \( s_{n + 1} \). 
Assume \( \bar{s} \in \mathcal{B}_n \): towards proving that \( \bar{s} \in \mathcal{B}_{n + 1} \)
it is enough to show that \( \bar{s} \perp s_{n + 1} \).
Assume otherwise, that is either \( s_{n + 1} \subseteq \bar{s} \) or \( \bar{s} \subset s_{n + 1} \).
If \( s_{n + 1} \subseteq \bar{s} \), then \( \bar{x} \in \Nbhd_{\bar{s}} \subseteq \Nbhd_{s_{n + 1}} \),
against \( \bar{x} \in B \setminus \bigcup_{i} \Nbhd_{s_i} \), and if \( \bar{s} \subset s_{n + 1} \)
this would go against the minimality of \( \lh ( s_{n + 1} ) \), hence either way
a contradiction is reached.

\bibliographystyle{alpha}
\bibliography{density}

\end{document}